\newtheorem*{acknowledgement}{Aknowledgement}
\newcommand{\cA}{\mathcal{A}}
\newcommand{\cB}{\mathcal{B}}
\newcommand{\cC}{\mathcal{C}}
\newcommand{\cD}{\mathcal {D}}
\newcommand{\cF}{\mathcal{F}}
\newcommand{\cG}{\mathcal{G}}
\newcommand{\cH}{\mathcal{H}}
\newcommand{\cK}{\mathcal{K}}
\newcommand{\cL}{\mathcal{L}}
\newcommand{\cN}{\mathcal{N}}
\newcommand{\cP}{\mathcal{P}}
\newcommand{\cQ}{\mathcal{Q}}
\newcommand{\cS}{\mathcal{S}}
\newcommand{\cT}{\mathcal{T}}
\newcommand{\cU}{\mathcal{U}}
\newcommand{\cV}{\mathcal{V}}
\newcommand{\fS}{ \mathsf{S}}
\newcommand{\RR}{\mathbb R}
\newcommand{\CC}{\mathbb C}
\newcommand{\id}{\mathop{\mathrm{Id}}\nolimits}
\newcommand{\pb}[1]{{}^*#1^*}
\newcommand\smx{X^\bullet}
\newcommand\smX{X^\bullet}
\newcommand\pa{\partial}
\newcommand{\interior}[1]{\overset{\circ}{#1}}
\newtheorem{theorem}{Theorem}[section]
\newtheorem{proposition}[theorem]{Proposition}
\newtheorem{corollary}[theorem]{Corollary}
\newtheorem{lemma}[theorem]{Lemma}
\newtheorem{claim}{Claim}
\theoremstyle{definition}
\newtheorem{definition}[theorem]{Definition}
\theoremstyle{definition}
\theoremstyle{definition}
\newtheorem{remark}[theorem]{Remark}
\newcommand\CI{\mathcal{C}^{\infty}}
\newcommand\ff{\operatorname{ff}}
\newcommand\Hom{\operatorname{Hom}}
\newcommand\bbR{\mathbb{R}}
\newcommand\bbC{\mathbb{C}}
\newcommand\bbN{\mathbb{N}}
\newcommand\Id{\operatorname{Id}}
\newcommand\ad{\operatorname{sl}}
\newcommand\pr{\operatorname{pr}}
\newcommand\bbS{\mathbb{S}}
\newcommand\sus{\operatorname{sus}}
\newcommand\Diff{\operatorname{Diff}}
\newcommand\cf{cf\@. }
\newcommand\phg{\operatorname{ph}}
\newcommand\ed{\operatorname{ed}}
\newcommand\Con{\operatorname{C}}
\newcommand\nc{\operatorname{nc}}
\newcommand\quan{\operatorname{quan}}
\newcommand\PD{\operatorname{PD}}
\newcommand\fc{\operatorname{fc}}
\newcommand\fe{\operatorname{FE}_{\fS}}
\newcommand\ifc{\operatorname{ifc}}
\newcommand\fb{\operatorname{fb}}
\newcommand\Gr[1]{\mathcal{G}^{(#1)}}
\newcommand\SX{^\mathsf{S}\!X}
\newcommand\SN{^\mathsf{S}\!\mathcal{N}}
\newcommand\ev{\operatorname{ev}}
\newcommand\FCX{^\mathsf{FC}\!X}
\newcommand\FC{\mathsf{FC}}
\newcommand\MC{\mathsf{MC}}
\title{Pseudodifferential operators on manifolds with fibred corners} 
\author{Claire Debord}
\address{Laboratoire de Math\'ematiques, Universit\'e Blaise Pascal}
\email{debord@math.univ-bpclermont.fr}
\author{Jean-Marie Lescure}  
\address{Laboratoire de Math\'ematiques, Universit\'e Blaise Pascal}
\email{lescure@math.univ-bpclermont.fr}
\author{Fr\'ed\'eric Rochon}
\address{Department of mathematics, Australian National University}
\email{Frederic.Rochon@anu.edu.au} 
\begin{document} 
 
\maketitle

\newcommand\cd[1]{\marginpar{{\bf CD}: #1}}
\begin{abstract}
One way to geometrically encode  the singularities of a stratified pseudomanifold is to endow its interior with an iterated fibred cusp metric.  For such a metric, we develop and study a pseudodifferential calculus generalizing the $\Phi$-calculus of Mazzeo and Melrose.  Our starting point is the observation, going back to Melrose, that a stratified pseudomanifold can be `resolved' into a manifold with fibred corners.  This allows us to define pseudodifferential operators as conormal distributions on a suitably blown-up double space.  Various symbol maps are introduced, leading to the notion of full ellipticity.  This is used to construct refined parametrices and to provide criteria for the mapping properties of operators such as Fredholmness or compactness.  We also introduce a semiclassical version of the calculus and use it to establish a Poincar\'e duality between the $K$-homology of the stratified pseudomanifold and the $K$-group of fully elliptic operators.    
\end{abstract}

\tableofcontents


\section*{Introduction}

To study linear elliptic equations on singular spaces, it is very helpful to have a pseudodifferential calculus adapted to the geometry of the singularities.  Indeed, such a tool allows one to construct refined parametrices to geometric operators like the Laplacian, leading to a precise description of the space of solutions and typically having important consequences and applications in spectral theory, scattering theory, index theory and regularity theory.  This has also applications to study certain non-linear elliptic equations, see for instance \cite{Mazzeo-Montcouquiol},\cite{Jeffres-Mazzeo-Rubinstein}, \cite{Rochon-Zhang} for recent works in that direction.  Over the years, various types of pseudodifferential calculi have been introduced on non-compact and singular spaces, see for instance \cite{Mazzeo-Melrose0}, \cite{EMM}, \cite{MelroseAPS}, \cite{Schulze_book}, \cite{MazzeoEdge}, \cite{Mazzeo-Melrose} \cite{Lauter-Moroianu2}, \cite{Krainer}, \cite{Grieser-Hunsicker} and \cite{pomfb}.  Such a diversity of calculi comes from the fact that different types of singularities usually require quite different treatments.

Still, many of the examples above are concerned with a particular class of singular spaces:  stratified pseudomanifolds.  The notion of stratified pseudomanifold is relatively easy to describe and has the advantage of including many important examples of singular spaces, going from manifolds with corners to algebraic varieties.  One could therefore hope for a relatively uniform treatment of pseudodifferential operators in this context.  However, it is necessary to first choose a Riemannian metric geometrically encoding the singularities.  There are two natural choices.  To present these two choices, let us first consider a stratified pseudomanifold of depth one, that is, with only one singular stratum, see Figure~\ref{edge_metric} and Figure~\ref{fibred_cusp_metric}. 
\begin{figure}[h]
\setlength{\unitlength}{1cm}
\begin{minipage}[s]{5cm}
\begin{picture}(5,6)
\thicklines
\put(1,5){\line(1,-4){1}}
\put(0.9,3.6){$E$}
\put(1.1,4.6){\line(4,1){2}}
\put(1.1,4.6){\line(4,-1){2}}
\put(1.5,3){\line(4,1){2}}
\put(1.5,3){\line(4,-1){2}}
\put(1.9,1.4){\line(4,1){2}}
\put(1.9,1.4){\line(4,-1){2}}
\put(1.5,0.5){\vector(1,0){3}}
\put(4.5, 0.2){$r$}
\thinlines
\qbezier(3.1,5.1)(3,4.6)(3.1,4.1)
\qbezier(3.1,5.1)(3.2,4.6)(3.1,4.1)
\put(3.3,4.6){$L$}
\qbezier(3.5,3.5)(3.4,3)(3.5,2.5)
\qbezier(3.5,3.5)(3.6,3)(3.5,2.5)
\qbezier(3.9,1.9)(3.8,1.4)(3.9,0.9)
\qbezier(3.9,1.9)(4,1.4)(3.9,0.9)\end{picture} 
\caption{$g_{\ed}$}\label{edge_metric}
\end{minipage}
\hfill
\begin{minipage}[s]{5cm}
\begin{picture}(5,6)
\thicklines
\put(1,5){\line(1,-4){1}}
\put(0.9,3.6){$E$}
\qbezier(1.1,4.6)(3.1,4.6)(3.1,5.1)
\qbezier(1.1,4.6)(3.1,4.6)(3.1,4.1)
\put(3.3,4.6){$L$}
\qbezier(1.5,3)(3.5,3)(3.5,3.5)
\qbezier(1.5,3)(3.5,3)(3.5,2.5)
\qbezier(1.9,1.4)(3.9,1.4)(3.9,1.9)
\qbezier(1.9,1.4)(3.9,1.4)(3.9,0.9)
\put(1.5,0.5){\vector(1,0){3}}
\put(4.5, 0.2){$r$}
\thinlines
\qbezier(3.1,5.1)(3,4.6)(3.1,4.1)
\qbezier(3.1,5.1)(3.2,4.6)(3.1,4.1)
\qbezier(3.5,3.5)(3.4,3)(3.5,2.5)
\qbezier(3.5,3.5)(3.6,3)(3.5,2.5)
\qbezier(3.9,1.9)(3.8,1.4)(3.9,0.9)
\qbezier(3.9,1.9)(4,1.4)(3.9,0.9)
\end{picture}
\caption{$g_{\fc}$}\label{fibred_cusp_metric}
\end{minipage}
\end{figure}

The first choice, going back to Cheeger \cite{Cheeger}, is to consider an incomplete edge metric, a prototypical example being a metric which in a neighborhood of the singular stratum takes the form
\begin{equation}
    g_{\ed} = dr^2 + g_E + r^2 g_L,
\label{intro.1}\end{equation}        
where $r$ is the distance to the singular stratum, $g_E$ is a Riemannian metric on the singular stratum (the edge) and $g_L$ is a choice of metric on the link.  In this setting, a pseudodifferential calculus was developed independently by Mazzeo \cite{MazzeoEdge} and Schulze \cite{Schulze_book}.  In \cite{MazzeoEdge}, the metric which is really used as a starting point is in fact the conformally related metric 
\begin{equation}
     \widetilde{g}_{\ed}= \frac{g_{\ed}}{r^2}= \frac{dr^2}{r^2} + \frac{g_E}{r^2} + g_L,
\label{intro.1b}\end{equation}
a complete edge metric, which has the virtue of defining a Lie algebra of vector fields `generating' the pseudodifferential calculus.

Alternatively, one can consider a fibred cusp metric to encode the singularity, which is a certain type of complete Riemannian metric of finite volume on the regular stratum.  A prototypical example of such metric is one which near the singular stratum takes the form
\begin{equation}
    g_{\fc}= \frac{dr^2}{r^2}+ g_E + r^2 g_{L}.
\label{intro.2}\end{equation} 
For such a metric, a pseudodifferential calculus was introduced by Mazzeo and Melrose \cite{Mazzeo-Melrose} starting with a Lie algebra of smooth vector fields associated to the conformally related metric
\begin{equation}
 g_{\fb} = \frac{g_{\fc}}{r^2} = \frac{dr^2}{r^4} + \frac{g_E}{r^2} + g_L.  
\label{intro.2b}\end{equation}

Both \eqref{intro.1} and \eqref{intro.2} have analogs on general stratified pseudomanifolds by iterating the definition.  \  The generalization of \eqref{intro.1} is called an iterated edge metric \cite{Albin-Leichtnam-Mazzeo-Piazza} .  An important source of examples of iterated edge metrics is given by certain constant curvature metrics \cite{Mazzeo-Montcouquiol} and by K\"ahler-Einstein metrics singular along a divisor \cite{Jeffres-Mazzeo-Rubinstein}.   On the other hand, we call the analog of \eqref{intro.2} for a general stratified pseudomanifold an \textit{iterated fibred cusp metric}, see Definition~\ref{fc.1} below.  For stratified pseudomanifolds of depth one, natural examples of such metrics are given by certain fibred cusp K\"ahler-Einstein metrics, see \cite{Rochon-Zhang}.   

For iterated edge metrics, an associated pseudodifferential calculus has been introduced in \cite{NSS07a} and \cite{NSS07b} for operators of order zero and was used in \cite{NSS07c}.  There is also a recent survey \cite{Schulze09} by Schulze giving a nice description of how his methods can be adapted to stratified pseudomanifolds of higher depth.  Adopting a Lie groupoid point of view, one can obtain a pseudodifferential calculus by applying the general method of \cite{NWX} and \cite{ALN}, which works for both iterated edge metrics and iterated fibred cusp metrics.  This latter approach is suitable for certain applications in index theory, but the properness condition on the support of the operators makes it less appealing for the construction of refined parametrices.  Still, in certain cases, this can be avoided by introducing a length function, see \cite{LMN}. 

In this paper, we propose to systematically develop and study a calculus of pseudodifferential operators on stratified pseudomanifolds equipped with an iterated fibred cusp metric.  We call it the $\fS$-calculus.  Our approach takes its inspiration from \cite{Mazzeo-Melrose}, which deals with the case of a stratified pseudomanifold of depth 1.  In particular, we start with a Lie algebra of smooth vector fields associated to \textbf{iterated fibred corner metrics}, a type of metrics conformally related to iterated fibred cusp metrics.  To be able to consider stratified pseudomanifolds of arbitrary depth, our starting point is the idea, going back to Melrose (see \cite{Albin-Leichtnam-Mazzeo-Piazza}), that a stratified pseudomanifold can be resolved by a manifold with fibred corners.  This allows us to use blow-up techniques in a systematic way to construct the double space on which the Schwartz kernels of the operators can naturally be defined.  

To prove that this pseudodifferential calculus is closed under composition, we diverge from \cite{Mazzeo-Melrose} and follow an approach closer in spirit to \cite{Krainer}.  Beside the `usual' principal symbol, we introduce a `noncommutative' symbol for each singular stratum of the stratified pseudomanifold by restricting on a corresponding front face in the double space.  This lead to a simple Fredholm criterion for polyhomogeneous pseudodifferential operators:  an operator is Fredholm when acting on suitable Sobolev spaces if and only if              it is elliptic and its `noncommutative' symbols are invertible for each stratum.  We say such operators are fully elliptic.  For fully elliptic operators, we are able to construct a refined parametrix giving rise to a corresponding regularity result.   
This refined parametrix can also be used to show that our calculus is spectrally invariant, namely, that an invertible operator (when acting on suitable Sobolev spaces) has its inverse also contained in the calculus.  

Along the way, we have a parallel discussion that keeps track of the underlying Lie groupoid and relates our approach with the one of \cite{NWX} and \cite{ALN}.  This becomes particularly useful at the end of the paper, where we establish a Poincar\'e duality between  the fully elliptic $\fS$-operators and the K-homology of the stratified pseudomanifold.  In \cite{NSS07c}, such a Poincar\'e duality was obtained using the pseudodifferential operators of \cite{NSS07a} and \cite{NSS07b}.  Using instead groupoids, the first two authors introduced in \cite{Debord-Lescure} the noncommutative tangent space of a stratified pseudomanifold and showed its K-theory is Poincar\'e dual to the K-homology of the stratified pseudomanifold.  In fact, they showed more generally that the $C^*$-algebra of the noncommutative tangent space is dual in the sense of KK-theory to the $C^*$-algebra of continuous functions of the underlying stratified pseudomanifold.  

A key feature of our approach is the introduction of the semiclassical $\fS$-double space and its associated semiclassical $\fS$-calculus.  This allows us to define a continuous family groupoid $T\FCX$ playing the role in our context of the noncommutative tangent space of \cite{Debord-Lescure}.  By looking at the associated algebra of pseudodifferential operators, we can then provide a simple way of relating classes of fully elliptic $\fS$-operators with elements of the $K$-theory of $T\FCX$ (see Theorem~\ref{hcep.6} below).  This allows us to use a hybrid combination of the operator theoretic methods of \cite{NSS07c} (see also \cite{Melrose-Rochon06}) and the groupoid approach of \cite{Debord-Lescure} to obtain our Poincar\'e duality result in $KK$-theory, Theorem~\ref{dualite-Poincare-FC}.  In Theorem~\ref{interpretation-dp}, we also provide an interpretation of this result in terms of a quantization map for full symbols of fully elliptic $\fS$-operators,  relating in this way the points of view of \cite{Debord-Lescure} and \cite{NSS07c}.

The paper is organized as follows.  In Section~\ref{mwfc.0}, we introduce the definition of manifolds with fibered corners and recall from \cite{Albin-Leichtnam-Mazzeo-Piazza}  how they can be used to `resolve' stratified pseudomanifolds.  In Section~\ref{vfomfc}, we introduce a natural class of vector fields defined on a manifold with fibered corners $X$.  This leads to the notion of $\fS$-pseudodifferential operators in Section~\ref{def.S.op}. 
In Section~\ref{grpd}, we review the definition of groupoid and explain its relevance to the present context.  
In Section~\ref{asp.0}, we describe how $\fS$-pseudodifferential operators act on smooth functions.  
Section \ref{sus.0} is about suspended operators, which are used in Section~\ref{sm.0} to introduce various symbol maps for $\fS$-operators.  In section \ref{com.0}, we prove that the composition of two $\fS$-operators is again a $\fS$-operator.  In Section~\ref{mp.0}, we introduce natural Sobolev spaces on which $\fS$-operators act and provide criteria to determine when a $\fS$-operator is bounded, compact or Fredholm.  In Section~\ref{ac.0}, we introduce the semiclassical $\fS$-double space and the associated semiclassical $\fS$-calculus, as well as the Lie groupoid $T\FCX$.  This is used to obtain a relationship between classes of fully elliptic $\fS$-operators and elements of the K-theory of $T\FCX$.  Finally, in Section~\ref{pd.0}, we establish a Poincar\'e duality in $KK$-theory between $T\FCX$ and the stratified pseudomanifold ${}^\fS X$ associated to $X$ and interpret it in terms of a quantization map.

\begin{acknowledgement}
The authors are very grateful to Thomas Krainer for many helpful conversations.  
\end{acknowledgement}




\numberwithin{equation}{section}

\section{Manifolds with fibered corners and stratified pseudomanifolds}  \label{mwfc.0}

Let $X$ be a manifold with corners as defined in \cite{MelroseMWC}.  In particular, we are assuming that each boundary hypersurface $H\subset X$ is
embedded in $X$.  This means that there exists a boundary defining function $x_{H}\in \CI(X)$ such that $x_{H}^{-1}(0)=H$, $x_{H}$ is positive on
$X\setminus H$ and the differential $dx_{H}$ is nowhere zero on $H$.    
 In such a situation, one can choose 
 a smooth retraction $r_{H}:\cN_H \rightarrow
H \ ,$ where $\cN_H$ is a (tubular) neighborhood of $H$ in $X$ such
that $(r_H,x_H):\cN_H\rightarrow H\times [0,\infty)$ is a diffeomorphism on its
image. We call $(\cN_H, r_H, x_H)$ a \textbf{tube system} for $H$. 
A smooth
map $\phi:X\to Y$ between manifolds with corners is said to be a \textbf{fibration}\footnote{A more standard terminology would be \emph{smooth fibre bundle.}} if it is a locally trivial surjective submersion.

\begin{definition} \label{defmwfc}
Let $X$ be a compact manifold with corners and $H_{1},\ldots, H_{k}$ an exhaustive list of its set of boundary hypersurfaces $M_{1}X$.  Suppose that
each boundary hypersurface $H_{i}$ is the total space of a smooth fibration $\pi_{i}:H_{i}\to S_{i}$ where the base $S_{i}$ is also a compact manifold
with corners.   The collection of fibrations $\pi= (\pi_{1},\ldots, \pi_{k})$ is said to be an \textbf{iterated fibration structure} if there is a
partial order on the set of hypersurfaces such that  
  \begin{itemize} \item[(i)]  for any subset $I\subset \{1,\ldots,k\}$ with $\underset{i\in I}{\cap} H_i \not= \emptyset$, the set $\{H_i\ \vert \
i\in I\}$ is totally ordered. 
  \item[(ii)] If $H_i<H_j$, then $H_{i}\cap H_{j}\ne \emptyset$, $\pi_{i}(H_{i}\cap H_{j})=S_{i}$ 
  with $\pi_{i}: H_{i}\cap H_{j}\to S_{i}$  a surjective submersion and 
  $S_{ji}:= \pi_{j}(H_{i}\cap H_{j})\subset S_{j}$ is one of the boundary hypersurfaces of the 
  manifold with corners $S_{j}$.  Moreover,  there is a surjective submersion
    $\pi_{ji} : S_{ji}\rightarrow S_i$ such that on $H_i\cap H_j$ we have
    $\pi_{ji}\circ \pi_{j}=\pi_i$.  
   \item[(iii)] The boundary hypersurfaces of $S_j$ are exactly the $S_{ji}$ with $H_i<H_j$. In particular if $H_{i}$ is minimal, then $S_{i}$ is a
closed manifold.  
  \end{itemize}
 A \textbf{manifold with fibred corners} is a manifold with corners $X$ together with an iterated fibration structure $\pi$.  A smooth map $\psi: X\to X'$ between two manifolds with fibred corners $(X,\pi)$ and $(X',\pi')$ is said to be a \textbf{diffeomorphism of manifolds with fibred corners} if it is a diffeomorphism of manifolds with corners and if for each $H_i\in M_1 X$, there is $H_{\mu(i)}'\in M_1 X'$ and a diffeomorphism $\psi_i: S_i \to S_{\mu(i)}'$ inducing a commutative diagram
 \[
 \xymatrix{   H_i  \ar[r]^{\psi}   \ar[d]^{\pi_i} & H_{\mu(i)}' \ar[d]^{\pi_{\mu(i)}'} \\
                   S_i \ar[r]^{\psi_i} & S_{\mu(i)}'.
 }
 \]

\end{definition}

\begin{remark} With the previous notation, for any $j$, $S_j$ is naturally a manifold with fibered corners. The hypersurfaces are the $S_{ji}$ with
fibration $\pi_{ji} :S_{ji} \rightarrow S_i$ for any $i$ such that $H_i<H_j$. The same goes for the fibres of $\pi_i$. Precisely, if $x$ belongs to
$S_i$ let $L_i^x:=\pi_i^{-1}(x)$. Then $L^x_i$ is a manifold with fibered corners, where the boundary hypersurfaces are the $L^x_i\cap H_j$ with
$H_i <H_j$ and the corresponding fibration comes from the restriction of $\pi_j$. Notice that in the special case where $H_i$ is maximal,  the fibre $L^x_i$ is a closed manifold.
\end{remark}

\begin{definition} A family of tube system $(\cN_i,r_i,x_i)$ for $H_i$, $i=1,\ldots,k$ is an \textbf{iterated fibred tube system} of the manifold with 
fibred corners $X$ if the following condition holds for $H_i<H_j$, $$r_j(\cN_i\cap \cN_j)\subset \cN_i ,\ x_i\circ r_j=x_i,\  \pi_i\circ r_i\circ
r_j=\pi_i\circ r_i \mbox{ on } \cN_i\cap \cN_j ,$$ and the restriction to $H_j$ of the function $x_i$ is constant on the fibres of
$\pi_j$.
\end{definition}
If $X$ is equipped with an iterated fibred tube system, then for each $H_i\in M_1 X$, we have an induced iterated fibred tube system on each fibre of $\pi_i:H_i\to S_i$.  Similarly, there is an induced iterated fibred tube system on the base $S_i$.

To see that manifolds with fibred corners always admit iterated fibred tube systems, it is useful to describe tube systems in terms of vector fields.  Given a tube system $(\cN_H, r_H,x_H)$ for the boundary hypersurface $H$, one can naturally associate to it a vector field $\xi_H\in \CI(\cN_H;TX)$ such that
\[
          (r_H,x_H)_* \xi_H= \frac{\pa}{\pa x_H}.
\]
Clearly, the tube system can be recovered from this vector field by considering its flow.  More generally, if $\eta_H\in \CI(X;TX)$ is a vector field which is inner pointing an nowhere vanishing on $H$, but tangent to all other boundary hypersurfaces, we can construct a tube system $(\cN_H',r_H',x_H')$ such that 
\[
         (r_H',x_H')_* \eta_H= \frac{\pa}{\pa x_H'}
\]  
by considering the flow of $\eta_H$ for some short period of time $E_H$.  Thus, to obtain an iterated fibred tube system, it suffices to associate a vector field $\xi_{H_i}$ to each boundary hypersurface $H_i$ in such a way that,
\begin{itemize}
\item[(i)] The restriction $\left.\xi_{H_i}\right|_{H_i}$ is inner pointing and nowhere vanishing on $H_i$;
\item[(ii)] If $H_i < H_j$, then $\xi_{H_i}$ is tangent to $H_j$ and there is a vector field $\xi_{ji}\in \CI(S_j,TS_j)$ such that $(\pi_j)_* (\left. \xi_{H_i}\right|_{H_j})= \xi_{ji}$, while $\xi_{H_j}$ is tangent to the fibres of the fibration $\pi_i: H_i\to S_i$ on $H_i$.  Moreover, in a neighborhood of $H_i\cap H_j$, we have that $[\xi_{H_i}, \xi_{H_j}]=0$.  
\end{itemize}
Indeed, the flows of these vector fields generates tube system for each boundary hypersurface.  The condition that $[\xi_{H_i}, \xi_{H_j}]$ insures that the flows of $\xi_{H_i}$ and $\xi_{H_j}$ commute, so that in particular $x_i\circ r_j=x_i$ near $H_i\cap H_j$, while requiring $\xi_{H_j}$ to be tangent to the fibres of $\pi_i: H_i\to S_i$ insures that $\pi_i\circ r_i\circ r_j= \pi_i\circ r_j$ near $H_i\cap H_j$.  On the other hand, the condition 
$(\pi_j)_* (\left. \xi_{H_i}\right|_{H_j})=\xi_{ji}$ insures that $\left. x_i\right|_{H_j}$ is constant on the fibres of $\pi_j: H_j\to S_j$.  Thus, by shrinking the tube systems if necessary, we can insure they form an iterated fibred tube system.  

\begin{lemma}
A manifold with fibred corners always admit an iterated fibred tube system.  
\label{ifts.1}\end{lemma}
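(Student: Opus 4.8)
By the discussion preceding the statement, it suffices to produce, for each boundary hypersurface $H_i$, a vector field $\xi_{H_i}\in\CI(X;TX)$ satisfying the two conditions \emph{(i)} and \emph{(ii)} on the $\xi_{H_i}$ recorded just above; the flows of such vector fields then give, after shrinking, an iterated fibred tube system. We argue by induction on the depth $n$ of $X$, i.e. on the maximal codimension of a corner of $X$. If $n\le 1$ there are no pairs $H_i<H_j$, condition \emph{(ii)} is empty, the boundary hypersurfaces are pairwise disjoint, and for each $i$ one takes any inward pointing, nowhere vanishing vector field supported in a collar of $H_i$ chosen disjoint from the other hypersurfaces.

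Assume $n\ge 2$ and that the statement holds in every depth $<n$. Refine the partial order to a total order and relabel so that $H_i<H_j\Rightarrow i<j$ (thus the minimal, ``deepest'', hypersurfaces come first), and build $\xi_{H_1},\dots,\xi_{H_k}$ in this order. At stage $j$ we arrange that $\xi_{H_j}$ be inward pointing and nowhere vanishing on $H_j$ and tangent to every $H_m$ with $m\ne j$; that for each $H_i<H_j$ the restriction $\xi_{H_j}|_{H_i}$ be tangent to the fibres of $\pi_i\colon H_i\to S_i$ and $[\xi_{H_i},\xi_{H_j}]=0$ near $H_i\cap H_j$; and that for each $H_j<H_m$ the field $\xi_{H_j}$ be tangent to $H_m$ with $(\pi_m)_*(\xi_{H_j}|_{H_m})$ a well defined vector field on $S_m$. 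The remaining commutator conditions, those for the pairs $H_j<H_m$, are imposed at the later stage $m$, so by antisymmetry of the bracket all of \emph{(i)}--\emph{(ii)} are eventually satisfied. Observe that at stage $j$ every $\xi_{H_i}$ with $H_i<H_j$ is already at hand and, by construction, tangent to $H_j$, and that the fields $\xi_{H_i}$ with $H_i<H_j$ pairwise commute near the corners where they are defined.

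To carry out stage $j$, recall from the Remark after Definition~\ref{defmwfc} that $H_j$ and all the bases $S_m$ are themselves manifolds with fibred corners, now of depth $<n$, so by the induction hypothesis they carry iterated fibred tube systems. Using these, together with the local product structure of $X$ near its corners, one builds a ``fibred'' collar $\cN_j\cong H_j\times[0,\varepsilon)$ of $H_j$ and sets $\xi_{H_j}=\pa/\pa x_j$ on it (then cuts it off to vanish outside $\cN_j$ and away from the hypersurfaces not comparable to $H_j$). Near a corner $H_{i_1}\cap\cdots\cap H_{i_c}\cap H_j$ with $H_{i_1}<\cdots<H_{i_c}<H_j$ one uses the fact that the $\xi_{H_{i_a}}$ were already fixed, commute pairwise and are tangent to $H_j$, to choose the collar so that $\xi_{H_j}$ commutes with all of them there (hence $[\xi_{H_{i_a}},\xi_{H_j}]=0$) and restricts on each $H_{i_a}$ to a field tangent to the fibres of $\pi_{i_a}$; concretely, one picks $\xi_{H_j}$ on the deepest face $H_{i_1}$ to be inward pointing along $H_{i_1}\cap H_j$ and $\pi_{i_1}$-vertical, and propagates it off $H_{i_1}$ by the commuting flows of $\xi_{H_{i_1}},\dots,\xi_{H_{i_c}}$. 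Near a corner $H_j\cap H_{m_1}\cap\cdots$ with $H_j<H_{m_1}<\cdots$ one instead takes the collar so that its collar direction is carried by each $\pi_m$ to the collar direction of the boundary hypersurface $S_{mj}\subset S_m$ (available from the iterated fibred tube system of $S_m$), which makes $\pa/\pa x_j$ push forward. These local prescriptions agree on overlaps because they are cut out by the iterated fibred tube structures of $H_j$ and of the $S_m$, so a partition of unity on a neighbourhood of $H_j$ glues them into the required $\xi_{H_j}$, completing stage $j$ and the induction.

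The heart of the matter is the \emph{simultaneous} compatibility at corners: a single $\xi_{H_j}$ must at once have a flow preserving every other boundary hypersurface, commute with the already chosen $\xi_{H_i}$ near $H_i\cap H_j$ when $H_i<H_j$, restrict to a $\pi_i$-vertical field on such $H_i$, and push forward under $\pi_m$ when $H_j<H_m$. The local product model near each corner shows these demands are not in conflict, but making the choices cohere globally is exactly what forces the order-respecting induction and the appeal to the induction hypothesis on the lower-depth spaces $H_j$ and $S_m$; once the local models are in place, the patching and the cut-off are routine.
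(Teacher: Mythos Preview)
Your proof shares the paper's architecture---reduce to constructing vector fields satisfying (i)--(ii), induct on depth---but reverses the recursion over $M_1X$: you build the $\xi_{H_j}$ from \emph{minimal} faces upward, whereas the paper works from \emph{maximal} faces downward. Either order could in principle work, but your execution has a real gap.

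The issue is the compatibility of the push-forward targets on the various bases $S_m$. At stage $j$ you want $\xi_{H_j}$ to push forward under each $\pi_m$ (for $H_j<H_m$) to the collar direction $\xi_{mj}$ of $S_{mj}\subset S_m$ coming from an iterated fibred tube system on $S_m$, obtained from the depth induction \emph{independently for each $m$}. But at a corner $H_j\cap H_{m_1}\cap H_{m_2}$ with $H_j<H_{m_1}<H_{m_2}$, the relation $\pi_{m_1}=\pi_{m_2 m_1}\circ\pi_{m_2}$ on $H_{m_1}\cap H_{m_2}$ forces
\[
(\pi_{m_2 m_1})_*\bigl(\xi_{m_2 j}\big|_{S_{m_2, m_1}}\bigr)=\xi_{m_1 j},
\]
and nothing you have arranged guarantees this. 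The paper isolates exactly this as an auxiliary condition (iii) and secures it by a preliminary recursion over the $S_i$, constructing all the $\xi_{ji}$ compatibly across the different bases \emph{before} turning to $X$ itself. Your sentence ``these local prescriptions agree on overlaps because they are cut out by the iterated fibred tube structures of $H_j$ and of the $S_m$'' is precisely where this is being assumed rather than proved: independently chosen tube systems on $S_{m_1}$ and $S_{m_2}$ need not satisfy the displayed relation, and without it no single $\xi_{H_j}$ can meet both prescriptions. Note also that $\pi_m$-projectability is not preserved under averaging by a partition of unity on $X$ (the coefficients would have to be constant along $\pi_m$-fibres), so your final gluing step needs more care as well; the paper avoids any averaging by extending $\xi_{H_i}$ off each $H_j>H_i$ via the flow of the already-constructed $\xi_{H_j}$, which delivers the commutator condition automatically and makes the consistency checks tractable.
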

\begin{proof}
By the discussion above, it suffices to find  vector fields $\xi_{H_i}\in \CI(X;TX)$ for each boundary hypersurface $H_i\in M_1 X$ in such a way that conditions (i) and (ii) above are satisfied.  This requires to construct the vectors fields $\xi_{ij}$ on $S_j$ as well.  

Recall that the depth of $X$ is the highest codimension of a boundary face of $X$.  If $X$ has depth zero, that is, if $X$ is a smooth manifold, the lemma is trivial.  We can thus proceed by induction on the depth of $X$ to prove the lemma.  In particular, for each $i$, the base $S_i$ of the fibration $\pi_i$ is a manifold with fibred corners of depth smaller than the one of $X$ and we can assume we have vector fields satisfying $(i)$ and $(ii)$ on $S_i$.  We can denote the vector field associated to the boundary face $S_{ji}$ of $S_j$ by $\xi_{ji}$.  Proceeding by recurrence on the partial order of the $S_i$ to construct these vector fields, starting with $S_i$ minimal, we can assume furthermore that for $H_i<H_j<H_k$, we have 
\begin{itemize}
  \item[(iii)]  $(\pi_{kj})_*   \left.  \xi_{ki}\right|_{S_{kj}} =  \xi_{ji}$.
\end{itemize}

To construct the vector fields $\xi_{H_i}$ on $X$, we can proceed by recurrence on $M_1 X$ starting with maximal elements.  For a maximal element $H_i$, we just choose $\xi_{H_i}\in\CI(X;TX)$ such that $\left.\xi_{H_i}\right|_{H_i}$ is inner pointing and nowhere vanishing on $H_i$ and is tangent to the fibres of $\pi_j:H_j\to S_j$ for $H_j\ne H_i$.

Suppose now that $H_i$ is a hypersurface such that for all $H_j \in M_1 X$ with $H_j>H_i$, the corresponding vector field $\xi_{H_j}$ has been constructed in such a way that conditions $(i)$ and $(ii)$ hold.  To construct $\xi_{H_i}$, we first define it on the maximal elements in $\{H\in M_1X\; ; \; H>H_i\}$.  Let $H_j$ be such a maximal element.  We choose $\left.\xi_{H_i}\right|_{H_j}\in \CI(H_j;TH_j)$ in such a way that on $H_j$, we have that 
$(\pi_j)_* (\left. \xi_{H_i} \right|_{H_j})= \xi_{ji}$.  This can be done by  also requiring at the same time that
$[\left.\xi_{H_i}\right|_{H_j}, \left.\xi_{H_k}\right|_{H_j}]=0$ near $H_i\cap H_j\cap H_k$ for $H_k$ such that $H_j>H_k>H_i$.  Indeed,  we can do so by first constructing $\left.\xi_{H_i}\right|_{H_j}$ recursively on the boundary faces of $H_j$, starting with the boundary faces of smallest dimension, and extending the definition at the next level using the flow of  vector fields $\xi_{H_k}$ for $H_j>H_k>H_i$ whenever possible.  Thanks to the fact condition (ii) is satisfied by the vector fields $\xi_{H_k}$, this can be achieved consistently.  In this process, we also require that $\left.\xi_{H_i}\right|_{H_j}$ be tangent to the fibres of $\pi_l: H_l\to S_l$ for $H_l<H_i$.  Using the flow of $\xi_{H_j}$, we can then extend the definition of $\xi_{H_i}$ to a neighborhood of $H_i\cap H_j$.  

Doing this for all maximal elements in $\{H\in M_1X\; ;\; H>H_i\}$, we then proceed recursively to extend the definition of $\xi_{H_i}$ in a neighborhood of the other hypersurfaces of $\{H\in M_1X\; ;\; H>H_i\}$.   Thus, let $H_k>H_i$ be given and suppose that $\xi_{H_i}$ has already been defined in a neighborhood of $H_j$ for $H_j>H_i$ such that $H_j>H_k$.  Then $\left.\xi_{H_i}\right|_{H_k}$ is already defined in a neighborhood of $H_{j}\cap H_k$.  As before, proceeding by recurrence on the boundary faces of $H_k$, we can extend the definition of $\left.\xi_{H_i}\right|_{H_k}$ to all of $H_k$ in such a way that $(\pi_{k})_{*} (\left. \xi_{H_i} \right|_{H_k})= \xi_{ki}$ and 
 $[\left.\xi_{H_i}\right|_{H_k}, \left.\xi_{H_l}\right|_{H_j}]=0$ near $H_k\cap H_l$ for $H_k>H_l>H_i$.  We can also require $\left.\xi_{H_i}\right|_{H_k}$ to be tangent to the fibres of $\pi_l: H_l\to S_l$ for $H_l<H_i$.  Using the flow of $\xi_{H_k}$, we can then extend the definition of $\xi_{H_i}$ to a neighborhood of $H_k\cap H_i$.  Because the already defined vector fields satisfy condition (ii), this extension is consistent with the previous ones.  

Thus, proceeding recursively, we can extend the definition of $\xi_{H_i}$ to a neighborhood of 
\[
     \bigcup_{\{H\in M_1X\; ;\; H>H_i\}} H_i\cup H. 
\]
We can then extend this definition further in such a way that $\xi_{H_i}$ is tangent to the fibres of $\pi_k: H_k\to S_k$ for $H_k< H_i$.  This give us a vector field $\xi_{H_i}$ which together with the already existing vector fields satisfies conditions (i) and (ii), completing the inductive step and the proof.

\end{proof}

As observed by Melrose and subsequently described in \cite{Albin-Leichtnam-Mazzeo-Piazza}, there is a correspondence between manifolds with fibered corners and stratified pseudomanifolds.  For the convenience of the reader and in order to set up the notation, we will review the main features of this correspondence and refer to \cite{Albin-Leichtnam-Mazzeo-Piazza} for further details.   

 Let us first recall what are pseudomanifolds. We will use the notations and equivalent descriptions given by A. Verona in \cite{verona} and used by
the first
two authors in \cite{Debord-Lescure}.

\smallskip Let $\SX$ be a locally compact separable metrizable space. Recall that a $C^{\infty}$-stratification of $\SX$ is a pair
  $(\fS,N)$ such that,
\begin{enumerate}
 \item $\fS=\{s_i\}$ is a locally finite partition of
  $\SX$ into locally closed subsets of $\SX$, called the strata, which are
  smooth manifolds such that
  $$
    s_0\cap \bar{s_1}\not=\emptyset
    \mbox{ if and only if } s_0\subset \bar{s_1} .
  $$
  In that case we will write $s_{0}\leq s_{1}$ and $s_{0}<s_{1}$ if
  moreover $s_{0}\not= s_{1}$.  
\item $N=\{ \SN_s,\pi_s,\rho_s\}_{s\in \fS}$ is the set of control
  data, namely
  $\SN_s$ is an open neighborhood of $s$ in
  $\SX$, $\pi_s:\SN_s \rightarrow s$ is a continuous retraction and
  $\rho_s:\SN_s \rightarrow [0,+\infty[$ is a continuous map such that
  $s=\rho_s^{-1}(0)$. The map $\rho_s$ is either surjective or
  the constant zero function.\\ 
  Moreover if $\SN_{s_0}\cap s_1\not= \emptyset$, then the map 
  $$
    (\pi_{s_0},\rho_{s_0}):\SN_{s_0}\cap s_1
    \rightarrow s_0 \times ]0,+\infty[
  $$
  is a smooth proper submersion.
\item For any strata $s, t$ such that $s<t$, the set
  $\pi_{t}(\SN_{s}\cap\SN_{t})$ is included in  $\SN_{s}$ and the
  equalities
  $$
    \pi_{s}\circ \pi_{t}=\pi_{s} \makebox{ and } 
    \rho_{s}\circ\pi_{t}=\rho_{s}
  $$ 
  hold on $\SN_{s}\cap \SN_{t}$.
\item For any two strata $s_0$ and $s_1$ the following two statements hold,
  $$
    s_0\cap \bar{s_1}\not= \emptyset \mbox{ if and only if }
    \SN_{s_0}\cap s_1 \not= \emptyset; 
  $$ 
  $$
    \SN_{s_0}\cap \SN_{s_1} \not= \emptyset \mbox{ if and only if }
   s_0\subset \bar{s_1},\ s_0=s_1 \mbox{ or } s_1\subset \bar{s_0}.
  $$
\end{enumerate}

\medskip \noindent A stratification gives rise to a filtration.  Indeed, if $\SX_j$ is the union of strata of dimension
$\leq j$,  then,
 $$
  \emptyset \subset \SX_0 \subset \cdots \subset \SX_n =\SX .
 $$ 
We call $n$ the dimension of $\SX$ and $\smX:=\SX\setminus
\SX_{n-1}$ the regular part of $\SX$. The strata included in
$\smX$ are called {\it regular} while strata included in
$\SX\setminus \smx$ are called {\it singular}. The set of singular (resp. regular)
strata is denoted
$\fS_{sing}$ (resp.  $\fS_{reg}$).

\begin{definition}\label{defpseudomanifold}
A {\bf stratified pseudomanifold} is a triple $(\SX,\fS,N)$ where $\SX$ is a
locally compact separable metrizable space, $(\fS,N)$ is a $C^{\infty}$-stratification
on $\SX$ and the regular part $\smx$ is a dense open subset of $\SX$.
\end{definition}

Given a stratified pseudomanifold, notice that the closure of each of its strata is also naturally a stratified pseudomanifold.  Given a manifold with fibred corners $X$, there is a simple way of extracting a stratified pseudomanifold.  Let $H_1,\ldots,H_k$ an exhaustive list of its boundary hypersurfaces and let
$\pi=(\pi_1,\ldots,\pi_k)$ be the iterated fibration structure on $X$. 
On $X$,  consider the equivalence relation
\[   x\sim y \iff x=y \; \mbox{or} \; x,y\in H_i \ \mbox{with} \ \pi_i(x)=\pi_i(y) \; \mbox{for some} \ H_i.  
\]
We denote by $\SX$ the quotient space of $X$ by the previous equivalence relation and $q:X\rightarrow \ {\SX}$ the quotient map.  By construction, the
restriction of $q$ to $X\setminus \partial X$ is a homeomorphism. We claim that $\SX$ is naturally a stratified pseudomanifold.
 Indeed, for any $i\in I$ let $\sigma_i:=\pi_i(H_i\setminus \underset{H_k<H_i}{\bigcup} H_k)$, that is $\sigma_i=S_i\setminus \partial S_i$ . Then
$\sigma_i$ is a manifold and $\fS=\{X\setminus \partial X,\ \sigma_i\}$ is a locally finite partition of $\SX$ into strata. Choose an iterated fibered
tube system  $\{\cN_i,r_i,x_i\}$ for the manifold with fibred corners $X$. On can easily check that the map $(\pi_i\circ
r_i,x_i):r_i^{-1}(H_i\setminus \underset{H_k<H_i}{\bigcup} H_k)) \rightarrow \sigma_i \times [0,\infty[$ factors through a map $\SN_i\rightarrow
\sigma_i \times [0,\infty[$ where $\SN_i$ is the image of $r_i^{-1}(H_i\setminus \underset{H_k<H_i}{\bigcup} H_k))\subset \cN_i$ in $\SX$. This
enables us to define a set of control data on $\SX$ making it a stratified pseudomanifold. We call $\SX$ the stratified pseudomanifold  {\bf associated to} the manifold with fibered corners $X$.

Conversely, to any stratified pseudomanifold, one can associate a natural manifold with corners.  To see this, we need to recall the notion of depth, which is a good measure of complexity of a stratified pseudomanifold.

\begin{definition}
Let $(\SX,\fS,N)$ be a stratified pseudomanifold.  Then the depth $d(s)$ of a stratum $s$ is the biggest
$k$ such that one can find $k$ different strata $s_0,\cdots,s_{k-1}$ such that
$$s_0<s_1<\cdots<s_{k-1}<s_k:=s .$$ 
The depth of the stratification $(\fS,N)$ of $X$ is
$$d(X):= sup \{d(s),\ s\in \fS \} .$$
A stratum whose depth is $0$ will be called minimal.
\end{definition}
\begin{remark}
This definition is consistent with the notion of depth for manifolds with corners, which constitute a particular type of stratified pseudomanifolds.    Moreover, if $X$ is a manifold with fibred corners of depth $k$, then its associated stratified pseudomanifold ${}^{\fS}X$ also has depth $k$.  Notice that different conventions for the depth are also common, see for instance \cite{Albin-Leichtnam-Mazzeo-Piazza}.       
\end{remark}

Let $(\SX,\fS,N)$ be a stratified pseudomanifold. For any singular stratum $s$, set $L_s:=\rho_s^{-1}(1)$.  From
\cite{verona}, we know there is an isomorphism between $\SN_s$ and $L_s\times[0,+\infty[ / \sim_s$ where $\sim_s$ is the equivalence relation induced by
$(x,0)\sim_s (y,0)$ when $\pi_s(x)=\pi_s(y)$. This local triviality around strata enables to make the unfolding process of \cite{BHS} (see also
\cite{Debord-Lescure} for a complete description). If $s$ is minimal, one can construct a pseudomanifold 
$$ (\SX \setminus s ) \cup L_s\times [-1,1] \cup (\SX \setminus s ) \ $$ 
using the gluing coming from the trivialization of the neighborhood $\SN_s$ of $s$.  If $M$ is the set of minimal strata of $\SX$ and $m= \bigcup_{s\in M} s$ is the union of the minimal strata of $\SX$, then one can more generally construct the double stratified pseudomanifold
$$ 2X= (\SX \setminus m ) \cup \left(\bigsqcup_{s\in M}L_s\times [-1,1]\right) \cup (\SX \setminus m ) \ $$
by gluing $L_s\times [-1,1]$ for $s\in M$ via the trivialization of the neighborhood $\SN_s$ of $s$.   Since all the minimal strata of $\SX$ are involved, notice that the depth of $2X$ is one less that the one of $\SX$.  The stratified pseudomanifold $2X$ also comes with an involution $\tau_1$ interchanging the two copies of $\SX\setminus m$ with fixed point set identified with $L_m= \bigsqcup_{s\in M} L_s\times \{0\}$.  This fixed point set is naturally a stratified pseudomanifold and come with a surjective map $L_m\to m$ induced by the retractions in each neighborhood $\SN_s$ for $s\in M$. 

If $\SX$ has depth $k$, we can repeat this process $k$ times to obtain a stratified pseudomanifold $2^k X$ of depth 0, in other words, a smooth manifold.  The manifold $2^kX$ comes with a continuous surjective map $p:2^kX\to \SX$.  At the $j$th step of this unfolding process, we get a stratified pseudomanifold $2^j X$ with an involution $\tau_j$ and a fixed point set  equipped with a surjective map as before.  This lift to $2^k X$ to give $k$ involutions $\tau_1, \ldots,\tau_k$ with $k$ fixed point sets given by smooth hypersurfaces $R_1, \ldots, R_k$ equipped with smooth fibrations on each of their connected components.  The various bases of these fibrations are simply the smooth manifolds corresponding to the unfolded strata of $\SX$.    The complement $2^kX\setminus (\bigcup_{j=1}^{k} R_j)$ consists of $2^k$ copies of $\smx$.  The closure of any one of these copies is naturally a manifold with corners $\FCX$ with boundary hypersurfaces given by parts of the hypersurfaces $R_1,\ldots, R_k$ and have corresponding induced fibrations with bases given by manifolds with corners.  These fibrations give $\FCX$ a structure of manifold with fibred corners.  We call $\FCX$ the manifold with fibered corners {\bf associated to} the pseudomanifold $\SX$.

Up to the identifications described below, the two previous operations are mutually inverse.   
Precisely, starting with
a
stratified pseudomanifold $\SX$ and letting $\FCX$ be the associated manifold with fibered corners, we have for any $x,\ y$ in $\FCX$ that   
$x\sim y$ if and only if $p(x)=p(y)$. In other words, the map $p$ factors through a homeomorphism $\FCX/_{\sim} \longrightarrow \SX$ which  
 is a diffeomorphism in restriction to each strata and with respect to the control data. Conversely, starting with a manifold with fibered corners $X$ and letting $\SX$ be its associated stratified pseudomanifold, it can be seen that $\FCX$ is isomorphic to the original manifold with fibred corners $X$ by noticing that the unfolding process described above has an analog for manifolds with fibred corners obtained by gluing along boundary hypersurfaces and with the same resulting smooth manifold $2^kX$.

\section{Vector fields on manifolds with fibred corners}\label{vfomfc}

Let $X$ be a manifold with corners with $H_1,\ldots,H_k$ an exhaustive list of the boundary hypersurfaces of $X$.  For each $i\in\{1,\ldots,k\}$, let
$x_{i}\in \CI(X)$ be a boundary defining function for $H_{i}$.  Recall that 
\[
   \cV_{b}(X):= \{ \xi\in \Gamma(TX) \; ; \; \xi x_{i}\in x_{i}\CI(X)\; \forall i \}
\] 
is the Lie algebra of $b$-vector fields.  Notice in particular that a $b$-vector field $\xi\in \cV_{b}(X)$ is necessarily tangent to all the boundary
hypersurfaces of $X$.  Suppose that $\pi=(\pi_{1},\ldots,\pi_{k})$ is an iterated fibration structure on $X$.   

\begin{definition} The space $\cV_{\fS}(X)$ of \textbf{$\fS$-vector fields} on the manifold with fibred corners $(X, \pi)$ is  
\begin{multline}
\cV_{\fS}(X):= \{ \xi \in \cV_{b}(X); \left.\xi\right|_{H_{i}} \;\mbox{is tangent to the fibres of }
   \pi_{i}: H_{i}\to S_{i}  \\
   \mbox{and}\; \xi x_{i}\in x_{i}^{2}\mathcal{C}^{\infty}(X) \ \forall i \}.
\end{multline}    
\end{definition}
\begin{remark}
This definition depends on the choice of boundary defining functions.  To lighten the discussion, we might sometime not mention explicitly the choice of boundary defining functions, but the use of $\fS$-vector fields and related concepts always presuppose such a choice has been made to start with.  Moreover, we will usually assume the boundary defining functions are induced by an iterated fibred tube system.    
\end{remark}
As can be seen directly from the definition, $\cV_{\fS}(X)$ is a $\CI(X)$ module and is closed under the Lie bracket.  It is therefore a Lie
subalgebra of $\Gamma(TX)$.  
More generally,  the space $\Diff^{k}_{\fS}(X)$ of \textbf{$\fS$-differential operators of order $k$} is the space of operators on $\CI(X)$ generated
by $\CI(X)$ and product of up to $m$ elements of $\cV_{\fS}(X)$.   

Away from the boundary, a $\fS$-vector field is just like a usual vector field in $\CI(X;TX)$.  On the other hand, near a point $p\in \pa X$,  it is
useful to introduce a system of coordinates in which $\fS$-vector fields admits a simple description.  To this end,  let $H_{i_{1}}, \ldots,
H_{i_{\ell}}$ be the boundary hypersurfaces of $X$ containing the point
$p\in \pa X$.  After relabelling if necessary, we can assume $H_{1}, \ldots, H_{\ell}$ are the  boundary hypersurfaces containing $p$ and that 
\begin{equation}
   H_{1}< H_{2}<\cdots <H_{\ell}.  
\label{tdiff.3}\end{equation}  
Let $x_{i}\in \CI(X)$ be the chosen boundary defining function for $H_{i}$.  Consider a small neighborhood of $p$ where for each
$i\in\{1,\ldots,\ell\}$, the fibration $\pi_{i}: H_{i}\to S_{i}$ restricts to be trivial.  Consider then tuples of functions $y_{i}= (y^{1}_{i},
\ldots, y^{k_{i}}_{i})$ for $1\le i\le \ell$ and $z=(z^{1},\ldots, x^{q})$ such that 
\begin{equation}
(x_{1}, y_{1}, \ldots, x_{\ell},  y_{\ell}, z)
\label{coord.1}\end{equation}
form coordinates near $p$ with the property that on $H_{i}$,  $(x_{1},\ldots,x_{i-1}, y_{1}, \ldots, y_{i})$ induce coordinates on the base
$S_{i}$ of the fibration $\pi_{i}: H_{i}\to S_{i}$ such that $\pi_{i}$ corresponds to the map
\begin{equation}
 (x_{1}, \ldots, \widehat{x}_{i},\ldots x_{\ell}, y,z) \to 
        (x_{1}, \ldots, x_{i-1},y_{1},\ldots,  y_{i}),
\label{tdiff.3b}\end{equation}
where the $\;\widehat{ }\;$ notation above the variable $x_{i}$ means it is omitted. 
Thus, the coordinates $(x_{i+1},\ldots, x_{\ell}, y_{i+1},\ldots,y_{\ell},z)$ restrict to give coordinates on the fibres of the fibration $\pi_{i}$.  
    With such coordinates, the Lie algebra $\cV_{\fS}(X)$ is locally spanned over $\CI(X)$ by the vector fields
\begin{equation}
  \frac{\pa}{\pa z^{j}},  \; w_{i}x_{i} \frac{\pa}{\pa x_{i}},  \; w_{i}\frac{\pa}{\pa y_{i}^{n_{i}}},   
\label{tdiff.4a}\end{equation}   
for $ j\in\{1,\ldots,q\}, \; i\in \{1,\ldots,\ell\}, \;  n_{i}\in \{1,\ldots, k_{i} \},$ where $w_{i}= \prod_{m=i}^{\ell}x_{m}$.  Thus, in these
coordinates,  a $\fS$-vector field $\xi\in \cV_{\fS}(X)$ is of the form 
\begin{equation}
\xi = \sum_{i=1}^{\ell} a_{i} x_{i}w_{i}\frac{\pa}{\pa x_{i}}  +\sum_{i=1}^{\ell} \sum_{j=1}^{k_{i}} b_{ij}w_{i}\frac{\pa}{\pa y_{i}^{j}} +
\sum_{m=1}^{q} c_{m} \frac{\pa}{\pa z^m},
\label{tdiff.4b}\end{equation}
with $a_{i}, b_{ij}, c_{m}\in \CI(X)$. \\


Since $\cV_{\fS}(X)$ is a $\CI(M)$-module, there exists a smooth
vector bundle ${}^{\pi}TX\to X$ and a natural map $\iota_{\pi}:{}^{\pi}TX\to
TX$ which restricts to an isomorphism  on $X\setminus \pa X$ such that 
\begin{equation}
     \cV_{\fS}(X)=  \iota_{\pi} \CI(X;{}^{\pi}TX).
\label{pts.1}\end{equation}
At a point $p\in X$, the fibre of ${}^{\pi}TX$ above $p$ can be defined by
\begin{equation}
   {}^{\pi}T_{p}X= \cV_{\fS}/ \mathcal{I}_{p}\cdot \cV_{\fS} ,
\label{pts.2}\end{equation}
where $\mathcal{I}_{p}\subset \CI(X)$ is the ideal of smooth functions 
vanishing at $p$.  Although the map $\iota_{\pi}: {}^{\pi}TX\to TX$ fails to be  an isomorphism of vector bundles, notice that ${}^{\pi}TX$ is nevertheless isomorphic to $TX$, but not in a natural way.

Unless the hypersurface $H_i$ has no boundary, notice that the kernel of the natural map $\left.{}^{\pi}TX\right|_{H_i}\to \left. TX\right|_{H_i}$ does not form a vector bundle over $H_i$.  To obtain a vector bundle on $H_i$, we need to introduce an intermediate vector bundle in between ${}^{\pi}TX$ and $TX$.  Let 
\[
      X_i= X\cup_{H_i} X
\]
be the manifold with corners obtained by gluing two copies of $X$ along the boundary hypersurface $H_i$.  The manifold $X_i$ naturally has an iterated fibration structure induced from the one of $X$.  Hoping this will lead to no confusion, we will also denote this iterated fibration by $\pi$.  We then have a corresponding Lie algebra $\cV_{\fS}(X_i)$ of $\fS$-vector fields as well as an associated $\fS$-tangent vector bundle ${}^{\pi}TX_i$.  Consider then the restriction of this vector bundle to one of the two copies of $X$ inside $X_i$,
\[
       {}^{\pi \setminus \pi_i}TX=  \left.{}^{\pi}TX_i\right|_{X}.
\]
Away from $H_i$, the vector bundle ${}^{\pi\setminus\pi_i}TX$ is canonically isomorphic to ${}^{\pi}TX$.  However, seen as a subspace $\CI(X;TX)$, the space of sections $\CI(X;{}^{\pi\setminus\pi_i}TX)$ is slightly bigger than $\cV_{\fS}(X)$.  We have in fact the following natural sequence of maps
\[
\xymatrix{
     {}^{\pi}TX \ar[r]  & {}^{\pi\setminus \pi_i}TX \ar[r]^{a_{\pi\setminus\pi_i}} & TX.
}
\]
When restricted to the boundary hypersurface $H_i$, the first map ${}^{\pi}TX \to {}^{\pi\setminus \pi_i}TX$ 
is such that its kernel ${}^{\pi}NH_i$ is naturally a vector bundle over $H_i$.  This vector bundle is the pullback of a vector bundle on $S_i$.  To see this, set 
\[
      {}^{\pi}TH_i := \{  \xi\in \left.{}^{\pi\setminus \pi_i}TX \right|_{H_i} \; ; \;  a_{\pi\setminus\pi_i}(\xi)\in TH_i\}.
\]
The fibration $\pi_i: H_i\to S_i$ induces the short exact sequence
\[
\xymatrix{
0 \ar[r] & {}^{\pi}T(H_i\setminus S_i) \ar[r] & {}^{\pi}TH_i \ar[r]^{(\pi_i)_*} &
  \pi_i^{*}{}^{\pi}TS_i \ar[r] & 0,
}
\]
where ${}^{\pi}TS_i$ is the $\fS$-tangent bundle of $S_i$ and ${}^{\pi}T(H_i\setminus S_i)$ is such that its restriction to each fibre $F_i$ of 
the fibration $\pi_i$ is the $\fS$-tangent bundle ${}^{\pi}TF_i$ of that fibre.  In particular,   this induces a canonical identification $\pi_{i}^{*}{}^{\pi}TS_{i} = {}^{\pi}TH_{i} / {}^{\pi}T(H_{i}/S_{i})$.  
Now, using the vector field $x_{i}^{2}\frac{\pa}{\pa x_{i}}$ induced by a tube system for $H_{i}$, we have a natural decomposition
\begin{equation}
   {}^{\pi}N H_{i} \cong \left(  {}^{\pi}TH_{i}/ {}^{\pi}T(H_{i}/ S_{i})\right) \times \bbR.
\label{nor.1}\end{equation}
This means a tube system for $H_{i}$ induces an isomorphism of vector
bundles 
\begin{equation}
    {}^{\pi}NH_{i} \cong \pi_{i}^{*} {}^{\pi}NS_{i}
\label{nor.2}\end{equation}
where ${}^{\pi}NS_{i}= {}^{\pi}TS_{i}\times \bbR$, and thus, a corresponding fibration
\begin{equation}
\xymatrix{  F_{i} \ar@{-}[r] &   {}^{\pi}NH_{i} \ar[d]^{(\pi_{i})_{*}}\\
                & {}^{\pi}NS_{i}.
}\label{nor.3}\end{equation}

An \textbf{iterated fibred corner metric}   (or \textbf{$\fS$-metric}) is a choice of metric $g_{\pi}$ for the vector bundle ${}^{\pi}TX$.  Via the map $\iota_{\pi}: {}^{\pi}TX\to TX$, it
restricts to give a complete Riemannian metric on $X\setminus \pa X$.  In the local coordinates \eqref{coord.1},  a special example of such a metric
would be
\begin{equation}
  g_{\pi}=  \sum_{i=1}^{\ell}   \frac{dx_{i}^{2}}{(x_{i}w_{i})^{2}} +  \sum_{i=1}^{\ell} \sum_{j=1}^{k_{i}} \frac{ (dy_{i}^{j})^{2}}{ w_{i}^{2}} +
\sum_{m=1}^{q} dz_{m}^{2}. 
\label{metric.2}\end{equation} 
The Laplacian associated to a $\fS$-metric is an example of $\fS$-differential operator of order $2$.  Similarly, if ${}^{\pi}TX\to X$ has a spin
structure, then the corresponding Dirac operator associated to a $\fS$-metric is a $\fS$-differential operator of order $1$.  

The \textbf{$\fS$-density bundle} associated to a manifold with fibred corners is the smooth real line bundle ${}^{\pi}\Omega$ with fibre above $p\in
X$ given by
\begin{equation}
   {}^{\pi}\Omega_{p}= \{ u: \Lambda^{\dim X}({}^{\pi}T_{p}X)\to \bbR \; ; \; 
                  u(t\omega)= |t| u(\omega),  \;\; \forall \omega\in \Lambda^{\dim X}({}^{\pi}T_{p}X), \; t\ne 0\}.
\label{density.1}\end{equation}    
A \textbf{$\fS$-density} is  an element of $\CI(X;{}^{\pi}\Omega)$.  In particular, the volume form of a $\fS$-metric is naturally a $\fS$-density.   
Via the map $\iota_{\pi}: {}^{\pi}TX\to TX$, a $\fS$-density restricts to give a density on the interior of $X$.  Let $\nu\in \CI(X;\Omega)$ be a
non-vanishing density on $X$, where $\Omega=\Omega(TX)$ is the density bundle associated to $TX$.   On $X\setminus \pa X$, a $\fS$-density $\nu_{\pi}$
can be written in terms of $\nu$ as
\begin{equation}
   \nu_{\pi} =   \left(\prod_{i=1}^{k} x_{i}^{2+\dim S_{i}} \right)^{-1}  h\nu , \quad \mbox{for some} \; h\in \CI(X).     
\label{density.2}\end{equation}   

As indicated in the introduction, $\fS$-metrics are conformally related to another type of metrics geometrically encoding the singularities of the stratified pseudomanifold.

\begin{definition}
On a manifold with fibred corners $(X,\pi)$ with a boundary defining function $x_H$ specified for each boundary hypersurface $H\in M_1 X$, an \textbf{iterated fibred cusp metric} $g_{\ifc}$ is a metric of the form
\[
        g_{\ifc}= x^2 g_{\pi},  \quad x= \prod_{H\in M_1X} x_H,
\]
where $g_{\pi}$ is a $\fS$-metric.   
\label{fc.1}\end{definition}

\section{The definition of $\fS$-pseudodifferential operators}\label{def.S.op}

Let $X$ be a manifold with fibred corners.  Let $H_{1},\ldots, H_{k}$ be an exhaustive list of its boundary hypersurfaces with
$x_{1},\ldots,x_{k}\in \CI(X)$ a choice of  corresponding boundary defining functions and 
$\pi_{i}: H_{i}\to S_{i}$ the corresponding fibrations.  
Consider the Cartesian product $X^{2}=X\times X$ with the projections $\pr_{R}: X\times X\to X$ 
and $\pr_{L}: X\times X \to X$ on the right and left factors respectively.  Then 
$x_{i}' := \pr_{R}^{*}x_{i}$ and $x_{i}:= \pr_{L}^{*} x_{i}$ are boundary defining functions
for $X\times H_{i}$ and $H_{i}\times X$ respectively.  The $b$-double space 
$X^{2}_{b}$ is the space obtained from $X^{2}$ by blowing up the p-submanifolds 
$H_{1}\times H_{1}, \ldots, H_{k}\times H_{k}$,
\begin{equation}
  X^{2}_{b}:= [X^{2}; H_{1}\times H_{1};\ldots; H_{k}\times H_{k}],
\label{ds.1}\end{equation}
with blow-down map $\beta_{b}: X^{2}_{b}\to X^{2}$.  Near $H_{i}\times H_{i}$, this amounts
to the introduction of polar coordinates
\[
     r_{i}:= \sqrt{ x_{i}^{2}+ (x_{i}')^{2}}, \quad \omega_{i}= \frac{x_{i}}{r_{i}}, \;
                   \omega_{i}'= \frac{x_{i}'}{r_{i}},
\]
where $r_{i}$ is a boundary defining function for the `new' hypersurface 
\[
     B_{i}:= \beta_{b}^{-1}(H_{i}\times H_{i}) \subset X^{2}_{b}
\] 
introduced by the blow-up, while near $B_{i}$, the functions
$\omega_{i}$ and $\omega_{i}'$ are boundary defining functions for the lifts of the `old' boundary hypersurfaces.  Notice that since $H_{i}\times
H_{i}$ and $H_{j}\times H_{j}$ are transversal
as $p$-submanifolds for $i\ne j$, the diffeomorphism class of $X^{2}_{b}$ stays the same if we change the order in which we blow up (\cf Proposition
5.8.2 in \cite{MelroseMWC} or \cite[p.21]{Mazzeo-MelroseETA}).  

To define the $\pi$-double space, consider the fibre diagonal on the $p$-submanifold $H_{i}\times
H_{i}$,
\begin{equation}
  D_{\pi_{i}}= \{ (h,h') \in H_{i}\times H_{i}\; ; \;  \pi_{i}(h)= \pi_{i}(h')\}.
\label{ds.2}\end{equation}    
To lift this $p$-submanifold to the front face $B_{i}$, notice that 
\[
      B_{i}= SN^{+}(H_{i}\times H_{i})
\] 
is by definition a quarter of circle bundle on $H_{i}\times H_{i}$ giving a canonical decomposition
\begin{equation}
    B_{i}= (H_{i}\times H_{i})\times [-1,1]_{s_{i}}, \ \mbox{with} \, s_{i}:= \omega_{i}-\omega_{i}'.
\label{ds.3}\end{equation}
Thus, we can define a lift of $D_{\pi_{i}}$ to $B_{i}$ by 
\begin{equation}
\Delta_{i}:= \{ (h,h',0)\in H_{i}\times H_{i}\times [-1,1]_{s_{i}} \; ; \; 
   \pi_{i}(h)= \pi_{i}(h') \}.
\label{ds.4}\end{equation}

The space $\Delta_{i}$ is a $p$-submanifold of $B_{i}$ and $X^{2}_{b}$.  To obtain the
$\pi$-double space, it suffices to blow up $\Delta_{i}$ in $X^{2}_{b}$ for $i\in \{1,\ldots,k\}$.
As opposed to the definition of $X^{2}_{b}$, the order in which the blow-ups are performed is important,
different orders leading to different diffeomorphism classes of manifolds with corners.  Fortunately, our assumptions on the partial order of
hypersurfaces of $X$ give us a systematic way to proceed.

More precisely, assume that the hypersurfaces of $X$ are labeled in such a way that
\begin{equation}
   i<j  \; \Longrightarrow  \; H_{i} < H_{j} \; \mbox{or} \; H_{i}\cap H_{j}=
   \emptyset .
\label{label.1}\end{equation}
With this convention, we define the $\pi$-double space by 
\begin{equation}
     X^{2}_{\pi}:= [ X^{2}_{b}; \Delta_{1};\ldots ; \Delta_{k} ].
\label{ds.5}\end{equation}
See Figure~\ref{pidouble} for a picture of $X^{2}_{\pi}$ when $X$ is a manifold with boundary.  Notice that the order in which we blow up is not completely determined by \eqref{label.1}, but a different choice of ordering satisfying
\eqref{label.1}  would  amount in commuting the
blow-ups of $p$-submanifolds which do not intersect, an operation which does not affect the diffeomorphism class of $X^{2}_{\pi}$.

\begin{figure}
\setlength{\unitlength}{0.7cm}
\begin{picture}(6,6)
\thicklines
\qbezier(0,3)(0.78,3)(1.5,2.6)
\qbezier(3,0)(3,0.78)(2.6,1.5)
\qbezier(1.5,2.6)(2.05,3.15)(2.6,2.6)
\qbezier(2.6,1.5)(3.15,2.05)(2.6,2.6)
\put(2.1,2.2){$\ff_{\pi}$}

\put(0,3){\vector(0,1){3}}
\put(-0.4,5.8){$x$}

\put(3,0){\vector(1,0){3}}
\put(6,-0.4){$x'$}

\thinlines
\put(2.6,2.6){\line(1,1){0.53}}
\put(3.4,3.4){\line(1,1){0.53}}
\put(4.2,4.2){\line(1,1){0.53}}
\put(5,5){\line(1,1){0.53}}
\put(4.4,3.8){$\Delta_{\pi}$}

\end{picture}
\caption{The $\pi$-double space}\label{pidouble}
\end{figure}
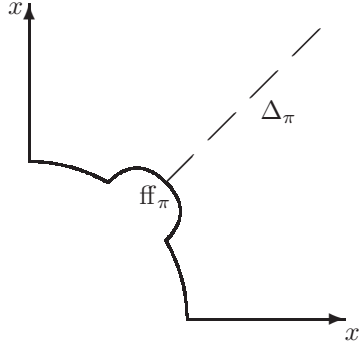

We have corresponding blow-down maps 
\begin{equation}
  \beta_{\pi-b}:X^{2}_{\pi}\to X^{2}_{b}, \quad \beta_{\pi}:= \beta_{b}\circ \beta_{\pi-b}: X^{2}_{\pi}\to X^{2}.
\label{ds.6}\end{equation} 
We denote the `new' hypersurface introduced by blowing up $\Delta_{i}$ by 
\begin{equation}
  \ff_{\pi_{i}}:= (\beta_{\pi-b})^{-1}(\Delta_{i}) \subset X^{2}_{\pi} .
\label{ds.7}\end{equation}
The $p$-submanifold
$\ff_{\pi_{i}}$ is called the \textbf{front face} associated to the boundary hypersurface $H_{i}$.   
  Let also 
  \[
  \Delta_{\pi}:= \overline{\beta_{\pi}^{-1}(
\overset{\circ}{\Delta}_{X})}
\]
denote  the lift of the diagonal $\Delta_{X}\subset X\times X$ to
$X^{2}_{\pi}$, where $\overset{\circ}{\Delta}_{X}$ is the interior of $\Delta_{X}$.   On $X^{2}_{\pi}$, we are particularly interested in the lift of
the Lie algebra $\cV_{\fS}(X)$ with respect to the natural maps
\begin{equation}
  \pi_{L}= \pr_{L}\circ \beta_{\pi}: X^{2}_{\pi}\to X, \quad \pi_{R}= \pr_{R}\circ \beta_{\pi}: X^{2}_{\pi}\to X.
\label{diff.1}\end{equation} 
\begin{lemma}
The lifted diagonal $\Delta_{\pi}$ is a $p$-submanifold of $X^{2}_{\pi}$.  Furthermore, the lifts to $X^{2}_{\pi}$ of the Lie algebra $\cV_{\fS}(X)$
via the maps $\pi_{L}$ and $\pi_{R}$ are transversal to the lifted diagonal $\Delta_{\pi}$.  
\label{diff.2}\end{lemma}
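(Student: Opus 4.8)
The plan is to reduce both assertions to a computation in the local coordinates of \eqref{coord.1}, since being a p-submanifold and being transversal to it are local conditions, and away from $\partial X$ everything is classical. Fix a point $p$ of $X$ and, after relabelling, let $H_1<\cdots<H_\ell$ be the boundary hypersurfaces through $p$; use coordinates $(x_1,y_1,\ldots,x_\ell,y_\ell,z)$ as in \eqref{coord.1}, with $w_i=\prod_{m=i}^\ell x_m$, and write $(x_i,y_i,z)$ and $(x_i',y_i',z')$ for the two liftings of these coordinates to $X^2$. We work near the diagonal over $p$. There the fibre diagonals $\Delta_j$ with $p\notin H_j$ are irrelevant (a neighbourhood of the diagonal point over $p$ is disjoint from $B_j$), and \eqref{label.1} forces $\Delta_1,\ldots,\Delta_\ell$ to be blown up in this order.

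The first task is to follow the blow-ups near this part of the diagonal. After the $b$-blow-ups, near the diagonal only the new hypersurfaces $B_i$ survive, with boundary defining function $x_i$, and $S_i:=(x_i'-x_i)/x_i$ completes a coordinate system; the lift of $\Delta_X$ is $\{S_i=0,\, y_i=y_i',\, z=z'\}$, and it satisfies $(\text{lift of }\Delta_X)\cap\{x_i=0\}\subset\Delta_i$ for each $i$. Blowing up $\Delta_1,\ldots,\Delta_\ell$ in turn, one checks by induction that after the first $i-1$ blow-ups the proper transform of $\Delta_i$ is cut out, near the diagonal, by $x_i=0$ together with the (successively rescaled) equations $S_1=\cdots=S_i=0$ and $y_1=y_1',\ldots,y_i=y_i'$, that the lifted diagonal stays inside the current centre, and that $x_i$ is a boundary defining function at each stage. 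The outcome is that near $\Delta_\pi$ the space $X^2_\pi$ carries coordinates
\[
   (x_1,\ldots,x_\ell,\, y_1,\ldots,y_\ell,\, z,\, \Sigma_1,\ldots,\Sigma_\ell,\, U_1,\ldots,U_\ell,\, Z),
\]
where $\Sigma_i:=(x_i'-x_i)/(x_iw_i)$, $U_i:=(y_i-y_i')/w_i$ and $Z:=z'-z$; here $x_i$ is a boundary defining function for $\ff_{\pi_i}$, and $\Delta_\pi=\{\Sigma_i=0,\, U_i=0,\, Z=0\}$. Since $\Delta_\pi$ is cut out by the vanishing of coordinates none of which is a boundary defining function, it is a p-submanifold, which is the first assertion.

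For the transversality, note first that each $V\in\cV_\fS(X)$ has a unique lift to a smooth vector field on $X^2_\pi$ projecting to $V$ under $(\pi_L)_*$, obtained by lifting $V\otimes 1$ through $\beta_\pi$ (and symmetrically via $\pi_R$): indeed $V$ is tangent to every blow-up centre — to $H_i\times H_i$ because $V$ is a $b$-vector field, and to $\Delta_i$ because $V|_{H_i}$ is tangent to the fibres of $\pi_i$ — so the lifting criterion for blow-ups applies. In the coordinates above one then computes that the local generators \eqref{tdiff.4a} lift via $\pi_L$ as follows: $\partial_{z^j}\mapsto\partial_{z^j}-\partial_{Z^j}$, $w_i\partial_{y_i^n}\mapsto w_i\partial_{y_i^n}-\partial_{U_i^n}$, and $w_ix_i\partial_{x_i}$ maps to a smooth vector field whose restriction to $\Delta_\pi$ equals $w_ix_i\partial_{x_i}-\partial_{\Sigma_i}$ — the apparent negative powers of $x_i$ in this last lift cancelling exactly because $\xi x_i\in x_i^2\CI(X)$ for $\xi\in\cV_\fS(X)$. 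Restricting to $\Delta_\pi$ and reducing modulo $T\Delta_\pi=\operatorname{span}(\partial_{x_i},\partial_{y_i^n},\partial_{z^j})$, these lifts become $-\partial_{Z^j}$, $-\partial_{U_i^n}$, $-\partial_{\Sigma_i}$, which span $TX^2_\pi|_{\Delta_\pi}/T\Delta_\pi$. Hence the lifted $\fS$-vector fields are transversal to $\Delta_\pi$; the argument for $\pi_R$ is identical up to the obvious left–right symmetry.

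I expect the main obstacle to be the blow-up bookkeeping of the second paragraph — the inductive description of the successive proper transforms of the $\Delta_i$, and the choice of the coordinate chart that actually contains $\Delta_\pi$ — together with the closely related point that the lifts are genuinely smooth on $X^2_\pi$, which is precisely where the two defining conditions of $\cV_\fS(X)$ (tangency to the $\pi_i$-fibres and the extra order of vanishing $\xi x_i\in x_i^2\CI(X)$) are both used. Once the coordinates are in place, the transversality itself is a one-line linear-algebra check.
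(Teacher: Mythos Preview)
Your proof is correct and follows the same local-coordinate strategy as the paper. The only difference is cosmetic: the paper takes the right-factor functions $x_i'$ as boundary defining functions on $X^2_\pi$ (with $S_i=(x_i-x_i')/(x_i'w_i')$, $Y_i=(y_i-y_i')/w_i'$, and base coordinates $r_i=x_i',\,y_i',\,z,\,z'$), so that the $\pi_L$-lifts of the generators come out directly as $\partial_{z^j}$, $\sigma_i(S_iw_i'+1)\partial_{S_i}$, $\sigma_i\partial_{Y_i^{n_i}}$ with no tangential residue to mod out --- slightly cleaner bookkeeping than your left-factor choice, but the same argument (and note a harmless sign slip: with your $U_i=(y_i-y_i')/w_i$ the lift of $w_i\partial_{y_i^n}$ is $w_i\partial_{y_i^n}+\partial_{U_i^n}$, not minus).
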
 
\begin{proof}
The result is trivial in the interior of $X^{2}_{\pi}$.  Thus, let $p\in \Delta_{\pi}\cap \pa X^{2}_{\pi}$ be given.  We need to show that the lemma
holds in a neighborhood of $p$ in $X^{2}_{\pi}$.  Moreover, by symmetry, we only have to prove the result for the lift of $\cV_{\fS}(X)$ with respect
to the map $\pi_{L}$.    Let $H_{i_{1}}, \ldots, H_{i_{\ell}}$ be the boundary hypersurfaces of $X$ containing the point
$\pi_{L}(p)\in \pa X$.  After relabelling if necessary, we can assume $H_{1}, \ldots, H_{\ell}$ are the hypersurfaces containing $\pi_{L}(p)$ and
that 
\begin{equation}
   H_{1}< H_{2}<\cdots <H_{\ell}.  
\label{diff.3}\end{equation}  
Near $\pi_{L}(p)\in \pa X$, let $(x_{i}, y_{i}, z)$ be coordinates as in \eqref{coord.1}.  Recall that in such coordinates, 
the Lie algebra $\cV_{\fS}(X)$ is locally spanned over $\CI(X)$ by the vector fields
\begin{equation}
  \frac{\pa}{\pa z^{j}},  \; w_{i}x_{i} \frac{\pa}{\pa x_{i}},  \; w_{i}\frac{\pa}{\pa y_{i}^{n_{i}}},   
\label{diff.4}\end{equation}   
for $ j\in\{1,\ldots,q\}, \; i\in \{1,\ldots,\ell\}, \;  n_{i}\in \{1,\ldots, k_{i} \},$ where $w_{i}= \prod_{m=i}^{\ell}x_{m}$.  

On $X^{2}$, we can then consider the coordinates 
\begin{equation}
x_{i},y_{i},z, x_{i}',y_{i}',z',
\label{diff.4b}\end{equation}
 where $(x_{i}, y_{i},z)$ is seen as the pullback of our coordinates from the left factor of $X^{2}$, while $(x_{i}', y_{i}',z')$ is the pullback of
our coordinates from the right factor of $X^{2}$.  On the $b$-double space $X^{2}_{b}$, we can then consider  the local coordinates
\begin{equation}
    r_{i}= x_{i}',  \; s_{i}= \frac{x_{i}-x_{i}'}{x_{i}'}, \; y_{i}, \; y_{i}', \;z,\; z',
\label{diff.5}\end{equation}      
In these coordinates, we have that
\[
         x_{i}= r_{i}(s_{i}+1), \quad w_{i}= w_{i}' \sigma_{i},  \quad \mbox{where} \;  w_{i}'= \prod_{j=i}^{\ell} r_{j}, \quad \sigma_{i}=
\prod_{j=i}^{\ell} (s_{j}+1).
\]
Thus, under the map $pr_{L}\circ \beta_{b}$, the vector fields of \eqref{diff.4} lift to
\begin{equation}
   \frac{\pa}{\pa z^{j}},  \; w_{i}'\sigma_{i}(s_{i}+1) \frac{\pa}{\pa s_{i}},  \; w_{i}'\sigma_{i}\frac{\pa}{\pa y_{i}^{n_{i}}}.   
   \label{diff.6}\end{equation}
Finally, on $X^{2}_{\pi}$, we can consider the local coordinates near $p$ given by
\begin{equation}
r_{i}= x_{i}',  \; S_{i}= \frac{s_{i}}{w_{i}'}, \; Y_{i}= \frac{y_{i}-y_{i}'}{w_{i}'}, \; y_{i}', \;z,\; z'.
\label{diff.7}\end{equation}
In these coordinates, the lifted diagonal $\Delta_{\pi}$ is given by the subset 
$\{S_{i}=0, Y_{i}=0, z=z'\}$.  In particular, this shows it is a $p$-submanifold.  From \eqref{diff.6}, we also see that under the map $\pi_{L}$,  the
vector fields of \eqref{diff.4} lift  to 
\begin{equation}
    \frac{\pa}{\pa z^{j}}, \; \sigma_{i}(S_{i}w_{i}'+1) \frac{\pa}{\pa S_{i}},  \;\sigma_{i}\frac{\pa}{\pa Y_{i}^{n_{i}}}, \quad \mbox{where}
\; \sigma_{i}= \prod_{j=i}^{\ell} (S_{j}w_{j}'+1).   
\label{diff.8}\end{equation}
These vector fields are clearly transversal to the lifted diagonal 
$\Delta_{\pi}= \{ S_{i}=0, Y_{i}=0, z=z'\}$, which completes the proof.

\end{proof}

\begin{corollary}
The natural diffeomorphism $\Delta_{\pi} \cong X$ induced by the map $\pi_{L}$ (or alternatively by the map  $\pi_{R}$) is covered by natural
identifications
\[
       N\Delta_{\pi} \cong {}^{\pi}TX,  \quad N^{*}\Delta_{\pi}\cong {}^{\pi}T^{*}X,
\] 
where $N\Delta_{\pi}$ is the normal bundle of $\Delta_{\pi}$ in $X^{2}_{\pi}$.
\label{ds.9}\end{corollary}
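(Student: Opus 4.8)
My plan is to deduce this from Lemma~\ref{diff.2} together with the description \eqref{pts.1}--\eqref{pts.2} of the bundle ${}^{\pi}TX$.

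First I would observe that, since $\pi_{L}$ restricts to the Cartesian projection $\pr_{L}$ over the dense interior of $X^{2}_{\pi}$, every $\xi\in\cV_{\fS}(X)$ has \emph{at most one} lift to a smooth vector field $\widetilde{\xi}_{L}$ on $X^{2}_{\pi}$, \ie a vector field with $\widetilde{\xi}_{L}(\pi_{L}^{*}f)=\pi_{L}^{*}(\xi f)$ for all $f\in\CI(X)$; the existence of such a smooth lift is exactly what the coordinate computation \eqref{diff.8} in the proof of Lemma~\ref{diff.2} provides. Since $\pi_{L}$ also restricts to a diffeomorphism $\Delta_{\pi}\xrightarrow{\ \sim\ }X$ (it does so over the interior, and $\Delta_{\pi}$ is by definition the closure of the interior diagonal), I would then define
\[
   \Phi_{L}\colon\ \cV_{\fS}(X)\longrightarrow \Gamma(\Delta_{\pi};N\Delta_{\pi}),\qquad
   \Phi_{L}(\xi)= \left.\widetilde{\xi}_{L}\right|_{\Delta_{\pi}}\bmod T\Delta_{\pi},
\]
where $N\Delta_{\pi}=\left.TX^{2}_{\pi}\right|_{\Delta_{\pi}}/T\Delta_{\pi}$ is the normal bundle.

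Next I would check that $\Phi_{L}$ is $\CI(X)$-linear: for $f\in\CI(X)$ the vector field $(\pi_{L}^{*}f)\,\widetilde{\xi}_{L}$ is the unique lift of $f\xi$, and $\left.\pi_{L}^{*}f\right|_{\Delta_{\pi}}$ corresponds to $f$ under the identification $\Delta_{\pi}\cong X$, so $\Phi_{L}(f\xi)=f\,\Phi_{L}(\xi)$. In view of \eqref{pts.1}--\eqref{pts.2}, a $\CI(X)$-linear map from $\cV_{\fS}(X)=\iota_{\pi}\CI(X;{}^{\pi}TX)$ to the sections of a vector bundle over $\Delta_{\pi}\cong X$ is induced by a unique vector bundle morphism, so $\Phi_{L}$ comes from a bundle map $\phi_{L}\colon{}^{\pi}TX\to N\Delta_{\pi}$ covering $(\pi_{L}|_{\Delta_{\pi}})^{-1}$. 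Finally I would invoke transversality: Lemma~\ref{diff.2} says precisely that at each point of $\Delta_{\pi}$ the vectors $\widetilde{\xi}_{L}$, $\xi\in\cV_{\fS}(X)$, span a complement of $T\Delta_{\pi}$, \ie $\phi_{L}$ is fibrewise onto; since $\operatorname{rank} {}^{\pi}TX=\dim X=\dim X^{2}_{\pi}-\dim\Delta_{\pi}=\operatorname{rank} N\Delta_{\pi}$, it follows that $\phi_{L}$ is an isomorphism. (Concretely, by \eqref{diff.8} and the fact that the factors $\sigma_{i}$ and $S_{i}w_{i}'+1$ equal $1$ on $\Delta_{\pi}=\{S_{i}=0,Y_{i}=0,z=z'\}$, the map $\phi_{L}$ sends the local frame \eqref{tdiff.4a} of ${}^{\pi}TX$ to the frame $\pa/\pa S_{i}$, $\pa/\pa Y_{i}^{n_{i}}$, $\pa/\pa z^{j}$ of $N\Delta_{\pi}$.) Running the same argument with $\pi_{R}$ in place of $\pi_{L}$ produces the identification via $\pi_{R}$; one notes in passing that $\pi_{L}$ and $\pi_{R}$ restrict to the same diffeomorphism $\Delta_{\pi}\cong X$ and that the two resulting identifications of $N\Delta_{\pi}$ with ${}^{\pi}TX$ differ only by an overall sign. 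The dual statement $N^{*}\Delta_{\pi}\cong{}^{\pi}T^{*}X$ then follows by taking inverse transposes, $N^{*}\Delta_{\pi}$ being canonically dual to $N\Delta_{\pi}$ and ${}^{\pi}T^{*}X$ to ${}^{\pi}TX$.

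The only step that is not pure formalism is the existence and smoothness of the lifts $\widetilde{\xi}_{L}$ across the front faces $\ff_{\pi_{i}}$, and this has already been carried out inside the proof of Lemma~\ref{diff.2}; everything else is linear algebra plus the rank count, so I do not anticipate a genuine obstacle.
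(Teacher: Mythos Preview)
Your argument is correct and is exactly the natural elaboration of what the paper leaves implicit: the corollary is stated without proof immediately after Lemma~\ref{diff.2}, and the intended justification is precisely that the $\pi_{L}$-lifts of $\cV_{\fS}(X)$ are transversal to $\Delta_{\pi}$, together with the $\CI(X)$-module description \eqref{pts.1}--\eqref{pts.2} of ${}^{\pi}TX$ and the obvious rank count. One small remark: the proof of Lemma~\ref{diff.2} (and Remark~\ref{diff.9}) only guarantees smoothness of the lifts $\widetilde{\xi}_{L}$ \emph{near} $\Delta_{\pi}$, not on all of $X^{2}_{\pi}$, but since you only use their restriction to $\Delta_{\pi}$ this is harmless.
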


\begin{remark}
It can also be proved that the Lie algebra $\cV_{\fS}(X)$ lifts via $\pi_{L}$ or $\pi_{R}$ to give a Lie subalgebra of $\cV_{b}(X^{2}_{\pi})$.  Near
$\Delta_{\pi}$, this follows from the local description \eqref{diff.8}.  Since we do not need this fact elsewhere on $X^{2}_{\pi}$, we omit the proof.
 
\label{diff.9}\end{remark}

On the $\pi$-double space, the Schwartz kernels of $\fS$-differential operators admit a simple description.  Let us first describe the Schwartz kernel
of the identity operator.  In the local coordinates  \eqref{diff.4b} near  $\Delta_{X}\cap(\pa X\times \pa X)$ , the Schwartz kernel of the identity
operator can be written as
\begin{equation}
\begin{aligned}
  K_{\Id} &=  \left(\prod_{i=1}^{\ell} \delta(x_{i}-x_{i}') \delta(y_{i}-y_{i}')dx_{i}dy_{i}' \right) \delta(z-z')dz' \\
              &=  \delta(x-x')\delta(y-y')\delta(z-z') dx' dy'dz',  
\end{aligned}  
\label{diff.10}\end{equation}
where in the second line we suppressed the subscripts to lighten the notation.  Using the coordinates \eqref{diff.5} on $X^{2}_{b}$, this Schwartz
kernel becomes
\begin{equation}
K_{\Id}= \delta(s) \delta(y-y') \delta(z-z') \frac{dx' dy' z'}{w_{1}'}, \quad w_{1}'= \prod_{j=1}^{\ell}x_{j}'.
\label{diff.11}\end{equation}
Finally, using the local coordinates \eqref{diff.7} on the $\pi$-double space $X^{2}_{\pi}$, this becomes
\begin{equation}
\begin{aligned}
   K_{\Id}&= \delta(S)\delta(Y)\delta(z-z')  \frac{dx' dy'dz'}{  \prod_{i=1}^{\ell} (x_{i}')(w_{i}')^{k_{i}+1}}, \quad w_{i}'= \prod_{j=i}^{\ell}
x_{j}',  \\
   &=\delta(S)\delta(Y)\delta(z-z')  \frac{dx' dy'dz'}{  \prod_{i=1}^{\ell} (x_{i}')^{2+\dim S_{i}}},  \\
       &= \delta(S)\delta(Y)\delta(z-z') \pi_{R}^{*}(\nu_{\pi}),
 \end{aligned}  
\label{diff.12}\end{equation}
where $\nu_{\pi}\in \CI(X;{}^{\pi}\Omega)$ is a non-vanishing $\fS$-density on $X$.  Let
\[
\cD^{0}(\Delta_{\pi})= \CI(X^{2}_{\pi}) \cdot \mu
\] 
be the space of smooth $\delta$-functions on the $p$-submanifold $\Delta_{\pi}\subset X^{2}_{\pi}$,  where $\mu$ is a non-vanishing delta function
with smooth coefficient as in \eqref{diff.12}.  From the local computation \eqref{diff.12},  we see that 
\begin{equation}
  K_{\Id}\in \cD^{0}(\Delta_{\pi})\cdot \pi_{R}^{*}(\nu_{\pi}).
\label{diff.13}\end{equation}
Thus, if $P\in \Diff^{k}_{\fS}(X)$ is a $\fS$-differential operator of order $k$, we see from Lemma~\ref{diff.2} that 
\begin{equation}
   K_{P}= \pi_{L}^{*}P\cdot K_{\Id} \in \cD^{k}(\Delta_{\pi}) \cdot \pi_{R}^{*}(\nu_{\pi}),
\label{diff.14}\end{equation}
where $D^{k}(\Delta_{\pi})$ is the space of delta functions of order at most $k$, namely
\begin{equation}
   \cD^{k}(\Delta_{\pi})= \Diff^{k}(X^{2}_{\fS})\cdot \cD^{0}(\Delta_{\pi}).  
\label{diff.15}\end{equation}
In fact, since $\pi_{L}^{*}(\cV_{\fS}(X))$ is transversal to $\Delta_{\pi}$ by Lemma~\ref{diff.2}, the space of Schwartz kernels of $\fS$-differential
operators of order $k$ is precisely given by
\begin{equation}
    \cD^{k}(\Delta_{\pi})\cdot \pi_{R}^{*}(\nu_{\pi}).  
\label{diff.16}\end{equation}
If $E$ and $F$ are smooth complex vector bundles on $X$ and $\Diff^{k}_{\fS}(X;E,F)$ is the space of $\fS$-differential operators of order $k$ acting
from $\CI(X;E)$ to $\CI(X;F)$, then working in local trivializations, we can in a similar way identify the corresponding space of Schwartz kernels
with
\begin{equation}
   \cD^{k}(\Delta_{\pi})\cdot \CI(X^{2}_{\pi}; \beta_{\pi}^{*}\Hom(E,F) \otimes \pi_{R}^{*}({}^{\pi}\Omega)),
\label{diff.17}\end{equation}
where $\Hom(E,F)= \pr_{L}^{*}(F)\otimes \pr_{R}^{*}(E^{*})$.

Since delta functions are a special type of conormal distributions, this suggests to define
$\fS$-pseudodifferential operators of order $k$ acting from sections of $E$ to sections of $F$ by
\begin{multline}
\Psi^{m}_{\fS}(X;E,F):= \{ K \in I^{m}(X^{2}_{\pi}; \Delta_{\pi};  
     \beta_{\pi}^{*}(\Hom(E;F))\otimes \pi_{R}^{*}{}^{\pi}\Omega); \\ \; K\equiv 0  \; \mbox{at} \; 
          \pa X^{2}_{\pi}\setminus \ff_{\pi} \}.  
\label{ds.8}\end{multline}
Here, $\ff_{\pi}:= \cup_{i=1}^{k} \ff_{\pi_{i}}$ and  $I^{m}(X^{2}_{\pi}; \Delta_{\pi};  
     \beta_{\pi}^{*}(\Hom(E,F))\otimes \pi_{R}^{*}{}^{\pi}\Omega)$ is the space of conormal distributions
     of order $m$ at $\Delta_{\pi}$.  The notation $K\equiv 0$ at $\pa X^{2}_{\pi}\setminus \ff(X^{2}_{\pi})$ 
     means that its Taylor series identically vanishes at $     \pa X^{2}_{\pi}\setminus \ff(X^{2}_{\pi})$.  We can similarly define the space of
polyhomogeneous (or classical) $\fS$-pseudodifferential operators of order $m$ by
\begin{multline}
\Psi^{m}_{\fS-\phg}(X;E,F):= \{ K \in I^{m}_{\phg}(X^{2}_{\pi}; \Delta_{\pi};  
     \beta_{\pi}^{*}(\Hom(E;F))\otimes \pi_{R}^{*}({}^{\pi}\Omega)); \\ \; K\equiv 0  \; \mbox{at} \; 
          \pa X^{2}_{\pi}\setminus \ff_{\pi} \},  
\label{ds.8b}\end{multline}     
where     $I^{m}_{\phg}(X^{2}_{\pi}; \Delta_{\pi};  
     \beta_{\pi}^{*}(\Hom(E,F))\otimes \pi_{R}^{*}{}^{\pi}\Omega)$ is the space of polyhomogeneous conormal distributions
     of order $m$ at $\Delta_{\pi}$.   
     
       With these definitions, notice that there are natural inclusions
\begin{equation}
  \Diff^{k}_{\fS}(X;E,F)\subset \Psi^{k}_{\fS-\phg}(X;E,F)\subset \Psi^{k}_{\fS}(X;E,F).\label{diff.18}\end{equation}

\section{Groupoids}\label{grpd}

We refer to \cite{Renault,Mackenzie} for the classical definitions and
constructions related to groupoids and their Lie algebroids. We recall here the basic
definitions needed for this paper.

A {\bf groupoid} is a small
category in which every morphism is an isomorphism. Let us make this notion more explicit. 
A groupoid $\cG$ is a pair $(\Gr{0},\Gr{1})$ of sets together  with
structural morphisms: the unit $u: \Gr{0} \to \Gr{1}$, the source and range $s,r: \Gr{1} \to \Gr{0}$, the inverse
$\iota: \Gr{1} \to \Gr{1}$, and the multiplication $\mu$
which is defined on the set $\Gr2$ of pairs $(\alpha, \beta) \in \Gr{1} \times \Gr{1}$ such that
$s(\alpha) = r(\beta)$. Here, the set $\Gr0$ denotes the set of objects (or
units) of the groupoid, whereas the set $\Gr1$ denotes the set of
morphisms of $\cG$. The identity morphism of any object of $\cG$ enables one to identify that object with a
morphism of $\cG$. This leads to the injective map $u : \Gr0 \to \cG$. Each morphism $g \in
\cG$ has a ``source'' and a ``range.'' The
inverse of a morphism $\alpha$ is denoted by $\alpha^{-1}=\iota(\alpha)$. The
structural maps satisfy the following properties,
\begin{enumerate}[(i)]
\item  $r(\alpha \beta)=r(\alpha)$ and $s(\alpha \beta)=s(\beta)$,
       for any pair $(\alpha, \beta)$ in $\Gr2$,
\item  $s(u(x))=r(u(x))=x$,  $u(r(\alpha))\alpha=\alpha$,
$\alpha u(s(\alpha))=\alpha$,
\item  $r(\alpha^{-1})=s(\alpha)$,\ $s(\alpha^{-1})=r(\alpha)$,\ $\alpha\alpha^{-1}=u(r(\alpha))$,\
and $\alpha^{-1}\alpha=u(s(\alpha))$,
\item  the partially defined multiplication $\mu$ is associative.
\end{enumerate}

We shall need groupoids with additional structures.

\begin{definition}
A {\bf Lie groupoid} (resp. {\bf locally compact groupoid}) is a groupoid
\begin{equation*}
        \cG=(\Gr0,\Gr1,s,r,\mu,u,\iota)
\end{equation*}
such that $\Gr0$ and $\Gr1$ are manifolds with corners (resp. locally compact spaces), the
structural maps $s,r,\mu,u,$ and $\iota$ are differentiable (resp. continuous), the
source map $s$ is a submersion (resp. surjective and open) and $\cG_x :=
s^{-1}(x)$, $x\in M$, are all Hausdorff manifolds without corners (resp. locally compact Hausdorff spaces).
\end{definition}
We will also encounter the notion of continuous family groupoid (\cite{Paterson}).
\begin{definition}\label{def.continuous.family}
A locally compact groupoid $\cG$ is a  {\bf continuous family groupoid} when it is covered by open sets $U$ with homeomorphisms
$\Phi_f=(f,\phi_f): U\to f(U)\times U_f$ where $f\in\{r,s\}$ and $U_f\subset\RR^n$ such that the following holds:
\begin{enumerate}
 \item for all $(U,\Phi_f)$ and $(V,\Psi_f)$ as above such that $W=U\cap V\not=\emptyset$, the map  $\Psi_f\circ\Phi_f^{-1}:\Phi_f(W)\longrightarrow \Psi_f(W)$ is of class $C^{0,\infty}$, which means that  $x\longmapsto\psi_f\circ\phi_f^{-1}(x,\cdot)$ is continuous from $f(W)$ to $C^\infty(\phi_f(W),\psi_f(W))$ (which has the topology of uniform convergence on compacta of all derivatives);
\item The inversion and product maps are locally $C^{0,\infty}$ in the above sense.  
\end{enumerate}
We say that $(U,\Phi_f)$ is a $C^{0,\infty}$ local chart for $(\cG,f)$. 
\end{definition}

A simple example of Lie groupoid is
the {\bf pair groupoid} associated to a
smooth manifold $M$. It is obtained by taking $\Gr0 = M$, $\Gr1 = M \times M$, $s(x, y)
= y$, $r(x, y) = x$, $(x, y)(y, z) = (x, z)$, $u(x)
= (x, x)$ and with inverse $\iota (x, y) = (y, x)$.

\smallskip 
Like vector bundles, groupoids can be pulled back.
More precisely, let $G\rightrightarrows M$ be a locally compact Hausdorff groupoid with source $s$ and range
$r$. If $f:N\rightarrow M$ is a surjective map, the {\bf pullback} groupoid $\pb{f}(G)\rightrightarrows N$ of $G$ by $f$ is by
definition the set
\begin{equation}
\pb{f}(G):=\{(x,\gamma,y)\in N\times G\times N \ \vert \
r(\gamma)=f(x),\ s(\gamma)=f(y)\}
\label{tire.arriere.groupoide}\end{equation}
with the structural morphisms given by
\begin{enumerate}
\item the unit map $x \mapsto (x,f(x),x)$,

\item the source map $(x,\gamma,y)\mapsto y$ and range map
$(x,\gamma,y) \mapsto x$,

\item the product $(x,\gamma,y)(y,\eta,z)=(x,\gamma \eta ,z)$ and
inverse $(x,\gamma,y)^{-1}=(y,\gamma^{-1},x)$.
\end{enumerate}

\smallskip \noindent The results of  \cite{MRW} apply to show that the groupoids $G$
and $\pb{f}(G)$ are Morita equivalent when $f$ is surjective and open.

\smallskip The infinitesimal object associated to a Lie groupoid is its Lie
algebroid, which we define next.

\begin{definition}
A  {\bf Lie algebroid} $\cA$ over a manifold $M$ is a vector bundle
$\cA \to M$, together with a Lie algebra structure on the space
$\Gamma(\cA)$ of smooth sections of $\cA$ and  a bundle map $\varrho:
\cA \rightarrow TM$ whose extension to sections of these bundles
satisfies

(i) $\varrho([X,Y])=[\varrho(X),\varrho(Y)]$, and

(ii) $[X, fY] = f[X,Y] + (\varrho(X) f)Y$,

\noindent for any smooth sections $X$ and $Y$ of $\cA$ and any
smooth function $f$ on $M$.
\end{definition}

\smallskip The map $\varrho$ is called the {\bf anchor map} of $\cA$. Note that
we allow the base $M$ in the definition above to be a manifold
with corners.

\smallskip 

Now, let $\cG = \Gr1 \overset{s}{\underset{r}\rightrightarrows}\Gr0$ be a Lie
groupoid. We denote by $T^s\cG$ the subbundle of $T\Gr1$ of $s$-vertical tangent
vectors. In other words, $T^s\cG$ is the kernel of the differential $Ts$ of
$s$.

\smallskip  \noindent
For any $\alpha$ in $\Gr1$, let $R_{\alpha}: \cG_{r(\alpha)} \rightarrow
\cG_{s(\alpha)}$ be the right multiplication by $\alpha$. A tangent vector
field $Z$ on $\Gr1$ is {\bf right invariant} if it  satisfies,

\begin{itemize}
\item[--] $Z$ is $s$-vertical, namely $Ts(Z)=0$.
\item[--] For all $(\alpha,\beta)$ in $\cG^{(2)}$, $Z(\alpha  \beta)=TR_{\beta}(Z(\alpha))$.
\end{itemize}

\medskip \noindent The Lie algebroid ${\cA}  \cG$ of a Lie groupoid $\cG$ is defined as follows \cite{Mackenzie},
\begin{itemize}
\item[--] The fibre bundle ${\cA}  \cG \rightarrow \cG^{(0)}$ is the restriction of
$T^s\cG$ to $\cG^{(0)}$. In other words, $\cA \cG=\cup_{x\in \cG^{(0)}} T_x
\cG_x$ is the union of the tangent spaces to the $s$-fibre at the corresponding unit.
\item[--] The anchor $\rho:{\cA}  \cG \rightarrow T\cG^{(0)}$
is the restriction of the differential $Tr$ of $r$
to ${\cA}  \cG$.
\item[--] If $Y:U \rightarrow {\cA}  \cG$ is a local section of ${\cA}  \cG$,
  where $U$ is an open subset of $\cG^{(0)}$, we define the local {\bf
    right invariant vector field} $Z_Y$ {\bf associated} with $Y$ by
  $$Z_Y(\alpha)=TR_{\alpha}(Y(r(\alpha))) \ \makebox{ for all } \
  \alpha \in \cG^U:= r^{-1}(U) \ .$$

\noindent The Lie bracket is then defined by
$$\begin{array}{cccc} [\ ,\ ]: & \Gamma({\cA}  \cG)\times \Gamma({\cA} 
  \cG) & \longrightarrow & \Gamma({\cA}  G) \\
 & (Y_1,Y_2) & \mapsto & [Z_{Y_1},Z_{Y_2}]_{\cG^{(0)}}
\end{array}$$
where $[Z_{Y_1},Z_{Y_2}]$ denotes the $s$-vertical vector field obtained
with the usual bracket and $[Z_{Y_1},Z_{Y_2}]_{\cG^{(0)}}$ is the
restriction of $[Z_{Y_1},Z_{Y_2}]$ to ${\cG^{(0)}}$.
\end{itemize}

\begin{remark}
When $\cG$ is a continuous family groupoid, the vector bundle  ${\cA}  \cG \rightarrow \cG^{(0)}$ as defined above still exists. Indeed, the fibres $\cG_x$, $x\in\Gr0$ are smooth manifolds and we still can set 
\begin{equation}
 \cA \cG = \bigsqcup_{x\in \Gr0} T_x\cG_x.
\end{equation}
This vector bundle is smooth in the sense of \cite{Paterson} and it is called the Lie algebroid of $\cG$ again. 
\end{remark}

In this paper, a central example of Lie algebroid  is given by ${}^{\pi}TX$ with anchor map given by the natural map
$\iota_{\pi}: {}^{\pi}TX\to TX$. In fact, the space $ \overset{\circ}{X^{2}_{\pi}} \cup
   \overset{\circ}{\ff}_{\pi}$ has a natural structure of Lie groupoid with Lie algebroid naturally identified with ${}^{\pi}TX$ under the
   identification $X\cong \Delta_{\pi}$.  More precisely, we set
 \begin{equation}
        \mathcal{G}_{\pi}^{(0)}= \Delta_{\pi}, \quad \mathcal{G}^{(1)}_{\pi}=  \overset{\circ}{X^{2}_{\pi}} \cup
   \overset{\circ}{\ff}_{\pi}.
 \label{lg.1}\end{equation}    
For $\alpha\in \mathcal{G}^{(1)}_{\pi}$ with $\beta_{\pi}(\alpha)= (x_{1},x_{2})$, we define
the source and range of $\alpha$ by 
\begin{equation}
    s(\alpha)= x_{2}, \quad r(\alpha)= x_{1}.  
\label{lg.2}\end{equation}     
The map 
\begin{equation}
 \begin{array}{lccc}
    \iota: & \overset{\circ}{X}\times \overset{\circ}{X} & \to &  \overset{\circ}{X}\times \overset{\circ}{X} \\
              &   (x,x') & \mapsto & (x',x)    
   \end{array}
\label{lg.3}\end{equation}     
extends in a unique way to a smooth map $\iota: \mathcal{G}_{\pi}^{(1)}\to \mathcal{G}_{\pi}^{(1)}$
defining on $\mathcal{G}^{(1)}_{\pi}$ the inverse map.  Similarly, the natural multiplication
map on the groupoid $\overset{\circ}{X}\times \overset{\circ}{X} $ extends to give a composition
map 
\begin{equation}
  \mu : \mathcal{G}^{(2)}_{\pi}\to \mathcal{G}^{(1)}_{\pi}
\label{lg.4}\end{equation}
where 
\begin{equation}
\mathcal{G}^{(2)}_{\pi}= \{ (\alpha,\beta)\in \mathcal{G}^{(1)}_{\pi}\times \mathcal{G}^{(1)}_{\pi}; \; r(\beta)=s(\alpha) \}.
\label{lg.5}\end{equation}
To see that the Lie algebroid of $\cG_{\pi}$ is precisely ${}^{\pi}TX$, it suffices to use Corollary~\ref{ds.9} and to notice that $\cA\cG_{\pi}$ is isomorphic to $N\Delta_{\pi}$, a fact that follows from the observation that the source map of $\cG_{\pi}$ is a surjective submersion equal to the identity map when restricted to units.  The Lie groupoid $\cG_{\pi}$ admits a decomposition into simpler groupoids.  Indeed, for each boundary hypersurface $H_i$ of $X$, notice that the subgroupoid
\[
           (\ff_{\pi_i}\cap\interior{\ff}_{\pi}) \setminus \left ( \bigcup_{H_i<H_j} (\ff_{\pi_j}\cap \ff_{\pi_i} \cap \interior{\ff}_{\pi}) \right)
\]
is naturally isomorphic to the pull-back groupoid  $\pb{\pi_i}({}^{\pi}NS_i)$.  Since these subgroupoids give a partition of $\interior{\ff}_{\pi}$,  this means that, forgetting about the Lie structure, the groupoid $\cG_{\pi}$ can be written as a disjoint union of groupoids,   
\[
        \cG_{\pi} \cong (\interior{X}\times \interior{X}) \bigsqcup\left(\bigsqcup_{i=1}^k \pb{\pi_i}({}^{\pi}NS_i) \right).
\]

Since $\mathcal{G}_{\pi}$ is a smooth groupoid, one can consider the space $\Psi(\mathcal{G}_{\pi})$ of  pseudodifferential
operators on $\mathcal{G}_{\pi}$ \cite{Monthubert-Pierrot,NWX,ALN}. Conditions on the supports of these operators are necessary to perform their products.  They corresponds to those pseudodifferential operators of \eqref{ds.8} whose
Schwartz
kernels are compactly supported in $\mathcal{G}^{(1)}_{\pi}$.  
Thus, the space of operators in \eqref{ds.8} is not much larger.  It replaces the condition of compact support in $\mathcal{G}^{(1)}_{\pi}$ by a
condition of Schwartz decay at `infinity' with the infinity and 
the decay condition specified by the natural compactification $X^{2}_{\pi}$ of $\mathcal{G}^{(1)}_{\pi}$. 
Still, it has an important consequence.  In the larger space \eqref{ds.8}, the inverse of an invertible pseudodifferential is automatically contained
in the
same space  (with minus the order of the original operator), a fact which is not true for the smaller space $\Psi(\mathcal{G}_{\pi})$.
This is intimately related with the well-known fact that the Fourier transform is an automorphism of the space  of Schwartz functions
on $\bbR^{n}$, but does not preserve the space smooth functions with compact support.

For $\cG$ a locally compact groupoid, we need a Haar system to define an associated $C^*$-algebra.  Recall first that a (right continuous) Haar system \cite{Renault} on $\cG$ is a family $(\lambda_x)_{x\in \Gr0}$ of (positive, regular, Borel) measures $\lambda_x$ on $\cG$ such that 
\begin{enumerate}
 \item the support of $\lambda_x$ is equal to (the whole of) $\cG_x$,
 \item for all $f\in \cC_c(\cG)$, the map $x\mapsto \int_{\cG_x} f(\gamma)d\lambda_x$ is continuous,
 \item for all $\gamma\in\cG$ and $f\in \cC_c(\cG)$, on has  $\int_{\cG_{r(\gamma)}} f(\gamma'\gamma)d\lambda_{r(\gamma)}=\int_{\cG_{s(\gamma)}} f(\gamma')d\lambda_{s(\gamma)}$.
\end{enumerate}
Haar systems always exist on continuous family groupoids.  Given a continuous Haar system, the space $C_c(\cG)$ is endowed with natural product and involution, and this involutive algebra is then  automatically represented in an appropriate Hilbert space as well. This leads to the notions of reduced $C^*$-algebra of $\cG$, usually denoted by $\cC^*_r(\cG)$, and of universal (maximal) $C^*$-algebra of $\cG$, usually denoted by $\cC^*(\cG)$. Different Haar systems provide up to isomorphism the same reduced and universal $C^*$-algebras\cite{Renault,Paterson}.  Moreover, if $E$ is a Hermitian vector bundle on $\cG^{(0)}$, then the space of sections $\cC_c(\cG,r^*E)$ has also natural
Hilbert $\cC^*_r(\cG)$ and $\cC^*(\cG)$-modules completions denoted by  $\cC^*_r(\cG,E)$ and $\cC^*(\cG,E)$.   
Often, the reduced and universal completions coincide, in which case we omit the subscript $r$ in $\cC^*_r$.  For the groupoids considered in this paper, this can be seen using the following general criterion.  
\begin{lemma}
If a measured groupoid $\cG$ is the finite disjoint union of
measurewise amenable (see \cite[Definition
3.3.1]{Anantharaman-Renault})   groupoids $\cG_i$, that is,  $\cG=\sqcup_{i\in I} \cG_i$
and $\cG^{(0)}=\sqcup_{i\in I} \cG^{(0)}_i$, where everything is
assumed to be borelian, then $\cG$   is measurewise amenable.  In particular,
$\cC^*(\cG)$ is nuclear and equal to $\cC^*_r(\cG)$.  
\label{ma.1}\end{lemma}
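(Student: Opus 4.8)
The statement is purely about measured/measurewise amenable groupoids, so I would treat it as a lemma that follows from the stability properties of measurewise amenability under disjoint unions, together with the standard facts that a measurewise amenable groupoid has nuclear and coinciding reduced/universal $C^*$-algebras. The plan is to first reduce to the amenability statement, then quote the two consequences from the literature.

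\emph{Step 1: measurewise amenability is stable under finite disjoint unions.} Suppose $\cG = \sqcup_{i\in I}\cG_i$ with $\cG^{(0)}=\sqcup_i \cG^{(0)}_i$, all pieces borelian, and each $\cG_i$ measurewise amenable. The point is that a disjoint union of groupoids is again a groupoid (no morphisms connect different pieces since $r,s$ of any arrow of $\cG_i$ land in $\cG^{(0)}_i$), and a quasi-invariant measure $\mu$ on $\cG^{(0)}$ restricts to quasi-invariant measures $\mu_i=\mu|_{\cG^{(0)}_i}$ on each piece. Unwinding the definition of measurewise amenability (\cite[Definition 3.3.1]{Anantharaman-Renault}): one must produce, for the measured groupoid $(\cG,\mu)$, an approximate invariant mean, i.e. a sequence of nonnegative $L^\infty$ functions on $\cG$ whose fibrewise integrals are $1$ and which is asymptotically invariant under left translation. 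For each $i$, measurewise amenability of $(\cG_i,\mu_i)$ supplies such a sequence $(g_n^i)$ on $\cG_i$. Define $g_n$ on $\cG$ by $g_n|_{\cG_i} = g_n^i$. Since $I$ is \textbf{finite}, for any compactly (or finitely many) supported test data the defect in the invariance of $g_n$ is a finite sum of the defects of the $g_n^i$, hence tends to $0$; likewise the normalization holds fibrewise because every fibre $\cG_x$ lies entirely in one piece $\cG_i$. Hence $(g_n)$ is an approximate invariant mean for $(\cG,\mu)$, and since $\mu$ was an arbitrary quasi-invariant measure, $\cG$ is measurewise amenable.

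\emph{Step 2: consequences.} Once $\cG$ is measurewise amenable, the equality $\cC^*(\cG)=\cC^*_r(\cG)$ and nuclearity of this algebra are exactly the content of the main theorems on amenable groupoids, see \cite[Ch.~6, Theorem 6.1.4 and Corollary 6.2.14]{Anantharaman-Renault} (the reduced-equals-full statement for measurewise amenable measured groupoids, and nuclearity of $\cC^*_r$). I would simply cite these. If one wants a more hands-on route avoiding the full machinery, note that for the specific groupoids appearing here — finite disjoint unions of pair groupoids $\interior X\times\interior X$ and pullbacks $\pb{\pi_i}({}^\pi NS_i)$ of bundles of abelian (vector) groups — each piece is amenable directly: pair groupoids are (proper, hence) amenable, and a pullback of an amenable groupoid by an open surjection is amenable (Morita invariance of amenability, cf.\ the remark after \eqref{tire.arriere.groupoide}), and a bundle of abelian groups is amenable. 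Then Step 1 applies verbatim.

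\emph{Main obstacle.} The only subtlety is bookkeeping at the level of measured-groupoid definitions: one must check that "measurewise" amenability — which quantifies over all quasi-invariant measure classes on the unit space — genuinely localizes to the pieces, i.e. that a quasi-invariant measure on $\cG^{(0)}=\sqcup_i\cG^{(0)}_i$ decomposes as a sum of quasi-invariant measures on the $\cG^{(0)}_i$ with no cross terms, and that the Radon--Nikodym cocycles are compatible. This is immediate from the block structure but is the place where a careful proof must spend a line or two; the finiteness of $I$ is what makes the approximate-invariant-mean argument go through without any uniformity issue. Everything else is a citation.
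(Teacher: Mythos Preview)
Your proposal is correct, but the paper takes a shorter route. Instead of building an approximate invariant mean on $\cG$ by hand from the pieces, the paper simply invokes \cite[Proposition~5.3.4]{Anantharaman-Renault} applied to the Borel ``labelling'' map $q:\cG^{(0)}\to I$, $q(x)=i$ if $x\in\cG^{(0)}_i$; that proposition says (roughly) that if the reductions of $\cG$ over the fibres of such a Borel map are measurewise amenable, then so is $\cG$. Your Step~2 and the paper's second sentence are identical, both citing \cite[6.2.14]{Anantharaman-Renault}.

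What each approach buys: the paper's is a one-line citation and does not actually need $I$ to be finite (nor does it need to unpack the definition of approximate invariant mean). Your direct construction is more self-contained and shows explicitly why the disjoint-union structure makes the patching trivial; your observation that every $s$-fibre lies in a single piece is exactly what makes it work, and in fact your argument also does not genuinely require finiteness of $I$ for this reason (the invariance and normalization conditions are fibrewise, so no uniformity across pieces is needed). Either way the lemma is essentially a bookkeeping remark, and your identification of the ``main obstacle'' --- checking that quasi-invariant measures decompose over the pieces --- is accurate and easily dispatched.
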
    
\begin{proof}
The fact $\cG$ is measurewise amenable follows from 
\cite[Proposition 5.3.4]{Anantharaman-Renault} applied to the Borel map $q :
\cG^{(0)}\to I$ defined by $q(x)=i$ if $x\in \cG^{(0)}_i$.  By \cite[6.2.14]{Anantharaman-Renault}, we then have that $\cC^*(\cG)$ is nuclear and equal to $\cC^*_r(\cG)$.
\end{proof}
For instance, this criterion can be applied to the groupoid $\cG_{\pi}$.  
\begin{lemma}
The groupoid $\cG_{\pi}$ is measurewise amenable.  In particular, $\cC^*(\cG_{\pi})$ is nuclear and equal to $\cC^*_r(\cG_{\pi})$.  
\label{ma.2}\end{lemma}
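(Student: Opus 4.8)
The plan is to deduce the statement from Lemma~\ref{ma.1} together with the set‑theoretic (Borel) decomposition of $\cG_{\pi}$ recorded just above, namely
\[
\cG_{\pi} \cong (\interior{X}\times \interior{X}) \bigsqcup\left(\bigsqcup_{i=1}^{k} \pb{\pi_i}({}^{\pi}NS_i)\right),
\]
where the pieces are Borel subgroupoids (obtained from the front faces $\ff_{\pi_i}$ by finitely many intersections and complements) whose unit spaces form a matching Borel partition of $\cG^{(0)}_{\pi}=\Delta_{\pi}\cong X$: the open dense piece $\interior{X}$ for the pair groupoid, and the locally closed pieces $H_i\setminus\bigcup_{H_i<H_j}H_j$ (equivalently the strata $\sigma_i$ under $X\cong\Delta_{\pi}$) for the pullback groupoids. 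Since there are finitely many summands, by Lemma~\ref{ma.1} it suffices to verify that each summand is measurewise amenable; the assertions about nuclearity and the equality $\cC^*(\cG_{\pi})=\cC^*_r(\cG_{\pi})$ then follow from the conclusion of Lemma~\ref{ma.1}.

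First I would dispose of the pair groupoid $\interior{X}\times\interior{X}$. A pair groupoid over a (second countable, locally compact Hausdorff) space is proper — it has a single orbit and trivial isotropy — hence measurewise amenable; so this summand is immediate.

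Next, for each $i$, I would invoke the observation made right after \eqref{tire.arriere.groupoide}: since $\pi_i$ (restricted appropriately) is a surjective submersion, hence open, the pullback groupoid $\pb{\pi_i}({}^{\pi}NS_i)$ is Morita equivalent to ${}^{\pi}NS_i$. Now ${}^{\pi}NS_i={}^{\pi}TS_i\times\RR$ is a smooth vector bundle over $S_i$, so as a Lie groupoid it is a bundle of the abelian (hence amenable) Lie groups $\RR^{1+\dim S_i}$; a continuous bundle of amenable groups is measurewise amenable (see \cite{Anantharaman-Renault}). Since measurewise amenability is invariant under Morita equivalence (again \cite{Anantharaman-Renault}), each $\pb{\pi_i}({}^{\pi}NS_i)$ is measurewise amenable. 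Applying Lemma~\ref{ma.1} to the finite Borel decomposition above then gives that $\cG_{\pi}$ is measurewise amenable and that $\cC^*(\cG_{\pi})$ is nuclear and coincides with $\cC^*_r(\cG_{\pi})$.

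The step I expect to require the most care is turning the set‑theoretic decomposition into a genuine \emph{Borel} decomposition of the \emph{measured} groupoid in the precise sense demanded by Lemma~\ref{ma.1}: one must fix a Haar system on $\cG_{\pi}$ (which exists since $\cG_{\pi}$ is a smooth, in particular continuous family, groupoid), check that the subsets $\interior{X}\times\interior{X}$ and the subgroupoids isomorphic to $\pb{\pi_i}({}^{\pi}NS_i)$ are Borel subgroupoids, that their unit spaces genuinely partition $\cG^{(0)}_{\pi}$, and that the Haar system restricts compatibly to each piece, so that the hypotheses of Lemma~\ref{ma.1} are literally met. Once this bookkeeping is in place, every remaining ingredient (properness of pair groupoids, amenability of group bundles, Morita invariance of measurewise amenability) is a citation to \cite{Anantharaman-Renault}.
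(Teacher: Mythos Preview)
Your proposal is correct and follows essentially the same route as the paper: both invoke Lemma~\ref{ma.1} on the very decomposition $\cG_{\pi}\cong(\interior{X}\times\interior{X})\sqcup\bigsqcup_i \pb{\pi_i}({}^{\pi}NS_i)$, then argue that each piece is amenable via the (Morita) equivalence of the pullback groupoid with the vector bundle ${}^{\pi}NS_i$, citing \cite{Anantharaman-Renault}. The only cosmetic differences are that the paper phrases the pieces as $(H_i\underset{\pi_i}{\times}{}^{\pi}TS_i\underset{\pi_i}{\times}H_i)|_{G_i}\times\bbR$ and passes through topological amenability, whereas you absorb the $\bbR$ factor into the vector bundle and speak directly of measurewise amenability.
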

\begin{proof}
By Lemma~\ref{ma.1}, it suffices to observe that $\cG_{\pi}$ can be written as a disjoint union of
topologically  amenable (and thus measurewise amenable, by
\cite{Anantharaman-Renault}) groupoids,
\begin{equation}
  \cG_{\pi} = (\overset{\circ}{X}\times \overset{\circ}{X}) \bigsqcup \left( \bigsqcup_{i=1}^k
(H_i\underset{\pi_i}{\times}{}^{\pi}TS_i\underset{\pi_i}{\times}H_i)|_{G_i}\times\bbR \right),
\end{equation}
where $G_i=H_i\setminus(\cup_{j>i} H_j)$.
The topological amenability  of the various subgroupoids on the right-hand side can be justified as follows,
\begin{itemize}
 \item[(i)]  A vector bundle is topologically amenable as a bundle of
abelian groups;
 \item[(ii)] Topological amenability is preserved under equivalence of
groupoids (\cite{Anantharaman-Renault}). For instance, given a vector
bundle $E\to S$ and a locally trivial fibre bundle $p:H\to S$, the
groupoid $(H\underset{p}{\times}E\underset{p}{\times}H)\rightrightarrows
H$ is equivalent as a groupoid to the vector bundle $E$, and thus is
topologically amenable;
 \item[(iii)]  The cartesian product of amenable groupoids is amenable.
\end{itemize}
\end{proof}

\section{Action of $\fS$-pseudodifferential operators} \label{asp.0}

Let us first consider the space $\Psi^{m}_{\fS}(X;E,F)$ in \eqref{ds.8} in the simpler situation where $E=F=\underline{\bbC}$.  Notice that
\eqref{ds.8} can alternatively be rewritten as
\begin{equation}
   \Psi^{m}_{\fS}(X)=I^{m}(X^{2}_{\pi}; \Delta_{\pi})\cdot {\cC}^{\infty}_{\ff_{\pi}}(X^{2}_{\pi}; \pi_{R}^{*}({}^{\pi}\Omega)),
\label{asp.1}\end{equation}
where ${\cC}^{\infty}_{\ff_{\pi}}(X^{2}_{\pi}; \pi_{R}^{*}({}^{\pi}\Omega))$ is the space of smooth sections vanishing with all their derivatives at
all boundary faces except those contained in $\ff_{\pi}$.  To describe the action of $\fS$-operators on functions, we will need the following result
about the pushforward of conormal distributions.

\begin{lemma}
 The map $\pi_{L}= \pr_{L}\circ \beta_{\pi}: X^{2}_{\pi}\to X$ induces a continuous linear map
 \[
           (\pi_{L})_{*}: I^{m}(X^{2}_{\pi};\Delta_{\pi})\cdot{\cC}^{\infty}_{\ff_{\pi}}(X^{2}_{\pi};\Omega)\to \CI(X;\Omega).
 \]  
\label{asp.3}\end{lemma}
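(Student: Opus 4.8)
The statement asserts that pushforward along $\pi_L$ sends conormal distributions at $\Delta_\pi$ (which vanish to infinite order at all boundary faces not contained in $\ff_\pi$) to smooth densities on $X$. The plan is to verify this locally, using the explicit coordinate description of $X^2_\pi$ near $\Delta_\pi$ established in the proof of Lemma~\ref{diff.2}, together with the standard fact that pushforward of conormal distributions along a fibration (a $b$-fibration, in the boundary setting) preserves conormality/smoothness. First I would reduce to a neighborhood of a point $p\in\Delta_\pi$: away from $\Delta_\pi$ the kernel is smooth and vanishes to infinite order at $\pa X^2_\pi\setminus\ff_\pi$, so its pushforward is smooth there; and away from $\pa X$ the statement is the classical fact that integrating a compactly-supported conormal distribution in the fibre variable of a submersion, transversal to the conormal singularity, yields a smooth function (this uses precisely that $\pi_L^*\cV_\fS(X)$ is transversal to $\Delta_\pi$, Lemma~\ref{diff.2}).

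The core of the argument is the local computation near $p\in\Delta_\pi\cap\pa X^2_\pi$. Using the coordinates $(r_i, S_i, Y_i, y_i', z, z')$ from \eqref{diff.7}, the diagonal is $\{S_i=0,\,Y_i=0,\,z=z'\}$, and the map $\pi_L$ sends this point to $(x_i,y_i,z)$ with $x_i = r_i(S_iw_i'+1)$, $y_i = y_i' + Y_i w_i'$; the "fibre" variables of $\pi_L$ along which we integrate are exactly $(S_i, Y_i, z')$ — the ones transverse to $\Delta_\pi$ — while $(r_i, y_i')$ are (essentially) pulled back from the target. A conormal distribution of order $m$ at $\Delta_\pi$ can be written, after a partition of unity and choice of phase, as an oscillatory integral in the conormal frequency variables dual to $(S_i, Y_i, z-z')$; pushing forward along $\pi_L$ amounts to integrating out $(S_i, Y_i, z')$. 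Because the kernel vanishes to infinite order at $\omega_i,\omega_i'\to 0$ (the boundary faces other than $\ff_\pi$ in the $b$-picture) but is merely smooth — indeed a priori only conormal and nonvanishing — at $\ff_{\pi_i}$, I need to check the fibre integral converges and is smooth down to $x_i=r_i=0$. The key point is that in the $\pi$-blown-up coordinates $\ff_{\pi_i} = \{r_i = 0\}$ is transverse to the fibres of $\pi_L$ (it is pulled back, up to the density factor, from $H_i\subset X$ via $\pi_L$), so integrating in $(S_i, Y_i, z')$ does not create any singularity at $r_i=0$; the density factor $\pi_R^*({}^\pi\Omega)$ relative to $\Omega$ carries the weights $\prod x_i^{-(2+\dim S_i)}$ coming from \eqref{density.2}, and one checks these exactly cancel the Jacobian factors produced by the change of variables \eqref{diff.7} (the $w_i'$ powers), so that the pushed-forward object is a genuine smooth section of $\Omega$ and not a weighted one.

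Concretely, the steps in order are: (1) reduce via a partition of unity subordinate to the coordinate charts of $X^2_\pi$, separating the region near $\Delta_\pi\cap\pa X^2_\pi$, the region near $\Delta_\pi$ away from $\pa X$, and the region away from $\Delta_\pi$; (2) in the interior region, invoke the classical theorem on pushforward of conormal distributions along a submersion transversal to the conormal singularity (\eg Hörmander, or Melrose's treatment) to get smoothness; (3) in the region away from $\Delta_\pi$, use that the kernel is smooth there and Taylor-vanishes at $\pa X^2_\pi\setminus\ff_\pi$, and that $\pi_L$ restricted to $\ff_\pi$ is a $b$-fibration, so the pushforward is smooth and Taylor-vanishes appropriately; (4) in the boundary region near $\Delta_\pi$, write the kernel as an oscillatory integral in the coordinates \eqref{diff.7}, carry out the $(S_i, Y_i, z')$ integration, and verify — this is the one genuinely computational point — that after combining the coordinate Jacobian, the weight in $\pi_R^*({}^\pi\Omega)$ from \eqref{density.2}, and the infinite-order vanishing at the faces $\{\omega_i=0\}$, $\{\omega_i'=0\}$, the result extends smoothly as a section of $\Omega$ up to $r_i=0$; (5) patch the local pieces together and note continuity of the resulting linear map follows from continuity of each step (continuity of oscillatory-integral pushforward in the relevant seminorms). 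The main obstacle is step (4): bookkeeping the weights to see that the $\fS$-density normalization is exactly what makes the fibre integral land in $\CI(X;\Omega)$ rather than in a space of densities with boundary blow-up, and checking that the noncompactness of the fibre of $\pi_L$ in the $S_i, Y_i$ directions near $\ff_{\pi_i}$ is controlled — here the transversality of $\pi_L^*\cV_\fS(X)$ to $\Delta_\pi$ from Lemma~\ref{diff.2} and the explicit form \eqref{diff.8} of the lifted vector fields (which shows the conormal direction is never tangent to the fibre, uniformly up to the boundary) are what save the day.
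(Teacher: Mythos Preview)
Your outline is workable but takes a harder road than the paper, and there is one genuine confusion to flag.

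The paper's proof is much shorter. It splits into only two cases. Near $\Delta_\pi$ (uniformly, including at the boundary), the transversality of $\pi_L$ to $\Delta_\pi$ from Lemma~\ref{diff.2} already gives smoothness of the pushforward by the general theory of conormal distributions; there is no need for your separate step~(4) with an explicit oscillatory integral in the coordinates \eqref{diff.7}. Away from $\Delta_\pi$, the kernel is smooth and the paper simply observes that $\pi_L=\pr_L\circ\beta_\pi$ is a $b$-fibration (checking that no boundary hypersurface of $X^2_\pi$ is sent to a face of codimension $\ge 2$ in $X$), and then invokes Melrose's Push-forward Theorem \cite{MelrosePFT} as a black box; the infinite-order vanishing at $\pa X^2_\pi\setminus\ff_\pi$ together with $\pi_L^{-1}(H_i)\cap\ff_\pi=\ff_{\pi_i}$ is exactly what makes the result land in $\CI(X;\Omega)$ rather than merely polyhomogeneous. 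Your approach of doing the local fibre integration by hand would reproduce this, but the $b$-fibration machinery is designed precisely to avoid that bookkeeping.

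The confusion: in your step~(4) you bring in the weights of $\pi_R^*({}^{\pi}\Omega)$ from \eqref{density.2} and speak of them cancelling Jacobian factors. But the present lemma is stated for the \emph{ordinary} density bundle $\Omega=\Omega(X^2_\pi)$, not for $\pi_R^*({}^{\pi}\Omega)$; there are no such singular weights here. The passage from $\pi_R^*({}^{\pi}\Omega)$ to $\Omega$ is the content of the \emph{next} lemma (Lemma~\ref{asp.2}), which shows that the singular factors in $\pi_L^*\Omega\otimes\pi_R^*{}^{\pi}\Omega$ versus $\Omega$ occur only at faces not in $\ff_\pi$ and are absorbed by the infinite-order vanishing. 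Your plan conflates the two lemmas; if you keep them separate, your step~(4) simplifies considerably and the weight-cancellation discussion disappears from this lemma entirely.
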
  
\begin{proof}
 If $K\in I^{m}(X^{2}_{\pi};\Delta_{\pi})\cdot {\cC}^{\infty}_{\ff_{\pi}}(X^{2}_{\pi};\Omega)$ is supported near the lifted diagonal, then the result
follows from general properties of conormal distributions together with the fact the map $\pi_{L}$ is transversal to $\Delta_{\pi}$.  Thus, using a
cut-off function, we can assume $K\in {\cC}^{\infty}_{\ff_{\pi}}(X^{2}_{\pi};\Omega)$.  To proceed further, notice that $\pi_{L}$ is a $b$-fibration
(we refer to \cite{MelroseMWC} for a definition).  Indeed, as a blow-down map, $\beta_{\pi}$ is a surjective $b$-submersion.  Since the projection
$\pr_{L}: X^{2}\to X$ is also clearly a surjective $b$-submersion, so is the composite
$\pi_{L}= \pr_{L}\circ \beta_{\pi}$.  Thus, according to Proposition~2.4.2 in \cite{MelroseMWC}, $\pi_{L}$ is a $b$-fibration provided no boundary
hypersurface of $X^{2}_{\pi}$ is mapped to a boundary face of $X$ of codimension greater than one.  This is clear for the `old' hypersurfaces in
$X^{2}_{\pi}$, while the `new' hypersurfaces are mapped  under $\beta_{\pi}$ to boundary faces of $X^{2}$ of codimension $2$ which are then mapped
under $\pr_{L}$ to boundary faces of codimension 1 under the projection $\pr_{L}$.  

The lemma can then be seen as a special case of the Push-forward Theorem of \cite{MelrosePFT}
for $b$-fibrations.  Precisely, the lemma is a consequence of this theorem combined with the fact 
\[
      \pi_{L}^{-1}(H_{i})\cap \ff_{\pi}= \ff_{\pi_{i}}
\]   
for all boundary hypersurfaces $H_{i}\subset X$.  
\end{proof}

Since the previous lemma is dealing with smooth densities, it cannot be applied directly to the space of conormal distributions $\Psi^{m}_{\fS}(X)$.

\begin{lemma}
The tensor product identification $\pr_{L}^{*}\Omega \otimes \pr_{R}^{*}{}^{\pi}\Omega\equiv \Omega$ on the interior of $X^{2}$ extends to give an
isomorphism of spaces of sections
\[
       {\cC}^{\infty}_{\ff_{\pi}}(X^{2}_{\pi}; \beta_{\pi}^{*}( \pr_{L}^{*}\Omega \otimes \pr_{R}^{*}{}^{\pi}\Omega))=
{\cC}^{\infty}_{\ff_{\pi}}(X^{2}_{\pi};\Omega)
\]
\label{asp.2}\end{lemma}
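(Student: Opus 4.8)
The plan is to reduce the claimed identification to a statement about density bundles and then to check it is compatible with the relevant boundary behaviour. The key point is simply that on the interior of $X^2$ the product map $\pr_L\times \pr_R\colon X^2\to X\times X$ is the identity, so that $\Omega(TX^2)\cong \pr_L^*\Omega(TX)\otimes \pr_R^*\Omega(TX)$ canonically there. Since ${}^\pi\Omega$ and $\Omega$ are canonically identified on $\interior X$ via $\iota_\pi$, we get a canonical isomorphism $\pr_L^*\Omega\otimes\pr_R^*{}^\pi\Omega\equiv\Omega$ on $\interior{X^2}$, hence (pulling back by $\beta_\pi$, a diffeomorphism over the interior) on $\interior{X^2_\pi}$. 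The content of the lemma is that this isomorphism, which a priori is only defined over the interior, extends to a \emph{smooth} bundle isomorphism $\beta_\pi^*(\pr_L^*\Omega\otimes\pr_R^*{}^\pi\Omega)\cong\Omega(TX^2_\pi)$ over all of $X^2_\pi$; once that is established, the equality of the two spaces of sections vanishing to infinite order off $\ff_\pi$ is immediate.

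To prove the extension, I would argue in local coordinates near a boundary point of $X^2_\pi$. Take $p\in\Delta_\pi\cap\pa X^2_\pi$ (the general boundary point is handled the same way, or more easily since no new blow-up is relevant there) and use the local coordinates \eqref{diff.7} on $X^2_\pi$, namely $r_i=x_i'$, $S_i=s_i/w_i'$, $Y_i=(y_i-y_i')/w_i'$, $y_i'$, $z$, $z'$, in a neighbourhood where the fibrations $\pi_i$ are trivial. On one hand, a nowhere-vanishing smooth section of $\Omega(TX^2_\pi)$ near $p$ is, up to a smooth positive factor, $|dr_1\cdots dr_\ell\, dS\, dY\, dy'\, dz\, dz'|$ (with the obvious multi-index conventions). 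On the other hand, $\pr_R^*{}^\pi\Omega$ is spanned by $\pi_R^*\nu_\pi$, which by \eqref{density.2} (pulled back to $X^2_\pi$) equals $\bigl(\prod_i (x_i')^{2+\dim S_i}\bigr)^{-1}$ times a smooth nonvanishing density in the primed variables, i.e. $\bigl(\prod_i (r_i)^{2+\dim S_i}\bigr)^{-1}|dx'\,dy'\,dz'|$ up to a smooth positive factor; and $\pr_L^*\Omega$ is spanned by $|dx\,dy\,dz|$ in the unprimed variables. The whole computation then amounts to verifying that the Jacobian of the change of variables from $(x,y,z,x',y',z')$ to $(r,S,Y,y',z,z')$ is precisely $\prod_i (r_i)^{2+\dim S_i}$ up to a smooth nonvanishing factor — but this is exactly the computation already carried out in \eqref{diff.10}--\eqref{diff.12} in establishing that $K_{\Id}\in\cD^0(\Delta_\pi)\cdot\pi_R^*(\nu_\pi)$, since there the identity kernel, which is a smooth density on the diagonal, was rewritten using precisely these coordinate changes and precisely this weight factor appeared. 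Thus the isomorphism extends smoothly, and by the same token is an isomorphism (the transition factor is smooth and nonvanishing).

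Finally, with the smooth bundle isomorphism $\beta_\pi^*(\pr_L^*\Omega\otimes\pr_R^*{}^\pi\Omega)\cong\Omega(TX^2_\pi)$ in hand, tensoring the space ${\cC}^\infty_{\ff_\pi}(X^2_\pi)$ of smooth functions vanishing to infinite order at every boundary hypersurface not contained in $\ff_\pi$ with either side gives the stated equality of section spaces, since an isomorphism of smooth bundles identifies smooth sections and preserves the condition of infinite-order vanishing at a boundary face. The only real work is the local coordinate bookkeeping of the previous paragraph; the main (minor) obstacle is keeping track of the density weights correctly across the two successive blow-ups $\beta_b$ and $\beta_{\pi-b}$, but this is already done in the identity-kernel computation \eqref{diff.10}--\eqref{diff.12}, which we may simply invoke.
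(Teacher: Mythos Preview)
There is a genuine gap in your argument. You claim that the interior identification extends to a \emph{smooth bundle isomorphism} $\beta_\pi^*(\pr_L^*\Omega\otimes\pr_R^*{}^\pi\Omega)\cong\Omega(TX^2_\pi)$ over \emph{all} of $X^2_\pi$, but this is false. Your local computation in the coordinates \eqref{diff.7} is correct and does show the identification is smooth in a neighbourhood of $\Delta_\pi\cup\ff_\pi$. The error is in the parenthetical ``the general boundary point is handled the same way, or more easily since no new blow-up is relevant there'': at the other boundary faces the identification is genuinely singular.

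Indeed, take a point on the proper transform of $X\times H_i$ away from all the blow-up centres. There $\beta_\pi$ is a local diffeomorphism onto an open set of $X^2$, and the two density bundles differ by the factor $(x_i')^{-(2+\dim S_i)}$ coming from \eqref{density.2}; this is singular at $x_i'=0$. For a manifold with fibred boundary one computes globally that
\[
\beta_\pi^*(\pr_L^*\Omega\otimes\pr_R^*{}^\pi\Omega)=\rho_{bf}^{-(1+\dim S)}\,\rho_{rf}^{-(2+\dim S)}\,\Omega(TX^2_\pi),
\]
with nontrivial negative powers at the lifted $b$-front face and the right face. What rescues the lemma is precisely that these singular faces are \emph{not} contained in $\ff_\pi$: any finite power of their boundary defining functions acts as an isomorphism on ${\cC}^\infty_{\ff_\pi}(X^2_\pi)$, since sections in that space already vanish to infinite order there. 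This is exactly the paper's argument --- it checks (via the same coordinate computation you propose) that no singular factor appears at $\ff_\pi$, and then observes that the remaining singular factors, occurring only at faces outside $\ff_\pi$, are absorbed by the infinite-order vanishing. Your final paragraph would go through if you replaced the false global bundle isomorphism by this weaker statement.
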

\begin{proof}
It suffices to notice that the singular factors of sections of $\beta_{\pi}^{*}( \pr_{L}^{*}\Omega \otimes \pr_{R}^{*}{}^{\pi}\Omega)$   all arise at
faces not contained in $\ff_{\pi}$, and so are absorbed by the infinite order vanishing at these faces.  This can be seen using the local
coordinates.  Indeed, in the coordinates \eqref{diff.4b},  an element of $\CI(X^{2}_{\pi}; \pr_{L}^{*}\Omega\otimes \pr_{R}^{*}({}^{\pi}\Omega)$ is of the
form
\[
   \frac{h dx dy dz dx' dy' dz'}{\prod_{i=1}^{\ell} (x_{i}')^{2+\dim S_{i}}} =\frac{h dx dy dz dx' dy' dz'}{\prod_{i=1}^{\ell} x_{i}'
(w_{i}')^{k_{i}+1} }, \quad \mbox{for some} \; h\in \CI(X^2_{\pi}).
    \]
 Thus, in the coordinates of \eqref{diff.7}, it takes the form 
 \[
      \widetilde{h} dS dY dz dx' dy' dz' \quad \mbox{for some} \; \widetilde{h}\in \CI(X^{2}_{\pi}),
 \]   
 and the only possible singular terms occur when $S_{i}\to \infty$ or $Y_{i}\to \infty$, that is, at faces not contained in $\ff_{\pi}$.  
\end{proof}

We can then define a push-forward map
\begin{equation}
   (\pi_{L})_{*}: I^{m}(X^{2}_{\pi};\Delta_{\pi})\cdot {\cC}^{\infty}_{\ff_{\pi}}(X^{2}_{\pi};\pi_{R}^{*}{}^{\pi}\Omega)\to \CI(X)
\label{asp.4}\end{equation}
by requiring  that for $K\in I^{m}(X^{2}_{\pi};\Delta_{\pi})\cdot {\cC}^{\infty}_{\ff_{\pi}}(X^{2}_{\pi};\pi_{R}^{*}{}^{\pi}\Omega)$ and  any
non-vanishing section $v\in \CI(X;\Omega)$,\begin{equation}
     v\cdot (\pi_{L})_{*}K= (\pi_{L})_{*} ( \pi_{L}^{*}v\cdot K),
\label{asp.5}\end{equation}
where the right hand side of \eqref{asp.5} is in $\CI(X;\Omega)$ thanks to Lemma~\ref{asp.3} and Lemma~\ref{asp.2}.    This push-forward map provides
a way to make $\fS$-pseudodifferential operators act on functions.  To state the main result of this section, we still need to introduce some
notation.  If $M_{1}X$ is the set of boundary hypersurfaces and $A\subset M_{1}X$ is a subset, then set 
\[
     x_{A}= \prod_{H\in A} x_{H} 
\]
where $x_{H}\in \CI(X)$ is a choice of boundary defining function for $H$.  For any $A\subset M_{1}X$, consider the space
 \[
         \dot{\cC}^{\infty}_{A}(X;E) = \bigcap_{k\in\bbN} x^{k}_{A} \CI(X;E)
 \]
 of smooth sections on $X$ vanishing with all their derivatives on each boundary hypersurface $H\in A$. 
 When $A=M_{1}X$, this gives the space 
 \[
      \dot{\cC}^{\infty}(X;E)= \dot{\cC}^{\infty}_{M_{1}X}(X;E)
 \]  
of smooth sections  vanishing with all their derivatives on $\pa X$.  It is also useful to use the notation $\CI_{A}(X;E)=
\dot{\cC}^{\infty}_{M_{1}X\setminus A}(X;E)$.  Thus, for $A=M_{1}X$, we have $\CI_{M_{1}X}(X;E)=\CI(X;E)$.   

Each space $\dot{\cC}^{\infty}_{A}(X;E)$ comes with a natural structure of Fr\'echet space induced from the one of $\CI(X;E)$.  The corresponding
space of distributions $\dot{\cC}^{-\infty}_{A}(X;E)$ is defined to be the dual of  ${\cC}^{\infty}_{A}(X;E^{*}\otimes \Omega)$.  Similarly, we use
the notation $\cC^{-\infty}_{A}(X;E)$ to denote the dual of $\dot{\cC}^{\infty}_{A}(X;E^{*}\otimes \Omega)$.

\begin{proposition}
Via the push-forward map \eqref{asp.4}, an element $P\in \Psi^{m}_{\fS}(X;E,F)$ defines a continuous linear map
\[
       P: \CI(X;E) \to \CI(X;F).
\] 
For each subset $A\subset M_{1}X$,  this map restricts to give a continuous linear map
\[
  P: \dot{\cC}^{\infty}_{A}(X;E) \to \dot{\cC}_{A}^{\infty}(X;F).
\]
These maps extend by continuity in the distributional topology to  linear maps
\[
P: \dot{\cC}^{-\infty}_{A}(X;E) \to \dot{\cC}_{A}^{-\infty}(X;F)
\]
for all subsets $A\subset M_{1}X$.

\label{asp.6}\end{proposition}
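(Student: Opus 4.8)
The plan is to \emph{define} the action of $P\in\Psi^{m}_{\fS}(X;E,F)$ on a section $u$ by the push-forward $Pu:=(\pi_{L})_{*}\big(K\cdot\pi_{R}^{*}u\big)$, where $K$ is the Schwartz kernel of $P$ and the push-forward is the one of \eqref{asp.4} (the contraction $\Hom(E,F)\otimes\pi_{R}^{*}E\to\pr_{L}^{*}F$ makes $K\cdot\pi_{R}^{*}u$ a section of $\beta_{\pi}^{*}\pr_{L}^{*}F\otimes\pi_{R}^{*}{}^{\pi}\Omega$, to which \eqref{asp.4} applies), and then to deduce all three mapping properties from the behaviour of the $b$-fibration $\pi_{L}:X^{2}_{\pi}\to X$ under the Push-forward Theorem of \cite{MelrosePFT}, essentially as in Lemma~\ref{asp.3} but now keeping careful track of infinite-order vanishing at the boundary. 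Bundle coefficients will be handled in local trivializations as around \eqref{diff.17}, so I may argue with $E=F=\underline{\bbC}$; moreover the first assertion is the case $A=\emptyset$ of the second, so there are really only two things to prove.

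\emph{Action on $\dot{\cC}^{\infty}_{A}$.} For $u\in\dot{\cC}^{\infty}_{A}(X;E)$ the lift $\pi_{R}^{*}u$ is smooth on $X^{2}_{\pi}$ and vanishes to infinite order at every boundary hypersurface $G$ with $\pi_{R}(G)$ contained in some $H\in A$. Multiplying $K$ by $\pi_{R}^{*}u$ preserves conormality of order $m$ at $\Delta_{\pi}$ and the infinite-order vanishing at $\pa X^{2}_{\pi}\setminus\ff_{\pi}$, and adds infinite-order vanishing at the faces where $\pi_{R}^{*}u$ vanishes; so Lemma~\ref{asp.3}, Lemma~\ref{asp.2} and \eqref{asp.4} already give $Pu\in\CI(X)$ with $u\mapsto Pu$ continuous. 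To obtain the stronger conclusion I would use that $\pi_{L}^{-1}(H_{i})\cap\ff_{\pi}=\ff_{\pi_{i}}$ (the fact already invoked in Lemma~\ref{asp.3}), that $\pi_{L}(\ff_{\pi_{j}})=\pi_{R}(\ff_{\pi_{j}})=H_{j}$ for all $j$, and the obvious symmetry $\iota$ of $X^{2}_{\pi}$ exchanging left and right. Fix $H'\in A$. Then $\ff_{\pi_{H'}}$ is the only front face mapped into $H'$ by $\pi_{L}$, so every other boundary hypersurface of $X^{2}_{\pi}$ lying over $H'$ is contained in $\pa X^{2}_{\pi}\setminus\ff_{\pi}$, where $K\cdot\pi_{R}^{*}u$ vanishes to infinite order because $K$ does; and at $\ff_{\pi_{H'}}$ itself, $\pi_{R}(\ff_{\pi_{H'}})=H'\in A$ forces $\pi_{R}^{*}u$, hence $K\cdot\pi_{R}^{*}u$, to vanish to infinite order. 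Since $K\cdot\pi_{R}^{*}u$ thus vanishes to infinite order at every boundary hypersurface mapped into $H'$, the Push-forward Theorem gives that $(\pi_{L})_{*}(K\cdot\pi_{R}^{*}u)$ vanishes to infinite order at $H'$. As $H'\in A$ was arbitrary, $Pu\in\dot{\cC}^{\infty}_{A}(X;F)$, and continuity is inherited from the two maps composed.

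\emph{Distributional extension.} I would pass to the formal transpose. The exchange of the two factors of $X^{2}$ lifts to a diffeomorphism $\iota$ of $X^{2}_{\pi}$ fixing the lifted diagonal and each $\ff_{\pi_{i}}$ and interchanging the left and right old boundary hypersurfaces; transposing $K$ by $\iota$, with the obvious dualization of the bundle and density factors, produces $P^{t}\in\Psi^{m}_{\fS}(X;F^{*}\otimes\Omega,E^{*}\otimes\Omega)$, whose kernel again vanishes to infinite order off $\ff_{\pi}$ and which satisfies $\int_{X}\langle Pu,\psi\rangle=\int_{X}\langle u,P^{t}\psi\rangle$. Applying the previous step to $P^{t}$ with the subset $M_{1}X\setminus A$, and recalling $\cC^{\infty}_{A}=\dot{\cC}^{\infty}_{M_{1}X\setminus A}$, shows that $P^{t}$ maps $\cC^{\infty}_{A}(X;F^{*}\otimes\Omega)$ continuously to $\cC^{\infty}_{A}(X;E^{*}\otimes\Omega)$. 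One may then set $\langle PT,\psi\rangle:=\langle T,P^{t}\psi\rangle$ for $T\in\dot{\cC}^{-\infty}_{A}(X;E)$ and $\psi\in\cC^{\infty}_{A}(X;F^{*}\otimes\Omega)$; this is well defined precisely because $P^{t}\psi\in\cC^{\infty}_{A}(X;E^{*}\otimes\Omega)$, defines a weak-$*$ continuous map $\dot{\cC}^{-\infty}_{A}(X;E)\to\dot{\cC}^{-\infty}_{A}(X;F)$, and — by a Fubini argument identifying the composite of the two push-forwards with $(\pi_{L})_{*}(K\cdot\pi_{R}^{*}(\cdot))$ — coincides on the dense subspace $\CI(X;E)$ with the map of the previous step, hence is its unique continuous extension.

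The step I expect to be the real obstacle is the boundary bookkeeping in the middle paragraph: identifying exactly which boundary hypersurfaces of $X^{2}_{\pi}$ lie over a given $H'$ under $\pi_{L}$ and checking that the $b$-fibration exponents make the Push-forward Theorem yield precisely infinite-order vanishing there — this is essentially the heart of Lemma~\ref{asp.3} once more. Getting the density and bundle twists in $P^{t}$ to match up is mildly delicate but routine.
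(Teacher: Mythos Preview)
Your argument is correct, but the middle paragraph takes a different route from the paper's. The paper does not re-enter the Push-forward Theorem to track index sets; instead it uses a conjugation trick. The key observation is that $\beta_{\pi}^{*}(x_{A}/x_{A}')$ is smooth on $\ff_{\pi}$ with only finite-order singularities off $\ff_{\pi}$, so for every $k\in\bbN$ the operator $\widetilde P_{k}:=x_{A}^{k}\circ P\circ x_{A}^{-k}$ is again in $\Psi^{m}_{\fS}(X;E,F)$. Writing $u=x_{A}^{k}\widetilde u_{k}$ with $\widetilde u_{k}\in\CI(X;E)$, one then gets $x_{A}^{-k}Pu=\widetilde P_{k}\widetilde u_{k}\in\CI(X;F)$ directly from the first assertion, hence $Pu\in x_{A}^{k}\CI(X;F)$ for all $k$. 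This bypasses the boundary bookkeeping you flag as the ``real obstacle'': no analysis of which hypersurfaces of $X^{2}_{\pi}$ lie over a given $H'$ is needed beyond what already went into Lemma~\ref{asp.3}. Your approach, by contrast, is more geometric and makes the mechanism transparent (the vanishing of $\pi_{R}^{*}u$ at $\ff_{\pi_{H'}}$ is exactly what the conjugation encodes), at the cost of invoking the Push-forward Theorem in its index-set form a second time. For the distributional extension the two proofs are essentially the same: the paper uses the formal adjoint with respect to a ${}^{\pi}$-density and Hermitian metrics rather than the transpose, but the duality argument is identical.
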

\begin{proof}
The first assertion is a consequence of Lemma~\ref{asp.3} and Lemma~\ref{asp.2}.  
Using a partition of unity subordinate to a covering by open sets over which $E$ and $F$ restrict to be trivial, we can reduce to the case
$E=F=\underline{\bbC}$ to prove the second assertion.  

Let $A\subset M_{1}X$ be given.  Since the function
$(\frac{x_{A}}{x_{A}'})\in \CI(X^{2}\setminus \pa X^{2})$ pulls back to $X^{2}_{\pi}$ to give a function which is smooth on $\ff_{\pi}$ and has only
finite order singularities at hypersurfaces not in $\ff_{\pi}$, we see that
\[
   P \in \Psi^{m}_{\fS}(X;E,F)\; \Longrightarrow \; \widetilde{P}_{k}= x_{A}^{k}\circ P\circ x_{A}^{-k}
   \in \Psi^{m}_{\fS}(X;E,F)
\] 
for all $k\in \bbN$.  On the other hand,  given $u\in \dot{\cC}^{\infty}_{ A}(X;E)$, we can write it as
$u= x^{k}_{A} \widetilde{u}_{k}$ for some $\widetilde{u}_{k}\in\CI(X;E)$, so that
\[
      x^{-k}_{A}Pu= \widetilde{P}_{k} \widetilde{u}_{k} \in \CI(X;F)  \; \Longrightarrow 
      \; Pu\in x^{k}_{A}\CI(X;F).
\] 
Since $k\in \bbN$ is arbitrary, this means $Pu\in \dot{\cC}^{\infty}_{ A}(X;F)$.  

For the proof of the last assertion, choose a non-vanishing density in $\CI(X;{}^{\pi}\Omega)$ as well as Hermitian metrics for $E$ and $F$.  These
then define a $L^{2}$-inner product for sections of $E$ and $F$.  
To see the action of $P\in \Psi^{m}(X;E,F)$ extends to distributions, it suffices to notice that from \eqref{ds.8}, the formal adjoint of $P\in
\Psi^{m}(X;E,F)$ with respect to this $L^{2}$-inner product is an element of $\Psi^{m}_{\fS}(X;F,E)$, so that the action of $P$ on distributions can
be defined by duality.  

\end{proof}

The following proposition can be interpreted as a dual statement to the Schwartz kernel theorem.  
\begin{proposition}
A continuous linear operator $A: \dot{\cC}^{\infty}(X)\to \cC^{-\infty}(X)$ induces a continuous linear map $A: \cC^{-\infty}(X)\to
\dot{\cC}^{\infty}(X)$  if and only if its Schwartz kernel $K_{A}$ is an element of $\dot{\cC}^{\infty}(X\times X; \pr_{R}^{*}\Omega X)$ where 
$\Omega X$ is the density bundle on $X$ and $\pr_{R}: X\times X\to X$ is the projection on the right factor.   
\label{ideal.1}\end{proposition}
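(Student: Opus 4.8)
This proposition is a dual form of the Schwartz kernel theorem, and the plan is to obtain it by applying that theorem twice: once to $A$ itself and once to its putative extension. Throughout one uses that $\dot{\cC}^{\infty}(X)$, $\CI(X)$ and their density-twisted analogues are nuclear Fr\'echet--Montel spaces, hence reflexive; that $\cC^{-\infty}(X)=(\dot{\cC}^{\infty}(X;\Omega))'$ and $\cC^{-\infty}(X;\Omega)=(\dot{\cC}^{\infty}(X))'$ are their strong duals, again nuclear and reflexive, so in particular $(\cC^{-\infty}(X))'=\dot{\cC}^{\infty}(X;\Omega)$; and that for nuclear spaces $E,F$ with $E$ reflexive the continuous linear maps $E\to F$ are canonically the elements of $F\,\widehat{\otimes}\,E'$ (Schwartz/kernel theorem for nuclear spaces; see \cite{MelroseMWC} or standard references). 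The version producing $K_{A}$ is the identification $L(\dot{\cC}^{\infty}(X),\cC^{-\infty}(X))\cong\cC^{-\infty}(X)\,\widehat{\otimes}\,\cC^{-\infty}(X;\Omega)\cong\cC^{-\infty}(X\times X;\pr_{R}^{*}\Omega X)$, under which the kernel $K$ of an operator $A$ is the unique distribution with $\langle K,\phi\otimes\psi\rangle=\langle A\psi,\phi\rangle$ for all $\phi\in\dot{\cC}^{\infty}(X;\Omega)$ and $\psi\in\dot{\cC}^{\infty}(X)$ (uniqueness because finite sums of such $\phi\otimes\psi$ are dense in the test space).

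For the ``if'' direction, assume $K_{A}$ lies in $\dot{\cC}^{\infty}(X\times X;\pr_{R}^{*}\Omega X)\cong\dot{\cC}^{\infty}(X)\,\widehat{\otimes}\,\dot{\cC}^{\infty}(X;\Omega)$. Contracting its right-hand, density-valued factor against $v\in\cC^{-\infty}(X)=(\dot{\cC}^{\infty}(X;\Omega))'$ yields an element of $\dot{\cC}^{\infty}(X)$, and this contraction is a continuous linear map $\widetilde{A}\colon\cC^{-\infty}(X)\to\dot{\cC}^{\infty}(X)$, since the canonical pairing $E'\times(F\,\widehat{\otimes}\,E')\to F$ is continuous for nuclear $E,F$. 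For $\psi\in\dot{\cC}^{\infty}(X)$, viewed inside $\cC^{-\infty}(X)$ via the continuous inclusion, one has $\langle\widetilde{A}\psi,\phi\rangle=\langle K_{A},\phi\otimes\psi\rangle=\langle A\psi,\phi\rangle$ for all $\phi\in\dot{\cC}^{\infty}(X;\Omega)$, so $\widetilde{A}$ and $A$ have the same Schwartz kernel and therefore coincide on $\dot{\cC}^{\infty}(X)$; thus $\widetilde{A}$ is the desired extension.

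For the ``only if'' direction, suppose $A$ extends to a continuous map $\overline{A}\colon\cC^{-\infty}(X)\to\dot{\cC}^{\infty}(X)$. Applying the Schwartz kernel theorem to $\overline{A}$ in the reflexive nuclear setting recalled above, its kernel $K_{\overline{A}}$ lies in $\dot{\cC}^{\infty}(X)\,\widehat{\otimes}\,(\cC^{-\infty}(X))'=\dot{\cC}^{\infty}(X)\,\widehat{\otimes}\,\dot{\cC}^{\infty}(X;\Omega)=\dot{\cC}^{\infty}(X\times X;\pr_{R}^{*}\Omega X)$. Because $\overline{A}$ restricts to $A$ on $\dot{\cC}^{\infty}(X)$, the distribution $K_{\overline{A}}$ satisfies $\langle K_{\overline{A}},\phi\otimes\psi\rangle=\langle\overline{A}\psi,\phi\rangle=\langle A\psi,\phi\rangle$ for $\phi\in\dot{\cC}^{\infty}(X;\Omega)$ and $\psi\in\dot{\cC}^{\infty}(X)$; by the uniqueness noted above it equals $K_{A}$ once $\dot{\cC}^{\infty}(X\times X;\pr_{R}^{*}\Omega X)$ is mapped into $\cC^{-\infty}(X\times X;\pr_{R}^{*}\Omega X)$ by the natural inclusion. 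Hence $K_{A}\in\dot{\cC}^{\infty}(X\times X;\pr_{R}^{*}\Omega X)$, as claimed.

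The argument contains no serious analytic difficulty; the point demanding the most care is the functional-analytic bookkeeping --- verifying nuclearity and reflexivity of all the spaces with their duals computed correctly, keeping the density twists straight, and invoking the appropriate form of the Schwartz kernel theorem --- together with the careful use of ``extends'': it is precisely the fact that the extension agrees with $A$ on $\dot{\cC}^{\infty}(X)$ that forces the two kernels to be the same distribution on $X\times X$. One further mild subtlety, harmless because all spaces in sight are Montel, is which topology is meant on $\cC^{-\infty}(X)$ in the phrase ``continuous linear map''. If one prefers a hands-on substitute for the ``only if'' direction, one can instead test $\overline{A}$ against the normalised point masses $\delta_{y}^{\nu_{0}}\in\cC^{-\infty}(X)$ attached to a fixed nonvanishing density $\nu_{0}\in\CI(X;\Omega)$: since every test density in $\dot{\cC}^{\infty}(X;\Omega)$ vanishes to infinite order at $\partial X$, the assignment $y\mapsto\delta_{y}^{\nu_{0}}$ is smooth from $X$ into $\cC^{-\infty}(X)$ and vanishes to infinite order at $\partial X$, so $y\mapsto\overline{A}\delta_{y}^{\nu_{0}}$ is a smooth $\dot{\cC}^{\infty}(X)$-valued function on $X$ vanishing to infinite order at $\partial X$, which is exactly a section in $\dot{\cC}^{\infty}(X\times X;\pr_{R}^{*}\Omega X)$ and is readily checked to be $K_{A}$.
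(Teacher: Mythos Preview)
Your proof is correct and follows essentially the same route as the paper: the paper simply invokes the nuclear identification $\mathcal{L}(\cC^{-\infty}(X),\dot{\cC}^{\infty}(X))\cong \dot{\cC}^{\infty}(X;\Omega X)\,\widehat{\otimes}\,\dot{\cC}^{\infty}(X)\cong \dot{\cC}^{\infty}(X\times X;\pr_{R}^{*}\Omega X)$, citing Parenti and Tr\`eves, whereas you unpack both directions of this identification explicitly and verify that the kernel of the extension agrees with $K_A$ via the pairing. Your additional delta-function argument is a nice concrete alternative, but the core mechanism is the same nuclear kernel theorem used in the paper.
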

\begin{proof}
One proceeds as in the proof of Proposizione~1.2 in \cite{Parenti}.  Namely, it suffices to notice that if $\mathcal{L}( \cC^{-\infty}(X),
\dot{\cC}^{\infty}(X))$ denotes the space of continuous linear maps (with $\cC^{-\infty}(X)$ equipped with the strong dual topology), then (see
\cite{Treves})
\begin{equation*}
\begin{aligned}
    \mathcal{L}(\cC^{-\infty}(X),\dot{\cC}^{\infty}(X)) &\cong \dot{\cC}^{\infty}(X;\Omega X)\,  \widehat{\otimes} \, \dot{\cC}^{\infty}(X)   \\
    & \cong  \dot{\cC}^{\infty}(X\times X; \pr_{R}^{*}\Omega X).
 \end{aligned}   
\end{equation*}
\end{proof}
Let us denote by $\dot{\Psi}^{-\infty}_{\fS}(X)$ the space of operators with Schwartz kernel in $\dot{\cC}^{\infty}(X\times X; \pr_{R}^{*}\Omega X)$. 
From the definition of $\fS$-operators, it is clear that we have the identification $x^{\infty}\Psi^{-\infty}_{\fS}(X)= \dot{\Psi}^{-\infty}_{\fS}(X)$
where $x= \prod_{i=1}^{k} x_{i}$.  From Proposition~\ref{ideal.1}, we immediately obtain  the following.
\begin{corollary}
 For $A\in \dot{\Psi}^{-\infty}_{\fS}(X)$ and $B\in \Psi^{m}_{\fS}(X)$, we have
 \[
       AB\in \dot{\Psi}^{-\infty}_{\fS}(X),   \quad BA\in \dot{\Psi}^{-\infty}_{\fS}(X).  
 \]
\label{ideal.2}\end{corollary}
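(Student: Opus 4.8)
The plan is to reduce the statement to the composition behavior of Schwartz kernels and to invoke Proposition~\ref{ideal.1} twice: once to characterize the ideal $\dot{\Psi}^{-\infty}_{\fS}(X)$ in mapping terms, and once to conclude. First I would recall that by Proposition~\ref{ideal.1}, membership of an operator in $\dot{\Psi}^{-\infty}_{\fS}(X)$ is equivalent to the statement that the operator maps $\dot{\cC}^{\infty}(X)\to\cC^{-\infty}(X)$ continuously \emph{and also} maps $\cC^{-\infty}(X)\to\dot{\cC}^{\infty}(X)$ continuously; in fact, since the Schwartz kernel is smooth and rapidly decaying, such an operator is regularizing and maps $\cC^{-\infty}(X)$ all the way into $\dot{\cC}^{\infty}(X)$. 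So it suffices to check that $AB$ and $BA$ have this double mapping property.

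For $AB$ with $A\in\dot{\Psi}^{-\infty}_{\fS}(X)$ and $B\in\Psi^{m}_{\fS}(X)$: by Proposition~\ref{asp.6}, $B$ maps $\dot{\cC}^{\infty}(X)\to\dot{\cC}^{\infty}(X)$ continuously (take $A=M_1 X$ there) and, by the last part of that proposition, extends to a continuous map $\dot{\cC}^{-\infty}(X)\to\dot{\cC}^{-\infty}(X)$. Meanwhile $A$ maps $\cC^{-\infty}(X)\to\dot{\cC}^{\infty}(X)$ continuously and also $\dot{\cC}^{-\infty}(X)\to\dot{\cC}^{\infty}(X)$ by Proposition~\ref{ideal.1}. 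Composing, $AB: \dot{\cC}^{-\infty}(X)\xrightarrow{B}\dot{\cC}^{-\infty}(X)\xrightarrow{A}\dot{\cC}^{\infty}(X)$ is continuous, and in particular restricts to a continuous map $\dot{\cC}^{\infty}(X)\to\cC^{-\infty}(X)$ and sends $\cC^{-\infty}(X)$ into $\dot{\cC}^{\infty}(X)$ (using that $\cC^{-\infty}(X)\subset\dot{\cC}^{-\infty}(X)$). Hence, by Proposition~\ref{ideal.1}, the Schwartz kernel of $AB$ lies in $\dot{\cC}^{\infty}(X\times X;\pr_R^{*}\Omega X)$, i.e. $AB\in\dot{\Psi}^{-\infty}_{\fS}(X)$. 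For $BA$ one argues dually: $A$ maps $\dot{\cC}^{\infty}(X)\to\dot{\cC}^{\infty}(X)$ and $B$ maps $\dot{\cC}^{\infty}(X)\to\dot{\cC}^{\infty}(X)$; and passing to formal adjoints (which, as noted in the proof of Proposition~\ref{asp.6}, preserve the classes $\Psi^{m}_{\fS}$ and $\dot{\Psi}^{-\infty}_{\fS}$) reduces the $\cC^{-\infty}(X)\to\dot{\cC}^{\infty}(X)$ statement for $BA$ to the one already established for $A^{*}B^{*}$, which is of the form $\dot{\Psi}^{-\infty}_{\fS}\cdot\Psi^{m}_{\fS}$.

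The only genuinely delicate point is bookkeeping with the various distribution spaces $\dot{\cC}^{\pm\infty}$, $\cC^{\pm\infty}$ and their dualities: one must be careful that $B$ really does act on the \emph{extendible} distributions $\dot{\cC}^{-\infty}(X)$ (this is exactly the last clause of Proposition~\ref{asp.6} with $A=M_1 X$) and that $A$, being regularizing with kernel in $\dot{\cC}^{\infty}(X\times X;\pr_R^{*}\Omega X)$, absorbs the loss and lands back in $\dot{\cC}^{\infty}(X)$. Once the mapping diagram is set up correctly, the conclusion is immediate from Proposition~\ref{ideal.1}, so I would present this essentially as a one-line composition-of-maps argument together with the adjoint trick for the second half; no estimates or local coordinate computations are needed.
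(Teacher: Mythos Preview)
Your approach is exactly what the paper intends (it simply says the corollary follows ``immediately'' from Proposition~\ref{ideal.1}), and the overall plan of composing mapping properties and invoking Proposition~\ref{ideal.1} is correct. There is, however, one slip in the bookkeeping you flagged as delicate.

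You write that $\cC^{-\infty}(X)\subset\dot{\cC}^{-\infty}(X)$, but in the paper's conventions the inclusion goes the other way: $\dot{\cC}^{-\infty}(X)$ is the dual of $\CI(X;\Omega)$ (supported distributions) and $\cC^{-\infty}(X)$ is the dual of $\dot{\cC}^{\infty}(X;\Omega)$ (extendible distributions), and it is the supported distributions that embed into the extendible ones. So your chain $AB:\dot{\cC}^{-\infty}(X)\to\dot{\cC}^{\infty}(X)$ does not by restriction give the required $AB:\cC^{-\infty}(X)\to\dot{\cC}^{\infty}(X)$. The fix is immediate: apply Proposition~\ref{asp.6} with the subset $A=\emptyset$ (rather than $A=M_1X$) in its distributional clause, which gives $B:\dot{\cC}^{-\infty}_{\emptyset}(X)\to\dot{\cC}^{-\infty}_{\emptyset}(X)$, i.e.\ $B:\cC^{-\infty}(X)\to\cC^{-\infty}(X)$; then compose with $A:\cC^{-\infty}(X)\to\dot{\cC}^{\infty}(X)$ from Proposition~\ref{ideal.1} and conclude.

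For $BA$ the adjoint trick is fine, but the direct argument is even shorter and avoids the detour: $A:\cC^{-\infty}(X)\to\dot{\cC}^{\infty}(X)$ by Proposition~\ref{ideal.1}, then $B:\dot{\cC}^{\infty}(X)\to\dot{\cC}^{\infty}(X)$ by Proposition~\ref{asp.6} (with $A=M_1X$), so $BA:\cC^{-\infty}(X)\to\dot{\cC}^{\infty}(X)$, and Proposition~\ref{ideal.1} applies again.
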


\section{Suspended $\fS$-operators} \label{sus.0}

Before describing the symbol maps associated to $\fS$-operators, we first need to discuss how to suspend them in the sense of \cite{Melrose_eta}.  To
this end, let $(X,\pi)$ be a manifold with fibred corners and let $H_{1},\ldots, H_{k}$ be its boundary hypersurfaces with corresponding boundary
defining functions $x_{1},\ldots, x_{k}$.  Let $V$ be a Euclidean vector space, that is, a finite dimensional real vector space with inner product
$\langle\cdot,\cdot\rangle_{V}$.  Consider on $V$ the function
\[
    \rho_{V}(v)= (1+ \langle v,v\rangle_{V})^{-\frac12}, \quad v\in V.
\]   
Let $\overline{V}$ be the radial compactification of $V$ as defined in \cite{MelroseGST}, so that $\rho_{V}$ extends to be a boundary defining
function for $\pa\overline{V}\subset \overline{V}$.  We can regard $\overline{V}$ as a manifold with fibred corners, the fibration on the boundary
being given by the identity map $\Id:\pa \overline{V}\to \pa\overline{V}$.   We can get a new manifold with fibred corners $(\overline{V}\times X,
\varpi)$ by taking the Cartesian product of $\overline{V}$ and $X$.  The iterated fibration structure $\varpi$ of $\overline{V}\times X$ is naturally
induced from those of $\overline{V}$ and $X$ as follows.  The fibration $\varpi_{0}$ on  the boundary hypersurface $Z_{0}= \pa\overline{V}\times X$ is
given by the projection on $\pa \overline{V}$, while the fibration of the boundary hypersurface $Z_{i}= \overline{V}\times H_{i}$ is given by
$\varpi_{i}=\Id\times \pi_{i}$.  The partial order on the boundary hypersurfaces of $\overline{V}\times X$ is specified by requiring that for all
$i,j\in \{1,\ldots,k\}$,  
\begin{equation}
    Z_{0}< Z_{i},  \quad   Z_{i}< Z_{j}\;  \Longleftrightarrow   H_{i} < H_{j}.
\label{sus.1}\end{equation}
Finally, the boundary defining function of $Z_{i}= \overline{V}\times H_{i}$ is taken to be the pullback of $x_{i}$ to $\overline{V}\times X$, while
we choose the boundary defining function $x_{0}$ of $Z_{0}$ to be the pullback of $\rho_V$ to $\overline{V}\times X$.   

Let $E$ and $F$ be smooth complex vector bundles on $\overline{V}\times X$ obtained by pulling back complex vector bundles on $X$ to
$\overline{V}\times X$.  Consider then the space 
$\Psi^{m}_{\fS}(\overline{V}\times X;E,F)$ of $\fS$-operators of order $m$ acting from sections of $E$ to sections of $F$.  From the previous section,
we know that an operator $P\in \Psi^{m}_{\fS}(\overline{V}\times X;E,F)$ induces a continuous linear map
\begin{equation}
    P: \cS(V\times X;E) \to \cS(V\times X; F)
\label{sus.2}\end{equation}
where $\cS(V\times X;E) = \dot{\cC}^{\infty}_{Z_{0}}(\overline{V}\times X;E)$ is the space of smooth sections of $E$ vanishing with all their
derivatives at the boundary hypersurface $Z_{0}= \pa \overline{V}\times X$,  and  similarly $\cS(V\times X;F) =
\dot{\cC}^{\infty}_{Z_{0}}(\overline{V}\times X;F)$.  Given $v\in V$, consider the diffeomorphism 
\begin{equation}
 \begin{array}{lrcl}
   T_{v}: & V\times X & \to & V\times X  \\
              & (w,p) & \mapsto & (w+v,p)
 \end{array}
\label{sus.3}\end{equation}
obtained by translating by $v$.  Since $E$ is the pullback of a vector bundle defined on $X$, we have a corresponding action 
\begin{equation}
 \begin{array}{lrcl}
   T_{v}^{*}: & \cS(V\times X;E) & \to & \cS(V\times X;E)  \\
              & \psi & \mapsto & \psi\circ T_{v}
 \end{array}
\label{sus.4}\end{equation}
For the same reason, we have an action $T_{v}^{*}: \cS(V\times X;F)\to \cS(V\times X;F)$.  
\begin{definition}
The space $\Psi^{m}_{\fS-\sus(V)}(X;E,F)$ of \textbf{ $V$-suspended $\fS$-operators of order $m$ on $X$} acting from sections of $E$ to sections of
$F$ is the subspace  of operators $P$ in 
$\Psi^{m}_{\fS}(\overline{V}\times X;E,F)$  such that for all $v\in V$, 
\[
      T_{-v}^{*} \circ P\circ T_{v}^{*}= P.  
\]
When $V=\bbR^{p}$, we use the notation $\Psi^{m}_{\fS-\sus(p)}(X;E,F)= \Psi^{m}_{\fS-\sus(\bbR^{p})}(X;E,F)$ and say the corresponding operators are
$p$-suspended.  
\label{sus.5}\end{definition}

In terms of the Schwartz kernel $K_{P}$ seen as a distribution on $V^{2}\times X^{2}$, the translation invariance in this definition means that for
all $v\in V$, 
\begin{equation}
     T_{(v,v)}^{*}K_{P}= K_{P}
\label{sus.6}\end{equation}
where $T_{(v,v)}$ is the diffeomorphism
\begin{equation}
   \begin{array}{lrcl}
   T_{v}: & V^{2}\times X^{2} & \to & V^{2}\times X^{2}  \\
              & (w,w',p,p') & \mapsto & (w+v, w'+v,p, p').
 \end{array}
 \label{sus.7}\end{equation}
If 
\begin{equation}
      \begin{array}{lrcl}
   a: & V^{2} & \to & V  \\
              & (v,v') & \mapsto & (v-v')
 \end{array}\label{sus.8}\end{equation}
denotes the projection onto the anti-diagonal of $V^{2}$, this means that $K_{P}$ is the pullback via the map $a\times \Id: V^{2}\times X^{2}\to
V\times X^{2}$ of a distribution on $V\times X^{2}$.  

To accurately describe this distribution, notice first that parallel transport with respect to the Euclidean metric on $V$ gives a canonical
identification of vector bundles $TV=V\times T_{0}V=V\times V$ extending naturally to a trivialization
\begin{equation}
    {}^{\Id}T\overline{V}\cong \overline{V}\times V.
\label{sus.9}\end{equation}  
Using this identification and Corollary~\ref{ds.9}, one can see that the linear isomorphism 
\begin{equation}
    \begin{array}{lrcl}
   L: & V\times V & \to & V\times V  \\
              & (v',w) & \mapsto & (v'+w,v')
 \end{array}
 \label{sus.10}\end{equation}
naturally extends to give an identification  ${}^{\Id}T\overline V \cong \cG^{(1)}_{\Id}(\overline{V})$ of non-compact manifolds with
boundary, where
\[
\cG^{(1)}_{\Id}(\overline{V})= (\overline{V}^{2}_{\Id}\setminus \pa \overline{V}^{2}_{\Id} ) \cup (\ff_{\Id}\setminus \pa \ff_{\Id})
\]
is the Lie groupoid associated to $\overline{V}^{2}_{\Id}$.  Since $a\circ L(v',w)=w$,  this means the map $a$ can be extended  to a map 
\begin{equation}
\overline{a}: \cG^{(1)}_{\Id}(\overline{V})\to V
\label{sus.11}\end{equation}
  by composing the identification $\cG_{\Id}^{(1)}(\overline{V})\cong {}^{\Id}T\overline{V}\cong \overline{V}\times V$ with the projection $\pr_{2}:
\overline{V}\times V\to V$ on the second factor.

On the other hand, the $\varpi$-double space is naturally given by 
\begin{equation}
    (\overline{V}\times X)^{2}_{\varpi} = \overline{V}^{2}_{\Id}\times X^{2}_{\pi},
\label{sus.12}\end{equation} 
where $\overline{V}^{2}_{\Id}$ is the $\Id$-double space of the manifold with fibred boundary $\overline{V}$.  Consider then the map  
\begin{equation}
     \alpha=   \overline{a}\times \Id: \cG^{(1)}_{\Id}(\overline{V})\times X^{2}_{\pi}\to V\times X^{2}_{\pi}.
\label{sus.13}\end{equation}
In terms of this map, the translation invariance condition in Definition~\ref{sus.5} means that as a distribution on $\cG^{(1)}_{\Id}(\overline{V})\times
X^{2}_{\pi}$, the Schwartz kernel of a $V$-suspended $\fS$-operator is the pullback of a distribution on $V\times X^{2}_{\pi}$.  More precisely, we
have obtained the following.

\begin{lemma}
The space of Schwartz kernels of $V$-suspended $\fS$-operators of order $m$ acting from sections of $E$ to sections of $F$ is given by 
\begin{multline*}
\Psi^{m}_{\fS-\sus(V)}(X;E,F)= \{  \alpha^{*} K\; ; \; K\in I^{m}(\overline{V}\times X^{2}_{\pi}; \{0\}\times \Delta_{\pi}; \cV ), \\
K\equiv 0 \; \mbox{at} \; (\overline{V}\times \pa X^{2}_{\pi}\setminus \ff_{\pi})\cup (\pa\overline{V}\times X^{2}_{\pi}) \},   
\end{multline*}
where $\cV= \pr_{2}^{*}(\beta_{\pi}^{*}\Hom(E,F)\otimes \pi_{R}^{*}{}^{\pi}\Omega)\otimes \pr_{1}^{*}{}^{\Id}\Omega $ with $\pr_{1}:
\overline{V}\times X^{2}_{\pi}\to \overline{V}$ and $\pr_{2}: \overline{V}\times X^{2}_{\pi}\to X^{2}_{\pi}$ the natural projections.   From that
perspective, the action of an operator $P\in \Psi^{m}_{\fS-\sus(V)}(X;E,F)$ on a section
$u\in \cS(V\times X;E)$ is given by
\begin{equation*}
    Pu=  (\varpi_{L})_{*} ( \alpha^{*}K_{P} \cdot \varpi^{*}_{R}u),  
\end{equation*}
where $\varpi_{L}$ and $\varpi_{R}$ are the analog of the maps \eqref{diff.1} for the manifold with fibred corners $\overline{V}\times X$.  \label{sus.14}\end{lemma}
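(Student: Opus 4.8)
The plan is to unwind Definition~\ref{sus.5} directly: a $V$-suspended operator of order $m$ is an element $P$ of $\Psi^m_\fS(\overline V\times X;E,F)$ whose Schwartz kernel $K_P$ satisfies the translation invariance \eqref{sus.6}, and the goal is to show that this invariance is equivalent to $K_P=\alpha^*K$ with $K$ as described. First I would use the identification $(\overline V\times X)^2_\varpi=\overline V^2_{\Id}\times X^2_\pi$ of \eqref{sus.12}, under which $\Delta_\varpi=\Delta_{\Id}\times\Delta_\pi$ and $\ff_\varpi=(\ff_{\Id}\times X^2_\pi)\cup(\overline V^2_{\Id}\times\ff_\pi)$. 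By \eqref{ds.8}, $K_P$ is then a conormal distribution of order $m$ at $\Delta_{\Id}\times\Delta_\pi$ whose Taylor series vanishes at $\overline V^2_{\Id}\times(\pa X^2_\pi\setminus\ff_\pi)$ and at the lifts of the two old boundary faces of $\overline V^2$ coming from $\pa\overline V$, the latter vanishing being precisely Schwartz behaviour in the two $V$-variables. Since translation by $v\in V$ acts on $K_P$ only through $T^*_{(v,v)}$, the invariance condition says that $K_P$ is the pullback under $a\times\Id\colon V^2\times X^2_\pi\to V\times X^2_\pi$ of a distribution on $V\times X^2_\pi$; on the interior this is just that a $V$-invariant distribution on $V^2$ descends to $V^2/V\cong V$.

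Next I would transport this to the compactified picture using the discussion preceding the lemma. There the map $a$ is extended, through $\cG^{(1)}_{\Id}(\overline V)\cong{}^{\Id}T\overline V\cong\overline V\times V$ and projection onto the second factor, to a proper $b$-fibration $\overline a\colon\cG^{(1)}_{\Id}(\overline V)\to V$ with fibre $\overline V$, so that $\alpha=\overline a\times\Id$ is a proper $b$-fibration. Under $\cG^{(1)}_{\Id}(\overline V)\cong\overline V\times V$ the lifted diagonal $\Delta_{\Id}$ becomes $\overline V\times\{0\}$, hence $\alpha^{-1}(\{0\}\times\Delta_\pi)$ is exactly $\Delta_\varpi$ inside $\cG^{(1)}_{\Id}(\overline V)\times X^2_\pi$; and since a $\fS$-kernel on $\overline V\times X$ has vanishing Taylor series off $\ff_\varpi$, it is recovered from its restriction to $\cG^{(1)}_{\Id}(\overline V)\times X^2_\pi$, so the whole analysis takes place there. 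Pullback along the $b$-fibration $\alpha$ is injective on distributions, with image exactly the fibre-constant ones, and it identifies $I^m(\overline V\times X^2_\pi;\{0\}\times\Delta_\pi;\cV)$ with the fibre-constant conormal distributions of order $m$ at $\Delta_\varpi$ valued in $\beta_\varpi^*\Hom(E,F)\otimes\varpi_R^*{}^\varpi\Omega$; matching the remaining vanishing conditions — infinite-order vanishing at the old $\pa\overline V$-faces corresponding to vanishing of $K$ at $\pa\overline V\times X^2_\pi$, and vanishing at $\overline V^2_{\Id}\times(\pa X^2_\pi\setminus\ff_\pi)$ corresponding to vanishing of $K$ at $\overline V\times(\pa X^2_\pi\setminus\ff_\pi)$ — then yields the inclusion of $\Psi^m_{\fS-\sus(V)}(X;E,F)$ in the set on the right-hand side of the asserted identity. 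For the reverse inclusion, $\alpha^*K$ is manifestly translation invariant; it is conormal of order $m$ at $\Delta_\varpi$ because $\alpha$ is transversal to $\{0\}\times\Delta_\pi$, and it extends to all of $(\overline V\times X)^2_\varpi$ with vanishing Taylor series off $\ff_\varpi$ precisely because $K$ is Schwartz in the $V$-variable (forcing decay towards the old $\pa\overline V$-faces) and vanishes at $\overline V\times(\pa X^2_\pi\setminus\ff_\pi)$; hence $\alpha^*K\in\Psi^m_{\fS-\sus(V)}(X;E,F)$.

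For the action formula, I would simply specialize the general action of $\fS$-operators through the push-forward map \eqref{asp.4} — justified by Lemma~\ref{asp.3} and Lemma~\ref{asp.2} — to the manifold with fibred corners $\overline V\times X$, with $\varpi_L,\varpi_R$ in place of $\pi_L,\pi_R$. This gives $Pu=(\varpi_L)_*(K_P\cdot\varpi_R^*u)=(\varpi_L)_*(\alpha^*K_P\cdot\varpi_R^*u)$, and one checks that the result lies in $\cS(V\times X;F)$: for $u\in\cS(V\times X;E)=\dot{\cC}^\infty_{Z_0}(\overline V\times X;E)$ the pullback $\varpi_R^*u$ vanishes to infinite order at the face of $(\overline V\times X)^2_\varpi$ over $Z_0$ in the right factor, while $K_P$ is Schwartz in the difference variable, so that the $b$-fibration $\varpi_L$ pushes the product forward to a section that decays rapidly in $V$.

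The step I expect to be the main obstacle is the density bookkeeping in the second paragraph: following the various density line bundles through the blow-downs and through pullback along $\alpha$, and checking that this produces exactly the twist $\cV$ of the statement — the extra factor $\pr_1^*{}^{\Id}\Omega$ arising as the fibre-density contribution of $\overline a$ — while preserving the conormal order $m$ and matching all the vanishing conditions on the nose. The geometry itself is already packaged in \eqref{sus.12} and the identifications recalled before the lemma, so the real content is this bookkeeping rather than any new idea.
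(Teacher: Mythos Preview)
Your proposal is correct and follows essentially the same approach as the paper. In fact the paper does not give a separate proof of this lemma: the preceding discussion (the identification \eqref{sus.12}, the translation invariance \eqref{sus.6}--\eqref{sus.7}, the extension $\overline a$ in \eqref{sus.11}, and the definition of $\alpha$ in \eqref{sus.13}) is summarized by ``we have obtained the following'' and the lemma is simply stated as its outcome; your write-up is a more explicit unwinding of that same discussion, with the density bookkeeping you flag at the end being the only point the paper leaves implicit.
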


Seen as a distribution on $V\times X^{2}_{\pi}$, it is possible to take the Fourier transform in the $V$-factor of the Schwartz kernel $K_{P}$ of a
$V$-suspended $\fS$-operator $P$,
\begin{equation}
   K_{\widehat{P}}(\Upsilon) = \int_{V} e^{-i\Upsilon\cdot v} \; K_{P}(v), \quad \Upsilon\in V^{*}.  
\label{sus.15}\end{equation}
We will call $\Upsilon\in V^{*}$ the \textbf{suspension parameter}. 
This gives for each $\Upsilon\in V^{*}$ the Schwartz kernel $K_{\widehat{P}}(\Upsilon)$ of 
 a $\fS$-operator $\widehat{P}(\Upsilon)\in \Psi^{m}_{\fS}(X;E,F)$.  Similarly, if $\nu$ denotes the translation invariant density on $V$ associated
to our choice of inner product 
$\langle\cdot,\cdot\rangle_{V}$, then we can define the Fourier transform 
\begin{equation}
   \cF_{E}: \cS(V\times X;E) \to \cS(V^{*}\times X;E)
\label{sus.16}\end{equation}
  by
  \begin{equation}
   \widehat{u}(\Upsilon)= \cF_{E}(u)(\Upsilon)= \int_{V} e^{-i\Upsilon\cdot v} \; u(v) \nu, \quad 
   \Upsilon\in V^{*},
  \label{sus.17}\end{equation}
with inverse Fourier transform given by
\begin{equation}
  u(v)= \cF^{-1}_{E}(\widehat{u})(v)= \frac{1}{(2\pi)^{\dim V}} \int_{V^{*}} e^{i \Upsilon\cdot v}\; 
  \widehat{u}(\Upsilon) \nu^{*},
\label{sus.18}\end{equation}
 where $\nu^{*}$ is the density on $V^{*}$ dual to $\nu$.  With these definitions, we have as expected that the action of 
 $P$ on $\cS(V\times X;E)$ can be described by
 \begin{equation}
   \widehat{Pu}(\Upsilon) = \widehat{P}(\Upsilon)\widehat{u}(\Upsilon), \quad \forall \Upsilon \in V^{*}.
 \label{sus.19}\end{equation}   
  In other words, the Fourier transform of $P$ is given by
  \begin{equation}
    \widehat{P}= \cF_{F}\circ P \circ \cF^{-1}_{E}.
  \label{sus.20}\end{equation}  
  If $Q\in \Psi^{m}_{\fS-\sus(V)}(X;G,E)$ is another $V$-suspended operator, where $G$ is a complex vector bundle on $\overline{V}\times X$ given by
the pullback of a complex vector bundle on $X$, then we have in particular that 
  \begin{equation}
    \widehat{P\circ Q}(\Upsilon)= \widehat{P}(\Upsilon) \circ \widehat{Q}(\Upsilon).
  \label{sus.21}\end{equation}  
 That is, under the Fourier transform, the convolution product in the $V$-factor becomes pointwise composition.  Since an operator $P$ can be
recovered from $\widehat{P}$ by taking the inverse Fourier transform, we see that $\widehat{P}$ completely describes the operator $P$.  It is important
however to notice that the Fourier transform of an operator $P\in \Psi^{m}_{\fS-\sus(V)}(X;E,F)$ is \textbf{not} an arbitrary smooth family of $\fS$-operators. 
For instance, as can be readily seen by taking the Fourier transform of $K_{P}$ in directions conormal to $\Delta_{\pi}\subset X^{2}_{\pi}$, me
must have that 
 \begin{equation}
   (D_{\Upsilon}^{\alpha} \widehat{P})(\Upsilon)\in \Psi^{m-|\alpha|}_{\fS}(X;E,F), \quad 
   \forall \alpha\in \bbN^{\dim V}_{0}, \; \forall \Upsilon\in V^{*}.
 \label{sus.22}\end{equation}      
 For operators of order $-\infty$, we can completely characterize the image of the Fourier transform.  It is given by smooth families of
$\fS$-operators
 \[
        V^{*} \ni \Upsilon \mapsto  \widehat{P}(\Upsilon)\in \Psi^{-\infty}_{\fS}(X;E,F)
 \]    
such that for any Fr\'echet semi-norm $\|\cdot \|$ of the space $\Psi^{-\infty}_{\fS}(X;E,F)$, we have
\begin{equation}
  \sup_{\Upsilon}  \|  \Upsilon^{\alpha}D^{\beta}_{\Upsilon} \widehat{P} \|< \infty, \quad \forall \alpha,\beta\in \bbN_{0}^{\dim V}.
\label{sus.23}\end{equation}    
For operators of order $m\in\bbR$, one has more generally that for any Fr\'echet semi-norm $\|\cdot\|$ of $\Psi^{m}_{\fS}(X;E,F)$, the Fourier
transform $\widehat{P}$ of a suspended operator $P\in \Psi^{m}_{\fS-\sus(V)}(X;E,F)$ must satisfy 
\begin{equation}
\sup_{\Upsilon}  \|  (1+|\Upsilon|^{2})^{\frac{|\alpha|-m}{2}} D^{\alpha}_{\Upsilon}\widehat{P}\| <\infty  \quad \forall \alpha\in \bbN^{\dim V}_{0}.
\label{sus.24}\end{equation}   
 In this latter case however,  these conditions are not sufficient to fully characterize the image of the Fourier transform.

The discussion above has a straightforward generalizations to families.  Namely, consider a fibration 
\begin{equation}
\xymatrix{  F  \ar@{-}[r] &   H  \ar[d]^{\phi} \\
                & S
}
\label{sus.25}\end{equation}
where $S$ is a manifold with corners and where the fibres are manifolds with fibred corners.  We suppose that the fibration is locally trivial in the
sense that for each $s\in S$, there is a neighborhood $\cU$ of $s$, a manifold with fibred corners $F$ and a diffeomorphism $\psi: \phi^{-1}(\cU)\to
\cU\times F$ inducing a commutative diagram
\begin{equation}
\xymatrix{  \phi^{-1}(\cU) \ar[rr]^{\psi} \ar[rd]^{\phi} &  &  \cU\times F \ar[ld]^{\pr_{\cU}} \\
                & \cU &
}
\label{sus.26}\end{equation}    
such  that for all $u\in \cU$, the restriction
\[
    \psi: \phi^{-1}(u)\to \{u\} \times F
\]
is a diffeomorphism of manifold with fibred corners.   
For such a fibration, we can consider the space of fibrewise $\fS$-operators of order $m$ 
$\Psi^{m}_{\fS}(H/S;E,F)$ where $E$ and $F$ are smooth complex vector bundles on $H$.  If moreover $V\to S$ is a smooth Euclidean vector bundle, that
is, a smooth real vector bundle equipped with a fibrewise inner product, we can then consider the space of fibrewise $V$-suspended $\fS$-operators 
$\Psi^{m}_{\fS-\sus(V)}(H/S;E,F)$.  Thus, an operator $P\in \Psi^{m}_{\fS-\sus(V)}(H/S;E,F)$ is a smooth family 
\[
   S\ni s \mapsto P_{s}\in \Psi^{m}_{\fS-\sus(V_{s})}(\phi^{-1}(s);E,F)
\]
of fibrewise $V$-suspended $\fS$-operators, where $V_{s}$ is the fibre of $V$ above $s\in S$.    

\section{Symbol Maps}\label{sm.0}

As for other calculi of pseudodifferential operators on singular spaces, various symbol maps can be defined.  The ordinary symbol map can be defined
in terms of the principal symbol map for conormal
distributions introduced by H\"ormander (see Theorem 18.2.11 in \cite{Hormander3}),
\begin{equation}
\xymatrix{
    I^{m}(Y,Z;\Omega_{Y}^{\frac{1}{2}}) \ar[r]^-{\sigma_{m}} & S^{[M]}(N^{*}Z; \Omega^{\frac{1}{2}}(N^{*}Z))
}    
\label{sm.1}\end{equation}
with $M= m-\frac{1}{4}\dim Y+ \frac{1}{2}\dim Z$ where $\phi:N^{*}Z\to Z$ is the natural projection and
\begin{equation}
  S^{[M]}(N^{*}Z)= S^{M}(N^{*}Z)/ S^{M-1}(N^{*}Z),
\label{sm.2}\end{equation}
where $S^{M}(N^{*}Z)$ is the usual space of functions $\psi\in \CI(N^{*}Z)$ such that in a local trivialization $\left. N^{*}Z\right|_{\cU} \cong
\cU\times \bbR^{n}_{\xi}$ with local variable $u$ in $\cU$, 
\begin{equation}
   \sup_{u,\xi} \frac{ | D^{\alpha}_{u} D^{\beta}_{\xi} \psi |}{ (1+|\xi|^{2})^{ \frac{M-|\beta|}{2}}} < \infty
    \quad \forall \; \alpha, \beta\in \bbN_{0}^{n}.
\label{sm.3}\end{equation}
In our case, $Y=X^{2}_{\pi}$ and $Z=\Delta_{\pi}$.  By Corollary~\ref{ds.9}, $N^{*}\Delta_{\pi}\cong {}^{\pi}T^{*}X$.  Since ${}^{\pi}\Omega_{R}$ is
naturally isomorphic to ${}^{\pi}\Omega_{L}^{\frac12}\otimes {}^{\pi}\Omega^{\frac{1}{2}}_{R}$ when restricted to the diagonal and since the singular
symplectic form of 
${}^{\pi}T^{*}X$ provides a natural trivialization of $\Omega({}^{\pi}T^{*}X)$, we get a map 
\begin{equation}
\xymatrix{
  \Psi^{m}_{\fS}(X;E,F) \ar[r]^-{\sigma_{m}} & S^{[m]}({}^{\pi}T^{*}X; \phi^{*}\Hom(E,F))
 } 
\label{sm.3}\end{equation}
inducing a short exact sequence
\begin{equation}
\xymatrix @C=1.5pc{
  0 \ar[r] & \Psi^{m-1}_{\fS}(X;E,F) \ar[r] &  \Psi^{m}_{\fS}(X;E,F) \ar[r]^-{\sigma_{m}} & S^{[m]}({}^{\pi}T^{*}X; \phi^{*}\Hom(E,F)) \ar[r] & 0.
 } 
\label{sm.4}\end{equation}
Here, $\phi: {}^{\pi}T^{*}X\to X$ is the bundle projection.  When we consider instead polyhomogeneous pseudodifferential operators of degree $m$, the
principal symbol is a homogeneous section of degree $m$ on
${}^{\pi}T^{*}X\setminus \{0\}$, so it defines a map
\begin{equation}
\xymatrix{   
\Psi^{m}_{\fS-\phg}(X;E,F) \ar[r]^-{\sigma_{m}}  & \CI( {}^{\pi}S^{*}X; \Lambda^{m}\otimes 
   \phi^{*}\Hom(E,F)) 
}
\label{sm.5}\end{equation}
where $\Lambda$ is the dual of the tautological real line bundle of ${}^{\pi}S^{*}X$.

\begin{definition}
An operator $P\in \Psi^{m}_{\fS}(X;E,F)$ is \textbf{elliptic} if its principal symbol $\sigma_{m}(P)$ is invertible.  
\label{ell.1}\end{definition}

To study the asymptotic behavior of $\fS$-operators at each boundary hypersurface, it is also useful to introduce other symbols, that is, normal
operators in the terminology of \cite{Mazzeo-Melrose}.   Those additional symbol maps are defined by restricting the Schwartz kernel of the operator to the various front faces,
\begin{equation}
   \sigma_{\pa_{i}}: \Psi^{m}_{\fS}(X;E,F)\to \Psi^{m}_{\ff_{\pi_{i}}}(H_{i};E,F)
\label{sm.6}\end{equation}
with 
\begin{multline}
 \Psi^{m}_{\ff_{\pi_{i}}}(H_{i};E,F)= \left\{ K\in I^{m}(\ff_{\pi_{i}},\Delta_{\ff_{\pi_{i}}}; \beta_{\pi}^{*}(\Hom(E,F))\otimes
\left.\pi_{R}^{*}{}^{\pi}\Omega\right|_{\ff_{\pi_{i}}}) ;   \right .\\
\left. K\equiv 0 \; \mbox{at} \; (\pa \ff_{\pi_{i}} \cap \pa\ff_{\pi}) \right\},
\label{sm.7}\end{multline}
where $\Delta_{\ff_{\pi_{i}}}= \ff_{\pi_{i}}\cap \Delta_{\pi}$.  
The symbol map $\sigma_{\pa_{i}}$ clearly induces a short exact sequence
\begin{equation}
\xymatrix{
0 \ar[r] &  x_{i}\Psi^{m}_{\fS}(X;E,F)\ar[r] & \Psi^{m}_{\fS}(X;E,F)
\ar[r]^-{\sigma_{\pa_{i}}}   & \Psi^{m}_{\ff_{\pi_{i}}}(H_{i};E,F)\ar[r] & 0,   }
\label{sm.7b}\end{equation}
where $x_{i}$ is the boundary defining function of $H_{i}$.
\begin{remark}
Since $\beta_{\pi}^*( \frac{x_i}{x_i'})$ is equal to $1$ on $\ff_{\pi_i}$, notice that for $z\in \bbC$, 
\[
    P\in \Psi^m_{\fS}(X;E,F)\; \Longrightarrow \; P_{i,z}:=x_i^z Px_i^{-z}\in \Psi^m_{\fS}(X;E,F)\quad \mbox{with} 
     \quad \sigma_{\pa_i}(P_{i,z})= \sigma_{\pa_i}(P).
\]
\label{notw.1}\end{remark}

Clearly, the space $\mathcal{G}^{(1)}_{\ff_{\pi_{i}}}= \ff_{\pi_{i}}\setminus( \pa\ff_{\pi_{i}}\cap \pa\ff_{\pi})$ has
a natural Lie groupoid structure induced from the one of $\mathcal{G}^{(1)}_{\pi}$ with units given
by $\mathcal{G}^{(0)}_{\ff_{\pi_{i}}}= \Delta_{\ff_{\pi_{i}}}$.  The conormal distributions in
$\Psi^{m}_{\ff_{\pi}}(X)$ which have compact support on $\mathcal{G}^{(1)}_{\ff_{\pi_{i}}}$ can
be understood as elements of the algebra $\Psi^{*}(\mathcal{G}^{(1)}_{\ff_{\pi_{i}}})$ of 
pseudodifferential operators associated to the Lie groupoid $\mathcal{G}^{(1)}_{\ff_{\pi_{i}}}$.

The space $\Psi^{m}_{\ff_{\pi_{i}}}(H_{i};E,F)$ can also be interpreted as a space of suspended
$\fS$-operators.  To see this, notice that since the fibres of the fibration $\pi_{i}: H_{i}\to S_{i}$ are naturally manifolds with fibre corners with
typical fibre $F_{i}$ having iterated fibration structure $\pi_{F_{i}}$,  we can form the fibrewise $\pi_{F_{i}}$-double space
\begin{equation}
\xymatrix{  (F_{i})^{2}_{\pi_{F_{i}}} \ar@{-}[r] &   (H_{i}\times_{\pi_{i}}H_{i})_{\pi_{F_{i}}}  \ar[d]\\
                & S_{i}.
}
\label{sm.7bb}\end{equation}  
If $\overline{{}^{\pi}NS_{i}}$ denotes the radial compactification of the vector bundle ${}^{\pi}NS_{i}\to S_{i}$ defined in \eqref{nor.2}, then
notice that the front face $\ff_{\pi_{i}}$ is naturally identified with the total space of the fibration obtained from the fibration \eqref{sm.7bb} by
pulling it back to 
$\overline{{}^{\pi}NS_{i}}$.  This means we have a natural fibration 
\begin{equation}
\xymatrix{  (F_{i})^{2}_{\pi_{F_{i}}} \ar@{-}[r] &   \ff_{\pi_{i}} \ar[d]\\
                & \overline{{}^{\pi}NS_{i}}.
}
\label{sm.7c}\end{equation}
With this identification, the Schwartz kernels in \eqref{sm.7} corresponds to the Schwartz kernels of ${}^{\pi}NS_{i}$-suspended $\fS$-operators
associated to the fibration $\pi_{i}: H_{i}\to S_{i}$, that is,
\begin{equation}
  \Psi^{m}_{\ff_{\pi_{i}}}(H_{i};E,F)= \Psi^{m}_{\fS-\sus({}^{\pi}NS_{i})}(H_{i}/S_{i};E,F).
\label{sm.7d}\end{equation}
Recalling the identification \eqref{nor.2}, we see that,  as a suspended operator, the symbol $\sigma_{\pa_{i}}(P)$ has a natural action on Schwartz
sections,
\begin{equation}
  \sigma_{\pa_{i}}(P): \cS({}^{\pi}NH_{i};E)\to \cS({}^{\pi}NH_{i};F).
  \label{sm.7e}\end{equation}

\begin{definition}
An operator $P\in \Psi^{m}_{\fS}(X;E,F)$ is said to be \textbf{fully elliptic} if it is elliptic and if for all $i\in \{1,\ldots,k\}$, 
$\sigma_{\pa_{i}}(P)$ is invertible as a map 
\[
 \sigma_{\pa_{i}}(P): \cS({}^{\pi}NH_{i};E)\to \cS({}^{\pi}NH_{i};F). 
\]  
A $V$-suspended $\fS$-operator $P\in \Psi^{m}_{\fS-\sus(V)}(X;E,F)$ is said to be \textbf{fully elliptic} if, as a $\fS$-operator in
$\Psi^{m}_{\fS}(\overline{V}\times X;E,F)$, it is elliptic
and if for all boundary hypersurfaces of the form $Z_{i}= \overline{V}\times H_{i}$, the corresponding symbol $\sigma_{\pa_{i}}(P)$ is invertible as a
map
 \[
 \sigma_{\pa_{i}}(P): \cS({}^{\varpi}NF_{i};E)\to \cS({}^{\varpi}NF_{i};F). 
\]  
\label{fuel.1}\end{definition}

If $H_{i}$ and $H_{j}$ are two hypersurfaces such that $H_{i}<H_{j}$, then the associated symbols
$\sigma_{\pa_{i}}$ and $\sigma_{\pa_{j}}$ satisfy a certain compatibility condition, namely, their respective
restrictions to $\ff_{\pi_{i}}\cap \ff_{\pi_{j}}$ agree.  From the point of view of suspended operators,
this means that the restriction of $\sigma_{\pa_{j}}(P)\in \Psi^{m}_{\ff_{\pi_{j}}}(X)$ to
$\left. {}^{\pi}N^{*}S_{j} \right|_{S_{ji}}$ is the symbol of the suspended family $\sigma_{\pa_{i}}(P)$ associated
to the face $H_{j}\cap H_{i}$.

\section{Composition} \label{com.0}

To show that $\fS$-operators compose nicely, various strategy could be used.  One approach consists in defining pseudodifferential operators using Lie
groupoids as in \cite{NWX}, in which case the fact the  composition of operators in the calculus remains in the calculus follows directly from the
definition.  As indicated earlier, the disadvantage with such an approach is that the inverse of an invertible operator is not typically within the
algebra.  Another approach, developed by Melrose and collaborators (see for instance \cite{Mazzeo-Melrose}), is to consider a triple space suitably
blown up where composition can be represented by a pushforward map coming from a $b$-fibration.  The result then follows from the description in
\cite{MelroseMWC} of  general mapping properties that such pushforward maps  satisfy.  Such an approach is likely to work in our context, but might
involves a rather complicated  triple space.  Instead, we will proceed by less geometric means and follow the approach of \cite{Krainer} by working
locally and using a proof by induction on the dimension of the manifold with fibred corners.  

\begin{theorem}
Let $E,F,G$ be smooth complex vector bundles on a manifold with fibred corners $X$.  Then for $A\in \Psi^{m}_{\fS}(X;F,G)$ and $B\in
\Psi^{n}_{\fS}(X;E,F)$, we have that 
\[
     A\circ B\in \Psi^{m+n}_{\fS}(X;E,G), \quad \mbox{with} \;  \sigma_{\pa_{i}}(A\circ B)= \sigma_{\pa_{i}}(A)\circ \sigma_{\pa_{i}}(B)
     \]
     for each hypersurface $H_{i}\subset X$ of $X$.  Moreover, the induced map
     \[
      \Psi^{m}_{\fS}(X;F,G)\times  \Psi^{n}_{\fS}(X;E,F)\; \to \;\Psi^{m+n}_{\fS}(X;E,G)    \]
      is continuous with respect to the natural Fr\'echet topology on each space.  A similar result holds for polyhomogeneous $\fS$-operators.
\label{composition}\end{theorem}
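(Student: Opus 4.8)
The plan is to reduce the statement to a local one and prove it by induction on $\dim X$, following \cite{Krainer} rather than the triple-space method of \cite{Mazzeo-Melrose}.  Using a partition of unity subordinate to a cover of $X$ by coordinate charts over which $E,F,G$ are trivialized, and decomposing the Schwartz kernels on $X^{2}_{\pi}$ as $A=\sum A_{\alpha}$, $B=\sum B_{\beta}$ into pieces with arbitrarily small support, one reduces to the scalar case $E=F=G=\underline{\bbC}$ and to analysing each product $A_{\alpha}\circ B_{\beta}$.  Such a product vanishes unless the two supports are ``composable'', and then it is concentrated either near the lifted diagonal $\Delta_{\pi}$ (when both pieces are) or away from it.  Near $\Delta_{\pi}$ one knows, by Lemma~\ref{diff.2}, that the lifts of $\cV_{\fS}(X)$ along $\pi_{L}$ and $\pi_{R}$ are transversal to $\Delta_{\pi}$; away from $\Delta_{\pi}$ the kernels are smooth on $X^{2}_{\pi}$ and vanish to infinite order at the boundary faces not contained in $\ff_{\pi}$, so that one is composing \emph{residual} $\fS$-operators.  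The base case $\dim X=0$ (equivalently $\pa X=\emptyset$) is the classical composition of pseudodifferential operators on a closed manifold.  The inductive hypothesis enters as follows:  by \eqref{sm.7d} the normal operators $\sigma_{\pa_{i}}(A)$, $\sigma_{\pa_{i}}(B)$ are ${}^{\pi}NS_{i}$-suspended $\fS$-operators for the fibration $\pi_{i}\colon H_{i}\to S_{i}$, whose fibres are manifolds with fibred corners of dimension $\dim X-1-\dim S_{i}<\dim X$; and by \eqref{sus.21} composition of $V$-suspended $\fS$-operators is, under the Fourier transform in the suspension parameter, a smooth family of compositions of $\fS$-operators on the fibre.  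Hence composition of fibrewise suspended $\fS$-operators along $\pi_{i}$ is covered by the inductive hypothesis applied fibrewise, which is what is needed to make sense of $\sigma_{\pa_{i}}(A)\circ\sigma_{\pa_{i}}(B)$.

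For the part concentrated near $\Delta_{\pi}$ one works in the coordinates \eqref{diff.7} and uses the explicit form \eqref{diff.8} of the lifted $\fS$-vector fields:  an $\fS$-operator supported near $\Delta_{\pi}$ is there an ordinary (classical, in the polyhomogeneous case) pseudodifferential operator in the directions transverse to $\Delta_{\pi}$, the coordinates along $\Delta_{\pi}$ and the boundary defining coordinates $x_{i}'$ of the front faces entering as smooth parameters down to $x_{i}'=0$, and with the infinite-order vanishing at the faces off $\ff_{\pi}$ translating, as in the proof of Lemma~\ref{asp.2}, into rapid decay of the kernel as the variables $S_{i},Y_{i}$ tend to infinity.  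The usual composition formula for such parameter-dependent families then shows that $A\circ B$ is again conormal of order $m+n$ at $\Delta_{\pi}$, with $\sigma_{m+n}(A\circ B)=\sigma_{m}(A)\,\sigma_{n}(B)$ and with the vanishing off $\ff_{\pi}$ preserved; restricting the composed family to $x_{i}'=0$, i.e. to $\ff_{\pi_{i}}$, gives $\sigma_{\pa_{i}}(A\circ B)=\sigma_{\pa_{i}}(A)\circ\sigma_{\pa_{i}}(B)$, where the right-hand composition takes place in the suspended $\fS$-calculus along $\pi_{i}$ and is therefore well defined by the inductive hypothesis.  Concretely, near a corner $p\in H_{1}\cap\cdots\cap H_{\ell}$ one iterates this description, peeling off the hypersurfaces one at a time, so that $\fS$-operators appear as iterated quantizations of operator-valued symbols — the innermost values being classical pseudodifferential operators on the closed fibre — and the composition is carried out layer by layer.

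It remains to treat the products $A_{\alpha}\circ B_{\beta}$ with at least one factor supported away from $\Delta_{\pi}$, that is, to show that $\Psi^{-\infty}_{\fS}(X)$ is a two-sided ideal under composition.  If one of the factors is moreover Schwartz at all of $\pa X^{2}_{\pi}$, the product lies in $\dot{\Psi}^{-\infty}_{\fS}(X)$ by Corollary~\ref{ideal.2}.  In general, one first matches the leading behaviour at each front face:  using the surjectivity of $\sigma_{\pa_{i}}$ (the exact sequence \eqref{sm.7b}) together with the compatibility of the $\sigma_{\pa_{i}}$ on the overlaps $\ff_{\pi_{i}}\cap\ff_{\pi_{j}}$, one constructs $C\in\Psi^{-\infty}_{\fS}(X)$ whose normal operator at each $\ff_{\pi_{i}}$ equals $\sigma_{\pa_{i}}(A)\circ\sigma_{\pa_{i}}(B)$ (well defined by induction).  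The remainder then vanishes to first order at every $\ff_{\pi_{i}}$; conjugating by powers of the $x_{i}$ as in Remark~\ref{notw.1} and peeling off successive Taylor coefficients — each again a datum of the suspended calculus along some $\pi_{i}$ — followed by an asymptotic (Borel) summation, one obtains a remainder in $x^{\infty}\Psi^{-\infty}_{\fS}(X)=\dot{\Psi}^{-\infty}_{\fS}(X)$, again handled by Corollary~\ref{ideal.2}.  The identity $\sigma_{\pa_{i}}(A\circ B)=\sigma_{\pa_{i}}(A)\circ\sigma_{\pa_{i}}(B)$ then holds for every $H_{i}$.  Continuity of the composition map in the natural Fr\'echet topologies is read off step by step from the continuity of each of these constructions, and the polyhomogeneous statement follows by carrying polyhomogeneity through each step.

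The main difficulty lies in the interplay between the two directions of the calculus — the conormal singularity along $\Delta_{\pi}$ and the asymptotic behaviour at the front faces $\ff_{\pi_{i}}$ — and, concretely, in setting up and controlling the iterated operator-valued symbol description of $\fS$-operators near a corner so that it is stable under composition, together with the precise decay bookkeeping required in the residual part.  This is exactly where following \cite{Krainer} pays off:  the operator-valued symbol viewpoint replaces an explicit blown-up triple space by the Leibniz composition of symbols plus the inductive hypothesis on the fibre calculus.
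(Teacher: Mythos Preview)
Your overall strategy---induction on $\dim X$ and reduction to operator-valued symbols in the spirit of \cite{Krainer}---matches the paper's.  However, your decomposition differs from the paper's in a way that creates a genuine gap.

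You split according to proximity to the lifted diagonal $\Delta_\pi$; the paper instead localizes near each boundary hypersurface $H_i$, one at a time along the partial order.  Concretely, the paper uses cut-off functions supported in a tubular neighbourhood of a minimal $H_i$ to reduce (modulo $\dot\Psi^{-\infty}_\fS$, via Corollary~\ref{ideal.2}) to the case where both $K_A$ and $K_B$ are supported in $\beta_\pi^{-1}\bigl(\nu_i(\pi_i^{-1}(V_i)\times[0,\epsilon_i))^2\bigr)$ for some open $V_i\subset S_i\setminus\partial S_i$.  That piece of $X$ is then embedded into $\overline{\bbR^{p_i}}\times F_i$ with $p_i=\dim S_i+1$, and a Parenti--Shubin computation (Lemma~\ref{com.14}, with explicit remainder estimates \eqref{par.3}--\eqref{par.4}) shows the composition lies in $\Psi^{m+n}_\fS(\overline{\bbR^{p_i}}\times F_i)$, invoking the inductive hypothesis on $F_i$.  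Iterating over hypersurfaces reduces to kernels supported away from all of $\partial X$, where closed-manifold theory applies.  Crucially, this handles the conormal and the residual parts \emph{simultaneously}: no separate argument for $\Psi^{-\infty}_\fS$ is needed.

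Your residual argument, by contrast, is circular.  You construct $C\in\Psi^{-\infty}_\fS(X)$ with $\sigma_{\partial_i}(C)=\sigma_{\partial_i}(A)\circ\sigma_{\partial_i}(B)$ and then assert that $A\circ B - C$ ``vanishes to first order at every $\ff_{\pi_i}$''.  But to speak of the behaviour of $K_{A\circ B}$ at $\ff_{\pi_i}$, or to peel off its Taylor coefficients there, you must already know that $K_{A\circ B}$ lifts to a smooth function on $X^2_\pi$---which is exactly what you are trying to prove.  The Borel summation does not break this circle: at every stage you subtract something in the calculus from $A\circ B$, and the claim that the difference has improved decay at $\ff_\pi$ presupposes membership in the calculus.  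The paper's localization to $\overline{\bbR^{p_i}}\times F_i$ avoids this precisely because the Parenti--Shubin remainder bound \eqref{par.4} is an \emph{a priori} estimate on the full symbol $c(t,\tau)$ of the composition, established directly from the oscillatory integral \eqref{par.3} without assuming anything about $A\circ B$.

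Your near-$\Delta_\pi$ sketch is in the right direction but underspecified: the sentence ``the usual composition formula for such parameter-dependent families then shows\ldots'' is where all the work lies, and it is essentially the content of Lemma~\ref{com.14}.  Note also that the localization there is on $X$ (a tubular neighbourhood of $H_i$), not on $X^2_\pi$ near $\Delta_\pi$; the relevant Fourier variable is dual to the base $S_i$ of $\pi_i$, not to the normal directions of $\Delta_\pi$.
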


To describe the inductive step in the proof of this theorem, 
consider, for $p\in \bbN$,  the new manifold with corners $\overline{\bbR^{p}}\times X$ where $\overline{\bbR^{p}}$ is the radial compactification of
$\bbR^{p}$ as described in \cite{MelroseGST}.  A natural boundary defining function for the boundary $\pa\overline{\bbR^p}\cong \bbS^{p-1}$ is given
by $(1+r^{2})^{-\frac12}$ where $r$ is the Euclidean distance from the origin.  

 Notice that $\overline{\bbR^p}\times X$ has a natural structure of manifold with fibred corners induced from the one of $X$.  Indeed, the fibration
on the boundary hypersurface $Z_{0}=\pa\overline{\bbR^{p}}\times X$ is given by the projection on $\pa\overline{\bbR^{p}}$, while on the boundary hypersurface
$Z_{i}=\overline{\bbR^{p}}\times H_{i}$, where $H_{i}\subset X$ is a hypersurface of $X$ with fibration $\pi_{i}: H_{i}\to S_{i}$, the fibration is
given by
\[
          \Id\times \pi_{i}: \overline{\bbR^{p}}\times H_{i}\to \overline{\bbR^{p}}\times S_{i}.
\]
\begin{lemma}
Suppose that the conclusion of Theorem~\ref{composition} holds for the manifold with fibred corners $X$.  Then for $A\in
\Psi^{m}_{\fS}(\overline{\bbR^p}\times X)$ and $B\in \Psi^{n}_{\fS}(\overline{\bbR^p}\times X)$, we have 
\[
   A\circ B\in \Psi^{m+n}_{\fS}(\overline{\bbR^p}\times X), \quad \mbox{with} \;   
   \sigma_{\pa_{j}}(A\circ B)=\sigma_{\pa_{j}}(A)\circ \sigma_{\pa_{j}}(B)
 \]
 for all boundary hypersurfaces $Z_{j}\subset \overline{\bbR^p}\times X$. 
\label{com.14}\end{lemma}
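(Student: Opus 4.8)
The plan is to reduce the statement about $\overline{\bbR^p}\times X$ to the already-known composition result on $X$ by exploiting the product structure and the Fourier transform in the $\bbR^p$ directions. First I would observe that $\overline{\bbR^p}\times X$ carries the extra boundary hypersurface $Z_0 = \pa\overline{\bbR^p}\times X$, whose fibration is the projection to $\pa\overline{\bbR^p}$, and that the fibre of this fibration over a point is a copy of $X$. Thus the front face $\ff_{\pi_0}$ of the $\pi$-double space of $\overline{\bbR^p}\times X$ is (by the identification of front faces with suspended double spaces, as in \eqref{sm.7d}) naturally a $\bbR^p$-suspended version of the $\pi$-double space of $X$: restricting the Schwartz kernel to $\ff_{\pi_0}$ produces a $\bbR^p$-suspended $\fS$-operator on $X$, i.e. an element of $\Psi^{m}_{\fS-\sus(p)}(X)$. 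The key point, via Lemma~\ref{sus.14} and the Fourier transform \eqref{sus.15}--\eqref{sus.20}, is that a $p$-suspended $\fS$-operator on $X$ is the same thing as a suitable (symbol-type) smooth family $\Upsilon\mapsto \widehat{P}(\Upsilon)\in\Psi^m_{\fS}(X)$ satisfying the estimates \eqref{sus.22}--\eqref{sus.24}, and under the Fourier transform composition becomes pointwise composition \eqref{sus.21}.

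The main steps I would carry out are then as follows. (1) Show that the kernel of $A\circ B$ for $A,B\in\Psi^{\bullet}_{\fS}(\overline{\bbR^p}\times X)$ is again conormal of the right order at $\Delta_\pi$ away from $Z_0$, and vanishes to infinite order at all boundary faces not in $\ff_\pi$ except possibly $\ff_{\pi_0}$ — this is essentially the statement of Theorem~\ref{composition} applied \emph{fibrewise} over the interior of $\overline{\bbR^p}$, together with standard facts on composition of conormal distributions along the diagonal. (2) Analyze the behaviour near $\ff_{\pi_0}$: the restriction $\sigma_{\pa_0}(A\circ B)$ should be $\sigma_{\pa_0}(A)\circ\sigma_{\pa_0}(B)$, and the latter composition of $p$-suspended $\fS$-operators on $X$ is handled by taking the Fourier transform in $\bbR^p$, so that one must compose, for each fixed suspension parameter $\Upsilon\in\bbR^p$, two $\fS$-operators $\widehat{A}(\Upsilon),\widehat{B}(\Upsilon)\in\Psi^\bullet_{\fS}(X)$ — which lands in $\Psi^\bullet_{\fS}(X)$ by the hypothesis that Theorem~\ref{composition} holds for $X$ — and then check that the resulting family $\Upsilon\mapsto \widehat{A}(\Upsilon)\widehat{B}(\Upsilon)$ still satisfies the symbolic estimates \eqref{sus.22}--\eqref{sus.24} defining the image of the Fourier transform. (3) Conclude that $A\circ B$ has a kernel which is conormal at $\Delta_\pi$, vanishes rapidly at the correct faces, and restricts correctly to every front face $Z_j$, including $Z_0$; hence $A\circ B\in\Psi^{m+n}_{\fS}(\overline{\bbR^p}\times X)$ with the asserted multiplicativity of $\sigma_{\pa_j}$. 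The continuity in the Fr\'echet topologies is tracked along the way, as all the maps involved (Fourier transform, fibrewise composition, restriction to front faces) are continuous.

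The part that requires genuine care — and which I expect to be the main obstacle — is step (2), namely checking that composition is compatible with the symbol estimates that cut out $p$-suspended operators inside all smooth families of $\fS$-operators. The estimates \eqref{sus.24} are not a two-sided characterization of the image of the Fourier transform for operators of finite order $m$, so one cannot simply argue "the family $\widehat{A}\widehat{B}$ satisfies the estimates, hence it comes from a suspended operator". Instead one must argue directly at the level of Schwartz kernels on the suspended double space $\overline{\bbR^p}\times X^2_\pi$, or localize near the diagonal and off the diagonal separately: near $\Delta_\pi$ the composition of conormal distributions with a parameter is standard and gives a conormal family with the correct order behaviour in $\Upsilon$; away from $\Delta_\pi$ one is composing residual (smoothing in the $\fS$-interior) families, where \eqref{sus.23} \emph{is} a genuine characterization, so rapid decay in $\Upsilon$ is preserved under convolution-type composition. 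Piecing these two regimes together with a cutoff, and making sure the gluing respects the $\fS$-structure at the fibred corners $\ff_{\pi_i}\cap\ff_{\pi_0}$ (using the compatibility of symbols $\sigma_{\pa_i}$ and $\sigma_{\pa_0}$ noted at the end of Section~\ref{sm.0}), is the technical heart of the argument. A further mild subtlety is that near the corner $\ff_{\pi_0}\cap\ff_{\pi_i}$ one is simultaneously suspending and working on a manifold with fibred corners of the \emph{same} depth as $X$, so one must be slightly careful that the inductive hypothesis (Theorem~\ref{composition} for $X$) really does cover the needed fibrewise compositions; this is fine because the fibres of $\pi_i: Z_i\to \overline{\bbR^p}\times S_i$ are fibres of $X$, of strictly smaller depth.
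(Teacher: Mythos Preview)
Your proposal correctly identifies the Fourier transform in the $\bbR^p$ directions as the main tool, and you correctly locate the difficulty in the fact that the estimates \eqref{sus.24} do not characterize the image of the Fourier transform for operators of finite order. However, your two-step decomposition (interior away from $Z_0$; then the boundary symbol at $Z_0$) has a genuine gap, and it is not the approach the paper takes.

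First, your step (1) as written does not make sense: $A$ and $B$ are \emph{not} fibrewise operators over $\bbR^p$, so one cannot ``apply Theorem~\ref{composition} fibrewise over the interior of $\overline{\bbR^p}$''. Even after localizing the kernels to a compact set in the interior of $\bbR^p$, you are composing $\fS$-operators on a product $(\text{open ball})\times X$, which still has dimension strictly larger than $X$; the hypothesis (Theorem~\ref{composition} for $X$ alone) does not cover this. Second, and more seriously, your step (2) only computes the \emph{restriction} $\sigma_{\pa_0}(A\circ B)$, i.e.\ the kernel modulo $x_0\Psi^{m+n}_{\fS}$. To conclude that $A\circ B\in\Psi^{m+n}_{\fS}(\overline{\bbR^p}\times X)$ you need the kernel to be \emph{smooth up to} $\ff_{\pi_0}$, which is a full Taylor expansion there, not just the leading term. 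Your near/far-from-diagonal splitting is aimed at the suspended composition $\sigma_{\pa_0}(A)\circ\sigma_{\pa_0}(B)$ and does not address this.

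The paper's approach avoids this decomposition entirely by representing the \emph{full} operators $A,B$ (not merely their $\sigma_{\pa_0}$-symbols) as operator-valued amplitudes in the SG/Parenti--Shubin sense,
\[
  Au(t)=\frac{1}{(2\pi)^p}\int e^{i(t-t')\cdot\tau}a(t,\tau)u(t')\,dt'd\tau,\qquad a\in\CI\big(\overline{\bbR^p}_t;\,\Psi^m_{\fS-\sus(p)}(X)\big),
\]
where the values are $p$-suspended $\fS$-operators on $X$ with suspension parameter $\tau$. Smoothness of $a$ up to $\partial\overline{\bbR^p}_t$ is precisely smoothness of the kernel up to $\ff_{\pi_0}$. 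The composition is then governed by the classical asymptotic expansion $c\sim\sum_\alpha\frac{1}{\alpha!}\partial_\tau^\alpha a\,D_t^\alpha b$ with an explicit oscillatory-integral remainder $R_N$; each term and $R_N$ involve products of suspended $\fS$-operators on $X$, and \emph{this} is where the hypothesis (Theorem~\ref{composition} for $X$) enters. The key technical observation is that $\partial_\tau$ simultaneously improves the symbolic decay in $\tau$ and lowers the $\fS$-order on $X$, so that splitting the gain half-and-half yields $R_N\in\cS^{m+n-N/2}_{-N}(\bbR^p_t\times\bbR^p_\tau;\Psi^{m+n-N/2}_{\fS}(X))$; a Borel summation then shows $c\in\CI(\overline{\bbR^p}_t;\Psi^{m+n}_{\fS-\sus(p)}(X))$. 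The symbol identity at $Z_0$ falls out immediately as $\sigma_{\pa_0}(AB)=c|_{\partial\overline{\bbR^p}_t}=a|_{\partial\overline{\bbR^p}_t}\,b|_{\partial\overline{\bbR^p}_t}$. This global symbol representation is the missing ingredient in your plan; once you have it, your separate interior analysis and the delicate near/far-from-diagonal patching become unnecessary.
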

\begin{proof}
Using the Fourier transform on $\bbR^{p}$, we can describe the action of operators $A\in \Psi^{m}_{\fS}(\overline{\bbR^p}\times X)$ and $B\in
\Psi^{n}_{\fS}(\overline{\bbR^{p}}\times X)$  on $u\in \dot{\cC}^{\infty}(\overline{\bbR^{p}}\times X)$ by
\begin{equation}
\begin{gathered}
 Au(t)= \frac{1}{(2\pi)^{p}} \int e^{i(t-t')\cdot \tau} a(t;\tau) u(t')dt'd\tau, \\
  Bu(t)= \frac{1}{(2\pi)^{p}} \int e^{i(t-t')\cdot \tau} b(t;\tau) u(t')dt'd\tau. 
 \end{gathered} 
\label{com.10}\end{equation}
Here, $a$ and $b$ are operator-valued symbols,
\begin{equation}
   a\in \CI( \overline{\bbR^{p}}; \Psi^{m}_{\fS-\sus(p)}(X)), \quad 
   b\in \CI( \overline{\bbR^{p}}; \Psi^{n}_{\fS-\sus(p)}(X)),
\label{com.11}\end{equation}
where $\Psi^{\ell}_{\fS-\sus(p)}(X)$ is the space of $\bbR^{p}$-suspended $\fS$-operators of order $\ell$ on $X$ and the variable $\tau\in\bbR^p$ in
\eqref{com.10} is seen as the suspension parameter.

If we forget that $a$ and $b$ are operator-valued, then there symbol class is the one introduced in \cite{Parenti} and \cite{Shubin} (see also 
\cite{MelroseGST}).  In this setting, there are standard methods to study the composition of operators, see for instance the proof of Proposizione~1.4
in \cite{Parenti}, or in the context of the Weyl calculus, the proof of Theorem~29.1 in \cite{Shubin_book}.  Since the operator-valued symbols are such
that
\begin{equation}
      a\in \CI(\overline{\bbR_{t}^{p}}; \Psi^{m}_{\fS-\sus(p)}(X)) \; \Longrightarrow \;  D_{t}^{\alpha}D^{\beta}_{\tau}a \in
(1+t^{2})^{-\frac{|\alpha|}{2}} \CI(\overline{\bbR^{p}_{t}}; \Psi^{m-|\beta|}_{\fS-\sus(p)}(X)), \quad 
  \label{com.11b}\end{equation}
where $\tau$ is the suspension parameter, these methods have a straightforward generalization.

 Indeed,  let $c(t,\tau)$ be the operator-valued symbol such that
 \begin{equation}
 ABu(t)= \frac{1}{(2\pi)^{p}} \int e^{i(t-t')\cdot \tau} c(t,\tau) u(t')dt'd\tau.
  \label{par.1}\end{equation}
 As in \cite{Parenti}, for each $N\in \bbN$, we have
 \begin{equation}
   c(t,\tau)= \sum_{|\alpha|< N} \frac{1}{\alpha !} \pa_{\tau}^{\alpha} a(t,\tau) D_{t}^{\alpha}b(t,\tau) +
    R_{N}(t,\tau)
 \label{par.2}\end{equation}
 with remainder term $ R_{N}(t,\tau)$ given by
  \begin{equation}
    \sum_{|\alpha|=N} \frac{N}{\alpha !} \int_{0}^{1}(1-\lambda)^{N-1} 
   \left(  \frac{1}{(2\pi)^{p}} \int e^{-iz\cdot \zeta}  \pa_{\tau}^{\alpha} a(t,\tau+\lambda \zeta) D^{\alpha}_{t}b(t+z,\tau)dzd\zeta \right)
d\lambda
 \label{par.3}\end{equation}
 Using our inductive hypothesis on the composition of $\fS$-operators on $X$ as well as \eqref{com.11b}, we can essentially proceed as in
\cite{Parenti} to estimate the remainder term $R_{N}(t,\tau)$.  The only significant difference is that  taking a $\tau$ derivative of the symbol $a$ not
only improve its decay as $\tau$ tends to infinity, but it also reduces its order as an operator on $X$.  If we only focus on the decay behavior in
$\tau$,  we obtain
 \[
     R_{N}\in \cS^{m+n-N}_{-N}( \bbR^{p}_{t}\times \bbR^{p}_{\tau}; \Psi^{m+n}_{\fS}(X))
 \]
 where $ \cS^{k}_{\ell}( \bbR^{p}_{t}\times \bbR^{p}_{\tau}; \Psi^{m+n}_{\fS}(X))$ is the space of operator-valued symbols $q$ such that for any
Fr\'echet semi-norm $\| \cdot \|$ of $\Psi^{m+n}_{\fS}(X)$,   
 \[
     \sup_{t,\tau} \left(  (1+|\tau|^{2})^{\frac{|\beta|-k}{2}} (1+|t|^{2})^{\frac{|\alpha|-\ell}{2}} 
     \|  D_{t}^{\alpha}D_{\tau}^{\beta} q \|  \right)  < \infty \quad \forall\; \alpha,\beta\in \bbN_{0}^{p}.
 \]

It is more useful however to control $R_{N}$ as a symbol valued in a space of lower order $\fS$-operators.  Thus, if  instead we use half of the
$\tau$ derivative to reduce the order of the operator, we obtain for $N$ even
\begin{equation}
  R_{N}\in \cS^{m+n-\frac{N}{2}}_{-N} (\bbR^{p}_{t}\times \bbR^{p}_{\tau}; \Psi^{m+n-\frac{N}{2}}_{\fS}(X)).  
\label{par.4}\end{equation}
By Borel's lemma, there is an operator-valued symbol $e\in\CI(\overline{\bbR^{p}_{t}};
\Psi^{m+n}_{\fS-\sus(p)}(X))$ such that 
\[
            e(t,\tau) \sim \sum_{\alpha} \frac{1}{\alpha !} \pa_{\tau}^{\alpha} a(t,\tau) D^{\alpha}_{t}b(t,\tau),
\]
where this notation means that for all $N\in \bbN$, 
\[
       e(t,\tau) - \sum_{\alpha<N} \frac{1}{\alpha !} \pa_{\tau}^{\alpha} a(t,\tau) D^{\alpha}_{t}b(t,\tau)
        \in (1+ |t|^{2})^{-\frac{N}{2}}\CI(\overline{\bbR^{p}_{t}}; \Psi^{m+n-N}_{\fS-\sus(p)}(X)),
\]
we see from \eqref{par.4} that 
\[
  c(t,\tau)-e(t,\tau) \in \dot{\cC}^{\infty}(\overline{\bbR^{p}_{t}}\times \overline{\bbR^{p}_{\tau}}; \Psi^{-\infty}_{\fS}(X))= 
\dot{\cC}^{\infty}(\overline{\bbR^{p}_{t}}; \Psi^{-\infty}_{\fS-\sus(p)}(X)).  
\]  
  Therefore, we have that  
  \begin{equation}
   c \in \CI(\overline{\bbR^{p}_{t}}; \Psi^{m+n}_{\fS-\sus(p)}(X)) \quad \mbox{with} \; c
   \sim \sum_{\alpha} \frac{1}{\alpha !} \pa_{\tau}^{\alpha} a(t,\tau) D^{\alpha}_{t}b(t,\tau),
  \label{par.5}\end{equation}
  which implies in particular that $AB \in \Psi^{m+n}_{\fS}(\overline{\bbR^{p}}\times X)$.    Moreover, since this argument automatically provides
uniform control on each of the Fr\'echet semi-norms of $c$ in terms of the Fr\'echet semi-norms of $a$ and $b$, we see that the induced map 
\[
   \Psi^{m}_{\fS}(\overline{\bbR^{p}}\times X)\times \Psi^{n}_{\fS}(\overline{\bbR^p}\times X)  \; \to \;
   \Psi^{m}_{\fS}(\overline{\bbR^{p}}\times X)
\]  
is continuous with respect to the natural Fr\'echet topology on each spaces.

 By our assumptions on $X$, It follows directly that $\sigma_{\pa_{j}}(A\circ B)= \sigma_{\pa_{j}}(A)\circ\sigma_{\pa_{j}}(B)$ for hypersurfaces of
the form $Z_{j}=\overline{\bbR^p}\times H_{j}$, while for the hypersurface $Z_{0}= \pa\overline{\bbR^p}\times X$, it is a consequence of the
asymptotic expansion of the operator-valued symbol $c$ in \eqref{par.5}, namely 
\[
   \sigma_{\pa_{0}}(AB)= \left. c \right|_{\pa \overline{\bbR^{p}_{t}}} =  \left. a\right|_{\pa \overline{\bbR^{p}_{t}}}    \left. b \right|_{\pa
\overline{\bbR^{p}_{t}}}   = \sigma_{\pa_{0}}(A) \sigma_{\pa_{0}}(B).
\]

\end{proof}

To proceed further, we need some notation.  For each hypersurface $H_{i}$ of $X$, let 
\begin{equation}
\nu_{i}:H_{i}\times [0,\epsilon_{i})\to \cU_{i}\subset X
\label{com.1}\end{equation}
 be a tubular neighborhood of $H_{i}$ in $X$ compatible with the boundary defining function $x_{i}$, that is, such that $x_{i}(\nu_{i}(h,t))=t$
for $h\in H_{i}$ and $t\in [0,\epsilon_{i})$.  
To show that $\fS$-operators form an algebra, we will use the previous lemma to deal with $\fS$-operators having their Schwartz kernels supported near
 the front face $\ff_{\pi_{i}}$, but supported away from the front faces $\ff_{\pi_{j}}$ for $H_{j}<H_{i}$.    

\begin{lemma}
Suppose that the conclusions of Theorem~\ref{composition} hold for all manifolds with fibred corners $Y$ of dimension less than the one of $X$.
Suppose that $A\in \Psi^{m}_{\fS}(X)$ and $B\in \Psi^{n}_{\fS}(X)$ are such that their Schwartz kernels are  supported inside the set 
\[
       \beta_{\pi}^{-1}\left ( \nu_{i}(\pi_{i}^{-1}(V_{i})\times [0,\epsilon_{i}))^{2}  \right)\subset X^{2}_{\pi}
\]
where $V_{i}\subset S_{i}\setminus\pa S_{i}$ is some open set in the interior of the base $S_{i}$ of the fibration $\pi_{i}:H_{i}\to S_{i}$.  Then 
\[
A\circ B\in \Psi^{m+n}_{\cS}(X) \quad \mbox{with}  \; \sigma_{\pa_{j}}(A\circ B)= \sigma_{\pa_{j}}(A)\circ \sigma_{\pa_{j}}(B)
\]
for all hypersurfaces $H_{j}\subset X$.  
\label{com.2}\end{lemma}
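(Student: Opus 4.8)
The plan is to reduce this local composition statement to the suspended composition result already established in Lemma~\ref{com.14}, by exhibiting the relevant piece of the $\fS$-calculus near $\ff_{\pi_i}$ as a calculus of operators on a product $\overline{\bbR^p}\times Y$, where $Y$ is a manifold with fibred corners of strictly smaller dimension than $X$ (so that the inductive hypothesis of the lemma applies to $Y$). First I would use the tubular neighborhood $\nu_i$ and the local triviality of $\pi_i$ over the interior of $S_i$ to choose coordinates on $\cU_i$ adapted to the structure: over $V_i$ the boundary hypersurface $H_i$ is diffeomorphic to $V_i\times F_i$ with $F_i$ the typical fibre, and $\nu_i(\pi_i^{-1}(V_i)\times[0,\epsilon_i))\cong V_i\times F_i\times[0,\epsilon_i)_{x_i}$. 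Since $V_i\subset S_i\setminus\pa S_i$, in this region there are no boundary hypersurfaces $H_j$ with $H_j<H_i$ to worry about; the only hypersurfaces of $X$ meeting this set are $H_i$ itself and the $H_j$ with $H_j>H_i$, whose traces are cut out by the corresponding boundary hypersurfaces of the fibre $F_i$. Thus the fibre $F_i$ is itself a manifold with fibred corners, of dimension $\dim X - \dim S_i - 1 < \dim X$.

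The key step is then to identify the restriction of the $\fS$-double space to $\beta_\pi^{-1}(\nu_i(\pi_i^{-1}(V_i)\times[0,\epsilon_i))^2)$ with (an open piece of) $(\overline{\bbR^p}\times F_i)^2_{\fS}$ for $p=\dim S_i$, where the $\overline{\bbR^p}$-factor arises from the blow-up of $\Delta_i$ in the $x_i$- and $S_i$-directions. Concretely, using the local coordinates of \eqref{diff.7} specialized to this region — where $x_i'$ plays the role of $\rho$ at $\ff_{\pi_i}$, the variables $(S_i, Y_i)$ rescaled by $w_i'$ become the $\bbR^p$-variables of the suspension (here $Y_i$ are the $S_i$-base directions and $S_i$ the $x_i$-direction, together $p=\dim S_i = \dim V_i + 1$... more precisely $p = 1 + \dim V_i$), and the remaining fibre coordinates give $F_i$ — one sees that the bundle $\pi_i^*{}^\pi NS_i$ of \eqref{nor.2} supplies exactly the suspension vector bundle. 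An operator in $\Psi^m_{\fS}(X)$ with Schwartz kernel supported in this set therefore corresponds, fibrewise over $V_i$, to an element of $\Psi^m_{\fS-\sus({}^\pi NS_i)}(H_i/S_i)$ restricted over $V_i$, which by \eqref{sm.7d} is the same as (a family over $V_i$ of) $\fS$-operators on $\overline{\bbR^p}\times F_i$; the translation-invariance in the suspension parameter is exactly the statement that these are $\bbR^p$-suspended. Having set up this dictionary, $A\circ B$ is computed by composing the corresponding operators on $\overline{\bbR^p}\times F_i$, which is legitimate by Lemma~\ref{com.14} applied to $Y=F_i$ (valid since $\dim F_i<\dim X$, so the conclusion of Theorem~\ref{composition} holds for $F_i$ by the standing hypothesis of the lemma). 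This gives $A\circ B\in \Psi^{m+n}_{\fS}(\overline{\bbR^p}\times F_i)$ with support still in the prescribed set, hence $A\circ B\in\Psi^{m+n}_{\fS}(X)$; the continuity statement and the multiplicativity of the boundary symbols $\sigma_{\pa_j}$ for $H_j\geq H_i$ transfer directly from the corresponding assertions in Lemma~\ref{com.14} (the symbol $\sigma_{\pa_i}$ corresponds to $\sigma_{\pa_0}$ on the $\overline{\bbR^p}$-face, and $\sigma_{\pa_j}$ for $H_j>H_i$ to the $\sigma_{\pa_j}$ on the faces $\overline{\bbR^p}\times(\text{bdry of }F_i)$). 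For hypersurfaces $H_j$ not meeting the support the identity $\sigma_{\pa_j}(A\circ B)=\sigma_{\pa_j}(A)\circ\sigma_{\pa_j}(B)$ is the trivial $0=0\circ 0$.

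The main obstacle I anticipate is making the identification of the blown-up double space with $(\overline{\bbR^p}\times F_i)^2_{\fS}$ genuinely precise and natural — in particular checking that the rescalings of \eqref{diff.7} match the radial compactification $\overline{\bbR^p}$ of the suspension, that the blow-up of $\Delta_j$ for $H_j>H_i$ in $X^2_\pi$ restricts to the blow-up of the corresponding fibre diagonal in $(\overline{\bbR^p}\times F_i)^2_{\fS}$ (this uses the labeling convention \eqref{label.1} so the order of blow-ups is compatible), and that the density bundle $\pi_R^*{}^\pi\Omega$ factors correctly as $\pr_1^*{}^{\Id}\Omega\otimes\pi_R^*{}^{\pi_{F_i}}\Omega$ up to the boundary-vanishing absorbed at faces not in $\ff_\pi$, as in Lemma~\ref{asp.2}. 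None of this is conceptually deep, but it requires a careful bookkeeping of coordinates, and it is where essentially all the work of the proof lies; once the geometric identification is in hand, the composition statement is a formal consequence of Lemma~\ref{com.14}.
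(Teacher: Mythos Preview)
Your overall strategy is right and matches the paper: localize near $\ff_{\pi_i}$, identify the calculus there with (a piece of) the calculus $\Psi^*_{\fS}(\overline{\bbR^{p}}\times F_i)$ on a product with the lower-dimensional fibre $F_i$, then invoke Lemma~\ref{com.14}. But the middle of your argument contains a genuine confusion that you should straighten out. You write that an operator in $\Psi^m_{\fS}(X)$ supported near $\ff_{\pi_i}$ ``corresponds, fibrewise over $V_i$, to an element of $\Psi^m_{\fS-\sus({}^{\pi}NS_i)}(H_i/S_i)$'' and that ``the translation-invariance in the suspension parameter is exactly the statement that these are $\bbR^p$-suspended.'' This is not true: a general $\fS$-operator on $X$ is \emph{not} translation-invariant in the $(x_i,y_i)$-directions; only its restriction to $\ff_{\pi_i}$, i.e.\ the boundary symbol $\sigma_{\pa_i}(A)$, is a suspended operator (this is \eqref{sm.7d}). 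The full operators $A,B$ you need to compose are elements of $\Psi^*_{\fS}(\overline{\bbR^{p}}\times F_i)$ with no translation invariance, and it is precisely this non-invariant composition that Lemma~\ref{com.14} handles via the operator-valued Parenti--Shubin argument. So your invocation of Lemma~\ref{com.14} is the right one, but the suspended-operator paragraph leading to it is a detour that does not do what you say.

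The paper's route to the identification is more pedestrian than your blow-up-coordinate matching and avoids this confusion entirely. First, since $V_i$ is not assumed small enough for $\pi_i$ to be trivial over it, one covers $\overline{V_i}$ by finitely many balls $W_q\subset S_i\setminus\pa S_i$ over which $\pi_i$ trivializes, and uses cut-offs $\psi_q,\tilde\psi_q$ pulled back from a partition of unity on $V_i$ to reduce (modulo $\dot\Psi^{-\infty}_{\fS}(X)$, via Corollary~\ref{ideal.2}) to the case where both $K_A$ and $K_B$ are supported over a single $W_q$. Second, one simply embeds $W_q\hookrightarrow\bbS^{p_i-1}$ (with $p_i=\dim S_i+1$) and then $W_q\times[0,\epsilon_i)\hookrightarrow\overline{\bbR^{p_i}}$ via the standard collar of $\pa\overline{\bbR^{p_i}}$; this realizes $\nu_i(\pi_i^{-1}(W_q)\times[0,\epsilon_i))$ as an open set in $\overline{\bbR^{p_i}}\times F_i$ as manifolds with fibred corners, and $A,B$ as honest elements of $\Psi^*_{\fS}(\overline{\bbR^{p_i}}\times F_i)$. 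No double-space matching is needed beyond the observation that this is an open inclusion of manifolds with fibred corners, so the calculi agree on it. The symbol identifications you name at the end ($\sigma_{\pa_i}\leftrightarrow\sigma_{\pa_0}$, $\sigma_{\pa_j}$ for $H_j>H_i$ with the fibre faces) are correct and follow from Lemma~\ref{com.14} exactly as you say.
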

\begin{proof}
Let $F_{i}$ be the typical fibre of the fibration
\begin{equation}
   \xymatrix{   F_{i}\ar@{-}[r] &  H_{i} \ar[d]^{\pi_{i}}  \\
                                          &  S_{i}.
   }
\label{com.3}\end{equation}
Then as described in \S~\ref{mwfc.0}, the fibre $F_{i}$ is naturally a manifold with fibred corners.  Since
$\dim F_{i}<\dim X$, it is part of our assumptions that  $\Psi^{m}_{\fS}(F_{i})\circ \Psi^{n}_{\fS}(F_{i})\subset \Psi^{m+n}_{\fS}(F_{i})$.  The
strategy of the proof is to reduce composition of the operators $A$ and $B$ to Lemma~\ref{com.14}.

Let $\{W_{q}\}_{q\in \cQ}$ be a finite covering of the closure of $V_{i}$ in $S_{i}\setminus \pa S_{i}$ by open sets in $S_{i}\setminus \pa S_{i}$  
diffeomorphic to open balls and such that the fibration \eqref{com.3} restricts to a trivial fibration over each $S_{i}$.  Let $\varphi_{q}\in
\CI_{c}(W_{q})$ be functions which restricts to give a partition of unity on $V_{i}$ and let $\tilde{\varphi}_{q}\in \CI_{c}(W_{q})$ be  functions
such that $\varphi_{q}\tilde{\varphi}_{q}= \varphi_{q}$.    
Let 
\[
   \psi_{q}= (\nu_{i})_{*} \pr^{*}\pi_{i}^{*} \varphi_{q}, \quad  \tilde{\psi}_{q}= (\nu_{i})_{*} \pr^{*}\pi_{i}^{*} \tilde{\varphi}_{q},  
\]
be the corresponding pulled back functions on $\cU_{i}= \nu_{i}( H_{i}\times [0,\epsilon_{i}))$
where $\pr: H_{i}\times [0,\epsilon_{i})\to H_{i}$ is the projection on the left factor.  Then we can write the operator $A$ as
\begin{equation}
A = \sum_{q} A\psi_{q}  = \sum_{q} \left(  \tilde{\psi}_{q}A\psi_{q} + (1-\tilde{\psi}_{q})A\psi_{q}\right).
\label{com.4}\end{equation}    
Since $\tilde{\psi}_{q}\psi_{q}=\psi_{q}$, the Schwartz kernel of the second term is supported away from the diagonal in $X^{2}$, which means it is an
element of $\dot{\Psi}^{-\infty}_{\fS}(X)$.  Thus, we have that 
\begin{equation}
    A\equiv \sum_{q} \tilde{\psi}_{q}A\psi_{q}  \mod \dot{\Psi}^{-\infty}_{\fS}(X).
\label{com.5}\end{equation} 
Similarly, we have that 
\begin{equation}
   \psi_{q}B= \psi_{q}B\tilde{\psi}_{q} +  \psi_{q}B(1-\tilde{\psi}_{q}) \equiv \psi_{q}B\tilde{\psi}_{q}  \mod  \dot{\Psi}^{-\infty}_{\fS}(X).
    \label{com.6}\end{equation}
Thus, using Corollary~\ref{ideal.2}, we see that 
\begin{equation}
\begin{aligned}
AB & \equiv  \sum_{q} \tilde{\psi}_{q}A\psi_{q}B \mod \dot{\Psi}^{-\infty}_{\fS}(X) \\
& = \sum_{q} \tilde{\psi}_{q}A\psi_{q}\tilde{\psi}_{q}B =  \sum_{q} (\tilde{\psi}_{q}A\tilde{\psi}_{q})\psi_{q}B  \\
 & \equiv \sum_{q} (\tilde{\psi}_{q}A\tilde{\psi}_{q})(\psi_{q}B \tilde{\psi}_{q}) 
   \mod  \dot{\Psi}^{-\infty}_{\fS}(X).
\end{aligned}
\label{com.7}\end{equation}
This means we can assume both $K_{A}$ and $K_{B}$ are supported in the subset  
\[
\beta_{\pi}^{-1}\left ( \nu_{i}(\pi_{i}^{-1}(W_{q})\times [0,\epsilon))^{2}  \right)\subset X^{2}_{\pi}.
\]
Since we are assuming $W_{q}$ is diffeomorphic to an open ball, this means there exists an embedding 
\begin{equation}
     \iota_{q}: W_{q} \hookrightarrow \bbS^{p_{i}-1},
\label{com.8}\end{equation}
where $p_{i}-1= \dim S_{i}= \dim W_{q}$.  Since the fibration $\pi_{i}$ is trivial when restricted to $W_{q}$, we can assume 
\[
        \pi_{i}^{-1}(W_{q})= F_{i}\times W_{q}
\]
with $\pi_{i}$ given by projecting on the right factor.   The embedding \eqref{com.8} can be extended to an embedding 
\begin{equation}
\xymatrix{
   W_{q}\times [0,\epsilon_{i}) \ar[r]^(0.45){\iota_{q}\times \Id} & \bbS^{p_{i}-1}\times [0,\epsilon_{i}) \ar[r] & \overline{\bbR^{p_{i}}} 
   }
\label{com.9}\end{equation}
where the second map is the standard collar neighborhood of $\bbS^{p_{i}-1}= \pa \overline{\bbR^{p_{i}-1}}$ in the radial compactification
$\overline{\bbR^{p_{i}-1}}$ of $\bbR^{p_{i}}$ using the boundary defining function $\frac{1}{\sqrt{r^{2}+1}}$ where $r$ is the distance from the
origin.   

Via these identifications, this means we can regard $A$ and $B$ as operators acting on functions of $\bbR^{p_{i}}\times F_{i}$, more precisely:  $A\in
\Psi^{m}_{\fS}(\overline{\bbR^{p_{i} } }\times F_{i})$, $B\in \Psi^{n}_{\fS}(\overline{\bbR^{p_{i} } }\times F_{i})$.  The result then follows by
applying Lemma~\ref{com.14}. 

\end{proof}

We have now all the ingredients to prove the composition theorem.

\begin{proof}[Proof of Theorem~\ref{composition}]
By using a partition of unity, we can work locally in open sets where the vector bundles $E$, $F$ and $G$ are trivial.  Thus, without loss of
generality, we can assume that $E=F=G=\underline{\bbC}$ and $A\in \Psi^{m}_{\fS}(X)$, $B\in \Psi^{n}_{\fS}(X)$.  

Since the case where $\dim X=0$ is trivial, we can assume by induction on the dimension that the theorem is true for manifolds with fibred corners of
dimension less than the one of $X$.  For each boundary hypersurface $H_{i}$ of $X$, consider the tubular neighborhood $\nu_{i}: H_{i}\times
[0,\epsilon_{i})_{x_{i}}\to \cU_{i}\subset X$ of \eqref{com.1}.  Let also $\chi_{i},\widetilde{\chi}_{i}, \widehat{\chi}_{i}\in
\CI_{c}(\cU_{i})\subset \CI(X)$ be non-negative cut-off functions such that $\widehat{\chi}_{i}\equiv 1$ near $H_{i}$,  $\chi_{i}\widehat{\chi}_{i}=
\widehat{\chi}_{i}$ and $\widetilde{\chi}_{i}\chi_i=\chi_{i}$.  Using the cut-off functions $\chi_{i}$, $\widetilde{\chi}_{i}$ and
$\widehat{\chi}_{i}$, we can rewrite the composition of $A$ and $B$ as
\begin{equation}
\begin{aligned}
   AB &= A \chi_{i} B  +  A(1-\chi_{i})B  \\
         &=  \widetilde{\chi}_{i} A \chi_{i} B+  (1-\widetilde{\chi}_{i}) A \chi_{i} B  + 
         \widehat{\chi}_{i}  A(1-\chi_{i})B  + (1-\widehat{\chi}_{i})A(1-\chi_{i})B.
   \end{aligned}   
\label{com.15}\end{equation} 
Since $\chi_{i}\widehat{\chi}_{i}= \widehat{\chi}_{i}$ and $\widetilde{\chi}_{i}\chi_{i}=\chi_{i}$, the Schwartz kernels of
$(1-\widetilde{\chi}_{i})A\chi_{i}$ and $\widehat{\chi}_{i}A(1-\chi_{i})$ are both supported away from the diagonal in $X\times X$, which means the
operators $(1-\widetilde{\chi}_{i})A\chi_{i}$ and $\widehat{\chi}_{i}A(1-\chi_{i})$ are both in 
$\dot{\Psi}^{-\infty}_{\fS}(X)$.  Thus, using Corollary~\ref{ideal.2}, we see that modulo operators
in $\dot{\Psi}^{-\infty}_{\fS}(X)$, we have
\begin{equation}
  AB\equiv \widetilde{\chi}_{i}A \chi_{i} B + (1-\widehat{\chi}_{i})A(1-\chi_{i})B \mod \dot{\Psi}^{-\infty}_{\fS}(X).
\label{com.16}\end{equation}
Similarly, if $\chi_i'\in \CI_c(\cU_i)$ is such that $\widehat{\chi}_i\chi_i'=\chi_i'$ and $\chi_i'\equiv 1$ near $H_i$, then we can write the operator $B$ as
\begin{equation}
\begin{aligned}
  B &= \widehat{\chi}_{i}B \chi_{i}+ \widehat{\chi}_{i}B(1-\chi_{i}) + (1-\widehat{\chi}_{i})B \chi_{i}'+ (1-\widehat{\chi}_{i})B (1-\chi_{i}')  \\
  &\equiv \widehat{\chi}_{i} B\chi_{i} + (1-\widehat{\chi}_{i}) B (1-\chi_{i}') \mod \dot{\Psi}^{-\infty}_{\fS}(X).
\end{aligned}  
\label{com.17}\end{equation}
If $\check{\chi}_i\in \CI_c(\cU_i)$ is another cut-off function such that $\check{\chi}_i \widetilde{\chi}_i=\widetilde{\chi}_i$, then we can also write $B$ as
\begin{equation}
\begin{aligned}
  B &= \widetilde{\chi}_{i}B \check{\chi}_{i}+ \widetilde{\chi}_{i}B(1-\check{\chi}_{i}) + (1-\widetilde{\chi}_{i})B \chi_{i}+ (1-\widetilde{\chi}_{i})B (1-\chi_{i})  \\
  &\equiv \widetilde{\chi}_{i} B\check{\chi}_{i} + (1-\widetilde{\chi}_{i}) B (1-\chi_{i}) \mod \dot{\Psi}^{-\infty}_{\fS}(X).
\end{aligned}  
\label{com.17b}\end{equation}
Substituting \eqref{com.17} and \eqref{com.17b} in \eqref{com.16}, 
we see by Corollary~\ref{ideal.2} that 
\begin{equation}
\begin{aligned}
 AB &\equiv (\widetilde{\chi}_{i} A \chi_{i})(\widetilde{\chi}_{i}B\check{\chi}_{i})  + \widetilde{\chi}_{i} A \chi_{i}
(1-\widetilde{\chi}_{i})B(1-\chi_{i})  + (1-\widehat{\chi}_{i})A(1-\chi_{i})\widehat{\chi}_{i}B \chi_{i}  \\
  & \; + (1-\widehat{\chi}_{i})A(1-\chi_{i})(1-\widehat{\chi}_{i})B(1-\chi_{i}') \mod \dot{\Psi}^{-\infty}_{\fS}(X)  \\
 & \equiv  (\widetilde{\chi}_{i} A \chi_{i})(\widetilde{\chi}_{i}B\check{\chi}_{i})+ (1-\widehat{\chi}_{i})A(1-\chi_{i})(1-\widehat{\chi}_{i})B(1-\chi_{i}')
\mod \dot{\Psi}^{-\infty}_{\fS}(X).
 \end{aligned} 
\label{com.18}\end{equation}
Thus, from \eqref{com.18}, we can reduce the problem of composition to two situations,
\begin{itemize}
\item $K_{A}$ and $K_{B}$ are supported near $\ff_{\pi_{i}}$;
\item $K_{A}$ and $K_{B}$ are supported away from $\ff_{\pi_{i}}$.
\end{itemize}
In particular, if $H_{i}$ is a minimal hypersurface with respect to the partial order of hypersurfaces of $X$, then the first term on the right hand
side of \eqref{com.18} can be taken care of by Lemma~\ref{com.2}.
  In fact, starting with the minimal hypersurfaces $H_{i}$ and proceeding recursively on the partial order of boundary  hypersurfaces of $X$ using \eqref{com.18}
and Lemma~\ref{com.2} at each step, we can reduce to the case where $K_{A}$ and $K_{B}$ are supported away from $\ff_{\pi_{i}}$ for all $i$.  Adding
operators in $\dot{\Psi}^{-\infty}_{\fS}(X)$ if necessary, we can even reduce to the case the Schwartz kernels of $A$ and $B$  have compact support in
$(X\setminus\pa X)^{2}$.  By doubling $X$ to get a smooth closed manifold, this reduces to the standard result about composition of pseudodifferential
operators on closed manifolds.  It is straightforward to check that polyhomogeneity is preserved under composition.      
\end{proof}

\section{Mapping properties}\label{mp.0}

Let $(X,\pi)$ be a manifold with fibred corners.  Let $H_{1},\ldots, H_{k}$ be its boundary hypersurfaces with choice of boundary defining functions
$x_{1},\ldots, x_{k}$.  As for the $\Phi$-calculus of \cite{Mazzeo-Melrose}, an important ingredient in the study of mapping properties of
$\fS$-operators is the construction of a parametrix for fully elliptic operators.  We will also need such a result for $\fS$-suspended operators, in
which case the notation
\begin{multline*}
\dot{\Psi}^{-\infty}_{\fS-\sus(V)}(X;E_{1},E_{2})= \{ A\in \Psi^{-\infty}_{\fS-\sus(V)}(X;E_{1},E_{2});  \\ \widehat{A}(\Upsilon)\in
\dot{\Psi}^{-\infty}_{\fS}(X;E_{1},E_{2}) \; \forall \Upsilon\in V^{*} \},
\end{multline*}
for $E_{1}$ and $E_{2}$ complex vector bundles over $X$, is useful to describe the error term. 

\begin{proposition}[Parametrix]
If $P\in \Psi^{m}_{\fS}(X;E,F)$ is fully elliptic, then there exists $Q\in 
\Psi^{-m}_{\fS}(X;F,E)$ such that 
\begin{equation*}
  \Id-QP\in \dot{\Psi}^{-\infty}_{\fS}(X;E), \quad \Id - PQ\in \dot{\Psi}^{-\infty}_{\fS}(X;F).
\end{equation*}
Moreover, $\ker P\subset \dot{\cC}^{\infty}(X;E)$ and $\ker P^{*}\subset \dot{\cC}^{\infty}(X;F)$.
Similarly, if $V$ is an Euclidean vector space and $P\in \Psi^{m}_{\fS-\sus(V)}(X;E,F)$ is fully elliptic, then there exists $Q\in 
\Psi^{-m}_{\fS-\sus(V)}(X;F,E)$ such that 
\begin{equation*}
  \Id-QP\in \dot{\Psi}^{-\infty}_{\fS-\sus(V)}(X;E), \quad \Id - PQ\in \dot{\Psi}^{-\infty}_{\fS-\sus(V)}(X;F).
\end{equation*}
\label{fp.1}\end{proposition}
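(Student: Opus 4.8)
The plan is to construct the parametrix $Q$ by a standard iterative scheme, peeling off one symbol at a time, using the short exact sequences established in Section~\ref{sm.0} together with the composition theorem (Theorem~\ref{composition}) and an induction on the depth of $X$. First I would treat the ordinary (interior) principal symbol: since $P$ is elliptic, $\sigma_m(P)$ is invertible on ${}^{\pi}T^*X$, and by the symbol exact sequence \eqref{sm.4} there is $Q_0\in\Psi^{-m}_{\fS}(X;F,E)$ with $\sigma_{-m}(Q_0)=\sigma_m(P)^{-1}$, so that $R_0:=\Id-Q_0P\in\Psi^{-1}_{\fS}(X;E)$ and $\Id-PQ_0\in\Psi^{-1}_{\fS}(X;F)$. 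A formal Neumann series $Q':=(\sum_{j\ge0}R_0^{\,j})Q_0$, made sense of via an asymptotic summation of the orders (Borel lemma in the conormal order), then gives $Q'$ with $\Id-Q'P$ and $\Id-PQ'$ in $\Psi^{-\infty}_{\fS}$, i.e.\ with Schwartz kernels that are smooth but only vanish to finite (in fact infinite conormal) order; crucially they need not yet vanish at the front faces $\ff_{\pi_i}$.

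The second and harder stage is to improve the error term so that it vanishes rapidly at every boundary hypersurface, which is exactly where full ellipticity enters. For each hypersurface $H_i$ the boundary symbol $\sigma_{\pa_i}(P)\in\Psi^m_{\ff_{\pi_i}}(H_i;E,F)=\Psi^m_{\fS-\sus({}^{\pi}NS_i)}(H_i/S_i;E,F)$ is, by hypothesis, invertible as an operator on Schwartz sections of ${}^{\pi}NH_i$. Here is where I would invoke the induction on depth: the fibres $F_i$ of $\pi_i\colon H_i\to S_i$ are manifolds with fibred corners of strictly smaller depth (equivalently dimension), and $\sigma_{\pa_i}(P)$ is a fibrewise ${}^{\pi}NS_i$-suspended $\fS$-operator on them, so the suspended version of the Proposition applies to its fibres; this produces a fibrewise suspended parametrix $G_i\in\Psi^{-m}_{\fS-\sus({}^{\pi}NS_i)}(H_i/S_i;F,E)$ which, because $\sigma_{\pa_i}(P)$ is actually invertible (not just invertible modulo smoothing), can be upgraded to a genuine inverse modulo $\dot\Psi^{-\infty}_{\fS-\sus({}^{\pi}NS_i)}$ (a Fredholm-perturbation argument: an invertible operator which is invertible modulo a smoothing ideal has its inverse in the calculus, since the difference is smoothing). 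Lifting $G_i$ back to $X^2_{\pi}$ via \eqref{sm.7b} as a boundary-symbol correction and using $\sigma_{\pa_i}(A\circ B)=\sigma_{\pa_i}(A)\circ\sigma_{\pa_i}(B)$ from Theorem~\ref{composition}, one removes, hypersurface by hypersurface and respecting the partial order (starting from minimal $H_i$, using the compatibility of $\sigma_{\pa_i}$ and $\sigma_{\pa_j}$ on $\ff_{\pi_i}\cap\ff_{\pi_j}$ noted at the end of Section~\ref{sm.0}), the restriction of the error to each $\ff_{\pi_i}$. After finitely many steps the error term has trivial boundary symbol at every front face, hence lies in $\dot\Psi^{-\infty}_{\fS}$; a final Neumann/Borel summation absorbs it and yields the two-sided parametrix $Q$.

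For the regularity statement, if $Pu=0$ with $u$ a distributional section, then applying the left parametrix gives $u=QPu-(\Id-QP)u=-(\Id-QP)u$, and $\Id-QP\in\dot\Psi^{-\infty}_{\fS}(X;E)$ has Schwartz kernel in $\dot{\cC}^{\infty}(X\times X;\pr_R^*\Omega X)$; by Proposition~\ref{ideal.1} such an operator maps $\cC^{-\infty}(X;E)$ into $\dot{\cC}^{\infty}(X;E)$, so $u\in\dot{\cC}^{\infty}(X;E)$, and the same applies to $P^*$, which is fully elliptic of order $m$ by the mapping behaviour of the adjoint under \eqref{ds.8}. The suspended case is identical line by line: one runs the same two-stage construction inside $\Psi^\bullet_{\fS}(\overline{V}\times X;\cdot,\cdot)$, treating $Z_0=\pa\overline{V}\times X$ and the $Z_i=\overline{V}\times H_i$ on the same footing, noting that $\sigma_{\pa_0}$ is just the ordinary suspended symbol (the Fourier-transformed family at $|\Upsilon|\to\infty$) whose invertibility is again part of full ellipticity, and keeping all constructions $V$-translation invariant so the output lands in $\Psi^{-m}_{\fS-\sus(V)}$ with error in $\dot\Psi^{-\infty}_{\fS-\sus(V)}$.

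The main obstacle I anticipate is the bookkeeping in the second stage: ensuring that the successive boundary corrections are mutually compatible and do not spoil the symbols already arranged at deeper or shallower faces. This is precisely why the partial order on the $H_i$ and the compatibility $\sigma_{\pa_j}(P)|_{\ff_{\pi_i}\cap\ff_{\pi_j}}$ matching the suspended symbol of $\sigma_{\pa_i}(P)$ are essential — one must correct from minimal hypersurfaces upward so that at each stage the new correction term, viewed on $\ff_{\pi_j}$ for $H_j>H_i$, is already consistent with what was installed on $\ff_{\pi_i}$. A secondary subtlety is justifying that "invertible modulo smoothing plus invertible implies inverse in the calculus" for the suspended boundary families, which requires that $\dot\Psi^{-\infty}_{\fS-\sus(V)}$ be a two-sided ideal closed under the relevant functional calculus — this follows from Corollary~\ref{ideal.2} and the composition theorem, but needs to be stated carefully for the suspended algebra.
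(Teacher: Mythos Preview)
Your overall strategy is sound and would work, but it differs from the paper's proof in one essential organizational point, and there is one slip worth flagging.

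\textbf{Comparison with the paper.} The paper does \emph{not} separate the construction into an interior stage followed by a boundary-symbol stage. Instead, having first established (by induction on $\dim X$ together with Corollary~\ref{fp.17}) that each $\sigma_{\pa_i}(P)^{-1}$ already lies in $\Psi^{-m}_{\ff_{\pi_i}}(H_i;F,E)$, it chooses the initial approximation $Q_0\in\Psi^{-m}_{\fS}(X;F,E)$ so that \emph{all} symbols are correct at once: $\sigma_{-m}(Q_0)=\sigma_m(P)^{-1}$ and $\sigma_{\pa_i}(Q_0)=\sigma_{\pa_i}(P)^{-1}$ for every $i$. This immediately gives $\Id-Q_0P\in x\Psi^{-1}_{\fS}$, and a single iteration $Q_{n+1}=(\Id-\tilde Q_nP)\tilde Q_n$ produces errors in $x^{n+1}\Psi^{-n-1}_{\fS}$, so one asymptotic sum finishes. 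The compatibility of the boundary symbols of $P^{-1}$ on the intersections $\ff_{\pi_i}\cap\ff_{\pi_j}$ is automatic (it is inherited from that of $P$), so the simultaneous lift to $Q_0$ exists with no bookkeeping at all. In other words, the very obstacle you flag as the ``main obstacle'' --- making sequential boundary corrections mutually compatible along the partial order --- simply does not arise in the paper's argument. Your two-stage route is a legitimate alternative, but it trades the single compatibility check (which is free) for an inductive sweep over hypersurfaces.

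\textbf{A slip.} Your sentence ``the error term has trivial boundary symbol at every front face, hence lies in $\dot\Psi^{-\infty}_{\fS}$'' is not correct: vanishing of all $\sigma_{\pa_i}$ only places the error in $x\Psi^{-\infty}_{\fS}$, by the short exact sequences \eqref{sm.7b}, not in $x^\infty\Psi^{-\infty}_{\fS}=\dot\Psi^{-\infty}_{\fS}$. Your ``final Neumann/Borel summation'' is precisely what bridges this gap (since $R\in x\Psi^{-\infty}_{\fS}$ implies $R^j\in x^j\Psi^{-\infty}_{\fS}$), so the argument survives, but the intermediate claim should be corrected.

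\textbf{On the inverse-in-the-calculus step.} The statement you need --- that an invertible fully elliptic suspended operator has its inverse in $\Psi^{-m}_{\fS-\sus(V)}$ --- is exactly Corollary~\ref{fp.17}, not a consequence of Corollary~\ref{ideal.2}. Note that Corollary~\ref{fp.17} is proved \emph{using} Proposition~\ref{fp.1}, so the logical structure is a mutual induction on $\dim X$: one assumes both results for fibres of strictly smaller dimension, uses Corollary~\ref{fp.17} on the fibres to obtain $\sigma_{\pa_i}(P)^{-1}$ in the suspended calculus, and then carries out the parametrix construction on $X$. You have the right idea but should point to the right statement.
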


\begin{proof}
Using this proposition and Corollary~\ref{fp.17} below and proceeding by induction on the dimension of $X$, we can assume  that 
$\sigma_{\pa_{i}}(P)^{-1}\in \Psi^{-m}_{\ff_{\pi_{i}}}(H_{i};F,E)$.  This means we can
choose $Q_{0}\in \Psi^{-m}_{\fS}(X;F,E)$ such that $\sigma_{-m}(Q_{0})=
\sigma_{m}(P)^{-1}$ and $\sigma_{\pa_{j}}(Q_{0})=\sigma_{\pa_{j}}(P)^{-1}$.  Then we have 
\begin{equation}
   \Id-Q_{0}P\in x\Psi^{-1}_{\fS}(X;E), \quad \Id-PQ_{0}\in x\Psi^{-1}_{\fS}(X;F).  
\label{fp.2}\end{equation}
Suppose for a proof by induction that we have defined 
$Q_{\ell}\in x^{\ell}\Psi^{-m-\ell}_{\fS}(X;F,E)$ for $\ell\le n$ such that 
$\tilde{Q}_{n}=Q_{1}+\cdots +Q_{n}$ satisfies
\begin{equation}
   \Id-\tilde{Q}_{n}P\in x^{n+1}\Psi^{-n-1}_{\fS}(X;E), \quad \Id-P\tilde{Q}_{n}\in x^{n+1}\Psi^{-n-1}_{\fS}(X;F).  
\label{fp.2b}\end{equation}
Then, setting $\tilde{Q}_{n+1}= \tilde{Q}_{n}+Q_{n+1}$, we would like to find $Q_{n+1} \in x^{n+1}\Psi^{-m-n-1}_{\fS}(X;F,E)$  such that 
\begin{equation}
   \Id- \tilde{Q}_{n+1}P= \Id- \tilde{Q}_{n}P- Q_{n+1}P \in 
   x^{n+2}\Psi^{-n-2}_{\fS}(X;E),
\label{fp.3}\end{equation}
that is, such that 
\begin{equation}
   Q_{n+1}P= \Id -\tilde{Q}_{n}P \quad \mbox{modulo} \quad x^{n+2}\Psi^{-n-2}_{\fS}(X;E).
\label{fp.3b}\end{equation}
Thus, taking $Q_{n+1}= (\Id-\tilde{Q}_{n}P)\tilde{Q}_{n}$ will give
\begin{equation}
  \Id-\tilde{Q}_{n+1}P\in x^{n+2}\Psi^{-n-2}_{\fS}(X;E)
\label{fp.4}\end{equation}
with $\tilde{Q}_{n+1}= \tilde{Q}_{n}+Q_{n+1}$.  As one can check, we 
will also have that 
\begin{equation}
  \Id -PQ_{n+1}\in x^{n+2}\Psi^{-n-2}_{\fS}(X;F).  
\label{fp.5}\end{equation}
We can then define $Q$ to be the asymptotic sum of the $Q_{\ell}$ giving
the desired parametrix.  If $f\in \ker P$, then 
\begin{equation}
\begin{aligned}
  Pf=0 & \Rightarrow \quad QPf=0 \\
          &  \Rightarrow \quad f = (\Id-QP)f\in \dot{\mathcal{C}}^{\infty}(X;E)
 \end{aligned}
\label{fp.6}\end{equation}
since $\Id-QP\in \dot{\Psi}^{-\infty}_{\fS}(X;E)$.  There is a similar argument for the kernel of $P^{*}$.  For fully elliptic $V$-suspended
$\fS$-operators, the proof is similar and is left to the reader.
\end{proof}

\begin{corollary}
If $V$ is an Euclidean vector space and
$P\in \Psi^{m}_{\fS-\sus(V)}(X;E,F)$ is a fully elliptic $V$-suspended operators which is invertible as a map $P: \cS(V\times X;E)\to \cS(V\times
X;F)$, then it has an inverse in $\Psi^{-m}_{\fS-\sus(V)}(X;F,E)$.  
\label{fp.17}\end{corollary}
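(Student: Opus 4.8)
The plan is to feed the right and left parametrices of Proposition~\ref{fp.1} into an algebraic identity for $P^{-1}$, use that $\dot\Psi^{-\infty}_{\fS-\sus(V)}$ is a two-sided ideal in the suspended calculus, and absorb the remaining smoothing error into the calculus by a fibrewise application of the dual Schwartz kernel theorem (Proposition~\ref{ideal.1}). First, since $P\colon\cS(V\times X;E)\to\cS(V\times X;F)$ is a continuous bijection of Fr\'echet spaces, the open mapping theorem gives that $P^{-1}\colon\cS(V\times X;F)\to\cS(V\times X;E)$ is continuous; moreover $P^{-1}$ is automatically $V$-suspended, because $T_{-v}^*P^{-1}T_v^*=(T_{-v}^*PT_v^*)^{-1}=P^{-1}$ for all $v\in V$. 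By Proposition~\ref{fp.1} there is $Q\in\Psi^{-m}_{\fS-\sus(V)}(X;F,E)$ with $R_L:=\Id_E-QP\in\dot\Psi^{-\infty}_{\fS-\sus(V)}(X;E)$ and $R_R:=\Id_F-PQ\in\dot\Psi^{-\infty}_{\fS-\sus(V)}(X;F)$. Since $R_LP^{-1}=P^{-1}-Q$ and $P^{-1}R_R=P^{-1}-Q$, substituting the second identity into the first yields
\[
  P^{-1}=Q+R_LQ+R_LP^{-1}R_R .
\]
The term $R_LQ$ lies in $\dot\Psi^{-\infty}_{\fS-\sus(V)}(X;F,E)$: this is the ideal property of $\dot\Psi^{-\infty}_{\fS-\sus(V)}$, which follows from Theorem~\ref{composition}, the identity \eqref{sus.21} ($\widehat{R_LQ}(\Upsilon)=\widehat{R_L}(\Upsilon)\widehat Q(\Upsilon)$), and Corollary~\ref{ideal.2}.

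\noindent It remains to show $R_LP^{-1}R_R\in\dot\Psi^{-\infty}_{\fS-\sus(V)}(X;F,E)$. It is clearly $V$-suspended, so it has a Fourier transform $\Upsilon\mapsto\widehat{R_L}(\Upsilon)\,\widehat{P^{-1}}(\Upsilon)\,\widehat{R_R}(\Upsilon)$, where $\widehat{P^{-1}}(\Upsilon)$ is the operator on $X$ obtained from $P^{-1}$ by noting that a continuous operator commuting with all $V$-translations is a Fourier multiplier; concretely $\widehat{P^{-1}}(\Upsilon)$ maps $\CI(X;F)$ to $\CI(X;E)$ and coincides with $\widehat P(\Upsilon)^{-1}$. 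For each fixed $\Upsilon$, the definition of $\dot\Psi^{-\infty}_{\fS-\sus(V)}$ gives $\widehat{R_R}(\Upsilon)\in\dot\Psi^{-\infty}_{\fS}(X;F)$ and $\widehat{R_L}(\Upsilon)\in\dot\Psi^{-\infty}_{\fS}(X;E)$, and by Proposition~\ref{ideal.1} these map $\cC^{-\infty}(X;\cdot)$ continuously into $\dot\cC^\infty(X;\cdot)$. Hence the composite factors as
\[
  \cC^{-\infty}(X;F)\xrightarrow{\ \widehat{R_R}(\Upsilon)\ }\dot\cC^\infty(X;F)\hookrightarrow\CI(X;F)\xrightarrow{\ \widehat{P^{-1}}(\Upsilon)\ }\CI(X;E)\hookrightarrow\cC^{-\infty}(X;E)\xrightarrow{\ \widehat{R_L}(\Upsilon)\ }\dot\cC^\infty(X;E),
\]
so by Proposition~\ref{ideal.1} again, $\widehat{R_L}(\Upsilon)\widehat{P^{-1}}(\Upsilon)\widehat{R_R}(\Upsilon)\in\dot\Psi^{-\infty}_{\fS}(X;F,E)$ for every $\Upsilon\in V^*$. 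Since $\widehat{R_L}$ and $\widehat{R_R}$ are Schwartz in $\Upsilon$ with values in $\dot\Psi^{-\infty}_\fS$, while $\widehat{P^{-1}}(\Upsilon)$ and all its $\Upsilon$-derivatives are polynomially bounded (controlling $\widehat P(\Upsilon)^{-1}$ for $|\Upsilon|$ large by the full ellipticity of $P$, and for $\Upsilon$ bounded by continuity of $P^{-1}$), the family $\Upsilon\mapsto\widehat{R_L}(\Upsilon)\widehat{P^{-1}}(\Upsilon)\widehat{R_R}(\Upsilon)$ is Schwartz with values in $\dot\Psi^{-\infty}_\fS(X;F,E)$, that is, $R_LP^{-1}R_R\in\dot\Psi^{-\infty}_{\fS-\sus(V)}(X;F,E)$. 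Combining this with the above, $P^{-1}=Q+\bigl(R_LQ+R_LP^{-1}R_R\bigr)\in\Psi^{-m}_{\fS-\sus(V)}(X;F,E)$.

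\noindent The main obstacle is the last step: showing that the error $R_LP^{-1}R_R$ is genuinely in $\dot\Psi^{-\infty}_{\fS-\sus(V)}$ and not merely residual fibrewise. The delicate points are (i) making precise sense of $\widehat{P^{-1}}(\Upsilon)$ and its mapping and growth properties \emph{before} knowing $P^{-1}$ is in the calculus, which is exactly where translation invariance and full ellipticity are used; and (ii) the book-keeping that upgrades ``residual for each $\Upsilon$'' to ``Schwartz family of residual operators'', using the Fr\'echet semi-norm estimates for suspended operators recorded after Lemma~\ref{sus.14}. The other ingredients—the parametrix identity and the ideal property—are routine given the results already established; the suspended case being handled, the same proof scheme (now combined with the induction on $\dim X$ behind Proposition~\ref{fp.1}) also yields the non-suspended spectral invariance statement.
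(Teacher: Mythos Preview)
Your approach is sound in outline and uses the same ingredients as the paper—the parametrix from Proposition~\ref{fp.1}, the residual characterization of Proposition~\ref{ideal.1}, and fibrewise analysis in the suspension parameter $\Upsilon$—but the organization and one key step differ.

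The paper does not use the two-sided sandwich $P^{-1}=Q+R_LQ+R_LP^{-1}R_R$. It works one-sidedly: from $PQ=\Id+R$ with $R\in\dot\Psi^{-\infty}_{\fS-\sus(V)}$ it writes $\widehat P(\Upsilon)^{-1}=\widehat Q(\Upsilon)-\widehat P(\Upsilon)^{-1}\widehat R(\Upsilon)$, and Proposition~\ref{ideal.1} places the last term in $\dot\Psi^{-\infty}_{\fS}$ for every fixed $\Upsilon$. For the Schwartz decay as $|\Upsilon|\to\infty$ the paper gives an \emph{explicit} mechanism: since $\widehat R(\Upsilon)\to0$ in $\dot\Psi^{-\infty}_{\fS}$ by \eqref{sus.23}, for $|\Upsilon|>K$ the Neumann series $\Id+\widehat S(\Upsilon)=\sum_{k\ge0}(-\widehat R(\Upsilon))^k$ converges and yields $\widehat P(\Upsilon)^{-1}=\widehat Q(\Upsilon)(\Id+\widehat S(\Upsilon))$ with $\widehat S$ again satisfying \eqref{sus.23}. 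Hence $\widehat W(\Upsilon):=\widehat P(\Upsilon)^{-1}-\widehat Q(\Upsilon)$ equals $\widehat Q(\Upsilon)\widehat S(\Upsilon)$ for $|\Upsilon|>K$ and lies in $\dot\Psi^{-\infty}_{\fS}$ for all $\Upsilon$, so $W\in\dot\Psi^{-\infty}_{\fS-\sus(V)}$ and $P^{-1}=Q+W$.

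Your route hinges on the assertion you yourself flag as delicate: that $\widehat{P^{-1}}(\Upsilon)$ and all its $\Upsilon$-derivatives are polynomially bounded, ``controlling $\widehat P(\Upsilon)^{-1}$ for $|\Upsilon|$ large by the full ellipticity of $P$''. This is not justified as stated. Full ellipticity (Definition~\ref{fuel.1}) has already been consumed in producing $Q$; it imposes no direct growth control on $\widehat P(\Upsilon)^{-1}$ as $|\Upsilon|\to\infty$. What actually controls the large-$\Upsilon$ behavior is the smallness of $\widehat R(\Upsilon)$ coming from \eqref{sus.23}, and the way to cash this in is precisely the Neumann series above. Once you supply that, your sandwich $\widehat{R_L}(\Upsilon)\widehat P(\Upsilon)^{-1}\widehat{R_R}(\Upsilon)$ is indeed Schwartz in $\Upsilon$ with values in $\dot\Psi^{-\infty}_{\fS}$ and the argument closes; without it, the polynomial-bound claim—and hence the membership of $R_LP^{-1}R_R$ in $\dot\Psi^{-\infty}_{\fS-\sus(V)}$—remains unproved. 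The paper's one-sided version is slightly more economical because it never needs operator-seminorm bounds on $\widehat P(\Upsilon)^{-1}$ itself: the Neumann series already expresses the correction $\widehat W$ as a product of objects whose Schwartz behavior is known.
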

\begin{proof}
Let $P\in \Psi^{m}_{\fS-\sus(V)}(X;E,F)$ be as in the statement of the corollary and let $Q\in \Psi^{-m}_{\fS-\sus(V)}(X;F,E)$ be the parametrix of
Proposition~\ref{fp.1}.  in particular, we have that
\[
      \widehat{P}(\Upsilon)\widehat{Q}(\Upsilon)= \Id +\widehat{R}(\Upsilon), \quad \forall \; \Upsilon \in V^*, \; \mbox{where} \; R\in \dot{\Psi}^{-\infty}_{\fS-\sus(V)}(X;F).
\]
By \eqref{sus.23} we see that $\widehat{R}(\Upsilon)$ is small for $|\Upsilon|$ large, so that there exists $K>0$ with the property that 
$\Id+ \widehat{R}(\Upsilon)$ is invertible for $|\Upsilon|>K$ with inverse of the form $\Id +\widehat{S}(\Upsilon)$, where 
\[
        \widehat{S}(\Upsilon)= \sum_{k=1}^{\infty} (-1)^k \widehat{R}(\Upsilon)^k \in \dot{\Psi}^{-\infty}_{\fS}(X;F)
\]
satisfies \eqref{sus.23}.  Thus, for $|\Upsilon|>K$, we have that
\begin{equation}
   \widehat{P}(\Upsilon)^{-1}= \widehat{Q}(\Upsilon)(\Id+\widehat{S}(\Upsilon)).
\label{inverse.1}\end{equation}
Now, the invertibility of $P$ clearly implies the invertibility of $\widehat{P}(\Upsilon)$ for all $\Upsilon\in V^{*}$.  Using the parametrix $Q$, we have
\begin{equation}
\widehat{P}(\Upsilon)^{-1}= \widehat{P}(\Upsilon)^{-1}(\widehat{P}(\Upsilon)\widehat{Q}(\Upsilon)-\widehat{R}(\Upsilon))= \widehat{Q}(\Upsilon)-\widehat{P}(\Upsilon)^{-1}\widehat{R}(\Upsilon). 
\label{inverse.2}\end{equation}
By Proposition~\ref{ideal.1}, we must have $\widehat{P}(\Upsilon)^{-1}\widehat{R}(\Upsilon)\in\dot{\Psi}^{-\infty}_{\fS}(X; F, E)$ for all $\Upsilon\in V^{*}$.  Thus, from \eqref{inverse.1} and \eqref{inverse.2}, we see that 
\[
    \widehat{P}(\Upsilon)^{-1}= \widehat{Q}(\Upsilon) + \widehat{W}(\Upsilon),
\]
where $W\in \dot{\Psi}^{-\infty}_{\fS-\sus(V)}(X;F,E)$ is such that 
$\widehat{W}(\Upsilon)= \widehat{Q}(\Upsilon)\widehat{S}(\Upsilon)$ for 
$|\Upsilon|>K$.  Taking the inverse Fourier transform, we finally obtain that
\[
       P^{-1}= Q+ W \in \Psi^{-m}_{\fS-\sus(V)}(X;F,E).
\]
\end{proof}

As we will see, this last corollary will be useful to study the action of $\fS$-operators on square integrable functions.  Precisely, let $g_{\pi}$ be
a choice of $\fS$-metric and let $dg_{\pi}\in \CI(X; {}^{\pi}\Omega)$ be its volume form.  Let $L^{2}_{g_{\pi}}(X)$ be the corresponding space of
functions on $X\setminus \pa X$ that are square integrable with respect to the density $dg_{\pi}$.   
To establish the $L^{2}$-boundedness of $\fS$-pseudodifferential operators of order zero, we will, as in 
\cite{Mazzeo-Melrose},
follow the standard trick of H\"ormander relying on the construction of an approximate square root.    
\begin{proposition}
If $B\in \Psi^{0}_{\fS}(X)$ is formally self-adjoint with respect to a positive $\fS$-density $\nu$ on
$X$, then there exists $C>0$ sufficiently large so that 
\[
       C+B= A^{*}A+R
\]
for some $A\in \Psi^{0}_{\fS}(X)$ and $R\in \dot{\Psi}^{-\infty}_{\fS}(X)$.
\label{mp.1}\end{proposition}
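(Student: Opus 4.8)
The plan is to adapt H\"ormander's approximate square root construction to the $\fS$-calculus, proceeding by induction on $\dim X$. As part of the induction I will assume that the proposition, together with the $L^{2}$-boundedness of order zero $\fS$-operators that is deduced from it, already holds for all manifolds with fibred corners of dimension strictly less than $\dim X$, the base case $\dim X=0$ being H\"ormander's classical result on closed manifolds.

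\emph{Step 1: making $C+B$ fully elliptic with positive data.} Since ${}^{\pi}S^{*}X$ is compact, $\sigma_{0}(B)$ is uniformly bounded, so for $C$ large $\sigma_{0}(C+B)=C+\sigma_{0}(B)$ is everywhere positive and $a_{0}:=(C+\sigma_{0}(B))^{1/2}$ is a positive symbol of order $0$. For each boundary hypersurface $H_{i}$ the normal operator $\sigma_{\partial_{i}}(B)$ is formally self-adjoint (the map $\sigma_{\partial_{i}}$ intertwines formal adjoints for the induced $\fS$-density) and is an order zero $\fS$-operator on the fibres $F_{i}$ of $\pi_{i}$, which have dimension $<\dim X$; by the inductive hypothesis it is therefore $L^{2}$-bounded, so $\sigma_{\partial_{i}}(C+B)=C+\sigma_{\partial_{i}}(B)$ is positive and invertible for $C$ large, with spectrum a compact subset of $(0,\infty)$. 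Its inverse lies in $\Psi^{0}_{\fS-\sus({}^{\pi}NS_{i})}(H_{i}/S_{i})$ by Corollary~\ref{fp.17} (using that $z-\sigma_{\partial_{i}}(C+B)$ is fully elliptic and invertible off the spectrum), and the positive square root can then be defined by the holomorphic functional calculus $(\sigma_{\partial_{i}}(C+B))^{1/2}=\frac{1}{2\pi i}\oint_{\Gamma}\sqrt{z}\,(z-\sigma_{\partial_{i}}(C+B))^{-1}\,dz$ over a compact contour $\Gamma$ encircling the spectrum; since the resolvents depend continuously on $z\in\Gamma$ in the Fr\'echet topology of the suspended calculus, the integral converges there and $(\sigma_{\partial_{i}}(C+B))^{1/2}\in\Psi^{0}_{\fS-\sus({}^{\pi}NS_{i})}(H_{i}/S_{i})$ is again positive, invertible, with interior principal symbol compatible with $a_{0}$. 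Finally, because $\sigma_{\partial_{i}}(C+B)$ and $\sigma_{\partial_{j}}(C+B)$ agree on $\ff_{\pi_{i}}\cap\ff_{\pi_{j}}$ when $H_{i}<H_{j}$ and the square root is taken canonically, these square roots are mutually compatible on the corners.

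\emph{Step 2: symbolic iteration.} By the surjectivity in the symbol sequences \eqref{sm.4} and \eqref{sm.7b}, choose $A_{0}\in\Psi^{0}_{\fS}(X)$ with $\sigma_{0}(A_{0})=a_{0}$ and $\sigma_{\partial_{i}}(A_{0})=(\sigma_{\partial_{i}}(C+B))^{1/2}$ for all $i$ --- possible since this data is compatible --- and replace $A_{0}$ by $\tfrac12(A_{0}+A_{0}^{*})$ so that $A_{0}=A_{0}^{*}$. Then $A_{0}$ is fully elliptic and, by Theorem~\ref{composition}, $\sigma_{0}(A_{0}^{*}A_{0})=a_{0}^{2}=\sigma_{0}(C+B)$ and $\sigma_{\partial_{i}}(A_{0}^{*}A_{0})=\sigma_{\partial_{i}}(C+B)$, so $E_{1}:=(C+B)-A_{0}^{*}A_{0}$ is a self-adjoint element of $x\Psi^{-1}_{\fS}(X)$, with $x=\prod_{i}x_{i}$. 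Assume inductively that $A_{0},\dots,A_{n}$ with $A_{\ell}\in x^{\ell}\Psi^{-\ell}_{\fS}(X)$ have been constructed so that $\widetilde A_{n}:=A_{0}+\dots+A_{n}$ satisfies $E_{n+1}:=(C+B)-\widetilde A_{n}^{*}\widetilde A_{n}\in x^{n+1}\Psi^{-n-1}_{\fS}(X)$; letting $G_{0}\in\Psi^{0}_{\fS}(X)$ be a parametrix of $A_{0}$ from Proposition~\ref{fp.1} and setting $A_{n+1}:=\tfrac12 G_{0}E_{n+1}\in x^{n+1}\Psi^{-n-1}_{\fS}(X)$, one checks, using $A_{0}G_{0}\equiv\Id\equiv G_{0}^{*}A_{0}$ modulo $\dot{\Psi}^{-\infty}_{\fS}(X)$ and self-adjointness of $E_{n+1}$, that $\widetilde A_{n+1}:=\widetilde A_{n}+A_{n+1}$ satisfies $(C+B)-\widetilde A_{n+1}^{*}\widetilde A_{n+1}\in x^{n+2}\Psi^{-n-2}_{\fS}(X)$. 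Summing the $A_{\ell}$ asymptotically (Borel's lemma) produces $A\in\Psi^{0}_{\fS}(X)$ with $R:=(C+B)-A^{*}A\in\bigcap_{\ell}x^{\ell}\Psi^{-\ell}_{\fS}(X)=\dot{\Psi}^{-\infty}_{\fS}(X)$, which is the claim.

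\emph{Main obstacle.} The delicate point is Step 1: producing an honest square root of the boundary normal operators inside the suspended $\fS$-calculus and checking the corner compatibilities. This is precisely where the induction on dimension is used --- via $L^{2}$-boundedness in lower dimension to pin down the spectrum of $\sigma_{\partial_{i}}(B)$, via Corollary~\ref{fp.17} to keep the resolvents inside the calculus, and via uniform-in-$z$ control of these resolvents to make the functional calculus converge in the Fr\'echet topology. Everything else is the standard symbolic iteration, with only routine verifications that composition (Theorem~\ref{composition}) and the symbol maps behave as stated on the error terms.
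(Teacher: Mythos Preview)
Your proof is correct and follows the paper's overall strategy: induct on the size of $X$, build $A_{0}$ by lifting positive square roots of the principal symbol and of each boundary normal operator (the latter produced by a contour integral in the suspended calculus, exactly as in Corollary~\ref{raccar.1}), then correct iteratively and take an asymptotic sum.

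The one genuine difference is in the iteration. The paper keeps $A$ self-adjoint throughout and, at level $\ell$, corrects first the principal symbol and then the boundary symbols one face at a time by solving the linearised square-root (Sylvester) equation
\[
\hat{\sigma}_{\pa_{i}}(A_{0})\,\hat{\sigma}_{\pa_{i}}(\tilde Q)+\hat{\sigma}_{\pa_{i}}(\tilde Q)\,\hat{\sigma}_{\pa_{i}}(A_{0})=\hat{\sigma}_{\pa_{i}}(\text{error}),
\]
which is solvable because $\hat{\sigma}_{\pa_{i}}(A_{0})^{2}>0$. You instead exploit that $A_{0}$ is \emph{fully elliptic} and take $A_{n+1}=\tfrac12 G_{0}E_{n+1}$ with $G_{0}$ a parametrix of $A_{0}$; this kills the principal and all normal symbols of the error simultaneously, at the cost of not keeping $A$ self-adjoint (which the statement does not require). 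Your route is a bit cleaner; the paper's buys a self-adjoint $A$ and makes the face-by-face structure of the obstruction explicit. Both are standard variants of H\"ormander's argument.
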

\begin{proof}
The proof is by induction on the depth of $X$.  The 
case where $X$ is a closed manifold is well-known and the case where $X$ is a manifold with
boundary is proven by Mazzeo and Melrose in \cite{Mazzeo-Melrose}.  

 For $i\in\{1,\ldots,k\}$,  let $\nu_{S_{i}}$  be a positive section of $\Omega({}^{\pi}NS_{i})$ and write
$\left.\nu\right|_{H_{i}}= \nu_{F_{i}}\otimes \nu_{S_{i}}$ where $\nu_{F_{i}}$ is a positive density
in the fibres of the fibration $\pi_{i}:H_{i}\to S_{i}$.   Then the suspended family of $\fS$-pseudodifferential operators
$\hat{\sigma}_{\pa_{i}}(B)(\eta)$ with $\eta\in {}^{\pi}N^{*}S_{i}$
is formally self-adjoint with respect to the density $\nu_{F_{i}}$.  By our induction hypothesis (see Corollary~\ref{raccar.1}), 
for $C>0$ big enough, $(\hat{\sigma}_{\pa_{i}}(B)(\eta) +C)\in \Psi^0_{\fS}(F_{i})$ has a unique positive 
square root so that $(C+ \sigma_{\pa_{i}}(B))$ also has a unique positive square root in
$\Psi^{0}_{\ff_{\pi_{i}}}(X)$.  
Similarly, $(C+\sigma_{0}(B))$ has unique positive square root provided $C>0$ is large enough. 
Thus, we can find $A_{0}\in \Psi_{\fS}^{0}(X)$ such that 
\begin{equation}
\sigma_{0}(A_{0})= (C+\sigma_{0}(B))^{\frac12}, \quad 
\sigma_{\pa_{i}}(A_{0})=(C+\sigma_{\pa_{i}}(B))^{\frac12}, \quad i\in\{1,\ldots,k\}.
\label{mp.2}\end{equation}
Replacing $A_{0}$ by $\frac{1}{2}(A_{0}+A_{0}^{*})$ if necessary, we can assume that $A_{0}$ is formally self-adjoint with
\begin{equation}
  C+B-A^{2}_{0} \in x\Psi^{-1}_{\fS}(X).  
\label{mp.3}\end{equation}
To get an error term in $\dot{\Psi}^{-\infty}_{\fS}(X)$, we can proceed by induction.  Thus, assume that
we have found a formally self-adjoint operator $A_{\ell}\in\Psi_{\fS}^{0}(X)$ such that
\begin{equation}
     C+B- A^{2}_{\ell}= R_{\ell+1}\in x^{\ell+1}\Psi^{-\ell-1}_{\fS}(X).
\label{mp.4}\end{equation} 
Writing $A_{\ell+1}= A_{\ell}+Q_{\ell}$ where the formally self-adjoint operator $Q_{\ell}\in x^{\ell+1}\Psi_{\fS}^{-\ell-1}(X)$ is to be found, we
have 
\begin{equation}
\begin{aligned}
C+B- A_{\ell+1}^{2} &= R_{\ell+1}-Q_{\ell}A_{\ell}-A_{\ell}Q_{\ell}- Q_{\ell}^{2}  \\
               &= R_{\ell+1}-Q_{\ell}A_{\ell}-A_{\ell}Q_{\ell}
\end{aligned}
\label{mp.5}\end{equation}
modulo $x^{\ell+2}\Psi^{-\ell-2}_{\fS}(X)$.  First, this means we need to solve
\begin{equation}
  \sigma_{-\ell-1}(R_{\ell+1})= 2\sigma_{0}(A_{\ell})\sigma_{-\ell-1}(Q_{\ell}),
\label{mp.8}\end{equation}
which clearly has a formally self-adjoint solution $Q_{\ell,0}\in x^{\ell+1}\Psi^{-\ell-1}_{\fS}(X)$.
Thus, replacing $A_{\ell}$ by $A_{\ell,0}= A_{\ell}+Q_{\ell,0}$ in \eqref{mp.5}, this means we have to solve \eqref{mp.5}
with $R_{\ell+1}$ replaced by $R_{\ell+1,0}\in x^{\ell+1}\Psi^{-\ell-2}_{\fS}(X)$. 

Proceeding by induction on $i\in\{1,\ldots,k\}$ and with the convention that $x_0=1$ and $w_i=\prod_{j=1}^{i}x_j$, assume more generally we have found formally self-adjoint operators $Q_{\ell,j} \in w_{j-1}x^{\ell+1}\Psi^{-\ell-2}_{\fS}(X)$ for $j\le i-1$ such that 
$A_{\ell,i-1}= A_{\ell}+ \sum_{j=0}^{i-1} Q_{\ell,j}$ satisfies
\begin{equation}
  C+B-A^2_{\ell,i-1}= R_{\ell+1,i-1}\in w_{i-1} x^{\ell+1}\Psi^{-\ell-2}_{\fS}(X).
\label{mp.4b}\end{equation}
To find $Q_{\ell,i}$, write $Q_{\ell,i}=x^{\ell+1}_{i}\tilde{Q}_{\ell,i}$
 where 
$\tilde{Q}_{\ell,i}\in x^{-\ell-1}_{i}w_{i-1}x^{\ell+1}\Psi^{-\ell-2}_{\fS}(X)$.  Using Remark~\ref{notw.1}, this means we need to solve 
\begin{equation}
   \hat{\sigma}_{\pa_{i}}(x_{i}^{-\ell-1}R_{\ell+1,i-1}) =\hat{\sigma}_{\pa_{i}}(\tilde{Q}_{\ell,i})\hat{\sigma}_{\pa_{i}}(A_{0})+
   \hat{\sigma}_{\pa_{i}}(A_{0})\hat{\sigma}_{\pa_{i}}(\tilde{Q}_{\ell,i}).
\label{mp.6}\end{equation}
As pointed out in \cite{Mazzeo-Melrose}, this is 
solvable with $\widetilde{Q}_{\ell,i}$ formally self-adjoint as $\hat{\sigma}_{\pa_{i}}(A_{0})^{2}$ is positive and \eqref{mp.6} is the linearization of
the square root equation
\begin{equation}
( \hat{\sigma}_{\pa_{i}}(A_{0})+ \hat{\sigma}_{\pa_{i}}(\tilde{Q}_{\ell,i}))^{2}= 
\hat{\sigma}_{\pa_{i}}(x_{i}^{-\ell-1}R_{\ell+1,i-1})+ \hat{\sigma}_{\pa_{i}}(A_{0})^{2}.
\label{mp.7}\end{equation}
 Thus, we can find $Q_{\ell,i}$ such  that \eqref{mp.6} satisfied.  Replacing $Q_{\ell,i}$ by $\frac{Q_{\ell,i}^*+ Q_{\ell,i}}{2}$ if necessary, we can assume furthermore that $Q_{\ell,i}$ is formally self-adjoint.  Thus, taking $A_{\ell+1}=A_{\ell}+\sum_{i=0}^{k}Q_{\ell,i}$ insures that  
$A_{\ell+1}= A_{\ell+1}^{*}$ and 
\begin{equation}
   C+B-A^{2}_{\ell+1}\in x^{\ell+2}\Psi^{-\ell-2}_{\fS}(X).
\label{mp.9}\end{equation}
We can then define $A$ as an asymptotic sum specified by the $A_{\ell}$.

\end{proof}

\begin{theorem}
Any element $P\in \Psi^{0}_{\fS}(X;E_{1},E_{2})$ defines a bounded linear operator from $\mathcal{H}_{1}=L^{2}(X;E_{1})$ to
$\mathcal{H}_{2}=L^{2}(X;E_{2})$ with $L^{2}$-norms defined
by a positive $\fS$-density on $X$ and  Hermitian metrics on $E_{1}$ and $E_{2}$.  Furthermore, the map
$\Psi^{0}_{\fS}(X;E_{1},E_{2})\to \mathcal{L}(\mathcal{H}_{1},\mathcal{H}_{2})$ is continuous.  
\label{mp.10}\end{theorem}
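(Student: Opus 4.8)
The plan is to adapt the H\"ormander square root argument used by Mazzeo and Melrose in \cite{Mazzeo-Melrose} for the $\Phi$-calculus, organised as an induction on the depth of $X$.  The base case is $X$ a closed manifold, where order-zero pseudodifferential operators are $L^2$-bounded by the classical result.  For the inductive step, assume the statement (and hence, through Corollary~\ref{raccar.1} and Proposition~\ref{mp.1}) holds for all manifolds with fibred corners of strictly smaller depth; in particular it holds on the typical fibres $F_i$ of the fibrations $\pi_i\colon H_i\to S_i$, which have smaller depth than $X$.

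First I would reduce to a single bundle and to a nonnegative operator.  The formal adjoint $P^*$ of $P\in\Psi^0_\fS(X;E_1,E_2)$ lies in $\Psi^0_\fS(X;E_2,E_1)$ (as in the proof of Proposition~\ref{asp.6}), so by Theorem~\ref{composition} the operator $B:=-P^*P$ is a formally self-adjoint element of $\Psi^0_\fS(X;E_1,E_1)$, and $\|Pu\|^2_{\mathcal H_2}=-\langle Bu,u\rangle_{\mathcal H_1}$ for $u\in\dot{\cC}^\infty(X;E_1)$, which is dense in $\mathcal H_1$.  After a standard partition of unity reduction to trivial bundles (or, equivalently, after checking that the proof of Proposition~\ref{mp.1} goes through verbatim for Hermitian bundles), Proposition~\ref{mp.1} gives, for $C>0$ large enough, $A\in\Psi^0_\fS(X;E_1,E_1)$ and $R\in\dot{\Psi}^{-\infty}_\fS(X;E_1,E_1)$ with $C+B=A^*A+R$.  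Then
\[
C\|u\|^2-\|Pu\|^2_{\mathcal H_2}=C\|u\|^2+\langle Bu,u\rangle=\|Au\|^2+\langle Ru,u\rangle\geq-\|R\|_{\mathcal L(\mathcal H_1)}\|u\|^2,
\]
so that $\|Pu\|^2_{\mathcal H_2}\leq\big(C+\|R\|_{\mathcal L(\mathcal H_1)}\big)\|u\|^2_{\mathcal H_1}$ and $P$ extends to a bounded operator $\mathcal H_1\to\mathcal H_2$.  For the continuity statement I would track the construction in Proposition~\ref{mp.1}: it produces $C$ and $A$, hence $R=C+B-A^*A$, with control of finitely many Fr\'echet seminorms by finitely many Fr\'echet seminorms of $B$, and these are in turn controlled by those of $P$ since composition and the adjoint map are continuous.

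Two inputs remain to be verified.  The first is that every $R\in\dot{\Psi}^{-\infty}_\fS(X;E,E)$ is bounded on $L^2(X;E)$ with norm dominated by one of its Fr\'echet seminorms: writing its Schwartz kernel as $\kappa\cdot\pr_R^*(dg_\pi)$, the section $\kappa$ vanishes to infinite order at every boundary hypersurface of $X\times X$, while by \eqref{density.2} the $\fS$-density $dg_\pi$ blows up only to finite order at $\pa X$ relative to a smooth positive density; hence Schur's test gives $\sup_x\int_X|\kappa(x,y)|\,dg_\pi(y)<\infty$ together with the symmetric bound, both controlled by a $\dot{\cC}^\infty$-seminorm of $\kappa$.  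The second input, and the place where the induction is really used, is the existence in Proposition~\ref{mp.1} of positive square roots of the suspended normal operators $\hat\sigma_{\pa_i}(B)(\eta)$, which are order-zero $\fS$-operators on the lower-depth fibres $F_i$; here one invokes the inductive hypothesis through Corollary~\ref{raccar.1}.  The main difficulty is thus bookkeeping rather than a single hard estimate: one must set up the induction on depth so that Theorem~\ref{mp.10} at depth $d$ depends only on itself at depth $<d$, and one must check that the relevant bounds are uniform in the suspension parameter $\eta$ so that the square root trick can be carried out on all front faces simultaneously.
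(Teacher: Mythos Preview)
Your proof of boundedness is essentially the paper's: reduce to a self-adjoint $B=-P^*P$, invoke Proposition~\ref{mp.1} to write $C+B=A^*A+R$, and estimate $\|Pu\|^2$ accordingly, with the Schur-test justification for the $L^2$-boundedness of $R\in\dot\Psi^{-\infty}_\fS$ making explicit what the paper takes for granted. The inductive structure you describe (depth of $X$, via Corollary~\ref{raccar.1} on the fibres) is exactly the one underlying Proposition~\ref{mp.1}.

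The one genuine difference is the continuity of $\Psi^0_\fS(X;E_1,E_2)\to\mathcal L(\mathcal H_1,\mathcal H_2)$. You propose to track Fr\'echet seminorms through the entire approximate-square-root construction in Proposition~\ref{mp.1}; this is plausible but laborious, since one must control the asymptotic summation and the recursive corrections uniformly. The paper sidesteps this bookkeeping with a closed graph theorem argument: the map $P\mapsto\langle u,Pv\rangle$ is manifestly continuous on $\Psi^0_\fS$ for each $u,v\in\dot{\cC}^\infty(X)$ (it is just pairing the Schwartz kernel against a fixed smooth section), so the graph of $\Psi^0_\fS\to\mathcal L(\mathcal H)$ is closed for the weak operator topology, hence also for the norm topology, and the closed graph theorem finishes. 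This buys continuity without any quantitative control on $C$ or $A$, and is worth knowing as a general device.
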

\begin{proof}

Considering a local trivialization if necessary, we can assume that $E_{1}=E_{2}=\underline{\bbC}$ and $\cH_{1}=\cH_{2}=L^{2}(X)$.  Then
$B=-P^{*}P\in \Psi^{0}_{\fS}(X)$ is formally self-adjoint.  By the previous proposition, there exists
$C>0$ and $A\in \Psi^{0}_{\fS}(X)$ formally self-adjoint such that 
\begin{equation}
   C-P^{*}P= A^{*}A+ R
\label{mp.11}\end{equation} 
for some $R\in x^{\infty}\Psi^{-\infty}_{\fS}(X)$.  Thus, given $u\in \dot{\mathcal{C}}^{\infty}(X)$, we
have
\begin{equation}
 \begin{aligned}
  \| Pu\|^{2} & = C\| u\|^{2} - \|Au\|^{2} - \langle u,Ru\rangle \\
    &\le C\|u\|^{2} + |\;\langle u,Ru\rangle| \le C' \| u\|^{2},
 \end{aligned} 
\label{mp.12}\end{equation}
where the fact elements of $\dot{\Psi}^{-\infty}_{\fS}(X)$ are in $\mathcal{L}(\cH)$ has been
used.  Thus, there is a well-defined linear map 
\begin{equation}
    \Psi^{0}_{\fS}(X)\to \cL(\cH).
\label{mp.12b}\end{equation}
Since the map
\[
         \Psi^{0}_{\fS}(X)\ni A \mapsto \langle u, Av\rangle_{\cH} = K_{A}( \pi_{L}^{*}(u \nu_{\pi})\otimes \pi_{R}^{*}(v))
\]
is continuous for all $u,v\in \dot{\cC}^{\infty}(X)$,  where $\nu_{\pi}$ is the $\fS$-density used to define the $L^{2}$-norm,  we see that the 
graph of the linear map \eqref{mp.12b} is closed with the respect to the topology induced by the norms $A\mapsto |\langle u,Av\rangle|$.  Since this topology is weaker than the norm topology, this means the graph of this map is also closed when we use the norm topology on $\cL(\cH)$.    The map \eqref{mp.12b} is
therefore continuous by the closed graph theorem.

\end{proof}

There is a similar result for suspended $\fS$-operators.  Let $V$ be a Euclidean vector space and let $g_{V}$ be the corresponding Euclidean metric. 
On the manifold with fibred corners $\overline{V}\times X$, consider the $\varpi$-metric
\[
           g_{\varpi}= \pr_{1}^{*} g_{V}+ \pr_{2}^{*}g_{\pi}
\]
where $\pr_{1}: \overline{V}\times X\to \overline{V}$ and $\pr_{2}: \overline{V}\times X\to X$ are the projections on the first and second factors
respectively. 

\begin{corollary}
Any element $P\in \Psi^{0}_{\fS-\sus(V)}(X;E_{1},E_{2})$ defines a bounded linear operator from 
$\cH_{1}= L^{2}_{g_{\varpi}}(\overline{V}\times X;E_{1})$ to $\cH_{2}= L^{2}_{g_{\varpi}}(\overline{V}\times X;E_{2})$ with $L^{2}$-norm defined by a
volume form $dg_{\varpi}$ and Hermitian metrics on $E_{1}$ and $E_{2}$.  Furthermore, the map 
\[
         \Psi^{0}_{\fS-\sus(V)}(X;E_{1},E_{2})\to \cL(\cH_{1},\cH_{2})
\]
is continuous.  
\label{mps.1}\end{corollary}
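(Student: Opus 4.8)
The plan is to deduce this directly from Theorem~\ref{mp.10} applied to the compact manifold with fibred corners $\overline{V}\times X$, rather than to prove anything new about the suspended calculus. First I would recall that, by Definition~\ref{sus.5}, a $V$-suspended $\fS$-operator of order $0$ on $X$ is in particular an element of $\Psi^{0}_{\fS}(\overline{V}\times X;E_{1},E_{2})$, where $E_{1}$ and $E_{2}$ are viewed as pullbacks to $\overline{V}\times X$ of the given bundles on $X$; the translation invariance condition $T^{*}_{-v}\circ P\circ T^{*}_{v}=P$ is extra structure that we will not need for boundedness.

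Next I would check that the metric $g_{\varpi}=\pr_{1}^{*}g_{V}+\pr_{2}^{*}g_{\pi}$ is an iterated fibred corner metric for the iterated fibration structure $\varpi$ on $\overline{V}\times X$ introduced in Section~\ref{sus.0}, so that $L^{2}_{g_{\varpi}}(\overline{V}\times X;E_{i})$ is precisely an $L^{2}$-space of the kind appearing in Theorem~\ref{mp.10}. This is immediate from the canonical trivialization ${}^{\Id}T\overline{V}\cong\overline{V}\times V$ of \eqref{sus.9}: one gets a natural splitting ${}^{\varpi}T(\overline{V}\times X)\cong\pr_{1}^{*}({}^{\Id}T\overline{V})\oplus\pr_{2}^{*}({}^{\pi}TX)$, under which $g_{\varpi}$ corresponds to the direct sum of the constant metric on $\overline{V}\times V$ determined by $g_{V}$ and of $\pr_{2}^{*}g_{\pi}$, hence is a genuine fibre metric on ${}^{\varpi}T(\overline{V}\times X)$ with volume form $dg_{\varpi}\in\CI(\overline{V}\times X;{}^{\varpi}\Omega)$ a $\varpi$-density.

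With these two observations in hand, boundedness of $P\colon\cH_{1}\to\cH_{2}$ follows at once by applying Theorem~\ref{mp.10} to $P\in\Psi^{0}_{\fS}(\overline{V}\times X;E_{1},E_{2})$. For the continuity statement I would note that the natural Fr\'echet topology on $\Psi^{0}_{\fS-\sus(V)}(X;E_{1},E_{2})$ is the subspace topology it inherits as the closed subspace of translation-invariant Schwartz kernels inside $\Psi^{0}_{\fS}(\overline{V}\times X;E_{1},E_{2})$, so the inclusion of the former into the latter is continuous; composing it with the continuous map $\Psi^{0}_{\fS}(\overline{V}\times X;E_{1},E_{2})\to\cL(\cH_{1},\cH_{2})$ of Theorem~\ref{mp.10} gives the result. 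I do not anticipate a serious obstacle: essentially the only point to get right is the identification of $g_{\varpi}$ as a bona fide $\varpi$-metric, which uses nothing beyond \eqref{sus.9} and the product description \eqref{sus.12} of $(\overline{V}\times X)^{2}_{\varpi}$.
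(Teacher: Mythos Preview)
Your argument is logically valid \emph{once Theorem~\ref{mp.10} is known for all manifolds with fibred corners}, but in the paper's architecture that theorem is not available for $\overline{V}\times X$ at the point where Corollary~\ref{mps.1} is invoked. The proof of Theorem~\ref{mp.10} goes through Proposition~\ref{mp.1}, whose inductive step (on the depth of $X$) requires the existence of positive square roots for the suspended normal operators $\hat{\sigma}_{\partial_i}(B)$ on the fibres $F_i$; that existence is supplied by Corollary~\ref{raccar.1}, which in turn rests on Corollary~\ref{mps.1}. Thus Corollary~\ref{mps.1} for $X$ is part of the inductive package needed to close Theorem~\ref{mp.10} at the next depth. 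Since $\overline{V}\times X$ has depth exactly one more than $X$, invoking Theorem~\ref{mp.10} for $\overline{V}\times X$ to prove Corollary~\ref{mps.1} for $X$ is circular: you would be assuming the conclusion of the induction at depth $d+1$ while still establishing the ingredients for depth $d$. The paper flags this explicitly at the start of its proof.

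The paper's remedy is to bypass the extra depth by Fourier transforming in the $V$ variable: for each $\Upsilon\in V^*$ the operator $\widehat{P}(\Upsilon)$ lies in $\Psi^0_{\fS}(X;E_1,E_2)$, so Theorem~\ref{mp.10} for $X$ itself (already established at this stage of the induction) gives uniform $L^2$-bounds via \eqref{sus.24}, and Plancherel transfers these to $L^2_{g_{\varpi}}(V\times X)$. Your observations about $g_{\varpi}$ being a genuine $\varpi$-metric and about the subspace topology are correct and would finish the argument cleanly \emph{after} the induction on depth is complete; but as a proof of the corollary in its stated position they do not stand on their own.
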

\begin{proof}
Since our proof of Theorem~\ref{mp.10} is by induction on the depth of $X$ and since the inductive step is not yet completed, we cannot at this
stage simply use the statement of Theorem~\ref{mp.10} for the manifold with fibred corners $\overline{V}\times X$ to obtain the result.  Instead,
consider the Fourier transform of $P$,
\[
         \Upsilon \mapsto \widehat{P}(\Upsilon)\in \Psi^{0}_{\fS}(X;E_{1},E_{2}), \quad \Upsilon\in V^{*}.
\]
By Theorem~\ref{mp.10}, we know that for each $\Upsilon\in V^{*}$, the operator $\widehat{P}(\Upsilon)$ induces a continuous linear map
\[
   \widehat{P}(\Upsilon): L^{2}_{g_{\pi}}(X;E_{1}) \to L^{2}_{g_{\pi}}(X;E_{2}).
\]
Let $g_{V^{*}}$ be the metric on $V^{*}$ which is dual to $g_V$ and let
\[
    g_{\varpi^{*}}= \pr_{1}^{*} g_{V^{*}}+ \pr_{2}^{*}g_{\pi}
\] 
be the corresponding metric on $V^{*}\times X$.  Since the Fourier transform induces an isomorphism of Hilbert spaces
\[
     \cF_{i}:   L^{2}_{g_{\varpi}}(V\times X;E_{i})\to L^{2}_{g_{\varpi^{*}}}(V^{*}\times X;E_{i}),
\]
we conclude from \eqref{sus.24} and Theorem~\ref{mp.10} that $P\in\cL(\cH_{1},\cH_{2})$.  The continuity of the map
$\Psi^{0}_{\fS-\sus(V)}(X;E_{1},E_{2})\to \cL(\cH_{1},\cH_{2})$ can be proved in the same way as before.

\end{proof}

As a family of suspended operators, the symbol $\sigma_{\pa_{i}}(P)$ of an operator $P\in \Psi^{0}_{\fS}(X;E,F)$ will act on the Banach space
$L^{2}_{g_{\pi}}({}^{\pi}NH_{i}/S_{i};E)$ obtained by taking the closure of the space of Schwartz sections $\cS({}^{\pi}NH_{i};E)$ with respect to the
norm
\begin{equation}
  \| f\|_{L^{2}_{g_{\pi}}({}^{\pi}NH_{i}/S_{i};E)}= \sup_{s\in S_{i}} 
    \| \left. f\right.|_{\phi_{i}^{-1}(s)}\|_{L^{2}_{g_{\pi}}(\phi_{i}^{-1}(s);E)}, \quad f\in \cS({}^{\pi}NH_{i};E),
\label{lds.1}\end{equation} 
where $\phi_{i}= \pi_{i}\circ\nu_{i}: {}^{\pi}NH_{i}\to S_{i}$ and $\nu_{i}: {}^{\pi}NH_{i}\to H_{i}$ is the vector bundle projection.  On each fibre
of $\phi_{i}$, the $L^{2}$-norm of a section of $E$ is specified by a choice of Hermitian metric on $E$ and the natural density induced by $g_{\pi}$. 
Thus, from Corollary~\ref{mps.1}, we see that the symbol $\sigma_{\pa_{i}}(P)$ of an operator $P\in \Psi^{0}_{\fS}(X;E,F)$ naturally induce a
continuous linear map
\begin{equation}
   \sigma_{\pa_{i}}(P): L^{2}_{g_{\pi}}({}^{\pi}NH_{i}/S_i;E) \to L^{2}_{g_{\pi}}({}^{\pi}NH_{i}/S_i;F).
   \label{lds.2}\end{equation}    
Notice that the Banach space $L^{2}_{g_{\pi}}({}^{\pi}NH_{i}/S_{i};E)$ also has a natural structure of $\cC^{0}(S_{i})$-Hilbert module.

To complete the inductive step necessary to the proof of Proposition~\ref{mp.1}, we can now use this fact with Corollary~\ref{fp.17}  to construct the
unique positive square root of   
the operator $C+B$ in Proposition~\ref{mp.1} and its suspended versions.

\begin{corollary}
Given a formally self-adjoint operator $B\in \Psi^{0}_{\fS-\sus(V)}(X)$, there exists a positive constant $C$
 such that $C+B$ is invertible and has a well-defined formally self-adjoint positive definite square root in $\Psi^{0}_{\fS-\sus(V)}(X)$.  
 \label{raccar.1}\end{corollary}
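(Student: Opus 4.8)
The plan is to produce the positive square root of $C+B$ as a Cauchy integral of resolvents, each resolvent lying in the calculus by Corollary~\ref{fp.17}, and then to check that this integral converges in the Fr\'echet topology of $\Psi^{0}_{\fS-\sus(V)}(X)$.

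First one chooses $C$. By Corollary~\ref{mps.1}, $B$ acts as a bounded self-adjoint operator on the Hilbert space $\cH=L^{2}_{g_{\varpi}}(\overline{V}\times X)$ (for the density with respect to which $B$ is formally self-adjoint), so for $C>\|B\|_{\cL(\cH)}$ the operator $T:=C+B$ satisfies $T\ge(C-\|B\|_{\cL(\cH)})\Id>0$ on $\cH$; in particular $T$, and more generally $T+\lambda$ for any $\lambda\ge 0$, is invertible on $\cH$. Enlarging $C$ if necessary, one also arranges that $T$ is fully elliptic: its principal symbol $C+\sigma_{0}(B)$ is a bounded self-adjoint bundle endomorphism, hence positive and invertible once $C$ exceeds its supremum norm, and each normal operator $\sigma_{\pa_{i}}(T)=C+\sigma_{\pa_{i}}(B)$ is invertible --- indeed has a positive square root --- by Corollary~\ref{raccar.1} applied fibrewise over $S_{i}$ to the fibres $F_{i}$, which have strictly smaller depth. (This is the only point at which the inductive hypothesis intervenes.) The same two properties then hold for $T+\lambda$, uniformly in $\lambda\ge 0$. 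Using the argument in the proof of Corollary~\ref{fp.17} --- which needs only full ellipticity together with the pointwise invertibility of $\widehat{T+\lambda}(\Upsilon)=(C+\lambda)+\widehat{B}(\Upsilon)$ on $L^{2}(X)$, itself guaranteed for $C$ large by \eqref{sus.24} and Theorem~\ref{mp.10} --- one concludes $(T+\lambda)^{-1}\in\Psi^{0}_{\fS-\sus(V)}(X)$ for every $\lambda\ge 0$, and in particular that $C+B$ is invertible within the calculus.

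Next one sets
\begin{multline*}
   P:=\frac{1}{\pi}\int_{0}^{\infty}\lambda^{-1/2}\bigl(\Id-\lambda(T+\lambda)^{-1}\bigr)\,d\lambda \\
    =\frac{1}{\pi}\int_{0}^{\infty}\lambda^{-1/2}(T+\lambda)^{-1}T\,d\lambda .
\end{multline*}
The integrand is a continuous $\Psi^{0}_{\fS-\sus(V)}(X)$-valued function of $\lambda\in(0,\infty)$; as $\lambda\to 0^{+}$ each of its Fr\'echet seminorms is $O(\lambda^{-1/2})$, using continuity up to $\lambda=0$ of the resolvent of the invertible operator $T$; and as $\lambda\to\infty$ each seminorm is $O(\lambda^{-3/2})$, since $\Id-\lambda(T+\lambda)^{-1}=(\Id+\lambda^{-1}T)^{-1}(\lambda^{-1}T)$ and $(\Id+\lambda^{-1}T)^{-1}$ is controlled by a Neumann series converging in the Fr\'echet algebra $\Psi^{0}_{\fS-\sus(V)}(X)$ for $\lambda$ large. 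Hence the integral converges in $\Psi^{0}_{\fS-\sus(V)}(X)$ and $P\in\Psi^{0}_{\fS-\sus(V)}(X)$. On $\cH$, the elementary identity $\frac{1}{\pi}\int_{0}^{\infty}\lambda^{-1/2}\mu(\mu+\lambda)^{-1}\,d\lambda=\mu^{1/2}$ for $\mu>0$, together with the spectral theorem, gives $P=T^{1/2}$; thus $P$ is formally self-adjoint and positive definite, and it is the unique positive square root of $T$ in $\cL(\cH)$ --- a fortiori the unique one inside $\Psi^{0}_{\fS-\sus(V)}(X)$. Translation invariance is preserved by $\lambda\mapsto(T+\lambda)^{-1}$ and by the integral, so the suspended structure requires no extra bookkeeping; applying the same construction to $T^{-1}$ (again invertible and fully elliptic) also yields $T^{-1/2}=P^{-1}\in\Psi^{0}_{\fS-\sus(V)}(X)$.

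The main obstacle I expect is the Fr\'echet convergence of this resolvent integral --- obtaining the $O(\lambda^{-1/2})$ and $O(\lambda^{-3/2})$ bounds on \emph{every} seminorm of $\Psi^{0}_{\fS-\sus(V)}(X)$, uniformly in $\lambda$. This amounts to tracking the $\lambda$-dependence hidden in Corollary~\ref{fp.17}: the parametrix of $T+\lambda$ is built from the symbols $(\lambda+\sigma(T))^{-1}$, $(\lambda+\sigma_{\pa_{i}}(T))^{-1}$, which form a bounded (and, in $\lambda$, decaying) family, and its smoothing remainder is controlled uniformly via \eqref{sus.23}--\eqref{sus.24}; the large-$\lambda$ decay is then supplied by the factor $\lambda^{-1}T$, and the small-$\lambda$ behaviour by continuity of $\lambda\mapsto(T+\lambda)^{-1}$ on $[0,1]$, which follows from the resolvent identity and the continuity of composition (Theorem~\ref{composition} and its suspended counterpart). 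Everything else --- the choice of $C$, the identification $P=T^{1/2}$, uniqueness, and the suspended invariance --- is routine.
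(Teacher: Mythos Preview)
Your approach is correct in outline and uses the same key input (Corollary~\ref{fp.17} for the resolvent), but it takes a harder route than the paper. The paper exploits the fact that $B$ is \emph{bounded}: taking $C>\|B\|_{\cL(\cH)}$ and a circle $\Gamma$ of radius $r$ centred at $C$ with $\|B\|_{\cL(\cH)}<r<C$, it writes
\[
   (C+B)^{1/2}=\frac{1}{2\pi i}\int_{\Gamma}\lambda^{1/2}\bigl(\lambda-(C+B)\bigr)^{-1}\,d\lambda.
\]
For $\lambda\in\Gamma$ one has $|\lambda-C|=r>\|B\|$, so $\lambda-(C+B)$ is invertible on $\cH$; the same bound passed through the norm-decreasing $C^{*}$-homomorphisms $\overline{\sigma}_{0}$ and $\overline{\sigma}_{\pa_{i}}$ (cf.\ the proof of Theorem~\ref{fp.15}) gives invertibility of the principal symbol and of each normal operator, hence full ellipticity, so Corollary~\ref{fp.17} applies. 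The integrand is then a \emph{continuous} $\Psi^{0}_{\fS-\sus(V)}(X)$-valued function on a \emph{compact} curve, and convergence in every Fr\'echet seminorm is automatic. No inductive appeal to Corollary~\ref{raccar.1} on the fibres is needed at this step, and no decay analysis at $\lambda=\infty$ arises.

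Your half-line formula $\frac{1}{\pi}\int_{0}^{\infty}\lambda^{-1/2}T(T+\lambda)^{-1}\,d\lambda$ is the one one would use for unbounded $T$, and it is valid here too, but the obstacle you single out is real and not quite dispatched: you claim that the Neumann series for $(\Id+\lambda^{-1}T)^{-1}$ converges in the Fr\'echet algebra $\Psi^{0}_{\fS-\sus(V)}(X)$, yet this algebra is not obviously locally $m$-convex, so smallness of $\lambda^{-1}T$ in each seminorm does not by itself force $\sum_{n}(-\lambda^{-1}T)^{n}$ to converge in that topology. One can repair this---for instance by carrying the parameter $\lambda$ through the parametrix construction of Proposition~\ref{fp.1} (effectively treating $\sqrt{\lambda}$ as an extra suspension variable) and showing that the remainder in Corollary~\ref{fp.17} is uniformly controlled, or by iterating the resolvent identity against the known $\cL(\cH)$-bound---but this is exactly the work the compact contour avoids. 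In short: your argument can be completed, but the paper's holomorphic functional calculus on a bounded contour is both shorter and cleaner for the problem at hand.
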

\begin{proof}
From Corollary~\ref{mps.1}, we know that $B$ gives a bounded operator
\begin{equation}
   B: L^{2}_{g_{\varpi}}(V\times X)\to L^{2}_{g_{\varpi}}(V\times X).
\label{fp.18}\end{equation}
Thus, taking $C$ big enough, we can define the square root of $C+B$ 
as a bounded operator by
\begin{equation}
C^{\frac{1}{2}}(1+ \frac{B}{C})^{\frac{1}{2}}=
 C^{\frac12} \sum_{j=0}^{\infty} \frac{f^{(j)}(0)}{j!} \left(\frac{B}{C}\right)^{j}
\label{fp.19}\end{equation}
using the power series of $f(x)=(1+x)^{\frac{1}{2}}$ at $x=0$.  
\begin{figure}[h]
\setlength{\unitlength}{1cm}
\begin{picture}(10,4)
\thicklines
\put(5,0){\vector(0,1){4}}
\put(0,2){\vector(1,0){10}}
\put(8,2){\circle{2}}
\put(8.7,2){\vector(0,1){0.1}}
\put(8.9,2.5){$\Gamma$}
\put(8,2){\circle*{0.1}}
\put(8.1,2.1){C}
\put(10,1.7){x}
\put(4.7,3.8){y}
\end{picture}
\caption{}\label{figcont}
\end{figure}
To see it
is an element of $\Psi^{0}_{\fS-\sus(V)}(X)$, we can use the alternative representation in terms of a contour integral
\begin{equation}
  (C+B)^{\frac{1}{2}}= \frac{1}{2\pi i} \int_{\Gamma}
    \lambda^{\frac12}(\lambda- (C+B))^{-1} d\lambda
\label{fp.20}\end{equation} 
where $\Gamma$ is an anti-clockwise circle centered at $C$ and radius
$r$ such that $\|B\|_{\cL(\cH)}< r< C$ (see Figure~\ref{figcont}).

This way, the family $(\lambda- (C+B))$ is invertible for all $\lambda\in \Gamma$ and the square root of
$\lambda$ is well-defined along $\Gamma$.  By Corollary~\ref{fp.17}, the family
$(\lambda- (C+B))^{-1}$ is a smooth family in $\Psi^{0}_{\fS-\sus(V)}(X)$ for $\lambda\in \Gamma$.  Thus, we see from \eqref{fp.20} that
$(C+B)^{\frac{1}{2}}$ is in
$\Psi^{0}_{\fS-\sus(V)}(X)$ as well.  
\end{proof}

\begin{theorem}
For $\delta>0$, an operator $A\in \Psi^{-\delta}_{\fS}(X;E)$ is compact when acting on 
$\cH= L^{2}_{g_{\pi}}(X;E)$ if and only if
$\sigma_{\pa_{j}}(A)=0$ for all $j\in\{1,\ldots,k\}$.  In particular, a polyhomogeneous $\fS$-operator $A\in\Psi^{0}_{\fS-\phg}(X;E)$ of order zero is
compact when acting on $L^{2}_{g_{\pi}}(X;E)$ if and only if
$A\in x\Psi^{-1}_{\fS-\phg}(X;E)$.
\label{compactness}\end{theorem}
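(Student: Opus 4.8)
The plan is to prove the two implications of the equivalence separately, and then deduce the polyhomogeneous statement as a special case. The key observation is that compactness on $\cH = L^2_{g_\pi}(X;E)$ should be detected exactly by the vanishing of all the boundary symbols $\sigma_{\pa_j}(A)$, mirroring the situation for the $\Phi$-calculus of \cite{Mazzeo-Melrose}.

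\textbf{The ``if'' direction.} Suppose $\sigma_{\pa_j}(A) = 0$ for all $j$. By the short exact sequences \eqref{sm.7b}, vanishing of $\sigma_{\pa_j}(A)$ means $A \in x_j \Psi^{-\delta}_{\fS}(X;E)$, and since this holds for all $j$ we get $A \in x\Psi^{-\delta}_{\fS}(X;E)$ with $x = \prod_i x_i$. First I would treat the dense subclass $\dot{\Psi}^{-\infty}_{\fS}(X;E)$: by Proposition~\ref{ideal.1}, an operator whose Schwartz kernel lies in $\dot{\cC}^{\infty}(X\times X;\pr_R^*\Omega X)$ maps $\cC^{-\infty}(X;E)$ continuously to $\dot{\cC}^{\infty}(X;E)$, hence factors through a compact inclusion and is compact on $\cH$. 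For the general case $A \in x\Psi^{-\delta}_{\fS}(X;E)$ I would construct an approximation argument: choose, via the composition theorem (Theorem~\ref{composition}) and a parametrix construction, a sequence of smoothing operators $A_n \in \dot{\Psi}^{-\infty}_{\fS}(X;E)$ with $A - A_n \in x\Psi^{-\delta_n}_{\fS}(X;E)$ for $\delta_n \to \infty$ and with the operator norms $\|A - A_n\|_{\cL(\cH)} \to 0$; the norm control is supplied by Theorem~\ref{mp.10} (boundedness of order-zero operators) applied to $x^{-\epsilon}(A-A_n)$ for small $\epsilon>0$, using the fact that an element of $x^{\epsilon}\Psi^{0}_{\fS}(X;E)$ has small norm when $\epsilon$ is small together with a decay argument near the boundary faces. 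Since the compact operators form a closed subspace of $\cL(\cH)$, $A$ is compact.

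\textbf{The ``only if'' direction.} Suppose some $\sigma_{\pa_j}(A) \neq 0$; we must produce a bounded sequence in $\cH$ with no convergent image subsequence. The idea is the standard ``concentration near the front face'' argument. Pick a point of $H_j$ at which $\sigma_{\pa_j}(A)$, viewed as a ${}^{\pi}NS_j$-suspended $\fS$-operator on the fibre $F_j$ via \eqref{sm.7d}, is nonzero, hence acts nontrivially on $L^2_{g_\pi}$ of a fibre. Using the translation structure in the suspension variable (Section~\ref{sus.0}) and dilation in $x_j$, I would build a sequence $u_n \in \dot{\cC}^{\infty}(X;E)$ with $\|u_n\|_{\cH} = 1$ that concentrates more and more in the collar $\nu_j(H_j \times [0,\epsilon_j))$ and oscillates in the normal/suspension directions, arranged so that $\|A u_n\|_{\cH}$ stays bounded below by a fixed positive constant while $u_n \rightharpoonup 0$ weakly. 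A clean way to organize this is to note that $A$ compact would force $\sigma_{\pa_j}(A)$, as the limit of conjugates of $A$ by the relevant translation/dilation family (which tend weakly to the identity in an appropriate sense but push mass to the boundary), to act compactly on $L^2_{g_\pi}({}^{\pi}NH_j/S_j;E)$ — but a nonzero translation-invariant (suspended) operator on a space carrying a free $V$-action is never compact, since translating a fixed test function gives a bounded non-precompact orbit. This contradiction gives $\sigma_{\pa_j}(A) = 0$.

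\textbf{The polyhomogeneous corollary.} For $A \in \Psi^{0}_{\fS-\phg}(X;E)$ of order zero, first note its ordinary principal symbol $\sigma_0(A)$ is a homogeneous function of degree $0$ on ${}^{\pi}S^*X$; the argument above (the ``only if'' direction, run instead with oscillation concentrated at a point of the interior and frequency going to infinity — the classical Rellich-type obstruction) shows compactness forces $\sigma_0(A) = 0$, hence $A \in \Psi^{-1}_{\fS-\phg}(X;E)$ by \eqref{sm.4}. Together with $\sigma_{\pa_j}(A) = 0$ for all $j$, which by \eqref{sm.7b} gives $A \in x_j\Psi^{-1}_{\fS-\phg}(X;E)$ for each $j$, we conclude $A \in x\Psi^{-1}_{\fS-\phg}(X;E)$; the converse follows from the ``if'' direction applied with $\delta = 1$.

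\textbf{Main obstacle.} The delicate point is the ``only if'' direction: making rigorous the claim that a nonzero boundary symbol obstructs compactness requires carefully constructing the concentrating sequence on the fibred-corner geometry and showing that $A$ acting on it reproduces, to leading order, the action of $\sigma_{\pa_j}(A)$ on a fixed $L^2$-section of the model (suspended) space. The translation-invariance in the suspension directions from Section~\ref{sus.0}, together with Corollary~\ref{mps.1} controlling the $L^2$-operator norm of the suspended symbol, are the tools that make this work, but keeping track of the weights $x_i^{2+\dim S_i}$ in the $\fS$-density \eqref{density.2} under the concentration limit is where the bookkeeping is heaviest.
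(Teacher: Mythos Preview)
Your approach matches the paper's overall. Two refinements.

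For the ``if'' direction the paper is more direct than your approximation scheme: since $\Psi^0_{\fS}(X;E)\to\cL(\cH)$ is continuous (Theorem~\ref{mp.10}) and $\dot{\Psi}^{-\infty}_{\fS}(X;E)$ is Fr\'echet-dense in $x\Psi^{-\delta}_{\fS}(X;E)$, one gets $x\Psi^{-\delta}_{\fS}(X;E)\subset\overline{\dot{\Psi}^{-\infty}_{\fS}}^{\cL(\cH)}=\cK(\cH)$ immediately, with no need for an explicit norm estimate on $A-A_n$.

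For the ``only if'' direction your instinct is right but the transformation is not dilation in $x_j$: the model at $\ff_{\pi_j}$ is invariant under \emph{translation in $u=1/x_j$} (together with $v=y/x_j$), and that is what the paper uses. Concretely one picks $y_j\in S_j\setminus\pa S_j$ and $f\in\CI_c({}^{\pi}N_{y_j}H_j)$ with $\sigma_{\pa_j}(A)|_{y_j}f\neq 0$, passes to the coordinates $(u,v,z)$ in which the $\fS$-density is translation-invariant, sets $f_k(u,v,z)=f(u-k,v,z)$, and shows $(\chi A)f_k-\sigma_{\pa_j}(A)|_{y_j}f_k\to 0$ in $L^2$ for a suitable cutoff $\chi$; translation invariance keeps $\|\sigma_{\pa_j}(A)|_{y_j}f_k\|_{L^2}$ constant and nonzero while the supports run off to infinity, so $\{Af_k\}$ has no convergent subsequence. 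A genuine dilation $x_j\mapsto\lambda x_j$ does not preserve the $\fS$-density and would force an extra rescaling you do not want. Your treatment of the polyhomogeneous corollary (first kill $\sigma_0$ by the classical interior argument, then invoke the main statement with $\delta=1$) is correct and in fact more explicit than the paper, which leaves that step to the reader.
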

\begin{proof}
Without loss of generality, we can assume $E=\underline{\bbC}$ is the trivial vector bundle.  
By definition, the space of compact operators $\cK(\cH)$ is the closure in
$\cL(\cH)$ of operators of finite ranks.  Clearly, since $\dot{\mathcal{C}}^{\infty}(X)$ is dense in $L^{2}_{g_{\pi}}(X)$, we can as well 
define $\cK(\cH)$ as the closure of finite rank operators represented by an
element of $\dot{\Psi}^{-\infty}_{\fS}(X)$.  These operators of finite rank
are certainly dense in $\dot{\Psi}_{\fS}^{-\infty}(X)$.  Thus, $\cK(\cH)$ is
given by the closure of $\dot{\Psi}_{\fS}^{-\infty}(X)$ in $\cL(\cH)$.  Since the 
map $\Psi^{0}_{\fS}(X)\to \cL(\cH)$ is continuous, we conclude that the closure
of $\dot{\Psi}^{-\infty}_{\fS}(X)$ in $\Psi^{-\delta}_{\fS}(X)$, namely,
$x\Psi^{-\delta}_{\fS}(X)$, is included in $\cK(\cH)$.

Conversely, let $A\in \Psi^{-\delta}_{\fS}(X)$ be a compact operator.  Suppose for a contradiction that $\sigma_{\pa_{i}}(A)\ne 0$ for some
$i\in\{1,\ldots,k\}$.   This means that we can find $y_{i}\in S_{i}$ and a function 
$f\in \CI_{c}({}^{\pi}N_{y_{i}}H_{i})$ such that
\begin{equation}
  \left.  \sigma_{\pa_{i}}(A)\right|_{y_{i}} f \ne 0.   
\label{compact.1}\end{equation} 

Without loss of generality, we can assume in fact that $y_i\in S_i\setminus \pa S_i$.  
Let $\cV$ be a small neighborhood of $y_{i}\in S_{i}$ such that the fibration $\pi_{i}:H_{i}\to S_{i}$ is trivial over $\cV$, namely, there is a
diffeomorphism $\psi: \pi^{-1}_{i}(\cV)\to F_{i}\times \cV$ inducing a commutative diagram
\begin{equation}
\xymatrix{
      \pi^{-1}_{i}(\cV) \ar[rr]^{\psi} \ar[rd]^{\pi_{i}}  &  & \cV \times F_{i}\ar[dl]^{\pr_{L}} \\
           &   \cV  &
}
\label{compact.2}\end{equation} 
where $\pr_{L}: \cV\times F_{i}\to \cV$ is the projection on the left factor.  Let $\iota_{i}: H_{i}\times [0,\epsilon)_{x_{i}}\to X$ be a tubular
neighborhood of $H_{i}$ in $X$ compatible with the boundary defining function $x_{i}$.  Using the diffeomorphism $\psi$,  we can identify 
the open set $\iota_{i}(\pi_{i}^{-1}(\cV)\times [0,\epsilon)_{x_{i}})\subset X$ with the open set
\begin{equation}
   \cV\times F_{i}\times [0,\epsilon)_{x_{i}}.  
\label{compact.3}\end{equation}
Choosing $\cV$ to be smaller if needed, we can assume it is diffeomorphic to an open ball in the Euclidean space.  Let $y$ be a choice of coordinates
on $\cV$ such that the point $y_{i}\in \cV$ corresponds to $y=0$.  On the open set $\cV \times (0, \epsilon)_{x_{i}}$, consider the coordinates
\begin{equation}
     u= \frac{1}{x_{i}}, \quad v= \frac{y}{x_{i}}.  
\label{compact.4}\end{equation}
Considering alternatively $v$ and $u$ as linear coordinates on the vector space 
${}^{\pi_{i}}N_{y_{i}}S_{i}= T_{y_{i}}S_{i}\times \bbR_{u}$, we regard $\cV\times (0,\epsilon)_{x_{i}}$ as an open subset in
${}^{\pi_{i}}N_{y_{i}}S_{i},$ and consequently we can regard $\cU= \cV\times F_{i}\times (0,\epsilon)_{x_{i}}$ as a subset of
${}^{\pi_{i}}N_{y_{i}}S_{i}\times F_{i}= {}^{\pi_{i}}N_{y_{i}}H_{i}$.  For $k\in \bbN_{0}$, consider the new function
\begin{equation}
     f_{k}(u,v,z)= f(u-k, v, z),  \quad z\in F_{i}, 
\label{compact.5}\end{equation}    
obtained by translating $f$ in the $u$ variable.  Since we assume that the support of $f$ is compact, by taking $k$ sufficiently large, we can insure that
the support of $f$ is contained in the open set $\cU$.  In fact, since the operator $\sigma_{\pa_{i}}(A)$ is translation invariant, we will still have
that \eqref{compact.1}  holds after translating $f$ in the $u$ variable, so  without loss of generality, we can assume that the support of $f_{k}$ is
contained in $\cU$ for all $k\in \bbN_{0}$.  Again, by translation invariance of $\sigma_{\pa_{i}}(A)$, we will have that 
\[
     \left.  \sigma_{\pa_{i}}(A)\right|_{y_{i}} f_{k} \ne 0 
\]   
for all $k\in \bbN_{0}$.  Since the function $f_{k}$ is supported in $\cU$, we can also regard it as a function on $X$.  Let $\chi\in \CI_{c}(
\cV\times F_{i} \times [0,\epsilon)_{x_{i}})$ be a cut-off function such that $\chi\equiv 1$ in a neighborhood of $ \{y_{i}\}\times F_{i}\times
\{0\}$.  Thus, if we consider the operator $P = \chi A\in \Psi^{-\delta}_{\fS}(X)$, $P$ will also obviously be compact, and we will have that 
\[
    \left.  \sigma_{\pa_{i}}(P)\right|_{y_{i}} =  \left.  \sigma_{\pa_{i}}(A)\right|_{y_{i}}.
\]
Now, thanks to the cut-off function $\chi$, the action of $P$ on $f\in \CI_{c}(U)\subset \CI(X)$ is given by:
\[
P f_{k}(u,v,z)= \int_{U} K_{P}( u, v,u',v',z,z') f_{k}(u', v', z') du'dv' dz',
\]
where the integral is in the sense of distributions.  
Similarly, the action of $\left. \sigma_{\pa_{i}}(P)\right|_{y_{i}}$ can be described by
\[
\left.\sigma_{\pa_{i}}(P)\right|_{y_{i}} f_{k}(u,v,z)= \int_{U} K_{\left.\sigma_{\pa_{i}}(P)\right|_{y_{i}}}( u, v,u',v',z,z') f_{k}(u', v', z')
du'dv' dz'
\]
Since as a function on $\cU\subset X$, the support of the function $f_{k}$ is uniformly approaching the fibre $\pi_{i}^{-1}(y_{i})\subset H_{i}$ as
$k\to +\infty$, we see from the 
definition of the normal operator that we must have that as $k$ tends to infinity, 
\begin{equation}
         Pf_{k}- \sigma_{\pa_{i}}(P)f_{k}\to 0
\label{compact.6}\end{equation}
in the $L^{2}$-norm defined by the $\fS$-metric
\[
    g+ du^{2}+ dv^{2} + g_{F_{i}},
\]    
where $g_{F_{i}}$ is a choice of $\fS$-metric on $F_{i}$.
By translation invariance of this metric and of $\left.\sigma_{\pa_{i}}(P)\right|_{y_{i}}$, we have that,
on ${}^{\pi_{i}}N_{y_{i}}H_{i}$,  
\[
   \| \left.\sigma_{\pa_{i}}(P)\right|_{y_{i}} f_{k}\|_{L^{2}}=  \| \left.\sigma_{\pa_{i}}(P)\right|_{y_{i}} f\|_{L^{2}}\ne 0 
\]
If we restrict $\left.\sigma_{\pa_{i}}(P)\right|_{y_{i}} f_{k}$ to   $\cU$, we still clearly have that  
\[
   \lim_{k\to \infty}  \| \left.\sigma_{\pa_{i}}(P)\right|_{y_{i}} f_{k}\|_{L^{2}(\cU)} = \| \left.\sigma_{\pa_{i}}(P)\right|_{y_{i}}
f_{}\|_{L^{2}({}^{\pi_{i}}N_{y_{i}}S_{i})} \ne 0.
\]
On the other hand, $\left.\sigma_{\pa_{i}}(P)\right|_{y_{i}} f_{k}$ being moved to infinity as $k\to \infty$, we see that it converges pointwise to
zero everywhere on $\cU$, so that the sequence $\sigma_{\pa_{i}}(P)f_{k}$ cannot converge in $L^{2}$.  We conclude from \eqref{compact.6}
that the sequence $Pf_{k}$ also fails to converge in $L^{2}$.  Since by translation invariance of the metric, the sequence $f_{k}$ is bounded in
$L^{2}$,  this contradicts the fact $P$ is a compact operator.  To avoid a contradiction, we must conclude that $\sigma_{\pa_{i}}(A)=0$ for all
$i\in\{1,\ldots,k\}$, which completes the proof.     
\end{proof}

More generally, there are natural Sobolev spaces associated to $\fS$-operators.  
As before, let $g_{\pi}$ be a $\fS$-metric on $X$ and let $E\to X$ be a complex 
vector bundle with a Hermitian metric, so that we have a corresponding space $L^{2}_{g_{\pi}}(X;E)$ of square integrable sections.  For $m>0$, we
define the associated $\fS$-Sobolev space by
\begin{equation}
H^{m}_{\fS}(X;E)= \{ f\in \dot{\cC}^{-\infty}(X;E) \; ; \; Pf \in L^{2}_{g_{\pi}}(X;E) \quad \forall P\in\Psi^{m}_{\fS}(X;E)\},
\label{Sob.1}\end{equation}
while for $m<0$, we define it by
\begin{multline}
H^{m}_{\fS}(X;E)= \{ f\in \dot{\cC}^{-\infty}(X;E) ;  \\
       f= \sum_{i=1}^{N} P_{i}g_{i}, \quad g_{i}\in L^{2}_{g_{\pi}}(X;E),
       P_{i}\in \Psi^{-m}(X;E)  \}.
\label{Sob.2}\end{multline}
If $V$ is a Euclidean vector space, we define the corresponding $V$-suspended $\fS$-Sobolev space by 
\begin{equation}
  H^{m}_{\fS-\sus(V)}(X;E)= H^{m}_{\fS}(\overline{V}\times X;E).
\label{Sob.3}\end{equation}
These spaces can be given the structure of a Hilbert space using fully elliptic operators.  More
precisely, for $m>0$, let $A_{m}\in \Psi^{\frac{m}{2}}_{\fS}(X;E)$ be a choice of elliptic $\fS$-operator and consider the formally self-adjoint
operator $D_{m}\in \Psi^{m}_{\fS}(X;E)$ defined by
\begin{equation}
   D_{m}= A^{*}_m A_m + \Id_{E}.
\label{Sob.4}\end{equation} 
\begin{lemma}
  For $m>0$, the operator $D_{m}$ is fully elliptic and invertible.  In particular, its inverse
    $D_{-m}:= (D_m)^{-1}$ is an element of $\Psi^{-m}_{\fS}(X;E)$.
\label{Sob.5}\end{lemma}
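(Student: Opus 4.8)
The plan is to show that $D_m = A_m^* A_m + \Id_E$ is fully elliptic and then invoke the machinery already in place to conclude invertibility. First I would check full ellipticity. For the ordinary principal symbol, since $A_m \in \Psi^{m/2}_{\fS}(X;E)$ is elliptic, $\sigma_{m/2}(A_m)$ is invertible on ${}^{\pi}T^*X \setminus \{0\}$, and by Theorem~\ref{composition} the principal symbol is multiplicative, so $\sigma_m(A_m^* A_m) = \sigma_{m/2}(A_m)^* \sigma_{m/2}(A_m)$ is positive definite away from the zero section; adding $\Id_E$ (which contributes only at order $0 < m$) does not change the leading symbol, hence $\sigma_m(D_m) = \sigma_{m/2}(A_m)^*\sigma_{m/2}(A_m)$ is invertible, so $D_m$ is elliptic. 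For each boundary hypersurface $H_i$, Theorem~\ref{composition} also gives $\sigma_{\pa_i}(D_m) = \sigma_{\pa_i}(A_m)^* \sigma_{\pa_i}(A_m) + \Id_E$, a formally self-adjoint ${}^{\pi}NS_i$-suspended $\fS$-operator on $H_i/S_i$ of the form (positive) $+\,\Id$. One then needs that such an operator is invertible as a map on $\cS({}^{\pi}NH_i;E)$. This is where Corollary~\ref{raccar.1} and Corollary~\ref{fp.17} enter: by Corollary~\ref{mps.1} the suspended operator $\sigma_{\pa_i}(A_m)^*\sigma_{\pa_i}(A_m)$ is bounded and nonnegative on the relevant $L^2$-space, so $\sigma_{\pa_i}(D_m) \geq \Id$, hence is bounded below and invertible as a Hilbert space (in fact Hilbert-module) operator; being fully elliptic by the inductive hypothesis on the depth and invertible, Corollary~\ref{fp.17} puts its inverse back in $\Psi^{-m}_{\ff_{\pi_i}}(H_i;E)$. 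This verifies that $\sigma_{\pa_i}(D_m)$ is invertible, so $D_m$ is fully elliptic.

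Next I would deduce global invertibility. Since $D_m$ is fully elliptic, Proposition~\ref{fp.1} provides a parametrix $Q \in \Psi^{-m}_{\fS}(X;E)$ with $\Id - QD_m$ and $\Id - D_m Q$ in $\dot{\Psi}^{-\infty}_{\fS}(X;E)$, and moreover $\ker D_m \subset \dot{\cC}^{\infty}(X;E)$ and $\ker D_m^* \subset \dot{\cC}^{\infty}(X;E)$. But $D_m = A_m^* A_m + \Id_E$ is formally self-adjoint and strictly positive: for $u \in \dot{\cC}^\infty(X;E)$, $\langle D_m u, u\rangle = \|A_m u\|^2 + \|u\|^2 \geq \|u\|^2$, so $\ker D_m = \{0\}$, and similarly $\ker D_m^* = \{0\}$. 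Hence $D_m: L^2_{g_\pi}(X;E) \to L^2_{g_\pi}(X;E)$ (this map is bounded from $H^m_{\fS}$ to $L^2$, or one works on $\dot{\cC}^\infty$ and extends) is injective with dense range; the lower bound $\|D_m u\| \geq \|u\|$ together with self-adjointness gives that the range is closed and all of $L^2$, so $D_m$ is boundedly invertible on $L^2$ (equivalently, as a map $H^m_{\fS}(X;E) \to L^2_{g_\pi}(X;E)$). Now $D_m^{-1} = Q + Q(\Id - D_m Q)^{-1}\cdots$ — more cleanly, writing $D_m Q = \Id + R$ with $R \in \dot{\Psi}^{-\infty}_{\fS}(X;E)$, the operator $\Id + R$ is invertible on $L^2$ (since $D_m Q$ is invertible, being a product of invertibles once we know $Q$ is — or directly because $D_m^{-1}$ exists, $\Id + R = D_m Q$ maps into the image appropriately), and $(\Id+R)^{-1} = \Id + S$ with $S \in \dot{\Psi}^{-\infty}_{\fS}(X;E)$ by the same Neumann-series-plus-duality argument as in the proof of Corollary~\ref{fp.17}. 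Then $D_m^{-1} = Q(\Id + S) \in \Psi^{-m}_{\fS}(X;E)$, since $\dot{\Psi}^{-\infty}_{\fS}(X;E)$ is an ideal by Corollary~\ref{ideal.2}. This shows $D_{-m} := D_m^{-1} \in \Psi^{-m}_{\fS}(X;E)$, completing the proof.

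The main obstacle is the argument that $\Id + R$ (equivalently $\Id + S$) stays inside the calculus $\Psi^{-\infty}_{\fS}(X;E)$ after inversion — i.e. that the residual error term, once inverted, remains a residual operator rather than merely a bounded $L^2$-operator. This is exactly the point where one uses that $\dot{\Psi}^{-\infty}_{\fS}(X;E)$ is a two-sided ideal (Corollary~\ref{ideal.2}) together with the observation, already exploited in Corollary~\ref{fp.17}, that if $(\Id + R)$ is invertible with $R \in \dot{\Psi}^{-\infty}_{\fS}$, then its inverse is again of the form $\Id + S$ with $S \in \dot{\Psi}^{-\infty}_{\fS}$: indeed $S = (\Id+R)^{-1} - \Id = -(\Id+R)^{-1}R$, and $(\Id+R)^{-1}$ maps $L^2$ boundedly into $\dot{\cC}^\infty$ after composition with the smoothing $R$ (using Proposition~\ref{ideal.1} to recognize $\dot{\Psi}^{-\infty}_{\fS}$-membership). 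A secondary subtlety is being careful that the inductive hypothesis on the depth, needed to know that the boundary symbols $\sigma_{\pa_i}(D_m)$ are themselves fully elliptic, is genuinely available — but since $\dim F_i < \dim X$ (and depth of $F_i$ is strictly smaller), all the cited results (Corollaries~\ref{raccar.1}, \ref{fp.17}, Proposition~\ref{fp.1}) apply to the fibres, so the induction closes.
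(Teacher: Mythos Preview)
Your overall strategy matches the paper's --- exploit the positivity of $A_m^*A_m+\Id$ to kill kernels, then combine with the parametrix from full ellipticity --- and your treatment of the last step (showing $D_m^{-1}\in\Psi^{-m}_{\fS}(X;E)$ via $D_m^{-1}=Q-D_m^{-1}R$ and Proposition~\ref{ideal.1}) is actually more explicit than the paper, which leaves this implicit. But there is a real gap in your argument for invertibility of the boundary symbols. You invoke Corollary~\ref{mps.1} to say that $\sigma_{\pa_i}(A_m)^*\sigma_{\pa_i}(A_m)$ is bounded on $L^2$; that corollary is only for operators of order zero, whereas this operator has order $m>0$ and is unbounded. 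The inequality $\sigma_{\pa_i}(D_m)\geq\Id$ therefore only makes sense on the dense domain of Schwartz sections, and converting such a lower bound into genuine invertibility of an unbounded operator requires precisely the Fredholm/regularity input you have not yet supplied at that point. (Corollary~\ref{raccar.1} concerns square roots of order-zero perturbations and is not relevant here either.)

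The paper closes this gap by passing to the Fourier transform in the suspension variable and arguing pointwise: for fixed $s\in S_i$ and $\Upsilon\in{}^{\pi}N_s^*S_i$, set $Q=\widehat{\sigma_{\pa_i}(D_m)}(\Upsilon)=B^*B+\Id$ with $B=\widehat{\sigma_{\pa_i}(A_m)}(\Upsilon)$, a $\fS$-operator of order $m$ on the fibre $\pi_i^{-1}(s)$. Once $Q$ is known to be fully elliptic, Proposition~\ref{fp.1} forces $\ker Q\subset\dot{\cC}^\infty$, and \emph{then} the positivity computation $0=\langle u,Qu\rangle=\|Bu\|^2+\|u\|^2$ is legitimate and gives $\ker Q=\ker Q^*=0$, hence invertibility. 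The induction that feeds full ellipticity into this step is not on the depth of $X$ as you suggest, but a downward induction on the hypersurface index $i$ (labelled so that $H_i<H_j\Rightarrow i<j$): at $i=k$ the fibres of $\pi_k$ are closed manifolds and ellipticity alone gives full ellipticity; for smaller $i$, the boundary symbols of $\sigma_{\pa_i}(D_m)$ at faces $H_i\cap H_j$ with $H_j>H_i$ are, by the compatibility noted at the end of Section~\ref{sm.0}, controlled by the $\sigma_{\pa_j}(D_m)$ already shown invertible.
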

\begin{proof}
Let $H_{1},\ldots, H_{k}$ be the boundary hypersurfaces of $X$ and suppose that they are labelled  in such a way that
\[
      H_{i}< H_{j} \; \Longrightarrow \;  i<j.
\] 
We will first prove by induction on $i\in \{1,\ldots, k\}$ starting with $i=k$ that 
$\sigma_{\pa_{i}}(D_{m})$ is fully elliptic and  invertible.  For $i=k$, the fibres of the fibration $\pi_{k}: H_{k}\to S_{k}$ are closed manifolds,
so that in this case, $\sigma_{\pa_{k}}(D_{m})$ is automatically fully elliptic since it is  elliptic.  Thus, for $i\in \{1,\ldots, k\}$, the
inductive step we need to show is that if $\sigma_{\pa_{i}}(D_{m})$ is fully elliptic, then it is invertible.  To see this, 
fix $s\in S_{i}$ and consider the ${}^{\pi}N_{s}S_{i}$-suspended operator $\sigma_{\pa_{i}}(A_{m})_{s}$ above $s$.  For a fixed $\Upsilon\in
{}^{\pi}N_{s}^{*}S_{i}$, consider the operator
\[
         Q=  \widehat{\sigma_{\pa_{i}}(D_{m})}(\Upsilon)\in \Psi^{m}_{\fS}(\pi_{i}^{-1}(s);E). 
\]
Thus, if $B= \widehat{\sigma_{\pa_{i}}(A_{m})}(\Upsilon)$, we have that $Q=B^{*}B+\Id_{E}$.
By Proposition~\ref{fp.1}, if $Qu=0$, then $u\in \dot{\cC}^{\infty}(\pi_{i}^{-1}(s);E)$.  Thus, we have      
in particular
\begin{equation}
\begin{aligned}
 Qu=0 & \Longrightarrow \; \langle u, B^{*}Bu +u\rangle_{L^{2}}, \\
            & \Longrightarrow \; \| Bu\|^{2}_{L^{2}} + \| u\|^{2}_{L^{2}}=0, \\
            & \Longrightarrow \; u\equiv 0.
\end{aligned}
\label{Sob.6}\end{equation}
Thus, since $Q$ is formally self-adjoint, we have that $\ker Q= \ker Q^{*}= \{0\}$, so that $Q$ is invertible.  Since $\Upsilon\in N^{*}_{s}S_{i}$ was
arbitrary, this means that $\sigma_{\pa_{i}}(D_{m})_{s}$ is invertible.  Thus, since $s\in S_{i}$ was arbitrary, this means  that
$\sigma_{\pa_{i}}(D_{m})$ is invertible, which completes the inductive step.  

With this argument, we see $D_{m}$ is fully elliptic.  In particular, by Proposition~\ref{fp.1},  if $D_{m}u=0$, then $u\in \dot{\cC}^{\infty}(X;E)$. 
We can then show $D_{m}$ is invertible using a similar argument as in \eqref{Sob.6}, which completes the proof.

\end{proof}

Using the operator $D_{m}$ with $D_{m}= (D_{-m})^{-1}$ for $m<0$ and $D_{0}=\Id_{E}$, we can then define an inner product on $H^{m}_{\fS}(X;E)$ by
\begin{equation}
  \langle u,v\rangle_{H^{m}_{\fS}(X;E)} = \langle D_{m}u, D_{m}v\rangle_{L^{2}_{g_{\pi}}(X;E)},
\label{Sob.7}\end{equation}
with corresponding norm
\begin{equation}
  \| u \|_{H^{m}_{\fS}(X;E)} = \|D_{m}u \|_{L^{2}_{g_{\pi}}(X;E)}.
\label{Sob.8}\end{equation}
Using Theorem~\ref{mp.10}, it is straightforward to check $H^{m}_{\fS}(X;E)$ is precisely the closure of $\dot{\cC}^{\infty}(X;E)$ with respect to
this norm.

\begin{proposition}
Any $\fS$-pseudodifferential operator $P\in \Psi^{m}_{\fS}(X;E,F)$ induces
a bounded linear map
\[
       P: x^{\ell}H^{p}_{\fS}(X;E)\to x^{\ell}H_{\fS}^{p-m}(X;F)
\]
for $p,\ell\in \bbR$.
\label{fp.13}\end{proposition}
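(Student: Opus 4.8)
The plan is to reduce the statement to the already-established mapping property on $L^2_{g_\pi}$ (Theorem~\ref{mp.10}) together with the composition theorem (Theorem~\ref{composition}) and the existence of the invertible fully elliptic operators $D_m$ (Lemma~\ref{Sob.5}). First I would treat the case $\ell=0$. By definition of the norm on $H^p_{\fS}(X;E)$ and $H^{p-m}_{\fS}(X;F)$, it suffices to show that the operator
\[
    D_{p-m}\circ P\circ D_{-p} \ \in\ \mathcal{L}\bigl(L^2_{g_\pi}(X;E),L^2_{g_\pi}(X;F)\bigr).
\]
But $D_{-p}\in\Psi^{-p}_{\fS}(X;E)$, $P\in\Psi^m_{\fS}(X;E,F)$ and $D_{p-m}\in\Psi^{p-m}_{\fS}(X;F)$, so by Theorem~\ref{composition} their composite lies in $\Psi^{0}_{\fS}(X;E,F)$, and then Theorem~\ref{mp.10} gives the desired $L^2$-boundedness (choosing the Hermitian metrics on $E$ and $F$ and the $\fS$-density used there to be the ones fixed throughout). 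Thus $P$ is bounded from $H^p_{\fS}(X;E)$ to $H^{p-m}_{\fS}(X;F)$, with operator norm controlled by the $\Psi^0_{\fS}$-Fréchet seminorms of $D_{p-m}PD_{-p}$, hence continuously in $P$.

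Next I would incorporate the weight $x^\ell$. The point is that conjugation by $x^\ell$ preserves the calculus: because $\beta_\pi^*(x_i/x_i')$ is smooth and equal to $1$ on $\ff_{\pi_i}$ with only finite-order singularities away from $\ff_\pi$ (as used in Remark~\ref{notw.1} and in the proof of Proposition~\ref{asp.6}), we have $x^\ell P x^{-\ell}\in\Psi^m_{\fS}(X;E,F)$ for every $\ell\in\bbR$, where $x=\prod_i x_i$. Since multiplication by $x^\ell$ is by definition an isometry (up to trivial identification) from $x^\ell H^p_{\fS}(X;E)$ onto $H^p_{\fS}(X;E)$ and likewise on the target, the diagram
\[
\xymatrix{
  x^\ell H^p_{\fS}(X;E) \ar[r]^{P} \ar[d]_{x^{-\ell}} & x^\ell H^{p-m}_{\fS}(X;F) \ar[d]^{x^{-\ell}} \\
  H^p_{\fS}(X;E) \ar[r]^{x^\ell P x^{-\ell}} & H^{p-m}_{\fS}(X;F)
}
\]
commutes, and the bottom arrow is bounded by the $\ell=0$ case applied to $x^\ell P x^{-\ell}$. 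Hence the top arrow is bounded, which is exactly the assertion.

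One routine point to check carefully is that the norms \eqref{Sob.8} are independent, up to equivalence, of the choice of elliptic operator $A_m$ entering the definition of $D_m$; this follows from Theorem~\ref{composition} and Theorem~\ref{mp.10} by the same argument (if $D_m,D_m'$ are two such choices then $D_m'D_{-m}\in\Psi^0_{\fS}$ is $L^2$-bounded, and symmetrically), so the statement is well posed and the proof above does not depend on that choice. I do not expect a genuine obstacle here: all the analytic content — the $L^2$-boundedness of order-zero operators — has already been absorbed into Theorem~\ref{mp.10}, and what remains is purely formal manipulation with the algebra structure and the weight conjugation. The mildest subtlety is simply bookkeeping: making sure the same $\fS$-density and Hermitian metrics are used consistently when invoking Theorem~\ref{mp.10}, and that for $m<0$ or $p-m<0$ one uses the definitions \eqref{Sob.2} and the convention $D_m=(D_{-m})^{-1}$, $D_0=\Id_E$ uniformly.
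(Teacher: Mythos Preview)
Your proof is correct and follows essentially the same route as the paper: reduce to the $L^2$-boundedness of an order-zero operator via conjugation by $x^{\ell}$ and by the invertible operators $D_m$, invoking Theorem~\ref{composition} and Theorem~\ref{mp.10}. The paper condenses your two steps into a single formula $P= x^{\ell}D_{m-p}\widetilde{P} D_{p}x^{-\ell}$ with $\widetilde{P}= D_{p-m}x^{-\ell}Px^{\ell}D_{-p}\in\Psi^{0}_{\fS}$, after first reducing to $E=F$ by embedding both bundles in a common one (so that a single family $D_m$ can be used throughout).
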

\begin{proof}
Thinking of $E$ and $F$ as subbundles of a bigger bundle $H$, we reduce
to the case where $E=F$.  The result then follows from Theorem~\ref{mp.10}
by noticing
\[
    P= x^{\ell}D_{m-p}\widetilde{P} D_{p}x^{-\ell} \quad \mbox{with} \; \widetilde{P}= D_{p-m}x^{-\ell}Px^{\ell}D_{-p}  \in \Psi^{0}_{\fS}(X;E).
\]
\end{proof}
In particular, we conclude from Proposition~\ref{fp.13} that for all $\ell\in\bbR$ and $p\in\bbR$, the operator $D_{m}$ induces an isomorphism
\begin{equation}
D_{m}: x^{\ell}H^{p}_{\fS}(X;E)\to x^{\ell}H_{\fS}^{p-m}(X;F)\label{fp.12}\end{equation} 

\begin{proposition}
We have a continuous inclusion $x^{\ell}H_{\fS}^{m}(X;E)\subset x^{\ell'}H^{m'}_{\fS}(X;E)$ if and only if $\ell\ge \ell'$ and $m \ge m'$.  
The inclusion is compact if and only if $\ell>\ell'$ and $m>m'$.
\label{fp.14}\end{proposition}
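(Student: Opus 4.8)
The plan is to transport the whole question to a single operator on $L^2 = L^2_{g_\pi}(X;E)$. For $a \in \bbR$ write $M_a$ for multiplication by $x^a = \prod_i x_i^a$; by construction $M_a$ is an isometric isomorphism $H^p_\fS(X;E) \to x^a H^p_\fS(X;E)$ for every $p$, and by \eqref{fp.12} the operator $D_{-m}$ is an isomorphism $L^2 \to H^m_\fS(X;E)$. Composing, $M_\ell D_{-m} \colon L^2 \to x^\ell H^m_\fS(X;E)$ and $M_{\ell'}D_{-m'}\colon L^2 \to x^{\ell'}H^{m'}_\fS(X;E)$ are isomorphisms, so the putative inclusion $\iota\colon x^\ell H^m_\fS(X;E) \hookrightarrow x^{\ell'}H^{m'}_\fS(X;E)$ is bounded (resp.\ compact) if and only if its conjugate
\[
   T := D_{m'}\, M_{\ell-\ell'}\, D_{-m} \colon L^2 \longrightarrow L^2
\]
is bounded (resp.\ compact). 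All four implications would then be read off from $T$ with the help of Theorem~\ref{mp.10}, Theorem~\ref{composition}, Theorem~\ref{compactness} and Proposition~\ref{fp.13}.

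For the continuity half, suppose $\ell \ge \ell'$ and $m \ge m'$ and set $s = \ell - \ell' \ge 0$. I would factor $T$ as
\[
   L^2 \xrightarrow{\ D_{-m}\ } H^m_\fS \xrightarrow{\ M_s\ } x^s H^m_\fS \xrightarrow{\ D_{m'}\ } x^s H^{m-m'}_\fS \hookrightarrow x^s L^2 \hookrightarrow L^2,
\]
where the first two arrows are isomorphisms, the third is bounded by Proposition~\ref{fp.13}, the inclusion $x^s H^{m-m'}_\fS \hookrightarrow x^s L^2$ is bounded because $H^q_\fS(X;E) \hookrightarrow L^2$ for $q \ge 0$ (for $q>0$, write $f = D_{-q}(D_q f)$ and apply Theorem~\ref{mp.10} to $D_{-q} \in \Psi^{-q}_\fS \subset \Psi^0_\fS$), and the last inclusion is bounded since $x^s$ is a bounded function for $s \ge 0$. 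Conversely, if $\iota$ is continuous, restricting it to the closed, infinite-dimensional subspace of sections supported in a fixed compact $K \subset X \setminus \pa X$ with nonempty interior — where the weights $x^{\pm\ell}, x^{\pm\ell'}$ are bounded above and below and the $\fS$-Sobolev norms are equivalent to the ordinary ones — forces the ordinary embedding $H^m(K) \hookrightarrow H^{m'}(K)$ to be continuous, hence $m \ge m'$; and testing $\iota$ on a fixed bump function concentrated near a boundary hypersurface $H_j$ by rescaling $x_j$ (so that, by scale invariance of $g_\pi$ in the $x_j$-direction, $\|x^{-a}f_c\|_{H^p_\fS} \asymp c^{-a}$ as $c \to 0$) gives $\|\iota f_c\|_{x^{\ell'}H^{m'}_\fS} / \|f_c\|_{x^\ell H^m_\fS} \asymp c^{\ell-\ell'}$, which is unbounded unless $\ell \ge \ell'$.

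For the compactness half, suppose $\ell > \ell'$ and $m > m'$, and put $\delta = m - m' > 0$, $s = \ell - \ell' > 0$. When $s \in \bbN$ the operator $M_s$ is multiplication by $x^s \in \CI(X)$, hence lies in $\Psi^0_\fS(X;E)$, and since $x_j$ in the left variable vanishes on the front face $\ff_{\pi_j}$ — being comparable there to $x_j' = r_j$, as the coordinates \eqref{diff.7} show — we get $\sigma_{\pa_j}(M_s) = 0$, so $\sigma_{\pa_j}(T) = \sigma_{\pa_j}(D_{m'})\,\sigma_{\pa_j}(M_s)\,\sigma_{\pa_j}(D_{-m}) = 0$ for all $j$ by Theorem~\ref{composition}; since $T \in \Psi^{-\delta}_\fS(X;E)$, Theorem~\ref{compactness} then yields that $T$ is compact. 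For a general real $s > 0$ I would split $M_s = M_{x^s\chi_\epsilon} + M_{x^s(1-\chi_\epsilon)}$ with $\chi_\epsilon$ a smooth cut-off equal to $1$ on $\{x \ge \epsilon\}$ and $0$ on $\{x \le \epsilon/2\}$: the function $x^s\chi_\epsilon$ is smooth and supported away from $\pa X$, so $D_{m'}M_{x^s\chi_\epsilon}D_{-m}$ is compact by the case just treated, while $x^s(1-\chi_\epsilon)$ and all of its $\cV_\fS(X)$-derivatives are $O(\epsilon^s)$, so $D_{m'}M_{x^s(1-\chi_\epsilon)}D_{-m}$ has $\cL(L^2)$-norm $O(\epsilon^s)$ by Theorem~\ref{composition} and Theorem~\ref{mp.10}; letting $\epsilon \to 0$ presents $T$ as a norm limit of compact operators. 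Conversely, if $\iota$ is compact it is in particular continuous, so $\ell \ge \ell'$ and $m \ge m'$; if $m = m'$ then the restriction of $\iota$ to the sections supported in $K$ is the identity of an infinite-dimensional normed space, which is not compact, and if $\ell = \ell'$ then $M_{\ell-\ell'} = \Id$ and $T = D_{m'}D_{-m} \in \Psi^{m'-m}_\fS(X;E)$ has $\sigma_{\pa_j}(T) = \sigma_{\pa_j}(D_{m'})\,\sigma_{\pa_j}(D_{-m})$ invertible — because $D_{m'}$ and $D_{-m}$ are invertible fully elliptic operators, see Lemma~\ref{Sob.5} — hence nonzero, so $T$ is not compact by Theorem~\ref{compactness}. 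Thus compactness forces $\ell > \ell'$ and $m > m'$.

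The part I expect to be the main obstacle is the sufficiency of strict inequalities for compactness: pinning down that $T$ belongs to $\Psi^{-\delta}_\fS(X;E)$ with all boundary symbols zero — which rests on the vanishing of $x_j$ on $\ff_{\pi_j}$ and on the multiplicativity of the symbols $\sigma_{\pa_j}$ from Theorem~\ref{composition} — together with the cut-off and limiting argument needed to absorb non-integer weights. Once the reduction to the single operator $T$ is in hand, the remaining three implications are routine bookkeeping.
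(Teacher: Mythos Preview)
Your reduction to the single operator $T = D_{m'}M_{\ell-\ell'}D_{-m}$ on $L^2$ is exactly the paper's approach: its proof invokes only the isomorphisms \eqref{fp.12}, Proposition~\ref{fp.13}, and the compactness of $x^{\epsilon}D_{-\epsilon}$ on $L^2$. The paper is much terser, and in fact only argues the ``if'' directions; your treatment of the ``only if'' halves (localising to a compact interior set for $m\ge m'$, scaling toward $H_j$ for $\ell\ge\ell'$, and using Theorem~\ref{compactness} for the non-compactness when $\ell=\ell'$) goes beyond what the paper supplies and is sound.

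One technical point in your compactness argument deserves care. For non-integer $s=\ell-\ell'$ the function $x^s(1-\chi_\epsilon)$ is \emph{not} smooth on $X$, so $M_{x^s(1-\chi_\epsilon)}\notin\Psi^0_\fS(X;E)$ and Theorems~\ref{composition} and~\ref{mp.10} do not literally apply to bound $\|D_{m'}M_{x^s(1-\chi_\epsilon)}D_{-m}\|_{\cL(L^2)}$. Your key observation --- that all $\cV_\fS$-derivatives of $x^s(1-\chi_\epsilon)$ are $O(\epsilon^s)$, which follows from $V x^s = x^s\cdot b$ with $b\in\CI(X)$ for $V\in\cV_\fS(X)$ --- is exactly what is needed, but the conclusion should be drawn differently: it gives $\|M_{x^s(1-\chi_\epsilon)}\|_{\cL(H^p_\fS,H^p_\fS)}=O(\epsilon^s)$ for integer $p\ge 0$ via the Leibniz rule (and then for all $p$ by duality and interpolation), and hence the remainder is small on $L^2$ after composing with $D_{m'}$ and $D_{-m}$ and using the continuous inclusion $H^{m-m'}_\fS\hookrightarrow L^2$. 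The paper's own proof has the parallel gap --- it asserts $x^\epsilon D_{-\epsilon}\in x^\epsilon\Psi^{-\epsilon}_\fS$ is compact without saying why the weighted space maps into $\cK(\cH)$ --- so your cut-off argument, once patched as above, actually fills in what the paper leaves implicit.
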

\begin{proof}
The fact that these are continuous inclusions follows from the isomorphism \eqref{fp.12} and Proposition~\ref{fp.13}.  The statement about compactness follows by using the
isomorphism \eqref{fp.12} and the fact that for $\epsilon>0$, the operator
$x^{\epsilon}D_{-\epsilon}\in x^{\epsilon}\Psi^{-\epsilon}_{\fS}(X;E)$ is a compact operator from $L^{2}_{g_{\pi}}(X;E)$
to itself.
\end{proof}

By the parametrix construction of Proposition~\ref{fp.1} as well as 
Proposition~\ref{fp.13} and Proposition~\ref{fp.14}, an operator
$P\in\Psi_{\fS}^{m}(X;E,F)$ is Fredholm 
as an operator 
\begin{equation}
  P: x^{\ell}H_{\fS}^{p+m}(X;E)\to x^{\ell}H_{\fS}^{p}(X;F)
\label{fp.16}\end{equation} 
whenever it is fully elliptic.  When $P$ is polyhomogeneous, it is also possible to establish the converse. 

\begin{theorem}
An operator $P\in \Psi^{m}_{\fS-\phg}(X;E,F)$ induces a Fredholm operator
\[
     P: x^{\ell}H_{\fS}^{p+m}(X;E)\to x^{\ell}H_{\fS}^{p}(X;F)
\]
if and only if it is 
fully elliptic.  
\label{fp.15}\end{theorem}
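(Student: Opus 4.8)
The plan is to prove the two implications separately. The backward direction (fully elliptic $\Rightarrow$ Fredholm) has essentially already been obtained: as noted in the text, the parametrix construction of Proposition~\ref{fp.1} produces $Q\in\Psi^{-m}_{\fS}(X;F,E)$ with $\Id-QP$ and $\Id-PQ$ in $\dot\Psi^{-\infty}_{\fS}(X)$; by Proposition~\ref{fp.13} these remainders act as bounded operators $x^{\ell}H^{p+m}_{\fS}\to x^{\ell}H^{p+m}_{\fS}$ (and similarly on $F$), and by Proposition~\ref{fp.14} they are in fact compact, since an element of $\dot\Psi^{-\infty}_{\fS}(X)=x^{\infty}\Psi^{-\infty}_{\fS}(X)$ factors through $x^{\epsilon}\Psi^{-\epsilon}_{\fS}(X)$ for any $\epsilon>0$. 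Hence $Q$ is a two-sided parametrix modulo compact operators, so $P$ in \eqref{fp.16} is Fredholm. So the only real content is the forward direction.

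For the forward direction I would argue by contraposition: if $P$ is \emph{not} fully elliptic, then $P$ is not Fredholm. There are two cases. First, if $P$ is not elliptic, i.e.\ $\sigma_m(P)$ is not invertible at some point of ${}^{\pi}S^*X$, then the usual interior argument applies: one constructs a sequence $u_j$ of $L^2$-normalized test sections with frequency support concentrating at that codirection and spatial support in a fixed compact subset of $X\setminus\pa X$, for which $\|Pu_j\|_{H^p_{\fS}}\to 0$ but $u_j$ has no convergent subsequence in $x^{\ell}H^{p+m}_{\fS}$; this shows $P$ does not have finite-dimensional kernel modulo a closed complement, contradicting Fredholmness. (Equivalently, one may invoke that ellipticity is necessary for Fredholmness of any pseudodifferential operator by localizing near an interior point and using the classical theory.) Second, suppose $P$ is elliptic but $\sigma_{\pa_i}(P)$ fails to be invertible as a map $\cS({}^{\pi}NH_i;E)\to\cS({}^{\pi}NH_i;F)$ for some $i$. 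Here I would adapt the translation argument used in the proof of Theorem~\ref{compactness}: pick $s\in S_i\setminus\pa S_i$ and, using polyhomogeneity, reduce to the statement that the suspended family $\widehat{\sigma_{\pa_i}(P)}_s(\Upsilon)$ is not invertible for some $\Upsilon\in{}^{\pi}N^*_sS_i$ — this is where we use that $P$ is polyhomogeneous, so that $\sigma_{\pa_i}(P)$ and its Fourier transform in the suspension variable are classical and we can test invertibility fibrewise. If $\widehat{\sigma_{\pa_i}(P)}_s(\Upsilon)$ has nontrivial kernel or cokernel, produce a corresponding approximate solution concentrated near the fibre $\pi_i^{-1}(s)\subset H_i$, oscillating in the conormal variable $\Upsilon$ to mimic the suspension parameter and translated toward the boundary as in \eqref{compact.5}; as in Theorem~\ref{compactness}, the difference $Pf_k-\sigma_{\pa_i}(P)f_k\to 0$ in $L^2$ while $\|\sigma_{\pa_i}(P)f_k\|_{L^2}$ stays bounded away from zero and $f_k$ escapes to the boundary, so $\{Pf_k\}$ has no convergent subsequence while $\{f_k\}$ is bounded and bounded away from $0$ in $x^{\ell}H^{p+m}_{\fS}$ — a Weyl sequence obstructing Fredholmness. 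Running this on both $P$ and $P^*$ handles kernel and cokernel.

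Finally I would assemble: Proposition~\ref{fp.1}, Proposition~\ref{fp.13} and Proposition~\ref{fp.14} give the "if"; the two singular/elliptic Weyl-sequence arguments give the "only if". Since conjugation by $x^{\ell}$ and application of the isomorphisms $D_{p+m}$, $D_p$ of \eqref{fp.12} reduce the mapping \eqref{fp.16} to the case $\ell=0$, $p=0$, $m=0$ (as in the proof of Proposition~\ref{fp.13}), and since $\sigma_m$ and $\sigma_{\pa_i}$ transform compatibly under these reductions by Remark~\ref{notw.1}, it suffices to prove the statement for $P\in\Psi^{0}_{\fS-\phg}(X;E,F)$ acting on $L^2$; this also lets us invoke Theorem~\ref{mp.10} and Theorem~\ref{compactness} directly.

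I expect the main obstacle to be the construction of the boundary Weyl sequence in the second case: one must build genuine test sections $f_k\in\dot{\cC}^{\infty}(X;E)$ (not just formal symbols) whose images under $P$ are controlled both by the normal operator $\sigma_{\pa_i}(P)$ \emph{and} by the interior symbol, so that the "error" $Pf_k-\sigma_{\pa_i}(P)f_k$ really does tend to $0$ in the $L^2$ norm of the $\fS$-metric, uniformly as the support is pushed to the boundary and as the suspension frequency is dialed in. This requires care with the scaling $u=1/x_i$, $v=y/x_i$ near $\ff_{\pi_i}$, with the compatibility of the symbols $\sigma_{\pa_i}$ and $\sigma_{\pa_j}$ along $\ff_{\pi_i}\cap\ff_{\pi_j}$ when $H_i<H_j$ (so that the approximation does not pick up uncontrolled contributions from deeper strata), and with the polyhomogeneity needed to make "fibrewise invertibility of $\widehat{\sigma_{\pa_i}(P)}_s(\Upsilon)$ for all $(s,\Upsilon)$" equivalent to "invertibility of $\sigma_{\pa_i}(P)$ as an operator on Schwartz sections" — the latter equivalence itself being a consequence of Corollary~\ref{fp.17} applied to the suspended calculus on the fibre $\pi_i^{-1}(s)$.
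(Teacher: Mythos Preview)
Your ``if'' direction and the reduction to $m=p=\ell=0$ agree with the paper. For the ``only if'' direction, however, the paper takes a quite different route from your Weyl-sequence construction. Following \cite{LMN2000}, the paper passes to the $C^*$-closure $\overline{\cP}^{0}_{\fS-\phg}(X;E)$ of $\Psi^0_{\fS-\phg}(X;E)$ in $\cL(\cH)$, observes from Theorem~\ref{compactness} that the combined symbol map $(\overline{\sigma}_0,\bigoplus_i\overline{\sigma}_{\pa_i})$ is injective on the Calkin quotient $\overline{\cP}^{0}_{\fS-\phg}(X;E)/\cK$, and then invokes the standard fact that an injective unital $*$-homomorphism of $C^*$-algebras reflects invertibility. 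Thus $P$ is Fredholm iff invertible in the Calkin algebra iff its image under the symbol map is invertible, which is exactly full ellipticity. No explicit singular Weyl sequence is ever built.

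Your approach can be made to work, but it is considerably more delicate than the paper's, and you have already put your finger on the hard part. Two points deserve emphasis. First, to localize the failure of invertibility of $\sigma_{\pa_i}(P)$ to a single $(s,\Upsilon)$ with $s\in S_i\setminus\pa S_i$, you should choose $H_i$ \emph{maximal} (in the partial order) among those hypersurfaces for which $\sigma_{\pa_i}(P)$ fails to be invertible; then for every $H_j>H_i$ the symbol $\sigma_{\pa_j}(P)$ is invertible, so $\sigma_{\pa_i}(P)$ is itself fully elliptic as a suspended family and Corollary~\ref{fp.17} applies fibrewise. Without this ordering step, the failure of invertibility could live over $\pa S_i$ and your choice of an interior base point $s$ is not justified. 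Second, once you have a Schwartz kernel (or cokernel) element at a fixed $(s,\Upsilon)$, the translation argument from Theorem~\ref{compactness} does produce a singular Weyl sequence, but controlling the error $Pf_k-\sigma_{\pa_i}(P)f_k$ in the weighted Sobolev norm requires tracking both the scaling in $(u,v)$ and the residual decay of the parametrix at deeper faces. The $C^*$-algebra proof buys you all of this for free: the injectivity of the symbol map modulo compacts packages the entire obstruction, and no explicit test functions are needed.
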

\begin{proof}
We will follow the approach of \cite[Theorem~4]{LMN2000}.
First, by considering instead the operator $\tilde{P}= x^{-\ell}D_{p}PD_{-p-m}x^{\ell}$,
we can assume that $P$ is of order $0$ and is seen as a bounded operator
\[
       P: L^{2}_{g_{\pi}}(X;E)\to L^{2}_{g_{\pi}}(X;F).
\]
Furthermore, by considering instead the operator
\[
  \left( \begin{array}{cc}
    0 & P^{*} \\
    P & 0
   \end{array} \right) : L^{2}_{g_{\pi}}(X;E\oplus F)\to L^{2}_{g_{\pi}}(X;E\oplus F),
\] 
we can reduce to the case $E=F$ with $P$ self-adjoint.  By Theorem~\ref{mp.10}, we have a continuous linear map
\[
    \iota: \Psi^{0}_{\fS-\phg}(X;E) \to \cL(\cH,\cH),
\]
where $\cH= L^{2}_{g_{\pi}}(X;E)$.  Let $\cP^{0}_{\fS-\phg}(X;E)$ be the image of this map and $\overline{\cP}^{0}_{\fS-\phg}(X;E)$ its closure in
$\cL(\cH,\cH)$.
Now, the principal symbol induces a continuous linear map
\[
  \sigma_{0}: \Psi^{0}_{\fS-\phg}(X;E)\to \CI({}^{\pi}S^{*}X;\hom(E)).
\]
Using instead the $\cC^{0}$-topology on $\CI({}^{\pi}S^{*}X;\hom(E))$, this extends to a homomorphism of $C^*$-algebras
\[
  \overline{\sigma}_{0}: \overline{\cP}^{0}_{\fS-\phg}(X;E)\to \cC^{0}({}^{\pi}S^{*}X;\hom(E)).
\]
Similarly, the symbol map $\sigma_{\pa_{i}}$ induces a continuous linear map
\[
   \overline{\sigma}_{\pa_{i}}: \overline{\cP}^{0}_{\fS-\phg}(X;E) \to \overline{\cP}^0_{\ff_{\pi_i}-\phg}(H_i;E),
\]
where $\overline{\cP}^0_{\ff_{\pi_i}-\phg}(H_i;E)$ is the closure of $\Psi^0_{\ff_{\pi_i}-\phg}(H_i;E)$ in $\cL(\cH_{i},\cH_{i})$ with $\cH_{i}$ the Banach space $L^{2}_{g_{\pi}}({}^{\pi}NH_{i}/S_{i};E)$ introduced in \eqref{lds.2}.
By Theorem~\ref{compactness}, this induces an injective map
\begin{equation}
(\overline{\sigma}_{0}, \bigoplus_{i=1}^{k} \overline{\sigma}_{\pa_{i}}):
\overline{\cP}^{0}_{\fS-\phg}(X;E)/\cK\hookrightarrow \cC^{0}({}^{\pi}S^{*}X;\hom(E))\oplus ( \bigoplus_{i=1}^{k} \overline{\cP}^0_{\ff_{\pi_i}-\phg}(X;E))
\label{pfc.1}\end{equation}
where $\cK\subset \cL(\cH,\cH)$ is the subspace of compact operators.  Since this is an injective map of $C^{*}$-algebras mapping the identity to the
identity, it is a standard fact (see for instance Proposition~1.3.10 in \cite{Dixmier}) that an element of $\overline{\cP}^{0}_{\fS-\phg}(X;E)/\cK$ is
invertible if and only if its image under the map \eqref{pfc.1} is invertible.  Since a bounded operator in $\cL(\cH,\cH)$ is Fredholm if and only if
it is invertible in $\cL(\cH,\cH)/\cK$, the result follows.  
\end{proof}

\section{The semiclassical $\fS$-calculus}\label{ac.0}

Consider the manifold with corner $X^{2}_{\pi}\times [0,1]_{\epsilon}$ where $\epsilon$ should be considered as a semiclassical parameter.  The
\textbf{semiclassical $\pi$-double space} is obtained by blowing up the $p$-submanifold $\Delta_{\pi}\times \{0\}$,
\begin{equation}
  X^{2}_{\pi-\ad}= [X^{2}_{\pi}\times [0,1]_{\epsilon}; \Delta_{\pi}\times \{0\}]
\label{ac.1}\end{equation}
with blow-down map
\begin{equation}
  \beta_{\ad}: X^{2}_{\pi-\ad}\to X^{2}_{\pi}\times [0,1]_{\epsilon}.
\label{ac.2}\end{equation}
See Figure~\ref{addouble} for a picture of the semiclassical $\pi$-double space when $X$ is a manifold with boundary.
We denote the `new' boundary face obtained via this blow-up by
\begin{equation}
     \ff_{0}= \beta^{-1}_{\ad}(\Delta_{\pi}\times \{0\})\subset X^{2}_{\pi-\ad}.
\label{ac.3}\end{equation}
We also denote by 
\begin{equation}
  T_{\epsilon=0}= \overline{ \beta_{\ad}^{-1}( X^2_{\pi}\times \{0\}\setminus (\Delta_\pi\times \{0\}) }
\end{equation}
the lift of the `old' face $X^{2}_{\pi}\times \{0\}$ to $X^{2}_{\pi-\ad}$.

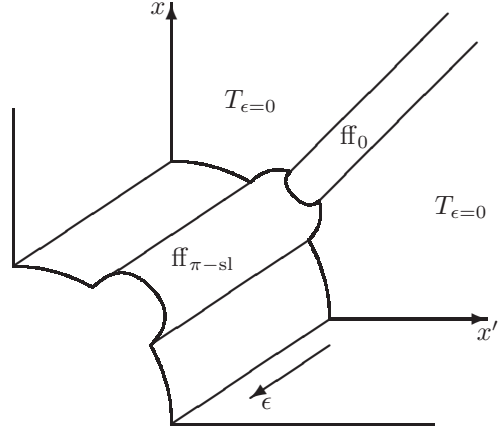
\begin{figure}[h]
\setlength{\unitlength}{0.7cm}
\begin{picture}(10,10)(0,-1)
\thicklines
\qbezier(0,3)(0.78,3)(1.5,2.6)
\qbezier(3,0)(3,0.78)(2.6,1.5)
\qbezier(1.5,2.6)(2.05,3.15)(2.6,2.6)
\qbezier(2.6,1.5)(3.15,2.05)(2.6,2.6)

\qbezier(3,5)(3.78,5)(4.5,4.6)
\qbezier(6,2)(6,2.78)(5.6,3.5)
\qbezier(4.5,4.6)(4.82,4.92)(5.25,4.8)
\qbezier(5.6,3.5)(5.92,3.82)(5.8,4.25)
\qbezier(5.25,4.8)(5.05,4.6)(5.325,4.325)
\qbezier(5.8,4.25)(5.6,4.05)(5.325,4.325)

\put(0,3){\line(3,2){3}}
\put(3,0){\line(3,2){3}}
\put(1.5,2.6){\line(3,2){3}}
\put(2.6,1.5){\line(3,2){3}}
\put(3,3){$\ff_{\pi-\ad}$}

\put(0,3){\line(0,1){3}}
\put(3,5){\vector(0,1){3}}
\put(2.6,7.8){$x$}

\put(3,0){\line(1,0){5}}
\put(6,2){\vector(1,0){3}}
\put(8.8,1.6){$x'$}


\put(6,1.5){\vector(-3,-2){1.5}}
\put(4.7,0.3){$\epsilon$}

\put(5.25,4.8){\line(1,1){3}}
\put(5.8,4.25){\line(1,1){3}}
\put(6.2,5.325){$\ff_{0}$}
\put(4,6){$T_{\epsilon=0}$}
\put(8,4){$T_{\epsilon=0}$}

\end{picture}
\caption{The semiclassical $\pi$-double space}\label{addouble}
\end{figure}

Notice that $\ff_{0}\setminus (\ff_0\cap T_{\epsilon=0})$ is naturally diffeomorphic to $N\Delta_{\pi}\cong {}^{\pi}TX$ and that $\ff_{0}$ is diffeomorphic to the radial
compactification of ${}^{\pi}TX\to X$.  We will also denote the lift
of $\ff_{\pi_{i}}\times [0,1]$ to $X^{2}_{\pi-\ad}$ by
\begin{equation}
  \ff_{\pi_{i}-\ad}= \beta^{-1}_{\ad}(\ff_{\pi_{i}}\times [0,1])\cong 
   [\ff_{\pi_{i}}\times [0,1]; (\Delta_{\pi}\cap \ff_{\pi_{i}})\times \{0\} ]. 
\label{ac.4}\end{equation}
It will be useful to consider the spaces 
\begin{equation}
   \ff_{\pi-\ad} = \bigcup_{i=1}^{k} \ff_{\pi_{i}-\ad}, \quad 
   \ff_{\ad}= \ff_{\pi-\ad}\cup \ff_{0}
\label{ac.5}\end{equation}
as well as the lift of $\Delta_{\pi}\times [0,1]$ to $X^{2}_{\pi-\ad}$,
\begin{equation}
  \Delta_{\ad}= \overline{\beta^{-1}_{\ad}(\Delta_{\pi}\times (0,1])}.
\label{ac.6}\end{equation}
Let also $\ff_{\Delta_{\ad}}= \ff_{\ad}\cup (X^{2}_{\pi}\times \{1\})$ be the
union of all the hypersurfaces of $\pa X^{2}_{\pi-\ad}$ having a non-empty
intersection with $\Delta_{\ad}$.

We can now define the space of \textbf{semiclassical $\fS$-pseudodifferential
operators}  of order $m$ by
\begin{multline}
  \Psi^{m}_{\fS-\ad}(X;E,F)= \left\{ K\in I^{m}(X^{2}_{\pi-\ad},
  \Delta_{\ad}; \beta_{\ad}^{*}p^*_{1}\beta_{\pi}^{*}(\Hom(E,F)\otimes
  \pi_{R}^*({}^{\pi}\Omega)))  \right.   \\
  \left. K  \equiv 0 \; \mbox{at} \quad \pa X^{2}_{\pi-\ad}\setminus \ff_{\Delta_{\ad}}   \right\},
\label{ac.7}\end{multline} 
where $p_{1}: X^{2}_{\pi}\times [0,1]\to X^{2}_{\pi}$ is the projection on the first factor.  Polyhomogeneous semiclassical $\fS$-operators can be defined in a similar way.   

As for $\fS$-pseudodifferential operators, there is a corresponding semiclassical
Lie groupoid
\begin{equation}
      \cG^{(0)}_{\pi-\ad}= \Delta_{\ad}, \quad \cG^{(1)}_{\pi-\ad}=
        \overset{\circ}{X}{}^{2}_{\pi-\ad}\cup \overset{\circ}{\ff}_{\Delta_{\ad}},
\label{ac.8}\end{equation}
where $\overset{\circ}{\ff}_{\Delta_{\ad}}= \ff_{\Delta_{\ad}}\setminus \pa\ff_{\Delta_{\ad}}$ is the interior of $\ff_{\Delta_{\ad}}$ as a subset of $\pa X^{2}_{\pi-\ad}$.  
Clearly, $\Delta_{\ad}$ is naturally identified with $X\times [0,1]$.  Under
this identification, we define the domain and range of $\alpha\in \cG^{(1)}_{\pi-\ad}$ with $p_{2}\circ \beta_{\ad}(\alpha)=\epsilon$ and
$\beta_{\pi}\circ p_{1}\circ \beta_{\ad}(\alpha)= (x_{1},x_{2})\in X^{2}$ by
\begin{equation}
   d(\alpha)= (x_{2},\epsilon), \quad r(\alpha)=(x_1,\epsilon),
\label{ac.9}\end{equation}
where $p_2: X^2_{\pi}\times [0,1]\to [0,1]$ is the projection on the second factor.
Since $\cG_{\pi-\ad}$ is a Lie groupoid, any choice of a metric on ${}^\pi TX\times[0,1]$ provides a (smooth) Haar system on $\cG_{\pi-\ad}$ \cite{Paterson}, giving to it the structure of a measured groupoid. As in the proof of Lemma~\ref{ma.2}, observe that $\cG_{\pi-\ad}$ can be written as a disjoint union of
 measurewise amenable groupoids,
\begin{equation}
  \cG_{\pi-\ad} = ({}^\pi TX) \bigsqcup (\overset{\circ}{X}\times \overset{\circ}{X})\times (0,1]_{\epsilon} \bigsqcup_{i=1}^k
(H_i\underset{\pi_i}{\times}{}^{\pi}TS_i\underset{\pi_i}{\times}H_i)|_{G_i}\times(0,1]_{\epsilon}\times\RR,
\end{equation}
where $G_i=H_i\setminus(\cup_{j>i} H_j)$.  Thus, by Lemma~\ref{ma.1}, we conclude that $\cG_{\pi-\ad}$ is measurewise amenable with $\cC^*(\cG_{\pi-\ad})$ nuclear and equal to $\cC^*_r(\cG_{\pi-\ad})$.

In the terminology of \cite{Connes}, $\cG_{\pi-\ad}$ is the tangent groupoid
of $\cG_{\pi}$.
From \cite{ALN} and \cite{NWX}, there is a calculus of pseudodifferential operators associated to this groupoid.  It corresponds
to operators in $\Psi^{*}_{\pi-\ad}(X;E,F)$ with Schwartz kernel 
compactly supported in $\cG^{(1)}_{\pi-\ad}$.  
As for $\cG^{(1)}_{\pi}$, the inverse map $\iota$ and the composition maps
comes from the natural smooth extensions of the corresponding maps on 
the Lie groupoid $\overset{\circ}{X}\times \overset{\circ}{X}\times[0,1]$
with domain and range given by $d(x_{1},x_{2},\epsilon)=(x_{2},\epsilon)$ and  $r(x_{1},x_{2},\epsilon)= (x_{1},\epsilon)$.

There are many symbol maps associated to 
$\Psi^{m}_{\fS-\ad}(X;E,F)$.  There is the obvious one associated to 
conormal distributions.  With the natural identification of $N^{*}\Delta_{\ad}$
with ${}^{\pi}TX\times [0,1]$, which has a canonical volume form, we can
write it as 
\begin{equation}
\xymatrix{
   \Psi^{m}_{\fS-\ad}(X;E,F) \ar[r]^-{\sigma_{m}} & 
   S^{[m]}(N^{*}\Delta_{\ad}; \phi^{*}\Hom(E,F))
   }
\label{ac.11}\end{equation}
where $\phi$ is the composition of the natural maps 
$N^{*}\Delta_{\ad}\to \Delta_{\ad}$ and 
$\Delta_{\ad}=X\times [0,1]\to X$.  This gives a short exact sequence
\begin{equation}
\xymatrix @C=1.3pc{
  0\ar[r] & \Psi^{m-1}_{\fS-\ad}(X;E,F) \ar[r] & 
  \Psi^{m}_{\fS-\ad}(X;E,F)\ar[r]^-{\sigma_{m}} & 
  S^{[m]}(N^{*}\Delta_{\ad}; \phi^{*}\Hom(E,F)) \ar[r] & 0.
}
\label{ac.12}\end{equation}
We say an operator $P\in \Psi^m_{\fS-\ad}(X;E,F)$ is \textbf{elliptic} if its principal symbol $\sigma_{m}(P)$ is invertible.  

Restriction to boundary hypersurfaces of $X^{2}_{\pi-\ad}$ leads to other types of symbols.  Restricting to the hypersurface $\ff_{0}$, we get the short exact sequence
\begin{equation}
\xymatrix @C=2.5pc{
0\ar[r] & \epsilon \Psi^{m}_{\fS-\ad}(X;E,F) \ar[r] & \Psi^{m}_{\fS-\ad}(X;E,F)
  \ar[r]^-{\sigma_{\epsilon=0}} & \Psi^{m}_{\ff_0 }(X;E,F)
  \ar[r] & 0.
  }
\label{ac.14}\end{equation}
On the other hand, restricting to the face
$\ff_{\pi_{i}-\ad}$ for $i\in\{1,\ldots,k\}$, we get the short exact sequence
\begin{equation}
\xymatrix@C=2.3pc{
0\ar[r] & x_{i} \Psi^{m}_{\fS-\ad}(X;E,F)\ar[r] & 
\Psi^{m}_{\fS-\ad}(X;E,F) \ar[r]^-{\sigma_{\ff_{\pi_{i}-\ad}}} &
  \Psi^{m}_{\ff_{\pi_{i}-\ad}}(X;E,F) \ar[r] & 0.  
}
\label{ac.15}\end{equation}
Combining the symbol maps $\sigma_{\ff_{\pi_{i}}}$ and $\sigma_{\epsilon=0}$, that is, restricting to the hypersurface $\ff_{\ad}$, we 
also get the short exact sequence
\begin{equation}
\xymatrix{
 0 \ar[r] & \epsilon x \Psi^{m}_{\fS-\ad}(X;E,F) \ar[r] &
  \Psi^{m}_{\fS-\ad}(X;E,F) \ar[r]^-{\sigma_{\ff_{\ad}}} &
   \Psi^{m}_{\ff_{\ad}}(X;E,F) \ar[r] & 0.
  }
\label{ac.16}\end{equation}
Finally, a symbol of particular importance is obtained by restricting at the face
$X^{2}_{\pi}\times \{1\}$, giving the short exact sequence
\begin{equation}
\xymatrix{
0 \ar[r] & (1-\epsilon)\Psi^{m}_{\fS-\ad}(X;E,F) \ar[r] &
  \Psi^{m}_{\fS-\ad}(X;E,F) \ar[r]^-{\sigma_{\epsilon=1}} & 
   \Psi^{m}_{\fS}(X;E,F) \ar[r] & 0.
   }
  \label{ac.13}\end{equation}
In fact, more generally, for $\epsilon\in (0,1]$, we can restrict $A\in\Psi^{m}_{\fS-\ad}(X;E,F)$  to the hypersurface $X^{2}_{\pi}\times \{\epsilon\}$ to get an operator $A_\epsilon\in \Psi^{m}_{\fS}(X;E,F)$.  This gives us a way of composing semiclassical $\fS$-operators,
\[
          (A\circ B)_\epsilon := A_{\epsilon}\circ B_\epsilon.  
\]
\begin{proposition}
If $E, F$ and $G$ are smooth vector bundles on $X$, then
\[
    \Psi^{m}_{\fS-\ad}(X;F,G)\circ \Psi^{n}_{\fS-\ad}(X;E,F)\subset \Psi^{m+n}_{\fS-\ad}(X;E,G)
\]
and the induced map is continuous with respect to the natural Fr\'echet topology.  Furthermore, the various symbol maps induce composition laws in such a way that they become algebra homomorphisms.  A similar result holds for polyhomogeneous semiclassical $\fS$-operators.
\label{compad.1}\end{proposition}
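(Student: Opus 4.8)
\emph{Strategy.} The plan is to deduce the proposition from two results already available: the composition theorem for $\fS$-operators, Theorem~\ref{composition}, and the classical fact that semiclassical pseudodifferential operators on a closed manifold compose. For $\epsilon\in(0,1]$ the composition is defined fibrewise by $(A\circ B)_{\epsilon}=A_{\epsilon}\circ B_{\epsilon}$, so Theorem~\ref{composition} already gives $(A\circ B)_{\epsilon}\in\Psi^{m+n}_{\fS}(X;E,G)$ for each such $\epsilon$, and, the composition map there being bilinear and continuous, the family $\{(A\circ B)_{\epsilon}\}_{\epsilon\in(0,1]}$ is smooth in $\epsilon$. Hence the real content is local near $\epsilon=0$: one must show that this family, regarded as a conormal distribution on $X^{2}_{\pi}\times(0,1]$ conormal to $\Delta_{\pi}\times(0,1]$, extends from the region $\epsilon>0$ to a conormal distribution of order $m+n$ at $\Delta_{\ad}$ on all of $X^{2}_{\pi-\ad}$, vanishing to infinite order at the boundary hypersurfaces not contained in $\ff_{\Delta_{\ad}}$, and that its restrictions to $\ff_{0}$ and to the $\ff_{\pi_{i}-\ad}$ are the products of the corresponding restrictions of $A$ and $B$. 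Rather than constructing a semiclassical $\fS$-triple space, I would carry this out by the local, inductive argument of \S\ref{com.0}.

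\emph{Reductions.} First I would repeat, essentially verbatim, the reduction steps in the proof of Theorem~\ref{composition}: a partition of unity reduces to $E=F=G=\underline{\bbC}$; an induction on $\dim X$ lets one assume the proposition on all manifolds with fibred corners of smaller dimension; and the cut-off manipulations leading to \eqref{com.18} apply unchanged, since all cut-offs are pulled back from $X$ and so commute with the blow-up of $\Delta_{\pi}\times\{0\}$. Running through the partial order on the boundary hypersurfaces of $X$ as there, this reduces the problem to two model situations: (a) $K_{A}$ and $K_{B}$ are supported near a single front face $\ff_{\pi_{i}-\ad}$, and (b) $K_{A}$ and $K_{B}$ are supported away from every $\ff_{\pi_{i}-\ad}$. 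In situation (a), the reduction of Lemma~\ref{com.2}, applied now with $\epsilon$ present as an extra parameter, together with the inductive hypothesis for the fibre $F_{i}$, turns the composition into the semiclassical analogue of Lemma~\ref{com.14}; the latter is proved by the same operator-valued-symbol computation as in Lemma~\ref{com.14}, carried out with a semiclassical operator-valued symbol, the required estimates being the standard ones for that symbol class.

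\emph{The new case, and the conclusions.} Situation (b) is the genuinely new content. After adding, if necessary, an operator whose kernel vanishes to infinite order at every face of $\ff_{\Delta_{\ad}}$, one may assume $K_{A}$ and $K_{B}$ are compactly supported in the interior lift, namely in the blow-up of $(X\setminus\pa X)^{2}\times[0,1]$ along $\Delta_{X}\times\{0\}$; by doubling $X$ to a closed manifold this is precisely the statement that semiclassical pseudodifferential operators compose, with multiplicative principal and $\epsilon=0$ symbols, which is standard (and also follows from the operator-valued-symbol argument of Lemma~\ref{com.14} with $\epsilon$ playing the role of the inverse suspension parameter). Reassembling the pieces gives $A\circ B\in\Psi^{m+n}_{\fS-\ad}(X;E,G)$, and continuity of the composition map follows since each reduction step, exactly as in Theorem~\ref{composition} and Lemma~\ref{com.14}, carries uniform control of all Fr\'echet semi-norms. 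Multiplicativity of the symbol maps is then immediate by restriction of Schwartz kernels: for the principal symbol $\sigma_{m}$ it is the symbol calculus for conormal distributions, for $\sigma_{\epsilon=1}$ it is Theorem~\ref{composition}, for $\sigma_{\ff_{\pi_{i}-\ad}}$ it follows from situation (a), and for $\sigma_{\epsilon=0}$ from situation (b), noting that $\Psi^{m}_{\ff_{0}}(X;E,F)$ is, fibrewise over $X$, the translation-invariant family on ${}^{\pi}TX$, so that composition there is fibrewise convolution. Since polyhomogeneity is preserved by each of these operations, the corresponding statement for polyhomogeneous semiclassical $\fS$-operators follows in the same way.

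\emph{Main obstacle.} The point requiring real care is the bookkeeping near $\ff_{0}\cap\ff_{\pi-\ad}$: one must check that the composition, built piecewise over $\epsilon\in(0,1]$ and separately near the front faces, glues to a single conormal distribution on $X^{2}_{\pi-\ad}$ with the prescribed infinite-order vanishing at the hypersurfaces outside $\ff_{\Delta_{\ad}}$, and that the semi-norm estimates remain uniform up to $\epsilon=0$. This is where the local coordinate computations and the compatibility between the semiclassical blow-up and the iterated-fibration blow-ups must be done explicitly, although none of it is conceptually different from the arguments already carried out in \S\ref{com.0}.
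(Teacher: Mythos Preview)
Your proposal is correct and follows essentially the same approach as the paper: induction on the dimension of $X$, the partition-of-unity and cut-off reductions of Theorem~\ref{composition} carried over verbatim, and the key inductive step handled by a semiclassical analogue of Lemma~\ref{com.14} proved via the same operator-valued-symbol computation with the $\epsilon$ parameter inserted. The paper packages this last step as a separate lemma (Lemma~\ref{compad.2}) and is terser about the interior case~(b), but the logical structure is the same.
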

\begin{proof}
We can employ the same strategy as in the proof of Theorem~\ref{composition} and proceed by induction on the dimension of the manifold with fibred corners.  Notice that the second part of the proof of Theorem~\ref{composition} (starting with Lemma~\ref{com.2}) mostly involve partitions of unity and has a direct generalization to semiclassical $\fS$-operators.  This means the proposition follows from Lemma~\ref{compad.2} below, which is an analog of Lemma~\ref{com.14} for semiclassical $\fS$-operators.
\end{proof}
\begin{lemma}
Suppose that the conclusion of Proposition~\ref{compad.1} holds for the manifold with fibred corners $X$.  Then it also holds for the manifold with fibred corner $\overline{\bbR^p}\times X$ defined just before Lemma~\ref{com.14}.
\label{compad.2}\end{lemma}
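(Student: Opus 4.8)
The plan is to imitate the proof of Lemma~\ref{com.14} line by line, while keeping track of the semiclassical parameter $\epsilon$. First I would reduce to the symbolic calculus by Fourier transform in the $\bbR^{p}$ variable. As in Section~\ref{sus.0} and in the proof of Lemma~\ref{com.14}, one checks that a semiclassical $\fS$-operator $A\in\Psi^{m}_{\fS-\ad}(\overline{\bbR^{p}}\times X)$ acts on $u\in\dot{\cC}^{\infty}(\overline{\bbR^{p}}\times X)$ by
\[
  Au(t)=\frac{1}{(2\pi\epsilon)^{p}}\int e^{i(t-t')\cdot\tau/\epsilon}\,a(t,\tau,\epsilon)\,u(t')\,dt'\,d\tau ,
\]
where $a$ is an operator-valued semiclassical symbol, a smooth family $a\in\CI\bigl(\overline{\bbR^{p}_{t}}\times[0,1]_{\epsilon};\Psi^{m}_{\fS-\ad-\sus(p)}(X)\bigr)$, with $\tau\in\bbR^{p}$ the suspension parameter and $\tau/\epsilon$ the semiclassical frequency; here $\Psi^{\bullet}_{\fS-\ad-\sus(p)}(X)$ is the hybrid calculus of $\bbR^{p}$-suspended semiclassical $\fS$-operators on $X$, defined exactly as in Section~\ref{sus.0} but for the semiclassical calculus. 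The two structural facts I will need are: (i) composition in $\Psi^{\bullet}_{\fS-\ad-\sus(p)}(X)$ is well-defined and continuous, which follows from the assumed Proposition~\ref{compad.1} for $X$ together with the Fourier-in-$\bbR^{p}$ argument of Section~\ref{sus.0} turning convolution into pointwise composition; and (ii) the semiclassical analog of \eqref{com.11b}, namely $D_{t}^{\alpha}D_{\tau}^{\beta}a\in(1+t^{2})^{-|\alpha|/2}\CI\bigl(\overline{\bbR^{p}_{t}}\times[0,1];\Psi^{m-|\beta|}_{\fS-\ad-\sus(p)}(X)\bigr)$, obtained by differentiating the Schwartz kernel in directions conormal to $\Delta_{\ad}$.

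Next I would carry out the symbolic composition exactly as in \eqref{par.1}--\eqref{par.5}, but for the semiclassical quantization. If $c(t,\tau,\epsilon)$ denotes the operator-valued symbol of $A\circ B$, the oscillatory-integral computation gives, for each $N$,
\[
  c(t,\tau,\epsilon)=\sum_{|\alpha|<N}\frac{\epsilon^{|\alpha|}}{\alpha!}\,\partial_{\tau}^{\alpha}a(t,\tau,\epsilon)\,D_{t}^{\alpha}b(t,\tau,\epsilon)+R_{N}(t,\tau,\epsilon),
\]
with $R_{N}$ an explicit iterated oscillatory integral as in \eqref{par.3}. Compared with \eqref{par.2}, each term of the expansion now carries an extra factor $\epsilon^{|\alpha|}$ coming from the $1/\epsilon$ in the semiclassical phase; this is precisely what makes the expansion an honest asymptotic expansion at the face $\epsilon=0$ (that is, at $\ff_{0}$). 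As before, a $\tau$-derivative improves decay in $\tau$ and lowers the order on $X$, while a $t$-derivative produces a factor $(1+t^{2})^{-1/2}$; so, splitting the $\tau$-derivatives as in the passage from \eqref{par.3} to \eqref{par.4} and using the inductive hypothesis to compose the $\fS$-operator coefficients, I would obtain for $N$ even an estimate $R_{N}\in\epsilon^{N}\,\cS^{m+n-\frac{N}{2}}_{-N}\bigl(\bbR^{p}_{t}\times\bbR^{p}_{\tau};\Psi^{m+n-\frac{N}{2}}_{\fS-\ad}(X)\bigr)$. By Borel's lemma there is $e\in\CI\bigl(\overline{\bbR^{p}_{t}}\times[0,1];\Psi^{m+n}_{\fS-\ad-\sus(p)}(X)\bigr)$ with $e\sim\sum_{\alpha}\frac{\epsilon^{|\alpha|}}{\alpha!}\partial_{\tau}^{\alpha}a\,D_{t}^{\alpha}b$, and the remainder estimates give $c-e\in\dot{\cC}^{\infty}\bigl(\overline{\bbR^{p}_{t}}\times[0,1];\Psi^{-\infty}_{\fS-\ad-\sus(p)}(X)\bigr)$, whence $c$ lies in that space and $A\circ B\in\Psi^{m+n}_{\fS-\ad}(\overline{\bbR^{p}}\times X)$. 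Since the construction yields uniform control of every Fréchet seminorm of $c$ in terms of those of $a$ and $b$, the induced composition map is continuous; polyhomogeneity is manifestly preserved.

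Finally I would read off the symbol maps. For hypersurfaces $Z_{j}=\overline{\bbR^{p}}\times H_{j}$ with $j\geq1$, multiplicativity of $\sigma_{\pa_{j}}$ and of $\sigma_{\ff_{\pi_{j}-\ad}}$ follows directly from the assumed Proposition~\ref{compad.1} for $X$, since these restrictions act fibrewise on the symbol $a$. For $Z_{0}=\pa\overline{\bbR^{p}}\times X$ one restricts $c$ to $\pa\overline{\bbR^{p}_{t}}$ and invokes the asymptotic expansion, exactly as at the end of the proof of Lemma~\ref{com.14}. For the new semiclassical symbols $\sigma_{\epsilon=0}$ and $\sigma_{\ff_{0}}$ (and $\sigma_{\epsilon=1}$, giving back ordinary composition) one restricts the expansion to $\epsilon=0$: only the $|\alpha|=0$ term survives, so $\sigma_{\epsilon=0}(A\circ B)=\sigma_{\epsilon=0}(A)\circ\sigma_{\epsilon=0}(B)$, and likewise for $\sigma_{\ff_{0}}$; multiplicativity of the principal symbol $\sigma_{m}$ is the standard statement for the conormal calculus. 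The main obstacle I expect is bookkeeping rather than a new idea: setting up cleanly the hybrid calculus $\Psi^{\bullet}_{\fS-\ad-\sus(p)}(X)$ and its symbol class, and checking that the Parenti--Shubin estimates hold uniformly down to $\epsilon=0$ — in particular that the semiclassical rescaling of the $X$-diagonal built into $\Psi^{\bullet}_{\fS-\ad}$ is compatible with the rescaling of the $\bbR^{p}$-diagonal demanded by the Fourier reduction. Once that framework is in place, the argument is a transcription of the proof of Lemma~\ref{com.14} with the extra powers of $\epsilon$ tracked.
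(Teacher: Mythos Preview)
Your proposal is correct and follows essentially the same route as the paper: representing operators via the semiclassical Fourier integral with phase $e^{i(t-t')\cdot\tau/\epsilon}$ and operator-valued symbols in $\Psi^{\bullet}_{\fS-\ad-\sus(p)}(X)$, deriving the expansion with the extra $\epsilon^{|\alpha|}$ factors, estimating $R_{N}\in\epsilon^{N}\cS^{m+n-N/2}_{-N}(\bbR^{p}_{t}\times\bbR^{p}_{\tau};\Psi^{m+n-N/2}_{\fS-\ad}(X))$, and Borel-summing. Your treatment of the symbol maps (in particular the semiclassical ones at $\epsilon=0$) is actually more explicit than the paper's, which simply refers back to Lemma~\ref{com.14}.
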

\begin{proof}
The proof is similar to the one of Lemma~\ref{com.14}.  To avoid repetition, we will focus on the parts that require changes.  First, without loss of generality, we can assume $E=F=G=\underline{\bbC}$.   Using the Fourier transform on $\bbR^p$, we can describe the action of operators 
$A\in \Psi^m_{\fS-\ad}(\overline{\bbR^p}\times X)$ and $B\in \Psi^n_{\fS-\ad}(\overline{\bbR^p}\times X)$
by
\begin{equation}
\begin{gathered}
 A_{\epsilon} u(t)= \frac{1}{(2\pi \epsilon)^{p}} \int e^{i(t-t')\cdot \frac{\tau}{\epsilon}} a(t;\tau) u(t')dt'd\tau, \\
  B_{\epsilon} u(t)= \frac{1}{(2\pi\epsilon)^{p}} \int e^{i(t-t')\cdot \frac{\tau}{\epsilon}} b(t;\tau) u(t')dt'd\tau. 
\end{gathered} 
\label{compad.3}\end{equation}
Here, $a$ and $b$ are operator-valued symbols,
\begin{equation}
   a\in \CI( \overline{\bbR^{p}}; \Psi^{m}_{\fS-\ad-\sus(p)}(X)), \quad 
   b\in \CI( \overline{\bbR^{p}}; \Psi^{n}_{\fS-\ad-\sus(p)}(X)),
\label{compad.4}\end{equation}
where $\Psi^{\ell}_{\fS-\ad-\sus(p)}(X)$ is the space of $\bbR^{p}$-suspended semiclassical $\fS$-operators of order $\ell$ on $X$, and the variable $\tau\in\bbR^p$ in \eqref{com.10} is seen as the suspension parameter.  Precisely, as for suspended $\fS$-operators, the space $\Psi^{\ell}_{\fS-\ad-\sus(p)}(X)$ can be defined as the subspace of 
$\Psi^{\ell}_{\fS-\ad}(\overline{\bbR^p}\times X)$ consisting of operators that are unchanged by translations in $\bbR^p$.  
These operator-valued symbols are such that 
\begin{equation}
      a\in \CI(\overline{\bbR_{t}^{p}}; \Psi^{m}_{\fS-\ad-\sus(p)}(X)) \; \Longrightarrow \;  D_{t}^{\alpha}D^{\beta}_{\tau}a \in
(1+t^{2})^{-\frac{|\alpha|}{2}} \CI(\overline{\bbR^{p}_{t}}; \Psi^{m-|\beta|}_{\fS-\ad-\sus(p)}(X)), \quad 
  \label{compad.5}\end{equation}
so that the techniques of \cite{Parenti} can be applied.  More precisely, using the change of variable $\xi=\frac{\tau}{\epsilon}$, we deduce from \eqref{par.1}, \eqref{par.2} and \eqref{par.3} that 
\begin{equation}
 A_\epsilon \circ B_{\epsilon} u(t)= \frac{1}{(2\pi \epsilon)^{p}} \int e^{i(t-t')\cdot \frac{\tau}{\epsilon}} c(t,\tau) u(t')dt'd\tau,
  \label{compad.6}\end{equation}
where $c(t,\tau)$ is an operator-valued symbol which for $N\in \bbN$ can be written in the form
 \begin{equation}
   c(t,\tau)= \sum_{|\alpha|< N} \frac{\epsilon^{|\alpha|}}{\alpha !} \pa_{\tau}^{\alpha} a(t,\tau) D_{t}^{\alpha}b(t,\tau) +
    R_{N}(t,\tau)
 \label{compad.7}\end{equation}
 with remainder term $ R_{N}(t,\tau)$ given by
  \begin{equation}
    \sum_{|\alpha|=N} \frac{N\epsilon^{|\alpha|}}{\alpha !} \int_{0}^{1}(1-\lambda)^{N-1} 
   \left(  \frac{1}{(2\pi)^{p}} \int e^{-iz\cdot \zeta}  \pa_{\tau}^{\alpha} a(t,\tau+\epsilon\lambda \zeta) D^{\alpha}_{t}b(t+z,\tau)dzd\zeta \right)
d\lambda.
 \label{compad.8}\end{equation}
 Proceeding as in the proof of Lemma~\ref{com.14}, we can then show that for $N$ even,
 \[
     R_{N}\in \epsilon^N \cS^{m+n-\frac{N}{2}}_{-N}( \bbR^{p}_{t}\times \bbR^{p}_{\tau}; \Psi^{m+n-\frac{N}{2}}_{\fS-\ad}(X))
 \]
 where $ \cS^{k}_{\ell}( \bbR^{p}_{t}\times \bbR^{p}_{\tau}; \Psi^{m+n}_{\fS-\ad}(X))$ is the space of operator-valued symbols $q$ such that for any
Fr\'echet semi-norm $\| \cdot \|$ of $\Psi^{m+n}_{\fS-\ad}(X)$,   
 \[
     \sup_{t,\tau} \left((1+|\tau|^{2})^{\frac{|\beta|-k}{2}} (1+|t|^{2})^{\frac{|\alpha|-\ell}{2}} 
     \|  D_{t}^{\alpha}D_{\tau}^{\beta} q \|  \right)  < \infty \quad \forall\; \alpha,\beta\in \bbN_{0}^{p}.
 \]
Therefore, taking an asymptotic sum as before we can show that 
\begin{equation}
   c \in \CI(\overline{\bbR^{p}_{t}}; \Psi^{m+n}_{\fS-\ad-\sus(p)}(X)) \quad \mbox{with} \; c
   \sim \sum_{\alpha} \frac{\epsilon^{|\alpha|}}{\alpha !} \pa_{\tau}^{\alpha} a(t,\tau) D^{\alpha}_{t}b(t,\tau),
  \label{compad.9}\end{equation} 
 where the notation $\sim$ means that for all $N\in \bbN$, 
\[
       c(t,\tau) - \sum_{\alpha<N} \frac{\epsilon^{|\alpha|}}{\alpha !} \pa_{\tau}^{\alpha} a(t,\tau) D^{\alpha}_{t}b(t,\tau)
        \in \epsilon^N (1+ |t|^{2})^{-\frac{N}{2}}\CI(\overline{\bbR^{p}_{t}}; \Psi^{m+n-N}_{\fS-\ad-\sus(p)}(X)).
\]
In particular, this shows $A\circ B\in\Psi^{m+n}_{\fS-\ad}(\overline{\bbR^p}\times X)$.  To show that the various symbols are compatible with composition, we can proceed as in the proof of Lemma~\ref{com.14}.  
\end{proof}

\begin{definition}
An operator $P\in \Psi^m_{\fS-\ad}(X;E,F)$ is said to be \textbf{elliptic} if $\sigma_m(P)$ is invertible.  It is said to be \textbf{fully elliptic} if it is elliptic and $\sigma_{\epsilon=1}(P)$ is a fully elliptic $\fS$-operator.  
\label{ac.16b}\end{definition}

A natural sub-groupoid of $\cG^{(1)}_{\pi-\ad}$ is obtained by considering the interior of $\ff_{\ad}$ (as a subset of $\pa X^2_{\pi-\ad}$), 
\begin{equation}\label{Tnc_facon_eclatement}
   T\FCX= \overset{\circ}{\ff}_{\ad}=  \ff_{\ad}\setminus \pa\ff_{\ad}, \quad \mbox{where} \; \pa{\ff_{\ad}}:= \ff_{\ad} \cap \overline{  (\pa X^{2}_{\pi-\ad}) \setminus \ff_{\ad} }.
\end{equation}
The groupoid $T\FCX$ also contains $\ff_0 \setminus (\ff_0\cap T_{\epsilon=0})$ as a subgroupoid. 
It inherits from the Lie structure of $\cG_{\pi-\ad}$ the structure of a continuous family groupoid (\cite{Paterson}).  There is also an induced continuous Haar system once a Haar system is fixed on $\cG_{\pi-\ad}$.

As for $\cG_{\pi-\ad}$, the groupoid $T\FCX$ can be written as a disjoint union of
topologically  amenable groupoids,
\begin{equation}
  T\FCX = {}^\pi TX \sqcup_{i=1}^k
(H_i\underset{\pi_i}{\times}{}^{\pi}TS_i\underset{\pi_i}{\times}H_i)|_{G_i}\times(0,1)_{\epsilon}\times\RR,
\end{equation}
where $G_i=H_i\setminus(\cup_{j>i} H_j)$.  Thus, we conclude from Lemma~\ref{ma.1} that $T\FCX$ is measurewise amenable with $\cC^*(T\FCX)$ nuclear and equal to $\cC^*_r(T\FCX)$.

As we will now describe,  the $K$-theory of $T\FCX$ corresponds to the stable homotopy classes of fully elliptic polyhomogeneous $\fS$-operators.  For this purpose, we will restrict our attention to fully elliptic polyhomogeneous $\fS$-operators of order zero.  This is not a serious restriction.  if $P\in \Psi^{m}_{\fS-\phg}(X;E,F)$ is fully elliptic, we can replace it by the fully elliptic operator $P(\Delta_E+1)^{-\frac{m}{2}}\in \Psi^{0}_{\fS-\phg}(X;E,F)$, where $\Delta_E\in \Psi^2_{\fS-\phg}(X;E)$ is some (positive) Laplacian associated to a choice of $\fS$-metric on $X$ and a choice of Hermitian metric on $E$.  

\begin{definition}
Two fully elliptic operators $P_0\in \Psi^0_{\fS-\phg}(X; E_0, E_1)$ and $P_1\in \Psi^0_{\fS-\phg}(X;E_1,F_1)$ are \textbf{homotopic} if they can be connected by a continuous family of fully elliptic polyhomogeneous $\fS$-operators
\[
            P_t\in \Psi^0_{\fS-\phg}(X;E_t,F_t), \quad t\in [0,1].            
\]
We say instead that $P_0$ and $P_1$ are \textbf{stably homotopic} if they become homotopic after the addition to each of them of the identity operator $\Id_H$ acting on the sections of some complex vector bundle $H\to X$.
\label{hcep.1}\end{definition}
Stable homotopies induce an equivalence relation and we denote by $\fe(X)$ the set of fully elliptic operators modulo stable homotopies.  This set is in fact an abelian group with addition given by direct sum and inverse given by the parametrix construction of Proposition~\ref{fp.1}.  It can be identified with the K-theory of a mapping cone.  To see this,  
let us use the notation of the proof of Theorem~\ref{fp.15} and denote by $\cA=\overline{\cP}^{0}_{\fS-\phg}(X)$ the closure of $\Psi^0_{\fS-\phg}(X)$ in $\cL(\cH,\cH)$, where $\cH= L^2_{g_{\pi}}(X)$.  The algebra $\cA$ contains the subalgebra $\cK\subset \cL(\cH,\cH)$ of compact operators so that we can consider the quotient map
\begin{equation}
  q: \cA\to \cA/\cK.
\label{hcep.2}\end{equation}  
The algebra $\cA_0=\cC(X)$ of continuous functions on $X$ is another subalgebra of $\cA$.  Denote also by $q: \cA_0\to \cA/\cK$ the restriction of the quotient map to $\cA_0$.  Let 
\begin{equation}
\Con_q= \{ (a_0, a)\in \cA_0\oplus \cC([0,1);\cA/\cK); \; q(a_0)= a(0)\}
\label{hcep.3}\end{equation}
be the mapping cone of the map $q: \cA_0\to \cA/\cK$. Consider also the mapping cylinder
 \begin{equation}
\Con^+_q= \{ (a_0, a)\in \cA_0\oplus \cC([0,1];\cA/\cK); \; q(a_0)= a(0)\}.
\label{hcep.3b}\end{equation}
By Theorem~\ref{fp.15}, a fully elliptic operator $P\in \Psi^0_{\fS-\phg}(X;E,F)$,  defines a  $K$-class in 
\[
    K_0( \Con^+_q,\Con_q)\cong K_0( \Con_q).
\]
This $K$-class only depends on the stable homotopy class of $P$ so that there is a well-defined group homomorphism
\begin{equation}
\sigma_{\Con_q}: \fe(X)\to K_0( \Con_q).
\label{hcep.4}\end{equation}
\begin{proposition}
The map $\sigma_{\Con_q}$ is a group isomorphism.  
\label{hcep.4b}\end{proposition}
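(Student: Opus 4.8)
The plan is to exhibit a two-sided inverse of $\sigma_{\Con_q}$ rather than checking injectivity and surjectivity separately. The mapping cone $\Con_q$ sits in a short exact sequence of $C^*$-algebras
\begin{equation*}
  0 \to S(\cA/\cK) \to \Con_q \to \cA_0 \to 0,
\end{equation*}
where $S(\cA/\cK) = \cC_0((0,1);\cA/\cK)$ is the suspension; the associated six-term exact sequence in $K$-theory, together with the identification $K_0(S(\cA/\cK)) \cong K_1(\cA/\cK)$, gives a description of $K_0(\Con_q)$ fitting into
\begin{equation*}
\xymatrix{
  K_1(\cA_0) \ar[r] & K_1(\cA/\cK) \ar[r] & K_0(\Con_q) \ar[r] & K_0(\cA_0) \ar[r] & K_0(\cA/\cK).
}
\end{equation*}
First I would set up the analogous algebraic description of $\fe(X)$. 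Exactly as in \cite{NSS07c} (and \cite{Melrose-Rochon06}), a stable homotopy class of a fully elliptic polyhomogeneous operator $P$ of order zero is determined by the pair consisting of the bundle data (an element of $K^0(X) = K_0(\cA_0)$ coming from $[E]-[F]$) together with the homotopy class of the symbol information, which by Theorem~\ref{fp.15} is precisely an invertible element of the quotient $\cA/\cK$ lifting the given class in $K_0(\cA_0)$. This is the standard identification of relative $K$-theory $\fe(X) \cong K_0(\Con^+_q, \Con_q) \cong K_0(\Con_q)$, and $\sigma_{\Con_q}$ is built so as to realise it.

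The key steps, in order, are: (1) recall from the proof of Theorem~\ref{fp.15} that $\cA = \overline{\cP}^0_{\fS-\phg}(X)$ contains $\cK$, that $\cA_0 = \cC(X)$, and that a fully elliptic operator is exactly one whose image under $q$ is invertible in $\cA/\cK$; (2) show that a fully elliptic $P \in \Psi^0_{\fS-\phg}(X;E,F)$ with $E=F$ trivial and $\sigma(P) \in \cA$ invertible modulo $\cK$ defines a class in $K_1(\cA/\cK)$, and then handle non-trivial bundles by the usual stabilisation trick (embed $E,F$ as complementary summands of a trivial bundle) to land in the relative group $K_0(\Con_q)$; (3) verify that this assignment is additive under direct sum, sends the parametrix to the inverse class, and is invariant under continuous families of fully elliptic operators, so it descends to a homomorphism $\fe(X) \to K_0(\Con_q)$ coinciding with $\sigma_{\Con_q}$; (4) construct the inverse: given a class in $K_0(\Con_q)$, represented by a pair $(e_0, u(\cdot))$ with $e_0$ a projection over $X$ (up to stabilisation a bundle $E$) and $u$ a path in the invertibles of the unitalisation of $\cA/\cK$ from $q(e_0)$-compatible data, lift $u(1)$ to an actual operator in $\Psi^0_{\fS-\phg}(X)$ using the surjectivity of the symbol maps $\sigma_0$ and $\sigma_{\partial_i}$ established in Sections~\ref{sm.0}--\ref{com.0} (the short exact sequences \eqref{sm.4} and \eqref{sm.7b}), producing a fully elliptic operator; (5) check the two composites are the identity, which amounts to the fact that both $\fe(X)$ and $K_0(\Con_q)$ are computed by the same relative $K$-theory data and the maps are compatible with the defining exact sequences.

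The main obstacle is step (4): realising an abstract invertible element of $\cA/\cK$ (or more precisely of $M_N(\widetilde{\cA/\cK})$) by a genuine fully elliptic polyhomogeneous $\fS$-operator, compatibly with the prescribed bundle class. This requires knowing that the joint symbol map $(\overline{\sigma}_0, \bigoplus_i \overline{\sigma}_{\partial_i})$ of \eqref{pfc.1} has image exactly characterised by the compatibility conditions at the corners $\ff_{\pi_i}\cap\ff_{\pi_j}$ noted at the end of Section~\ref{sm.0}, and that any such compatible invertible family can be quantised; surjectivity of the individual symbol sequences and Theorem~\ref{composition} give this by an inductive patching over the partial order of the hypersurfaces $H_i$, but keeping track of the $\cK$-ambiguities and the homotopies so that nothing depends on choices is the delicate bookkeeping. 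I would organise this induction along the same lines as the parametrix construction in Proposition~\ref{fp.1}, replacing "approximate inverse modulo $\dot\Psi^{-\infty}$" by "exact invertible modulo $\cK$", and invoke Proposition~\ref{fp.1} itself to pass between the two once the top-order and boundary symbols are invertible.
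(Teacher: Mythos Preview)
Your proposal is correct in outline and targets the same identification $\fe(X)\cong K_0(\Con_q)$ as the paper, but the routes diverge at the point where you propose to build an explicit inverse.

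The paper avoids your step~(4) entirely. Instead of lifting an abstract $K$-class in $K_0(\Con_q)$ back to a concrete fully elliptic operator, it first identifies $\fe(X)$ with the relative $K$-group $K(q)$ of the morphism $q:\cA_0\to\cA/\cK$ (this is essentially a reformulation of Theorem~\ref{fp.15}, and is also the formulation you allude to in your plan), and then invokes the standard fact (Karoubi, Theorem~3.29) that $K(q)\cong K_0(\Con_q)$ for any $*$-homomorphism $q$. That last isomorphism is proved by the five-lemma applied to the commutative diagram
\[
\xymatrix@C=0.5cm{
K_1(\cA_0)\ar[r] \ar[d]^{s} & K_1(\cA/\cK) \ar[r]\ar[d]^{s} & \fe(X) \ar[d]^{\sigma_{\Con_q}}\ar[r] & K_0(\cA_0) \ar[r]\ar[d]^{\Id} & K_0(\cA/\cK) \ar[d]^{\Id}  \\
K_0(S\cA_0) \ar[r] & K_0(S(\cA/\cK)) \ar[r] & K_0(\Con_q) \ar[r] & K_0(\cA_0) \ar[r] & K_0(\cA/\cK),
}
\]
whose bottom row is the Puppe sequence you wrote down and whose outer verticals are suspension isomorphisms or identities. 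Since you already have this exact sequence in hand, the five-lemma finishes the argument in one line.

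What your explicit-inverse approach buys is a concrete quantisation recipe, which is conceptually satisfying but not needed for the isomorphism statement. Your description of the obstacle in step~(4), however, is slightly misdirected: the issue is not really ``inductive patching over the partial order of hypersurfaces'' (that is symbol-level surjectivity, already handled by \eqref{sm.4} and \eqref{sm.7b}), but rather that a $K$-theory class in $K_0(\Con_q)$ is represented by data living in the $C^*$-closure $\cA/\cK$, and you must first perturb it (using density of $q(\Psi^0_{\fS-\phg}(X))$ in $\cA/\cK$ and stability of invertibles under small perturbations) to lie in the image of a genuine polyhomogeneous operator before any lifting can take place. This works, but is exactly the bookkeeping the five-lemma lets you skip.
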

\begin{proof}
This can be seen as a particular case of a result of Savin \cite[Theorem~4]{Savin2005}.  Alternatively,  since Theorem~\ref{fp.15} identifies $\fe(X)$ with the relative $K$-group $K(q)$ associated to the homomorphism $q:\cA_0\to \cA/\cK$ (see for instance \cite{Androulidakis-Skandalis} or \cite{Karoubi} for a definition of $K(q)$), we can follow instead the approach in \cite[Theorem~3.29]{Karoubi}.  This consists in noticing that the map $\sigma_{\Con_q}$ naturally fits into a commutative diagram of exact sequences,
\begin{equation}
\xymatrix@C=0.5cm{
K_1(\cA_0)\ar[r] \ar[d]^{s} & K_1(\cA/\cK) \ar[r]\ar[d]^s & \fe(X) \ar[d]^{\sigma_{\Con_q}}\ar[r] & K_0(\cA_0) \ar[r]\ar[d]^{\Id} & K_0(\cA/\cK) \ar[d]^{\Id}  \\
K_0(\cS\cA_0) \ar[r] & K_0(\cS(\cA/\cA)) \ar[r] & K_0(\Con_q) \ar[r] & K_0(\cA_0) \ar[r] & K_0(\cA\setminus \cK),
}
\label{hcep.4c}\end{equation}  
where the bottom row is the Puppe sequence associated to $q:\cA_0\to \cA/\cK$ and $s$ denotes the suspension isomorphism.  The result then follows by applying the five-lemma to this diagram.

\end{proof}

The group $\fe(X)$ can also be related with the $K$-theory of the groupoid $T\FCX$.  Indeed, 
given a fully elliptic $\fS$-operator $P\in \Psi^0_{\fS-\phg}(X;E,F)$, let $\cP\in \Psi^0_{\fS-\ad-\phg}(X;E,F)$ be a corresponding fully elliptic semiclassical $\fS$-operator such that $\sigma_{\epsilon=1}(\cP)=P$.  The full ellipticity insures that 
$\sigma_{\ff_{\ad}}(\cP)\in \Psi^0_{\ff_{\ad}-\phg}(X;E,F)$ defines a $K$-class in
\[
       K_0(\overline{\cP}^0_{\ff_{\ad}-\phg}(X), \cC^*(T\FCX))\cong K_0( \cC^*(T\FCX)),
\]
where $\cC^*(T\FCX)=\cC^*_r(T\FCX)$ is the reduced $C^*$-algebra of the groupoid $T\FCX$ and $\overline{\cP}^0_{\ff_{\ad}-\phg}(X)$ is the $C^*$-algebra obtained by taking the closure $\Psi^0_{\ff_{\ad}-\phg}(X)$ with respect to the reduced norm of the groupoid $T\FCX$, see for instance \cite[p.641]{LMN2000}. 

This $K$-class only depends on the stable homotopy class of $P$, so that there is in fact a well-defined group homomorphism
\begin{equation}
  \sigma_{\nc}: \fe(X)\to K_0(\cC^*(T\FCX)).
\label{hcep.5b}\end{equation}
\begin{theorem}
The map $\sigma_{\nc}$ in \eqref{hcep.5b} is an isomorphism of abelian groups.
\label{hcep.6}\end{theorem}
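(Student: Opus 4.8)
The plan is to exhibit a commutative diagram comparing the two parametrization maps $\sigma_{\Con_q}$ of Proposition~\ref{hcep.4b} and $\sigma_{\nc}$ of \eqref{hcep.5b}, and then deduce the claim from the fact the former is already known to be an isomorphism. More precisely, I would first observe that the semiclassical $\fS$-calculus provides a natural intermediate object: the $C^*$-algebra $\overline{\cP}^0_{\fS-\ad-\phg}(X)$ obtained by closing $\Psi^0_{\fS-\ad-\phg}(X)$ in the reduced norm of the groupoid $\cG_{\pi-\ad}$. This algebra fibres over $[0,1]_\epsilon$; restriction to $\epsilon=1$ recovers (modulo compacts) the algebra $\cA/\cK$ of Theorem~\ref{fp.15}, while restriction to $\ff_{\ad}$, i.e.\ to a neighbourhood of $\epsilon=0$, recovers $\cC^*(T\FCX)$. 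The key point — which uses Proposition~\ref{compad.1} and the full ellipticity hypothesis — is that a fully elliptic semiclassical $\fS$-operator $\cP$ with $\sigma_{\epsilon=1}(\cP)=P$ is invertible modulo $\epsilon x\Psi^{-\infty}_{\fS-\ad}$ near $\ff_{\ad}$, so that the pair $(\sigma_{\ff_{\ad}}(\cP), \sigma_{\epsilon=1}(\cP))$ determines, and is determined up to stable homotopy by, the full symbol data.

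The second step is to identify $K_0(\cC^*(T\FCX))$ with $K_0(\Con_q)$ directly. Here I would use the short exact sequence of $C^*$-algebras coming from the decomposition $T\FCX = {}^\pi TX \sqcup (\text{the } \ff_{\pi-\ad}\text{-part})$ given just before the theorem, and compare it with the mapping-cone exact sequence for $q:\cA_0=\cC(X)\to \cA/\cK$. Recall that $\cC^*({}^\pi TX)\cong \cC_0({}^\pi T^*X)$ by the Fourier transform, which is $KK$-equivalent (via the symbol) to $\cC(X)$; meanwhile the evaluation $\sigma_{\epsilon=1}$ realizes $\cA/\cK$ as the principal-plus-normal-symbol algebra, and Theorem~\ref{fp.15} together with Theorem~\ref{compactness} identifies it with exactly the data living on $\ff_\pi \subset \pa X^2_\pi$. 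So the groupoid $T\FCX$ is, up to the relevant $KK$-equivalences, precisely a geometric model of the mapping cone $\Con_q$: the $\epsilon=0$ face gives $\cC(X)$, the $\epsilon\in(0,1)$ part gives the suspension of (the image of) $\cA/\cK$, and the gluing over $\epsilon=0$ is $q$. Formally I would write down a map $\cC^*(T\FCX)\to \Con_q$ (or a $KK$-equivalence in whichever direction is cleanest) induced by these identifications and check it intertwines $\sigma_{\nc}$ with $\sigma_{\Con_q}$ on the level of the symbol of a fully elliptic operator.

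Concretely, then, the argument runs: (1) show $\sigma_{\nc}$ is well-defined, i.e.\ that a fully elliptic $P$ lifts to a fully elliptic semiclassical $\cP$ and that the resulting class in $K_0(\cC^*(T\FCX))$ is independent of the lift and depends only on the stable homotopy class of $P$ — this is a parametrix argument using Proposition~\ref{compad.1} and Proposition~\ref{fp.1}; (2) construct the natural homomorphism (or $KK$-element) $\Theta: \cC^*(T\FCX)\to \Con_q$ realizing the geometric identification above, and verify $\Theta_* \circ \sigma_{\nc} = \sigma_{\Con_q}$; (3) show $\Theta_*$ is an isomorphism on $K$-theory by the five lemma applied to the two six-term exact sequences (Puppe/mapping-cone sequence for $q$ versus the groupoid sequence for the decomposition of $T\FCX$), using the $KK$-equivalences $\cC_0({}^\pi T^*X)\sim \cC(X)$ and the identification of the $\ff_{\pi-\ad}$-piece with $\cS(\cA/\cK)$; (4) conclude by Proposition~\ref{hcep.4b} that $\sigma_{\nc}=\Theta_*^{-1}\circ\sigma_{\Con_q}$ is an isomorphism.

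The main obstacle I expect is step (2)–(3): making the identification of $\cC^*(T\FCX)$ with (something $K$-equivalent to) the mapping cone $\Con_q$ fully rigorous. The delicate points are (a) matching the groupoid $C^*$-algebra of the $\ff_{\pi-\ad}$-part — a bundle of suspended pseudodifferential algebras over the $H_i$'s — with the suspension $\cS(\cA/\cK)$ of the quotient algebra, which requires knowing that the closure of $\Psi^0_{\ff_{\pi_i}-\phg}$ in the groupoid norm coincides with its closure in $\cL(\cH_i,\cH_i)$ (this uses Corollary~\ref{mps.1} and the amenability/nuclearity established via Lemma~\ref{ma.1}, so that reduced and universal norms agree), and (b) keeping track of the compatibility conditions among the $\sigma_{\pa_i}$ on overlaps $\ff_{\pi_i}\cap\ff_{\pi_j}$, which is exactly what makes $T\FCX$ a single groupoid rather than a disjoint union and what makes the five-lemma diagram commute. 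Once these bookkeeping issues are handled, the $K$-theory comparison is formal.
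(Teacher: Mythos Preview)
Your overall strategy --- reduce to Proposition~\ref{hcep.4b} by producing a natural isomorphism $K_0(\cC^*(T\FCX))\cong K_0(\Con_q)$ compatible with $\sigma_{\nc}$ and $\sigma_{\Con_q}$ --- is exactly the paper's, and step (1) is fine. The gap is in steps (2)--(3), and it is not bookkeeping. Restricting $T\FCX$ to $\epsilon\in(0,1)$ gives the groupoid $\interior{\ff}_\pi\times(0,1)$, whose $C^*$-algebra is $\cS\bigl(\cC^*(\cG_\pi)/\cK\bigr)$, the suspension of the closure of the \emph{smoothing} $\fS$-operators modulo compacts. This is not $\cS(\cA/\cK)$, even in $K$-theory: by \eqref{pfc.1} the algebra $\cA/\cK$ is the joint principal-plus-normal symbol algebra, sitting in a nontrivial extension $0\to \cC^*(\cG_\pi)/\cK \to \cA/\cK \to \cC({}^\pi S^*X)\to 0$. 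Already for $X$ closed this is decisive: then $\cC^*(\cG_\pi)/\cK=0$ while $\cA/\cK=\cC(S^*X)$, so the ideals in your two six-term sequences have different $K$-theory and no five-lemma comparison can succeed. The sentence ``identifies it with exactly the data living on $\ff_\pi$'' is where the error enters: the principal symbol over the interior of $X$ is not determined by the restriction to $\ff_\pi$.

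The paper repairs this by carrying the principal symbol along from the start. It forms the $C^*$-closure $\cB$ of the joint image $(\sigma_0\oplus\sigma_{\ff_{\ad}})(\Psi^0_{\fS-\ad-\phg}(X))$ and sets $\cB_0=\ker(\sigma_{\epsilon=1}|_\cB)$. Forgetting the principal symbol yields $0\to\cC^*(T\FCX)\to\cB_0\to\cC(S({}^\pi T^*X)\times[0,1))\to 0$ with contractible quotient, so $K_*(\cC^*(T\FCX))\cong K_*(\cB_0)$. On the other side one passes through the subalgebra $\hat{\cB}_0\subset\cB_0$ cut out by $\sigma_{\ff_0}\in\cC(X)$; a five-lemma (using that $\cC(X)\hookrightarrow\overline{\cP}^0_{\ff_0-\phg}(X)$ is a $K$-equivalence) gives $K_*(\hat{\cB}_0)\cong K_*(\cB_0)$, while $\hat{\cB}_0$ is manifestly the mapping cone of $\iota:\cC(X)\to\sigma_{\epsilon=1}(\cB)\cong\cA/\cK$, whence $K_*(\cB_0)\cong K_*(\Con_q)$. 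The missing idea in your plan is precisely that the principal-symbol data over $[0,1)_\epsilon$ is $K$-contractible, and adjoining it is what bridges $T\FCX$ and $\Con_q$.
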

\begin{proof}
By Proposition~\ref{hcep.4b},
it suffices to construct a natural identification between $K_0(\cC^*(T\FCX))$ and 
$K_0(\Con_q)$ inducing a commutative diagram
\begin{equation}
\xymatrix{
                &   K_0(\cC^*(T\FCX))\ar[dd]^{\cong} \\
              \fe(X) \ar[ur]^{\sigma_{\nc}} \ar[dr]^{\sigma_{\Con_q}} &  \\
               &   K_0(\Con_q).
}               
\label{hcep.7}\end{equation}
To construct this natural identification, consider the algebra 
\begin{equation}
  (\sigma_0 \oplus \sigma_{\ff_{\ad}})(\Psi^0_{\fS-\ad-\phg}(X))\subset \CI(S(N^* \Delta_{\ad}))\oplus \Psi^0_{\ff_{\ad}-\phg}(X)
\label{hcep.8}\end{equation}
and let $\cB$ be its $C^*$-closure in $\cC(S(N^* \Delta_{\ad}))\oplus \overline{\cP}^0_{\ff_{\ad}-\phg}(X)$.  
The symbol $\sigma_{\epsilon=1}$ restricts to give a map
\begin{equation}
  \sigma_{\epsilon=1}: \cB\to \cC^0(S({}^{\pi}T^* X)) \oplus \left( \bigoplus_{i=1}^k \overline{\cP}^0_{\ff_{\pi_i}-\phg}(H_i) \right),
\label{hcep.9}\end{equation}
where $H_1,\ldots, H_k$ is an exhaustive list of the boundary hypersurfaces of $X$ and $\overline{\cP}^0_{\ff_{\pi_i}-\phg}(H_i)$ is the $C^*$-closure of $\Psi^0_{\ff_{\pi_i}-\phg}(H_i)$ with respect to the reduced norm (see \cite[p.641]{LMN2000}) for the groupoid $\ff_{\pi_i}\cap\overset{\circ}{\ff}_{\pi}$.  There is a natural inclusion
$\iota: \cC(X)\hookrightarrow \cB$.  Let $\cB_0$ be the kernel of the map \eqref{hcep.9} and consider the subalgebra
\[
         \hat{\cB}_0= \{ b\in \cB_0 \quad | \quad \sigma_{\ff_0}(b) \in \cC(X)\}.
\]  
Clearly, there is a natural identification $K_0(\hat{\cB}_0) \cong K_0 (\Con_{\iota})$, where $C_{\iota}$ is the mapping cone of the natural inclusion $\iota: \cC(X)\to \sigma_{\epsilon=1}(\cB)$.   On the other hand, the commutative diagram of short exact sequences
\begin{equation}
\xymatrix{
    0 \ar[r] &  \cS(\sigma_{\epsilon=1}(\cB)) \ar[r]\ar[d]^{\Id} & \hat{\cB}_0 \ar[r] \ar@{^{(}->}[d] & \cC(X) \ar[r] \ar@{^{(}->}[d] & 0 \\
    0 \ar[r]  & \cS(\sigma_{\epsilon=1}(\cB))\ar[r] & \cB_0 \ar[r] & \overline{\cP}^0_{\ff_0-\phg}(X) \ar[r] & 0
 }
 \label{hcep.9a}\end{equation}
 induces a corresponding commutative diagram of six-term exact sequences in $K$-theory.  Since the inclusion
 $\cC(X)\subset \overline{\cP}^0_{\ff_0-\phg}(X)$ induces isomorphisms in $K$-theory, we conclude by the five-lemma that 
the inclusion $\hat{\cB}_0\subset \cB_0$ also induces isomorphisms in $K$-theory.  This means there are natural identifications
\begin{equation}
\begin{aligned}
   K_0(\cB_0) & \cong K_0(\hat{\cB}_0)  \\
                      & \cong K_0(\Con_{\iota}) \\
                      & \cong K_0(\Con_q),
\end{aligned}   
\label{hcep.10}\end{equation}
where we have used Theorem~\ref{fp.15} in the last step.  On the other hand, the principal symbol induces a short exact sequence
\begin{equation}
\xymatrix{
   0\ar[r] & \cC^*(T\FCX) \ar[r] & \cB_0  \ar[r]^-{\sigma_0} & \cC(S({}^{\pi}T^*X)\times[0,1)) \ar[r] &0.
}   
\label{hcep.11}\end{equation}
Since the quotient is contractible, this induces a natural identification 
\[
K_0(\cC^*(T\FCX))\cong K_0(\cB_0), 
\]
so that we obtain the desired identification by combining this with \eqref{hcep.10}.  Thanks to the naturality of our construction, one can readily check it induces a commutative diagram as in \eqref{hcep.7}.

\end{proof}

\section{Poincar\'e duality}\label{pd.0}

This last section will involve some  Kasparov bivariant $K$-theory. The unfamiliar reader may for instance have a look at \cite{Skandalis,Blackadar,DL_Col}.  We are using the notations of \cite{Blackadar} and \cite{DL_Col}.

Let $P\in\Psi_{\fS-\phg}^{0}(X;E,F)$ be a fully elliptic operator and let $Q$ be a parametrix for $P$ as constructed in Proposition \ref{fp.1}. Set 
$\mathbf{H}=L^2_{g_\pi}(X,E)\oplus L^2_{g_\pi}(X,F)$ and $\mathbf{P}=\begin{pmatrix} 0& Q\\P & 0\end{pmatrix}$. 
By Theorem \ref{fp.15}, the operator $\mathbf{P}$ is bounded and Fredholm on $\mathbf{H}$.  Let $\CI_{\pi}(X)\subset \CI(X)$ be the subalgebra of smooth functions on $X$ which are constants along the fibres of the fibration $\pi_i$ for each boundary hypersurface $H_i$ of $X$.  Clearly, we have a dense inclusion 
$\CI_{\pi}(X)\subset \cC({}^\fS X)$.  Denote by $\mathbf{m} : \cC(\SX)\to \cL(\mathbf{H})$ the representation given by multiplication.  

For $f\in \CI_{\pi}(X)$, $\mathbf{m}(f)$ is naturally a $\fS$-operator of order $0$.   
The commutator $[\mathbf{m}(f),\mathbf{P}]$ is a $\fS$-operator of order $-1$ such that $\sigma_{\partial_i}([\mathbf{m}(f),\mathbf{P}])=0$ for all $i$. 
Hence, by Theorem~\ref{compactness}, the commutator $[\mathbf{m}(f),\mathbf{P}]$ is a compact operator.  By the density of $\CI_{\pi}(X)$ in $\cC({}^{\fS} X)$, we conclude more generally that   
$[\mathbf{m}(f),\mathbf{P}]\in\cK(\mathbf{H})$ for all $f \in \cC({}^\fS X)$.  Since $\mathbf{P}^2-\Id\in \cK(\mathbf{H})$, this means $(\mathbf{H},\mathbf{m},\mathbf{P})$ is a Kasparov $(\cC({}^{\fS}X),\bbC)$-module. We denote by\begin{equation}\label{dp.2}
   [P]= [(\mathbf{H},\mathbf{m},\mathbf{P})]\in KK(\cC({}^\fS X),\CC)=K_0({}^\fS X).
\end{equation}
the corresponding Kasparov $(\cC({}^\fS X),\CC)$-cycle.   

It is straightforward to check that this Kasparov cycle only depends on the stable homotopy class of $P$.  This means this procedure defines a homomorphism of abelian groups
\begin{equation}
 \quan: \fe(X)\to K_0({}^{\fS}X).
\label{quan.1}\end{equation}
Using the identification of Theorem~\ref{hcep.6}, this can be seen as defining a homomorphism of abelian groups
\begin{equation}
    \PD:= \quan\circ \sigma_{\nc}^{-1}: K_{0}(\cC^*(T\FCX))\to K_0({}^{\fS}X).
\label{quan.2}\end{equation}
This map establishes a Poincar\'e duality between $T\FCX$ and ${}^{\fS}X$.  This can be described in a systematic way using Kasparov bivariant K-theory.

We first recall that two separable $C^*$-algebras $A$ and $B$ are Poincar\'e dual in $K$-theory if there  exist  $\alpha\in  KK(A\otimes B,\CC)$ and $\beta\in KK(\CC, A\otimes B) $ (minimal tensor products are understood) such that $\beta\underset{A}{\otimes}\alpha=1_{B}$ and
$\beta\underset{B}{\otimes}\alpha=1_{A}$. Once such an $\alpha$ is given, the element $\beta$ completing the Poincar\'e duality is unique. The element $\alpha$ (resp. $\beta$) is called the Dirac (resp. dual-Dirac) element of the Poincar\'e duality.  For any $C^*$-algebras $C,D$,  they provide  isomorphisms
$$ \cdot\underset{A}{\otimes}\alpha : KK(C,A\otimes D) \longrightarrow KK(B\otimes C, D), $$
with inverses given by
$$ \beta \underset{B}{\otimes}\cdot : KK(B\otimes C,D) \longrightarrow KK(C, A\otimes D). $$

We are interested in the special case where $A=\cC^*(T\FCX)$ and $B=\cC({}^{\fS}X)$.  To construct a Dirac element, consider the groupoid $\cG_{\pi-\ad}'=\cG_{\pi-\ad}\setminus \ff_{\pi}\times\{\epsilon=1\}$. 	
It enters in the short exact sequence
\begin{equation}\label{ex-seq-TFCY}
0 \longrightarrow\cC^*(\interior{X}\times \interior{X}\times (0,1])\longrightarrow\cC^*(\cG_{\pi-\ad}')\overset{\ev_{\FC}}{\longrightarrow}\cC^*(T\FCX)\longrightarrow 0,
\end{equation}
where $\ev_{\FC}$ is the obvious evaluation map induced by the inclusion
$T\FCX\subset \cG_{\pi-\ad}'$.  
The ideal is contractible so by classical arguments $[\ev_{\FC}]$ is invertible in $KK$-theory and we set
\begin{equation}\label{pre-dirac-FC}
 \partial^\FC_{X} =[\ev_{\FC}]^{-1}\otimes[\ev_{\epsilon=1}]\otimes[\interior{\mu}]^{-1}\in KK(\cC^*(T\FCX),\CC).
\end{equation}
Here, $\ev_{\epsilon=1}: \cC^*(\cG_{\pi-\ad}')\to \cC^*(\interior{X}\times \interior{X})$ is the obvious evaluation map at $\epsilon=1$ and  the homomorphism $\interior{\mu}$  is defined by $\lambda\mapsto\lambda q$ where $q$ is a rank one self-adjoint projection and $[\interior{\mu}]^{-1}$ is thus the Morita equivalence $\cC^*(\interior{X}\times \interior{X})\sim \CC$. 

The natural inclusion $\CI_{\pi}(X)\subset \Psi^0_{\fS-\ad-\phg}(X)$ extends to an inclusion $\iota: \cC(\SX)\hookrightarrow\overline{\cP}^0_{\fS-\ad-\phg}(X)$, where $\overline{\cP}^0_{\fS-\ad-\phg}(X)$ is the $\cC^*$-closure of 
$\Psi^0_{\fS-\ad-\phg}(X)$ with respect to the reduced norm for the groupoid $\cG_{\pi-\ad}$.   This can be used to define a `zero sections' homomorphism
\begin{equation}\label{zero-section-FC-ad}
\begin{array}{llcl}
\Psi^{\FC}_{\pi-\ad}: & \cC(\SX)\otimes\cC^*(\cG_{\pi-\ad}) &\longrightarrow  &\cC^*(\cG_{\pi-\ad})  \\ 
   &g\otimes a & \longmapsto & \iota(g) a.
\end{array}
\end{equation}
 By restriction to $T\FCX$, we also get a map
\begin{equation}\label{zero-section-FC}
\Psi^{\FC}_{X}:  \cC(\SX)\otimes\cC^*(T\FCX)  \longrightarrow \cC^*(T\FCX). 
\end{equation}
 Consider then the following Kasparov cycle,
\begin{equation}\label{Dirac-FC}
  D^{\FC}_{X}= [\Psi^{\FC}_{X}]\otimes\partial^\FC_{X} \in KK(\cC(\SX)\otimes\cC^*(T\FCX),\CC).
\end{equation}
 
\begin{theorem}\label{dualite-Poincare-FC}
 The Kasparov cycle $ D^{\FC}_{X}$ is the Dirac element of a Poincar\'e duality between $\cC^*(T\FCX)$ and $\cC(\SX)$. 
\end{theorem}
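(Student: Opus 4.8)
The plan is to construct the dual-Dirac element explicitly and verify the two composition identities. I would begin by recalling the general principle (used for instance in \cite{Debord-Lescure}) that for a manifold with fibred corners the relevant Poincar\'e duality descends from the classical Atiyah--Kasparov duality between $\cC_0(T^*\interior X)$ and $\cC(\interior X)$ via the tangent groupoid. Concretely, the first step is to produce a dual-Dirac cycle $\lambda^{\FC}_X\in KK(\CC,\cC(\SX)\otimes\cC^*(T\FCX))$. The natural candidate comes from the semiclassical groupoid: the continuous family groupoid $T\FCX$ together with the bundle ${}^\pi TX$ sitting inside it as the face $\ff_0$, pulled back over $X$, gives rise to an element built from the half-density line bundle along the fibres of ${}^\pi TX\to X$, exactly as the Bott element does in the closed case. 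I would define $\lambda^{\FC}_X$ as the Kasparov product of this fibrewise Bott element with the class of the diagonal inclusion $X\hookrightarrow X\times X$ lifted to $\cG_{\pi-\ad}$, so that it is compatible with the zero-section homomorphism $\Psi^{\FC}_X$ of \eqref{zero-section-FC}.

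The second step is to verify $\lambda^{\FC}_X\underset{\cC(\SX)}{\otimes} D^{\FC}_X = 1_{\cC^*(T\FCX)}$ and $\lambda^{\FC}_X\underset{\cC^*(T\FCX)}{\otimes} D^{\FC}_X = 1_{\cC(\SX)}$. For the first identity I would use the definition $D^{\FC}_X=[\Psi^{\FC}_X]\otimes\partial^{\FC}_X$ together with associativity of the Kasparov product: composing $\lambda^{\FC}_X$ with $[\Psi^{\FC}_X]$ collapses the $\cC(\SX)$ factor and leaves a cycle in $KK(\CC,\cC^*(T\FCX))$ which, by the way $\lambda^{\FC}_X$ was built from the Bott element, is precisely the class dual to $\partial^{\FC}_X$; pairing with $\partial^{\FC}_X$ then reproduces the identity by the classical fibrewise Bott periodicity argument, carried fibrewise over $X$ and over the extra $\RR$-suspension directions attached to each stratum. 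The analysis is localized on the open dense piece ${}^\pi TX\subset T\FCX$ because the complementary pieces $(H_i\times_{\pi_i}{}^\pi TS_i\times_{\pi_i}H_i)|_{G_i}\times(0,1)\times\RR$ are, by the amenability decomposition already established, Morita-trivial bundles of groups and contribute compatibly. For the second identity I would exploit the short exact sequence \eqref{ex-seq-TFCY}, the invertibility of $[\ev_{\FC}]$ and $[\ev_{\epsilon=1}]$ (contractible ideals), and the Morita equivalence $[\interior\mu]^{-1}$, reducing everything to the statement that the pair-groupoid evaluation at $\epsilon=1$ recovers the identity on $\cC(\SX)$ after multiplying by the multiplication representation $\mathbf{m}$; this is essentially the content already visible in the construction of $\quan$ via Kasparov $(\cC(\SX),\CC)$-modules in \eqref{dp.2}.

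A cleaner alternative, which I would actually prefer to present, is to deduce the result from the already-established pieces rather than re-deriving Bott periodicity. Namely, by Theorem~\ref{hcep.6} and the definition \eqref{quan.2} of $\PD$, the map $\cdot\underset{\cC^*(T\FCX)}{\otimes}D^{\FC}_X$ on $KK(\CC,\cC^*(T\FCX))=K_0(\cC^*(T\FCX))$ agrees with $\quan\circ\sigma_{\nc}^{-1}=\PD$, so one knows a great deal about it already. To upgrade this to a genuine $KK$-duality I would follow the pattern of \cite[Section~...]{Debord-Lescure}: first establish duality for the simplest building blocks (${}^\pi TX$ over a smooth closed manifold, i.e. the classical case, and the suspended vector-bundle pieces, which are Morita-equivalent to vector bundles and hence handled by Bott), then glue along the disjoint-union decomposition of $T\FCX$ using the exact sequences \eqref{ac.14}--\eqref{ac.16}, \eqref{hcep.11} and a five-lemma / mapping-cone argument to propagate the duality isomorphism from the boundary faces to the whole groupoid. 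Uniqueness of the dual-Dirac element once $D^{\FC}_X$ is fixed then guarantees the gluings are consistent.

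The main obstacle I anticipate is the bookkeeping in the gluing step: one must check that the Bott/dual-Dirac classes chosen on the various strata $S_i$ are mutually compatible under the restriction maps $\ff_{\pi_i}\cap\ff_{\pi_j}$ (the compatibility of the symbols $\sigma_{\pa_i}$ and $\sigma_{\pa_j}$ discussed at the end of Section~\ref{sm.0}), and that the extra $(0,1)_\epsilon\times\RR$ factors in the decomposition of $T\FCX$ do not spoil the two composition identities. Verifying that the inductively constructed $\lambda^{\FC}_X$ satisfies \emph{both} $\beta\underset{A}{\otimes}\alpha=1_B$ and $\beta\underset{B}{\otimes}\alpha=1_A$ simultaneously — rather than just one of them on $K_0$ — is where the real work lies, and it is handled by the functoriality of the tangent-groupoid construction together with the naturality already emphasized in the proof of Theorem~\ref{hcep.6}.
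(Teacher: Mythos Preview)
Your second approach is in the right spirit --- the paper also proceeds by an inductive five-lemma argument rather than by explicitly writing down a dual-Dirac element --- but several key features of the actual argument are missing from your sketch, and your first approach is not what is done.

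First, the paper never constructs $\lambda^{\FC}_X$ or verifies the two composition identities directly. Instead it shows that for \emph{all} separable $C^*$-algebras $A,B$ the map
\[
  \cdot\underset{\cC({}^\fS X)}{\otimes} D^{\FC}_X : KK_q(A,B\otimes\cC({}^\fS X)) \longrightarrow KK_q(A\otimes\cC^*(T\FCX),B)
\]
is an isomorphism, and then invokes \cite[Lemma~2]{Debord-Lescure}, which says that a Kasparov cycle inducing such universal isomorphisms is automatically a Dirac element. This sidesteps entirely the construction of the dual-Dirac and the verification of $\beta\otimes_A\alpha=1_B$, $\beta\otimes_B\alpha=1_A$ that you identify as ``where the real work lies.''

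Second, the induction is not organised along the groupoid decomposition of $T\FCX$ or the exact sequences \eqref{ac.14}--\eqref{ac.16}. It runs along the filtration $X_j=X\setminus\bigcup_{i\le j}H_i$ obtained by removing hypersurfaces one at a time, which produces \emph{two parallel} short exact sequences: one for $\cC({}^\fS X_{j-1})$ with quotient $\cC(S_j\setminus\partial S_j)$, and one for $\cC^*(T\FCX_{j-1})$ with kernel $\cC^*(\cH_j)$ Morita-equivalent to $\cC^*({}^\pi TS_j)$. The restricted cycles $D^{\FC}_{X_j}$ and $D^{\MC}_{\cH_j}$ intertwine the associated six-term sequences (Claim~\ref{fl.5}); the delicate point is the commutativity at the boundary homomorphisms, which is handled by a local tubular-neighbourhood computation near $H_j$.

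Third, the base case (Claim~\ref{fl.4}) is not just the closed-manifold Bott periodicity: one needs that $D^{\MC}_X=D^{\FC}_{X_k}$ and each $D^{\MC}_{S_j}$ are Dirac elements, i.e.\ Poincar\'e duality for a manifold with corners viewed through the $b$-tangent bundle. This is itself proved by a separate induction on depth (Proposition~\ref{mepi.1}), using collar neighbourhoods and the classical Kasparov duality as the depth-zero input.

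Finally, your appeal to Theorem~\ref{hcep.6} and the identification $\PD=\quan\circ\sigma_{\nc}^{-1}$ to get information about $\cdot\otimes D^{\FC}_X$ is circular here: the fact that $\PD$ is implemented by cap product with $D^{\FC}_X$ is exactly Theorem~\ref{interpretation-dp}, which is proved \emph{after} the present theorem and uses it.
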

\begin{proof}
The groupoid $T\FCX$ is slightly different, but nevertheless intimately related to the noncommutative tangent space of \cite{Debord-Lescure} (see Corollary~\ref{idFCS.1} below).  At the cost of clarifying this relationship, it is therefore possible to transfer the Poincar\'e duality result of \cite{Debord-Lescure} to our context.    To have instead a more self-contained approach, we will adapt the proof of \cite{Debord-Lescure} to our context.  Really, this should be thought as a hybrid of the groupoid approach of \cite{Debord-Lescure} and the operator theoretic approach of \cite{NSS07c} (see also \cite{Melrose-Rochon06}).  

Let $H_1, \ldots, H_k$ be an exhaustive list of the boundary hypersurfaces of $X$ such that 
\[
          i<j, \; H_i\cap H_j\ne 0 \; \Longrightarrow \; H_i<H_j.  
\]  
Set $X_0=X$ and consider the non-compact manifolds with fibred corners 
\begin{equation}
        X_j := X\setminus \left( \bigcup_{i=1}^{j} H_i\right), \quad \mbox{for} \; j\in \{1,\ldots,k\}.
\label{xj.1}\end{equation}
Let 
\[
      \cC({}^{\fS}X_j)= \{  f\in \cC({}^{\fS}X);  \; \left. f\right|_{q(H_i)}=0 \; \mbox{for} \; i\le j\}
\]  
be the corresponding space of continuous functions on the associated stratified pseudomanifold, where $q:X\to {}^{\fS}X$ is the natural quotient map.  Finally, set $T_0\FCX= T\FCX$ and consider the subgroupoid
\[
    T \FCX_j := T\FCX \setminus \overset{\circ}{\left( \bigcup_{i=1}^j \ff_{\pi_i-\ad} \right)},
\]
where the interior is taken as a subset of $\pa X^2_{\pi-\ad}$.  Clearly, the morphism $\Psi_X^{\FC}$ restricts to give a morphism
\[
  \Psi^{\FC}_{X_j}:  \cC({}^{\fS}X_j)\otimes \cC^*(T\FCX_j) \to \cC^*(T\FCX),
\]
allowing us to define the following Kasparov cycle,
\[
      D^{\FC}_{X_j}= [\Psi^{\FC}_{X_j}] \otimes \pa^{\FC}_X \in KK(\cC({}^{\fS}X_j)\otimes \cC^*(T\FCX_j),\bbC).
\]
Now, for $j\in \{1,\ldots,k\}$, we have two natural short exact sequences of $C^*$-algebras,
\begin{gather}
\label{fl.1} \xymatrix{
        0 \ar[r] & \cC({}^{\fS}X_j) \ar[r] & \cC({}^{\fS}X_{j-1}) \ar[r]^-{\alpha} & \cC(S_j\setminus \pa S_j) \ar[r] & 0, 
}  \\
\label{fl.2}
\xymatrix{
  0\ar[r] & \cC^*(\cH_j) \ar[r] & \cC^*(T\FCX_{j-1}) \ar[r]^-{\beta} & \cC^*(T\FCX_j) \ar[r] & 0,
}
\end{gather}
where $\cH_j\subset \ff_{\pi_j-\ad}$ is the subgroupoid given by
\[
   \cH_j= T\FCX_{j-1}\setminus T\FCX_{j}.
\]
It is naturally Morita equivalent to the groupoid ${}^{\pi}TS_i$.  For this latter groupoid, we have a natural Kasparov cycle given by
\[
       D_{S_j}^{\MC}= [\Psi^{\MC}_{S_j}]\otimes \pa^{\FC}_{S_j} \in KK(\cC(S_j\setminus \pa S_j) \otimes \cC^*({}^{\pi}TS_j),\bbC),
\]
where 
\[
  \Psi^{\MC}_{S_j}: \cC(S_j\setminus \pa S_j)\otimes \cC^*({}^{\pi}TS_j)\to \cC^*( T^{\FC}S_j)
\]
is the morphism obtained by restriction of $\Psi^{\FC}_{S_j}$.  Using the Morita equivalence between $\cH_j$ and ${}^{\pi}TS_j$, this gives a corresponding Kasparov cycle $D^{\MC}_{\cH_j}\in KK(\cC(S_j\setminus \pa S_j)\otimes \cC^*(\cH_j),\bbC)$.  This cycle can be defined alternatively by $D^{\MC}_{\cH_j}= [\Psi^{\MC}_{\cH_j}]\otimes \pa^{\FC}_X$, where 
\[
            \Psi^{\MC}_{\cH_j}: \cC(S_j\setminus \pa S_j) \otimes \cC^*(\cH_j)\to \cC^* (T\FCX)
\]
is the morphism obtained by restriction of $\Psi^{\FC}_X$.  

Now, the cycle $D^{\FC}_{j-1}$, $D^{\FC}_{j}$ and $D^{\MC}_{\cH_j}$ can be used to obtain a diagram intertwining the six-term exact sequences in KK-theory associated to the short exact sequences \eqref{fl.1} and \eqref{fl.2},
\begin{equation}
\xymatrix@C=2.5cm{
\vdots \ar[d] &   \vdots \ar[d] \\
KK_q(A,B\otimes\cC({}^{\fS}X_j)) \ar[r]^{\underset{\cC({}^{\fS}X_j)}{\otimes} D^{\FC}_{X_j}} \ar[d]  & KK_q(A\otimes \cC^*(T\FCX_j),B)   \ar[d] \\
KK_q(A,B\otimes \cC({}^{\fS}X_{j-1})) \ar[d] \ar[r]^{\underset{\cC({}^{\fS}X_{j-1})}{\otimes} D^{\FC}_{X_{j-1}}} &  KK_q(A\otimes \cC^*(T\FCX_{j-1}),B)   \ar[d]    \\
KK_q(A,B\otimes \cC(S_j\setminus \pa S_j)) \ar[d]^{\pa_{\alpha}}\ar[r]^{\underset{\cC(S_j\setminus\pa S_j)}{\otimes}D^{\MC}_{\cH_j}}  &  KK_q(A\otimes\cC^{*}(\cH_j),B) \ar[d]^{\pa_{\beta}}  \\
 \vdots  & \vdots, 
}
\label{fl.3}\end{equation} 
where $A$ and $B$ are $C^*$-algebras.  

The result then follows from the following two claims.
\begin{claim}
The diagram \eqref{fl.3} is commutative up to sign.
\label{fl.5}\end{claim}
\begin{claim}
The Kasparov cycles $D_{X_k}^{\FC}= D^{\MC}_X$ and $D^{MC}_{S_j}$ for $j\in \{1,\ldots,k\}$ are Dirac elements.  
\label{fl.4}\end{claim}

Indeed, using the Morita equivalence between ${}^{\pi}TS_j$ and $\cH_j$, we see that $D^{\MC}_{\cH_j}$ is also a Dirac element.  Thus, starting with $j=k$ and applying the five-lemma to \eqref{fl.3}, we find that the map 
\[
\xymatrix@C=3cm{
     KK_q(A,B\otimes \cC({}^{\fS}X_{k-1})) \ar[r]^-{\underset{\cC({}^{\fS}X_{k-1})}{\otimes} D^{\FC}_{X_{k-1}}} &
     KK_q(A\otimes \cC^*(T\FCX_{k-1} ),B)
     }
\] 
is an isomorphism.  By \cite[Lemma~2]{Debord-Lescure}, this implies $D^{\FC}_{X_{k-1}}$ is a Dirac element.  Repeating this argument for $j=k-1,k-2,\ldots, 1$, we find more generally that $D^{\FC}_{X_j}$ is a Dirac element for all $j\in \{0,1,\ldots,k\}$.  In particular, $D^{\FC}_{X}= D^{\FC}_{X_0}$ is a Dirac element.  

Thus, it remains to prove the two claims, which we do below.  
\end{proof}

\begin{proof}[Proof of Claim~\ref{fl.5}]
The proof of the commutativity of the squares not involving boundary homomorphisms is straightforward and left to the reader.  To obtain the commutativity of the remaining squares, we need to show that 
\begin{equation}
    \pa_{\alpha}\underset{\cC({}^{\fS}X_j)}{\otimes} D^{\FC}_{X_j}=  \pa_{\beta} \underset{\cC^*(\cH_j)}{\otimes} D^{\MC}_{\cH_j},
\label{fl.5b}\end{equation}
where  $\pa_{\alpha}\in KK_1(\cC(S_j\setminus\pa S_j), \cC({}^{\fS}X_j))$ and  $\pa_{\beta}\in KK_1(\cC^*(T\FCX_j), \cC^*(\cH_j))$ are the boundary homomorphisms associated to the short exact sequences \eqref{fl.1} and \eqref{fl.2}.  From the definition of $D^{\FC}_j $ and $D^{\MC}_{\cH_j}$, this means we need to show that
\begin{equation}
\pa_{\alpha}\underset{\cC({}^{\fS}X_j)}{\otimes} [\Psi^{\FC}_{X_j}]= \pa_{\beta} \underset{\cC^*(\cH_j)}{\otimes} [\Psi^{\MC}_{\cH_j}] 
\label{fl.6}\end{equation}
in $ KK_1(\cC(S_j\setminus\pa S_j) \otimes \cC^*(T\FCX_j), \cC^*(T\FCX))$.  To see this, consider the subgroupoid
$L_j:= T\FCX_{j}\cap \ff_0\cap \ff_{\pi_j-\ad} \subset \left. {}^{\pi}TX \right|_{H_j}$.  Thus, there is a natural restriction homomorphism $\cC^*(T\FCX_j) \to\cC^*(L_j)$.  There is also an obvious multiplication homomorphism 
\[
           \cC(S_j\setminus \pa S_j)\otimes \cC(L_j) \to \cC( \left. {}^{\pi} TX\right|_{\overset{\circ}{H}_{j}}).
\]
Let also $\cN_{H_j}$ be a tubular neighborhood of $H_j$ coming from an iterated fibred tube system and set $W= \overset{\circ}{\cN}_{H_j}$.  The tube system of $H_j$ induces an identification
\begin{equation}
       \cC(\bbR)\otimes\cC^*(\left.{}^{\pi}TX\right|_{\overset{\circ}{H}_j}) \to \cC^*(TW).
\label{fl.7}\end{equation}
On the other hand, the short exact sequence
\[
\xymatrix{
0 \ar[r]& \cC^*(\overset{\circ}{\ff}_{\pi_j-\ad}) \ar[r] & \cC^*( \overset{\circ}{\ff}_{\pi_j-\ad}\cup \left.{}^{\pi}TX\right|_{\overset{\circ}{H}_j}) \ar[r] & \cC^*( \left.{}^{\pi}TX\right|_{\overset{\circ}{H}_j}) \ar[r] & 0,
}
\]
induces a boundary homomorphism in $KK_1(\cC^*(\left.{}^{\pi}TX\right|_{\overset{\circ}{H}_j}), \cC^*(\overset{\circ}{\ff}_{\pi_j-\ad}))$.  By composing with the inclusion $\cC^*(\overset{\circ}{\ff}_{\pi_i-\ad})\subset \cC^*(\cH_j)$, this induces a morphism $\pa\in KK_0(\cC(\bbR)\otimes \cC^*( \left.{}^{\pi}TX\right|_{\overset{\circ}{H}_j}), \cC^*(\cH_j))$.  Using the identification \eqref{fl.7}, this gives a corresponding element in $\pa'\in KK_0(\cC^*(TW),\cC^*(\cH_j))$ inducing a commutative diagram of Kasparov cycles
\[
\xymatrix{     \cC(\bbR)\otimes \cC( \left.{}^{\pi}TX\right|_{\overset{\circ}{H}_j}) \ar[r] \ar[dr]^{\pa} & \cC^*(TW) \ar[d]^{\pa'} \\
                       & \cC^*(\cH_j).
}
\]  
The result then follows by noticing this fits into a bigger diagram commutative up to sign involving the Kasparov cycles of \eqref{fl.6}, 
\begin{equation}
\xymatrix{
        & \cC(\bbR)\otimes \cC(S_j\setminus\pa S_j) \otimes \cC(T\FCX_j)\ar[d] \ar@/_6cm/[ddd]^{\pa_{\alpha}\underset{\cC({}^{\fS}X_j)}{\otimes} [\Psi^{\FC}_{X_j}]} \ar@/^6cm/[ddd]_{\pa_{\beta} \underset{\cC^*(\cH_j)}{\otimes} [\Psi^{\MC}_{\cH_j}] }   &      \\
        &   \cC(\bbR) \otimes \cC^*(  \left.{}^{\pi}TX\right|_{\overset{\circ}{H}_j}) \ar[dl]\ar[dr]^{\pa} &     \\
      \cC^*(TW)\ar[rr]^{\pa'}\ar[dr]  &  &  \ar[dl] \cC^*(\cH_j)     \\
        &    \cC^*(T\FCX).  &   
    }    
\label{fl.8}\end{equation}

\end{proof}

For Claim~\ref{fl.4}, this is the Poincar\'e duality for manifolds with corners obtained in \cite{Melrose-Piazza1}.  The result of \cite{Melrose-Piazza1} is not formulated in terms of Dirac elements,  but this can be remedied easily by using the semiclassical $b$-double space (or the semiclassical cusp double space).  For the convenience of the reader, we will provide a brief outline.  First, the semiclassical $b$-double space is defined by
\[
      X^2_{b-\ad}=[X^2_b\times[0,1]_{\epsilon}; \Delta_b\times\{0\}],
\]  
where $\Delta_b\subset X^2_b$ is the lifted diagonal.  Denote the new face obtained by this blow-up by $\ff_{0,b}$.  Notice that the $b$-tangent bundle is naturally included in $\ff_{0,b}$.  If $\ff_{b-\ad}$ is the union of all the boundary hypersurfaces intersecting the lift of $\Delta_b\times [0,1]$ in $X^2_{b-\ad}$, we get a corresponding groupoid 
\[
     T^bX:= \overset{\circ}{\ff}_{b-\ad}\setminus (\overset{\circ}{\ff}_{b-\ad}\cap X^2_b\times\{1\}).
\]
Using evaluation maps as in the fibred corners case, one can define a natural Kasparov cycle $\pa^b_X\in KK(\cC^*(T^bX),\bbC)$.  There is also a `zero sections' morphism $\Psi^b_X: \cC(X)\otimes \cC^*(T^bX)\to \cC^*(T^bX)$, and so a corresponding Kasparov cycle $D^b_X= [\Psi^b_X]\otimes \pa^b_X$ in $KK(\cC(X)\otimes \cC^*(T^bX),\bbC)$.  

Let $H_1,\ldots, H_k$ be an exhaustive list of boundary hypersurfaces of $X$ and set
\[
       X_j= X\setminus \bigcup_{i=1}^j H_i, \quad X_j'= X\setminus \bigcup_{i=j+1}^{k} H_i,
\]
with the convention that $X_0=X=X_k'$.  Then, by restriction of $D^b_X$, we obtain corresponding cycles
\[
              D^b_{X_j}\in KK(\cC(X_j)\otimes \cC^*({}^{b}TX_j'),\bbC), \quad \mbox{where} \, {}^bTX_j' = \left. {}^bTX \right|_{X_j'}.
\]
Since $D^{\MC}_X=D^b_{X_k}$, Claim~\ref{fl.4} is a consequence of the following proposition.
\begin{proposition}
If $X$ is a compact manifold with corners and $H_1,\ldots, H_k$ is an exhaustive list of its boundary hypersurfaces, then the Kasparov cycle $D^b_{X_j}$ is a Dirac element for all $j\in \{0,1,\ldots,k\}$.
\label{mepi.1}\end{proposition}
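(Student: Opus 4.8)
The plan is to prove Proposition~\ref{mepi.1} by downward induction on $j$, starting from $j=k$ and going down to $j=0$, using exactly the same inductive machinery already set up for $T\FCX$ in the proof of Theorem~\ref{dualite-Poincare-FC}, but now for manifolds with corners (so no fibrations on the boundary hypersurfaces, only the trivial ones). The base case $j=k$ is $X_k=X\setminus\bigcup_{i=1}^k H_i = \interior{X}$, so $\cC(X_k)=\cC_0(\interior{X})$ and ${}^bTX_k' = {}^bTX$ restricted to $\interior{X}$ is just $T\interior{X}$; here $D^b_{X_k}$ reduces to the classical Dirac element realizing the Poincar\'e duality between $\cC_0(\interior{X})$ and $\cC^*(T\interior{X})\sim \cC_0(T^*\interior{X})$, which is the standard Kasparov duality for a smooth manifold (see \cite{Kasparov} or the discussion in \cite{Connes}). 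This is where one invokes that the contractibility of the relevant ideals makes the evaluation maps $\ev_{\epsilon=1}$ and $\ev_b$ invertible in $KK$-theory, exactly as in \eqref{pre-dirac-FC}.

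\textbf{Inductive step.} Suppose $D^b_{X_j}$ is a Dirac element; I want to deduce that $D^b_{X_{j-1}}$ is one. As in \eqref{fl.1} and \eqref{fl.2}, there are two short exact sequences
\begin{gather*}
0 \longrightarrow \cC(X_j) \longrightarrow \cC(X_{j-1}) \overset{\alpha}{\longrightarrow} \cC(H_j\setminus\pa H_j) \longrightarrow 0, \\
0 \longrightarrow \cC^*(\cH^b_j) \longrightarrow \cC^*({}^bTX_{j-1}') \longrightarrow \cC^*({}^bTX_j') \longrightarrow 0,
\end{gather*}
where $H_j$ plays the role of the base $S_j$ (for a manifold with corners the boundary hypersurface is its own base) and $\cH^b_j$ is the piece of the $b$-semiclassical groupoid sitting over $H_j$, which is Morita equivalent to $TH_j$ (via the normal bundle ${}^bN H_j$, trivialized by the tube system, composed with the canonical $\cC(\bbR)$-factor). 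One then forms the analogue of the big diagram \eqref{fl.3} intertwining the two six-term exact sequences in $KK$-theory via the cycles $D^b_{X_{j-1}}$, $D^b_{X_j}$, and the cycle $D^{MC}_{H_j}$ attached to $\cH^b_j$; the latter is a Dirac element because $TH_j$ is Poincar\'e dual to $\cC_0(H_j\setminus\pa H_j)$ (again classical Kasparov duality, applied to the lower-dimensional manifold with corners $H_j$ — note this is itself an instance of the present proposition in lower dimension, so one may alternatively fold the dimension induction into the main induction). Commutativity up to sign of the diagram is proved exactly as in Claim~\ref{fl.5}: the only nontrivial squares are those involving the boundary homomorphisms $\pa_\alpha$ and $\pa_\beta$, and one checks $\pa_\alpha\otimes[\Psi^b_{X_j}] = \pa_\beta\otimes[\Psi^{MC}_{\cH^b_j}]$ by inserting both sides into a larger diagram built from the tubular neighborhood $W=\interior{\cN}_{H_j}$ and the boundary map of $0\to\cC^*(\interior{\ff}_{b,j-\ad})\to\cC^*(\interior{\ff}_{b,j-\ad}\cup {}^bTX|_{\interior{H}_j})\to\cC^*({}^bTX|_{\interior{H}_j})\to 0$, the verbatim analogue of \eqref{fl.8}. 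Applying the five-lemma to the intertwining diagram shows the middle horizontal arrow $\cdot\otimes D^b_{X_{j-1}}$ is an isomorphism on all $KK_q(A,B\otimes-)$, and then \cite[Lemma~2]{Debord-Lescure} promotes this to the statement that $D^b_{X_{j-1}}$ is a Dirac element.

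\textbf{Main obstacle.} The genuinely substantive point is identifying $D^b_{X_k}$ (the base case) with the classical Kasparov Poincar\'e duality element for $\interior{X}$, i.e. checking that the groupoid-theoretic construction via evaluation maps on the semiclassical $b$-double space really recovers the Dolbeault/Dirac element $\alpha\in KK(\cC_0(\interior{X})\otimes\cC_0(T^*\interior{X}),\bbC)$; everything else is a transcription, with the fibration data removed, of the argument already carried out in the $\fS$ setting. A secondary bookkeeping issue is that the groupoids $\cH^b_j$ here are, up to Morita equivalence, $\bbR\times TH_j$ rather than ${}^\pi TS_j$ — one must be careful that the extra $\bbR$-factor (coming from the normal direction in the tube system) is handled consistently so that the Dirac element of $\cH^b_j$ is indeed obtained from that of $TH_j$ by the Morita equivalence, exactly as the excerpt does for $\cH_j$ and ${}^\pi TS_j$. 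Once these identifications are in place, the downward induction closes and, since $D^{MC}_X = D^b_{X_k}$ when all boundary hypersurfaces have been removed in the opposite order, Claim~\ref{fl.4} (hence Theorem~\ref{dualite-Poincare-FC}) follows.
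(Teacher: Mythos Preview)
Your proposal has a genuine gap: you have the induction running in the wrong direction, because you have misidentified which end of the chain $j=0,\ldots,k$ is the classical case. Recall the definitions in the paper: $X_j=X\setminus\bigcup_{i\le j}H_i$ but $X_j'=X\setminus\bigcup_{i>j}H_i$, and $D^b_{X_j}\in KK(\cC(X_j)\otimes\cC^*({}^bTX_j'),\bbC)$. At $j=k$ this gives $X_k=\interior X$ (fine), but $X_k'=X$, so ${}^bTX_k'={}^bTX$ over the \emph{whole} manifold with corners, not $T\interior X$. Thus $D^b_{X_k}=D^{\MC}_X$ is precisely the hard case you are trying to establish (the Melrose--Piazza duality), not classical Kasparov duality. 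Conversely, at $j=0$ one has $X_0=X$ and $X_0'=\interior X$, so $D^b_{X_0}\in KK(\cC(X)\otimes\cC_0(T^*\interior X),\bbC)$ \emph{is} Kasparov's Poincar\'e duality for the compact space $X$ with tangent bundle $T\interior X$. Your base case and your target are swapped.

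The paper's proof accordingly runs the five-lemma argument \emph{upward} from $j=0$, inside an outer induction on the depth of $X$. The short exact sequences used are built from a closed collar $N_{H_j}=H_j\times[0,1]$, giving quotients $\cC(\hat H_j\times[0,1])$ and ideals $\cC^*({}^bT\check H_j\times T(0,1))$; the third Dirac element in the diagram is $D^b_{\hat H_j}\otimes D^b_{[0,1]}$, where $D^b_{\hat H_j}$ is known by the depth hypothesis and $D^b_{[0,1]}$ by Kasparov. Your intertwining-diagram / five-lemma / \cite[Lemma~2]{Debord-Lescure} machinery is exactly right, and once you reverse the direction and fix the base case your argument becomes essentially the paper's. (Note also that your quotient $\cC(H_j\setminus\partial H_j)$ is not quite correct: functions in $\cC(X_{j-1})$ vanish only on $H_i$ with $i<j$, so the restriction to $H_j$ lands in $\cC_0(\hat H_j)$ with $\hat H_j=H_j\setminus\bigcup_{i<j}(H_i\cap H_j)$, not in $\cC_0(H_j\setminus\partial H_j)$.)
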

\begin{proof}
From \cite{Kasparov}, we know that $D^b_{X_0}$ is a Dirac element.  This suggests to proceed by induction on the depth of $X$.  Thus, assume the proposition is true for all manifolds with corners of depth less than the one of $X$.  If $N_{H_j}= H_j\times [0,1]$ is a collar neighborhood of $H_j$ in $X$, then, after making obvious identifications, the inclusion $N_{H_j}\subset X$ induces two natural short exact sequences of $C^*$-algebras,
\begin{gather}
\label{mepi.2}\xymatrix{ 0 \ar[r] & \cC(X_j)\ar[r] & \cC(X_{j-1}) \ar[r] & \cC(\hat{H}_j\times [0,1])  \ar[r] & 0,
} \\
\label{mepi.3}\xymatrix{
0 \ar[r] & \cC^*({}^bT\check{H}_j\times T(0,1))\ar[r] & \cC^*({}^bTX_{j-1}') \ar[r] & \cC^*({}^bTX_j') \ar[r] & 0,
}
\end{gather} 
where 
\[
\hat{H}_j= H_j\setminus \left(\bigcup_{i=1}^{j-1} (H_i\cap H_j) \right), \quad   \check{H}_{j}= H_j \setminus \left(\bigcup_{i=j+1}^{k} (H_i\cap H_j) \right).
\]
By our inductive assumption, the cycle $D^b_{\hat{H}_j}\in KK(\cC(\hat{H}_j)\otimes \cC^*({}^bT\check{H}_j),\bbC)$ is a Dirac element.  On the other hand, $D^b_{[0,1]}\in KK(\cC([0,1])\otimes \cC^*(T(0,1)),\bbC)$ is a Dirac element by the result of \cite{Kasparov}.  This means the corresponding cycle
\[
       D^b_{\hat{H}_j\times[0,1]}= D^b_{\hat{H}_j}\otimes D^b_{[0,1]} \in KK(\cC(\hat{H}_j\times [0,1])\otimes\cC^*({}^bT\check{H}_j\times T(0,1)),\bbC)
\] 
is a Dirac element.  Now, this Dirac element combines with $D^b_{X_j}$ and $D^b_{X_{j-1}}$ to give a diagram intertwining the six-term exact sequences in $KK$-theory associated to \eqref{mepi.2} and \eqref{mepi.3},  
\begin{equation}
\xymatrix@C=2.5cm{
\vdots \ar[d] &   \vdots \ar[d] \\
KK_q(A,B\otimes\cC(X_j)) \ar[r]^{\underset{\cC(X_j)}{\otimes} D^{b}_{X_j}} \ar[d]  & KK_q(A\otimes \cC^*({}^bT X_j'),B)   \ar[d] \\
KK_q(A,B\otimes \cC(X_{j-1})) \ar[d] \ar[r]^{\underset{\cC(X_{j-1})}{\otimes} D^{b}_{X_{j-1}}} &  KK_q(A\otimes \cC({}^bTX_{j-1}'),B)   \ar[d]    \\
KK_q(A,B\otimes \cC(\hat{H}_j\times[0,1])) \ar[d]\ar[r]^{\underset{\cC(\hat{H}_j\times[0,1])}{\otimes}D^{b}_{\hat{H}_j\times[0,1]}}  &  KK_q(A\otimes\cC^{*}({}^bT\check{H}_j\times T(0,1)),B)\ar[d]   \\
 \vdots  & \vdots, 
}
\label{mepi.4}\end{equation}
where $A$ and $B$ are $C^*$-algebras.  
Using a similar method as for \eqref{fl.3}, it can be shown that this diagram is commutative up to sign.  Thus, starting with $j=0$ and applying the five-lemma recursively to \eqref{mepi.4} as well as \cite[Lemma 2]{Debord-Lescure}, we conclude that $D^b_{X_j}$ is a Dirac element for all $j\in \{0,1,\ldots,k\}$.   

\end{proof}

Since the noncommutative tangent space $T^{\fS}X$ of \cite{Debord-Lescure} is also Poincar\'e dual to the stratified pseudomanifold ${}^{\fS}X$, Theorem~\ref{dualite-Poincare-FC} has the following consequence.

\begin{corollary}
The $C^*$-algebras $\cC^*(T^{\fS}X)$ and $\cC^*(T\FCX)$ are $KK$-equivalent.
\label{idFCS.1}\end{corollary}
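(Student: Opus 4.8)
The plan is to deduce Corollary~\ref{idFCS.1} directly from Theorem~\ref{dualite-Poincare-FC} together with the corresponding Poincar\'e duality result of \cite{Debord-Lescure}, using the general uniqueness principle for Poincar\'e dual algebras. Recall from the discussion preceding the statement that two separable $C^*$-algebras $A$ and $B$ are Poincar\'e dual in $K$-theory if there exist $\alpha\in KK(A\otimes B,\CC)$ and $\beta\in KK(\CC,A\otimes B)$ such that $\beta\underset{A}{\otimes}\alpha=1_B$ and $\beta\underset{B}{\otimes}\alpha=1_A$; moreover, once $\alpha$ is fixed the element $\beta$ is uniquely determined. First I would invoke Theorem~\ref{dualite-Poincare-FC}, which says that $D^{\FC}_X\in KK(\cC(\SX)\otimes\cC^*(T\FCX),\CC)$ is the Dirac element of a Poincar\'e duality between $A_1:=\cC^*(T\FCX)$ and $B:=\cC(\SX)$. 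Then I would invoke the main theorem of \cite{Debord-Lescure}, which provides the analogous statement for the noncommutative tangent space: there is a Dirac element $D^{\fS}_X\in KK(\cC(\SX)\otimes \cC^*(T^{\fS}X),\CC)$ realizing a Poincar\'e duality between $A_2:=\cC^*(T^{\fS}X)$ and the same algebra $B=\cC(\SX)$.

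The key step is then the following elementary lemma in $KK$-theory: if $A_1$ and $A_2$ are both Poincar\'e dual to a fixed algebra $B$, then $A_1$ and $A_2$ are $KK$-equivalent. The proof of this lemma is a standard composition-of-Dirac-and-dual-Dirac argument. Write $\alpha_i\in KK(A_i\otimes B,\CC)$ and $\beta_i\in KK(\CC,A_i\otimes B)$ for the two dualities, $i\in\{1,2\}$. Using the first duality, the map $\,\cdot\,\underset{A_1}{\otimes}\alpha_1: KK(\CC,A_1\otimes A_2)\to KK(A_2,B\otimes A_2)$ is an isomorphism (this is exactly the isomorphism recalled in the excerpt, taken with $C=\CC$, $D=A_2$), with inverse $\beta_1\underset{B}{\otimes}\,\cdot\,$. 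Transport the dual-Dirac element $\beta_2\in KK(\CC,A_2\otimes B)\cong KK(\CC,B\otimes A_2)$ through these identifications to produce a distinguished class $\gamma\in KK(A_2,A_1)$, and symmetrically a class $\delta\in KK(A_1,A_2)$. A Kasparov-product bookkeeping computation, using the associativity of the Kasparov product and the defining identities $\beta_i\underset{A_i}{\otimes}\alpha_i=1_B$, $\beta_i\underset{B}{\otimes}\alpha_i=1_{A_i}$, shows that $\gamma\underset{A_1}{\otimes}\delta=1_{A_2}$ and $\delta\underset{A_2}{\otimes}\gamma=1_{A_1}$, so $\gamma$ and $\delta$ are inverse $KK$-equivalences. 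Applying this lemma with $A_1=\cC^*(T\FCX)$ and $A_2=\cC^*(T^{\fS}X)$ yields the corollary.

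I expect the only real content is keeping the two dualities straight and verifying that the transported classes are genuinely inverse to each other; this is the bookkeeping step and it is where one must be careful about which variable the Kasparov products are taken over and about the flip isomorphism $A\otimes B\cong B\otimes A$. Since all the $C^*$-algebras involved are separable (in fact nuclear, by Lemma~\ref{ma.2} and the amenability statements for $T\FCX$ and for the groupoids of \cite{Debord-Lescure}), there is no subtlety with reduced versus maximal completions or with the necessary separability hypotheses in Kasparov's framework. The main obstacle, such as it is, is purely expository: one has to cite \cite{Debord-Lescure} for the $T^{\fS}X$ duality in a form compatible with the Dirac-element language used here, and to state the uniqueness lemma cleanly; alternatively one could bypass the abstract lemma and construct the $KK$-equivalence by hand through the groupoid $\cG_{\pi-\ad}'$ and the evaluation maps of \eqref{ex-seq-TFCY}, but the duality-uniqueness route is shorter and more transparent.
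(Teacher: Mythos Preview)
Your proposal is correct and follows essentially the same approach as the paper: both deduce the $KK$-equivalence from the two Poincar\'e dualities by composing a dual-Dirac element from one duality with the Dirac element of the other. The paper simply writes down the explicit formula $\alpha=(D^{\fS}_X)^{-1}\underset{\cC({}^{\fS}X)}{\otimes}D^{\FC}_X$ and its inverse $\alpha^{-1}=(D^{\FC}_X)^{-1}\underset{\cC({}^{\fS}X)}{\otimes}D^{\fS}_X$, which is exactly the ``transport through the duality isomorphisms'' you describe abstractly.
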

\begin{proof}
Let $D^{\fS}_X\in KK(\cC^*(T^{\fS}X)\otimes \cC({}^{\fS}X),\bbC)$ be the Dirac element of \cite{Debord-Lescure} that provides the Poincar\'e duality between $\cC^*(T^{\fS}X)$ and $\cC({}^{\fS}X)$.  Denote by
\[
    (D^{\FC}_X)^{-1}\in KK(\bbC, \cC^*(T\FCX)\otimes\cC({}^{\fS}X)), \quad (D^{\fS}_X)^{-1}\in KK(\bbC, \cC^*(T^{\fS}X)\otimes\cC({}^{\fS}X)),
\]
the dual-Dirac elements of $D^{\FC}_X$ and $D^{\fS}_{X}$ respectively.  Then the element
\[
  \alpha= (D^{\fS}_X)^{-1} \otimes_{\cC({}^{\fS}X)} D^{\FC}_X \in KK(\cC^*(T\FCX), \cC^*(T^{\fS}X))
\]
is a $KK$-equivalence between $\cC^{*}(T\FCX)$ and $\cC^{*}(T^{\fS}X)$ with inverse
\[
   \alpha^{-1}= (D^{\FC}_X)^{-1}\otimes_{\cC({}^{\fS}X)} D^{\fS}_X \in KK(\cC^*(T^{\fS}X), \cC^*(T\FCX)).
\]
\end{proof}

The map $\PD$ in \eqref{quan.2} can be described in terms of the Dirac element $D^{\FC}_X$.

\begin{theorem}\label{interpretation-dp}
If $P\in \Psi^0_{\fS-\phg}(X;E,F)$ is a fully elliptic operator, then
\begin{equation}\label{dp.with.S-calcul}
 \sigma_{nc}(P) \underset{C^*(T\FCX) }{\otimes}D_{X}^{\FC} = [P].
\end{equation}
In particular, the map $\PD$ in \eqref{quan.2} is an isomorphism of abelian groups.  
\end{theorem}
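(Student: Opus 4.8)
\textbf{Proof strategy for Theorem~\ref{interpretation-dp}.} The plan is to verify the identity \eqref{dp.with.S-calcul} by unwinding the definitions of both sides as Kasparov products and matching them term by term, using the same semiclassical bookkeeping that appears in the proof of Theorem~\ref{hcep.6}. Recall that $\sigma_{\nc}(P)\in K_0(\cC^*(T\FCX))$ is represented by $\sigma_{\ff_{\ad}}(\cP)$ for a fully elliptic semiclassical lift $\cP\in \Psi^0_{\fS-\ad-\phg}(X;E,F)$ with $\sigma_{\epsilon=1}(\cP)=P$, while $D^{\FC}_X=[\Psi^{\FC}_X]\otimes \partial^{\FC}_X$ with $\partial^{\FC}_X=[\ev_{\FC}]^{-1}\otimes[\ev_{\epsilon=1}]\otimes[\interior{\mu}]^{-1}$. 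So the left-hand side of \eqref{dp.with.S-calcul} is obtained by taking $[\sigma_{\ff_{\ad}}(\cP)]\in K_0(\cC^*(T\FCX))$, pairing against the zero-sections morphism $[\Psi^{\FC}_X]$ to land in $KK(\cC({}^{\fS}X),\cC^*(T\FCX))$, and then composing with $\partial^{\FC}_X$. The key point I would establish is that this composite is exactly the Kasparov module $(\mathbf{H},\mathbf{m},\mathbf{P})$ defining $[P]\in K_0({}^{\fS}X)$.

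The main steps, in order, would be: (1) realize $[\sigma_{\ff_{\ad}}(\cP)]\otimes[\Psi^{\FC}_X]$ concretely as a Kasparov $(\cC({}^{\fS}X),\cC^*(T\FCX))$-cycle built from $\cP$, using the identification $K_0(\cC^*(T\FCX))\cong K_0(\Con_q)$ of Theorem~\ref{hcep.6} and the fact that the zero-sections morphism $\Psi^{\FC}_{\pi-\ad}$ of \eqref{zero-section-FC-ad} lets the $\cC({}^{\fS}X)$-algebra structure on $\Psi^0_{\fS-\ad-\phg}(X)$ pass to the symbol algebras; (2) apply $[\ev_{\FC}]^{-1}$, which amounts to lifting the cycle from $T\FCX$ to the groupoid $\cG_{\pi-\ad}'$ (possible because the ideal $\cC^*(\interior X\times\interior X\times(0,1])$ is contractible, so $\ev_{\FC}$ is a $KK$-equivalence) — here $\cP$ itself, viewed as a semiclassical $\fS$-operator, provides the natural lift; (3) apply $[\ev_{\epsilon=1}]$, i.e. restrict to $\epsilon=1$, which replaces $\cP$ by $\sigma_{\epsilon=1}(\cP)=P$ acting on $\interior X\times\interior X$, keeping the $\cC({}^{\fS}X)$-module structure coming from the multiplication representation $\mathbf{m}$; (4) apply the Morita equivalence $[\interior{\mu}]^{-1}$ to land in $KK(\cC({}^{\fS}X),\CC)$, and check the resulting module is precisely $(\mathbf{H},\mathbf{m},\mathbf{P})$ with $\mathbf{P}$ the parametrix-completed operator of Proposition~\ref{fp.1}, so it equals $[P]$ of \eqref{dp.2}. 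For the final sentence of the theorem, once \eqref{dp.with.S-calcul} is known, I would combine it with the fact that $\sigma_{\nc}$ is an isomorphism (Theorem~\ref{hcep.6}) and that $D^{\FC}_X$ is a Dirac element (Theorem~\ref{dualite-Poincare-FC}): the map $\PD=\quan\circ\sigma_{\nc}^{-1}$ is then, up to the isomorphism $\sigma_{\nc}$, exactly the map $x\mapsto x\otimes_{\cC^*(T\FCX)}D^{\FC}_X$, which is an isomorphism $K_0(\cC^*(T\FCX))\to KK(\cC({}^{\fS}X),\CC)=K_0({}^{\fS}X)$ by the Poincar\'e duality.

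The main obstacle I anticipate is step (1)–(2): making precise, at the level of Kasparov cycles rather than just $K$-theory classes, that pairing the symbol class $[\sigma_{\ff_{\ad}}(\cP)]$ with the zero-sections morphism $[\Psi^{\FC}_X]$ and then lifting via $[\ev_{\FC}]^{-1}$ reproduces the module built directly from $\cP$ on $\cG_{\pi-\ad}'$. This requires a careful model for the relative $K$-group $\fe(X)\cong K_0(\Con_q)$ and for how the multiplication representation of $\cC({}^{\fS}X)$ interacts with the groupoid $C^*$-algebra picture — essentially the compatibility of the operator-theoretic description of \cite{NSS07c} with the groupoid description of \cite{Debord-Lescure}. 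I expect this to be handled by a diagram chase analogous to the one in the proof of Theorem~\ref{hcep.6}, invoking the naturality of all the evaluation and zero-sections maps involved; the remaining steps (3)–(4) are then essentially formal restriction and Morita-equivalence arguments, and the concluding isomorphism statement is an immediate consequence of the two cited theorems.
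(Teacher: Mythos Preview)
Your strategy is essentially the same as the paper's: unwind the Kasparov product through the semiclassical groupoid $\cG_{\pi-\ad}'$ and match with $(\mathbf{H},\mathbf{m},\mathbf{P})$. The paper, however, resolves exactly the obstacle you flag in steps (1)--(2) by reversing the order of operations. Rather than first pairing with $[\Psi^{\FC}_X]$ on $T\FCX$ and then lifting through $[\ev_{\FC}]^{-1}$, the paper first lifts $\sigma_{\nc}(P)$ to a class $[\cT]\in K_0(\cC^*(\cG_{\pi-\ad}'))$ built directly from $\cP,\cQ$ (so that $(\ev_{\FC})_*[\cT]=\sigma_{\nc}(P)$ is immediate), and only then applies the zero-sections morphism, using the elementary commutation $\ev_{\FC}\circ\Psi^{\FC}_{\pi-\ad'}=\Psi^{\FC}_X\circ(\id_{\cC({}^{\fS}X)}\otimes\ev_{\FC})$ to justify the swap. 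The pairing $\tau_{\cC({}^{\fS}X)}([\cT])\otimes[\Psi^{\FC}_{\pi-\ad'}]$ is then computed by an explicit Hilbert-module identification (choosing a projection $e$ realizing $E\oplus F$), with no need for the mapping cone description from Theorem~\ref{hcep.6} that you invoke. Your final paragraph on deducing that $\PD$ is an isomorphism from Theorems~\ref{hcep.6} and~\ref{dualite-Poincare-FC} matches the paper exactly.
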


\begin{proof}

Let $P\in\Psi_{\fS-\phg}^{0}(Y;E,F)$ be a fully elliptic operator and let $Q$ be a parametrix for $P$ as constructed in Proposition~\ref{fp.1}.   Let $\cP\in \Psi^0_{\fS-\ad-\phg}(X;E,F)$ and $\cQ\in \Psi^0_{\fS-\ad-\phg}(X;F,E)$ be fully elliptic semiclassical $\fS$-operators such that $\sigma_{\epsilon=1}(\cP)=P$ and $\sigma_{\epsilon=1}(\cQ)=Q$.  Without loss of generality, we can choose $\cQ$ such that
\begin{equation}\label{full-ell_nc-symbol}
\cP\cQ-1\in \Psi^{-\infty}_{\fS-\ad}(X;F),\quad \cQ\cP-1\in \Psi^{-\infty}_{\fS-\ad}(X;E).
\end{equation}
By construction,   $a:=\cP|_{T\FCX}$  
 is a pseudodifferential operator on the groupoid $T\FCX$ of order
$0$, so it gives a (bounded) morphism between the $\cC^*(T\FCX)$-Hilbert modules $\cC^*(T\FCX, E)$ and $\cC^*(T\FCX,F)$. Reverting $E$ and $F$, the same is true for $b:= \cQ|_{T\FCX}$ so we get a bounded morphism 
 $$ \mathbf{a}=\begin{pmatrix}0 & b \\ a& 0\end{pmatrix} \in \cL(C^*(T\FCX, E\oplus F)). $$
Since $\sigma_{\epsilon=1}(\cP)=P$ and $\sigma_{\epsilon=1}(\cQ)=Q$, we have that $a|_{\epsilon=1}$ is invertible with inverse $b|_{\epsilon=1}$ so that 
$\mathbf{a}^2-\mathrm{Id}\in \cK(C^*(T\FCX, E\oplus F))$. This means 
\begin{equation}\label{nc-symbol-class}
 \left(\cC^*(T\FCX, E\oplus F) , \mathbf{a} \right)
\end{equation}
is a Kasparov $(\CC,C^*(T\FCX))$-cycle. Its class in $K_0(\cC^*(T\FCX))$ is the  element $\sigma_{\nc}(P)$ defined in \eqref{hcep.5b}.

Similarly, we get a $K$-theory class associated with $\cP$. As before, 
 $$  \mathcal{T}:= \begin{pmatrix}0 &  \cQ  \\  \cP  & 0\end{pmatrix} \in \cL(\cC^*(\cG_{\pi-\ad}',E\oplus F))$$
and $  \mathcal{T}^2-1 \in \cK(\cC^*(\cG_{\pi-\ad}',E\oplus F))$, so that 
 $$ [\cT]=\left(\cC^*(\cG_{\pi-\ad}',E\oplus F),  \mathcal{T}\right) \in K_0(\cC^*(\cG_{\pi-\ad}')).$$
The cycle $[\cT]$ is such that 
\begin{equation}\label{lift.nc.symbol}
[\cT]\otimes[\ev_{\FC}]=(\ev_{\FC})_*[\cT] = \sigma_{\nc}(P).
\end{equation}
In order to achieve the computation proving \eqref{dp.with.S-calcul}, we observe that the homomorphism (\ref{zero-section-FC-ad}) naturally induces a map
\begin{equation}\label{zero-section-FC-ad-prime}
\Psi^{\FC}_{\pi-\ad'} : \cC(\SX)\otimes\cC^*(\cG_{\pi-\ad}')\longrightarrow\cC^*(\cG_{\pi-\ad}')
\end{equation}
leading to the equality of homomorphisms
\begin{equation}\label{ev.commutes.psi}
 \ev_{\FC}\circ\Psi^{\FC}_{\pi-\ad'} = \Psi^{\FC}_{X}\circ(\id_{\cC(\SX)}\otimes\ev_{\FC}).
\end{equation}  
Now, using the basic properties of the Kasparov product, we have,
\begin{alignat*}{2}\label{quant.first.part}
    \sigma_{nc}(P) \underset{\cC^*(T\FCX)}{\otimes}D^{\FC}_X &= 
    ([\cT]\otimes [\ev_{\FC}])\underset{\cC^*(T\FCX)}{\otimes} D^{\FC}_X,&&  \hbox{by \eqref{lift.nc.symbol}, } \\
  &= \tau_{\cC(\SX)}([\cT]\otimes [\ev_{\FC}])\otimes D^{\FC}_X  &&  \\
  &= \tau_{\cC(\SX)}([\cT])\otimes [\id_{\cC(\SX)}\otimes\ev_{\FC}]\otimes D^{\FC}_X  && \\
  &= \tau_{\cC(\SX)}([\cT])\otimes [\Psi^{\FC}_{\pi-\ad'}]\otimes[\ev_{\epsilon=1}]\otimes [\interior{\mu}]^{-1},
              && \ \hbox{by \eqref{Dirac-FC}, \eqref{ev.commutes.psi}}.
\end{alignat*}
The next step requires some details. We have 
 $$ \tau_{\cC(\SX)}(\cT) =\left(\cC(\SX)\otimes\cC^*(\cG_{\pi-\ad}',E\oplus F),l, \id \otimes \begin{pmatrix}0 &  \cQ  \\  \cP  & 0\end{pmatrix}\right)$$
where $\cC(\SX)\otimes\cC^*(\cG_{\pi-\ad}',E\oplus F)$ has the obvious right $\cC(\SX)\otimes\cC^*(\cG_{\pi-\ad}')$-module structure and the representation $l$ is defined by: $l(f)(g\otimes\xi)=(fg)\otimes\xi$. We then have,
\begin{equation}
\begin{split}
 & \tau_{\cC(\SX)}(\cT)\otimes \Psi^{\FC}_{\pi-\ad'} = \\
  & \left( [\cC(\SX)\otimes\cC^*(\cG_{\pi-\ad}',E\oplus F)]\underset{\Psi^{\FC}_{\pi-\ad'}}{\otimes}\cC^*(\cG_{\pi-\ad}'),l\otimes\id, (\id \otimes \begin{pmatrix}0 &  \cQ  \\  \cP  & 0\end{pmatrix})\otimes \id\right).
\end{split}
\end{equation}
By construction, $\cC^*(\cG_{\pi-\ad}',E\oplus F)$ is a finitely generated projective Hilbert $\cC^*(\cG_{\pi-\ad}')$-module, so we can choose a self-adjoint idempotent $e\in M_r(\cC^*(\cG_{\pi-\ad}'))$ such that $\cC^*(\cG_{\pi-\ad}',E\oplus F)=e\cC^*(\cG_{\pi-\ad}')^r$.  This choice provides a Hilbert  $\cC^*(\cG_{\pi-\ad}')$-module isomorphism
 $$ [\cC(\SX)\otimes\cC^*(\cG_{\pi-\ad}',E\oplus F)]\underset{\Psi^{\FC}_{Y-\ad'}}{\otimes}\cC^*(\cG_{\pi-\ad}')\simeq\cC^*(\cG_{\pi-\ad}',E\oplus F) $$
under which the representation $l\otimes\id$ corresponds to $ \nu : \cC(\SX)\to\cL(\cC^*(\cG_{\pi-\ad}',E\oplus F))$ defined by
 $$ \nu(f)(e(b_1,\ldots,b_r)):=e(\Psi^{\FC}_{\pi-\ad'}(f,b_1),\ldots,\Psi^{\FC}_{\pi-\ad'}(f,b_r)),$$
and the operator $ (\id\otimes \begin{pmatrix}0 &  \cQ  \\  \cP  & 0\end{pmatrix})\otimes \id$ simply corresponds to $\begin{pmatrix}0 &  \cQ  \\  \cP  & 0\end{pmatrix}$. 
In other words, we have the equality
\begin{equation}
 \tau_{\cC(\SX)}([\cT])\otimes [\Psi^{\FC}_{Y-\ad'}] =\left[\left(\cC^*(\cG_{\pi-\ad}',E\oplus F),\nu, \begin{pmatrix}0 &  \cQ  \\  \cP  & 0\end{pmatrix}\right)\right] 
\end{equation}
in $KK(\cC(\SX),\cC^*(\cG_{\pi-\ad}'))$.  It follows that,
\begin{equation}\label{quant.final.part} 
\begin{aligned}
 &   \sigma_{nc}(P) \underset{\cC^*(T\FCX)}{\otimes}D^{\FC}_X  \\
&= \left[\left(\cC^*(\cG_{\pi-\ad}',E\oplus F),\nu, \begin{pmatrix}0 &  \cQ  \\  \cP  & 0\end{pmatrix}\right)\right]\otimes[\ev_{\epsilon=1}]\otimes [\interior{\mu}]^{-1} \\
 &= \left[\left(\cC^*(\interior{X}\times\interior{X},E\oplus F),\nu_{\epsilon=1},\begin{pmatrix}0 & Q \\ P & 0\end{pmatrix}\right)\right]\otimes [\interior{\mu}]^{-1}  \\
 &= \left[\left(L^2_\pi(X,E\oplus F),\mathbf{m},\begin{pmatrix}0 & Q\\ P & 0\end{pmatrix}\right)\right] =[P].
\end{aligned}
\end{equation}

\end{proof}

\bibliography{CoinsFibres} 
\bibliographystyle{amsplain}

\end{document}